\title{Trees with many leaves in tournaments}
\author{}
\newcommand\itref[1]{\textit{\ref{#1}}}
\newcounter{propcounter}
\newcommand{\AB}[1]{\textcolor{blue}{ $\blacktriangleright$\ {\sf AB: #1}\  $\blacktriangleleft$ }}
\title{\vspace{-0.7cm}Trees with many leaves in tournaments}
\author{Alistair\ Benford\thanks{School of Mathematics, University of Birmingham,
Birmingham,
B15 2TT,
UK.
a.s.benford@bham.ac.uk}
\ and\ Richard\ Montgomery\thanks{Mathematics Institute, University of Warwick, Coventry, CV4 7AL, UK. 
richard.montgomery@warwick.ac.uk. Supported by the European Research Council (ERC) under the European Union Horizon 2020 research and innovation programme (grant agreement No.\ 947978) and the Leverhulme Trust (grant agreement No.\ PLP-2020-183).}}
\date{}
\begin{document}

\maketitle

\begin{abstract}
Sumner's universal tournament conjecture states that every $(2n-2)$-vertex tournament should contain a copy of every $n$-vertex oriented tree. If we know the number of leaves of an oriented tree, or its maximum degree, can we guarantee a copy of the tree with fewer vertices in the tournament? Due to work initiated by H\"aggkvist and Thomason (for number of leaves) and K\"uhn, Mycroft and Osthus (for maximum degree), it is known that improvements can be made over Sumner's conjecture in some cases, and indeed sometimes an $(n+o(n))$-vertex tournament may be sufficient.

In this paper, we give new results on these problems.
Specifically, we show

\begin{enumerate}[label = \roman{enumi})]
\item for every $\alpha>0$, there exists $n_0\in\mathbb{N}$ such that, whenever $n\geqslant n_0$, every $((1+\alpha)n+k)$-vertex tournament contains a copy of every $n$-vertex oriented tree with $k$ leaves, and

\item for every $\alpha>0$, there exists $c>0$ and $n_0\in\mathbb{N}$ such that, whenever $n\geqslant n_0$, every $(1+\alpha)n$-vertex tournament contains a copy of every $n$-vertex oriented tree with maximum degree $\Delta(T)\leqslant cn$.
\end{enumerate}
\noindent Our first result gives an asymptotic form of a conjecture by Havet and Thomass\'e, while the second improves a result of Mycroft and Naia which applies to trees with polylogarithmic maximum degree.
\end{abstract}

\section{Introduction}\label{sect:introduction}

When the edges of a complete graph are oriented in any manner, giving a tournament, which oriented trees must appear within its edges? The study of this question has been motivated by Sumner's universal tournament conjecture from 1971, which states that every $(2n-2)$-vertex tournament should contain a copy of every $n$-vertex oriented tree (see, e.g.,~\cite{reid1983embedding}). The extremal examples showing this conjecture would be tight, the $n$-vertex stars whose root vertex has in- or out-degree 0, also maximise the number of leaves and the maximum degree among the $n$-vertex oriented trees. In this paper, we consider whether fewer vertices are required in the tournament for trees with fewer leaves or a lower maximum degree.

The first major step towards Sumner's conjecture was taken by H\"aggkvist and Thomason~\cite{HAE-THO} in 1991, who showed that $O(n)$ vertices in a tournament are sufficient to find a copy of any $n$-vertex oriented tree. The constant implicit in this result has been improved in the intervening years (see~\cite{havet2000median,HAV,El_S}), with the best current bound applicable for all $n$ by Dross and Havet~\cite{DRO-HAV}, who showed that any $\big\lceil\frac{21}{8}n-\frac{47}{16}\big\rceil$-vertex tournament contains a copy of any $n$-vertex oriented tree. Significantly, however, Sumner's conjecture has been proved exactly for all sufficiently large $n$, by K\"uhn, Mycroft and Osthus~\cite{KUE-MYC-OST-2}, so that the conjecture remains open for only finitely many oriented trees.

That trees with fewer leaves generally require fewer vertices in the tournament was also first demonstrated by H\"aggkvist and Thomason~\cite{HAE-THO} in 1991. That is, they showed that there is some smallest $g(k)$ such that every $(n+g(k))$-vertex tournament contains a copy of every $n$-vertex oriented tree with $k$ leaves. Due to Thomason~\cite{THO}, it is known that every $(n+1)$-vertex tournament contains a copy of every $n$-vertex oriented path, and so $g(2)=1$ (noting that we must have $g(k)\geqslant k-1$ as a consequence of the previous example of an $n$-vertex star). Motivated in part by this result, Havet and Thomass\'e~\cite{HAV2} generalised Sumner's conjecture, suggesting that $g(k)=k-1$, that is, that every $(n+k-1)$-vertex tournament should contain a copy of every $n$-vertex oriented tree with $k$ leaves.

Improving the initial bound given by H\"aggkvist and Thomason~\cite{HAE-THO}, which was exponential in $k^3$, Dross and Havet~\cite{DRO-HAV} showed that $g(k)=O(k^2)$, before the current authors showed that $g(k)=O(k)$~\cite{BEN-MON}. That is, it is now known that  every $(n+O(k))$-vertex tournament contains a copy of every $n$-vertex oriented tree with $k$ leaves. Further evidence towards the conjecture of Havet and Thomass\'e is that $(n+k-1)$-vertices in the tournament are known to be sufficient if $n$ is much larger than $k$~\cite{BEN-MON} or if the tree is an arborescence~\cite{DRO-HAV} (that is, if the tree either has all paths branching outwards, or all paths branching inwards, from some designated root vertex). In addition, a result of Mycroft and Naia~\cite{MYC-NAI} shows that almost every $n$-vertex oriented tree is contained in every tournament of size $n$, indicating that Havet and Thomass\'e's conjecture is true for all but at most an asymptotically small proportion of $n$-vertex trees.

The largest gap between these results and the conjecture of Havet and Thomass\'e occurs whenever $k=\Omega(n)$. In this paper, we reduce the required number of vertices in the tournament  to $n+k+o(n)$, giving an asymptotic form of the Havet-Thomass\'e conjecture, as follows.
\begin{theorem}\label{thm:n+k+an}
Let $\alpha>0$. There exists $n_0\in\mathbb{N}$ such that for any $n\geqslant n_0$, if $G$ is a $((1+\alpha)n+k)$-vertex tournament and $T$ is an $n$-vertex oriented tree with $k$ leaves, then $G$ contains a copy of $T$.
\end{theorem}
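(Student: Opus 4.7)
\emph{Reduction to large $k$.} The previous result of the authors \cite{BEN-MON} supplies an absolute constant $C_0$ with $g(k) \leqslant C_0 k$; this immediately settles the case of small $k$, as $(1+\alpha)n + k \geqslant n + C_0 k$ whenever $k \leqslant \alpha n/(C_0-1)$. Thus I may assume $k \geqslant \beta n$, where $\beta = \beta(\alpha) > 0$, so the tree has a linear number of leaves.

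\emph{Setup.} Let $L$ denote the leaf set of $T$ and $T^{\circ}:=T-L$ the de-leafed core, so $|V(T^{\circ})|=n-k$. Each leaf of $T^{\circ}$ is a cherry-parent of $T$, hence $\ell(T^{\circ})\leqslant k$. For $v\in V(T^{\circ})$, write $\ell^{+}(v)$ and $\ell^{-}(v)$ for the numbers of out-leaves and in-leaves of $T$ attached to $v$. My plan is:
\begin{enumerate}
\item[(a)] Choose a uniformly random set $R\subseteq V(G)$ of size $k+\gamma n$, for a small constant $\gamma=\gamma(\alpha)>0$. By standard concentration, with high probability every $w\in V(G)\setminus R$ has $(1-o(1))|R|/2$ in- and out-neighbours in $R$.
\item[(b)] Embed $T^{\circ}$ into the subtournament $G':=G\setminus R$, which has $(1+\alpha-\gamma)n$ vertices.
\item[(c)] Attach the $k$ leaves of $T$ to vertices of $R$ via a perfect matching produced by Hall's theorem on the bipartite graph of valid leaf images.
\end{enumerate}

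\emph{Recursion and Hall.} Step (b) is naturally handled by recursion on this very theorem: $T^{\circ}$ has $n':=n-k$ vertices and at most $k$ leaves, and
\[
(1+\alpha)n'+\ell(T^{\circ}) \;\leqslant\; (1+\alpha)(n-k)+k \;=\; (1+\alpha)n-\alpha k \;\leqslant\; (1+\alpha-\gamma)n
\]
whenever $\gamma\leqslant\alpha k/n$, which holds provided $\gamma\leqslant\alpha\beta$. Step (c) is straightforward for vertices $v\in V(T^{\circ})$ whose demand $\ell^{+}(v)+\ell^{-}(v)$ is small compared to $|R|/2$: the reservoir property then gives each leaf many valid images in $R$, and Hall's condition follows. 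The subtle point is a \emph{heavy} vertex $v$ with, say, $\ell^{-}(v)$ comparable to $k$, whose image must have at least $\ell^{-}(v)$ in-neighbours in $R$. However, $\sum_v(\ell^+(v)+\ell^-(v))=k$, so there are at most $1/\gamma$ vertices of $T^{\circ}$ with demand exceeding $\gamma n$; each such heavy vertex may be pre-embedded after step (a) at a target in $V(G')$ with sufficient in- or out-degree into $R$.

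\emph{Main obstacle.} The principal work lies in coordinating the three steps so that the recursive embedding in (b) respects the pre-assigned images of the heavy vertices produced in (c), and in calibrating $\gamma$ so that room for the recursion, supply of leaf-candidates in $R$, and Hall's condition for moderate-demand vertices all hold with comfortable slack. I would expect the cleanest execution to isolate a \emph{constrained-embedding} variant of the inductive hypothesis admitting $O(1)$ pre-specified images, together with a small absorbing block inside $R$ to mop up any residual matching defects. The recursion terminates once $n'$ drops below the base-case threshold $n_0(\alpha)$, at which point $T^{\circ}$ is short enough to be embedded using the asymptotic form of Sumner's conjecture from \cite{KUE-MYC-OST-2}.
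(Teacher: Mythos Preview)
Your concentration claim in step (a) is false as stated: a uniformly random $R$ gives each $w$ roughly $|R|\cdot d^{\pm}(w)/|G|$ neighbours of each type in $R$, not $(1-o(1))|R|/2$. More importantly, even with the correct concentration the Hall step (c) fails on the most basic heavy-vertex example. Take $T$ to be the out-star, so $k=n-1$, $T^{\circ}=\{x\}$, $|G|\approx(2+\alpha)n$ and $|R|\approx(1+\gamma)k$. You need a vertex $w\in G'$ with $d^{+}(w,R)\geqslant k$. But the maximum out-degree in $G$ is only guaranteed to be about $|G|/2\approx(1+\alpha/2)k$, so for \emph{any} such $w$ the random reservoir gives $d^{+}(w,R)\approx |R|/2\approx (1+\gamma)k/2$, which is well short of $k$ when $\gamma$ is small. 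Pre-embedding $x$ does not help: the deficiency is in $R$, not in the choice of image. Enlarging $R$ to $2k+\gamma n$ would repair this case but destroys the recursion bound in (b), since $|G'|$ then drops below $(1+\alpha)n'+k'$ when $k'$ is comparable to $k$.

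This is exactly the obstruction the paper is built around. Rather than a random reservoir, the paper applies the digraph regularity lemma and embeds the small core $T_0$ into a single carefully chosen cluster $V_{j_t}$; the components of $T-V(T_0)$ are then allocated across the remaining clusters via a random homomorphism of a fixed auxiliary forest $H$ into the weighted reduced digraph (Theorem~\ref{thm:extending-distillation}). The point is that regularity lets one use essentially the \emph{entire} out-neighbourhood of the image of a high-degree vertex, cluster by cluster, rather than only the half that happens to fall into a random set. Finding the right cluster $j_t$ and the right allocation when $T$ has a vertex of degree $\Theta(n)$ is genuinely delicate---the paper needs three separate structural cases (Lemmas~\ref{lm:distillation-case-2},~\ref{lm:distillation-case-1},~\ref{lm:distillation-case-3}) and a switching argument in which a first attempt at $j_t$ may fail and be replaced---and this machinery cannot be replaced by a reservoir-plus-Hall argument.
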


We turn now to consider whether oriented trees with low maximum degree are guaranteed to appear in tournaments with fewer vertices than the extremal cases for Sumner's conjecture. This appears more difficult than the study of oriented trees based on the number of leaves. Indeed, it is not known whether there is a function $h(\Delta)$ such that any $(n+h(\Delta))$-vertex tournament contains a copy of every oriented tree with maximum degree at most $\Delta$, despite this question being raised by K\"uhn, Mycroft and Osthus~\cite{KUE-MYC-OST}. Mycroft and Naia~\cite{MYC-NAI} asked whether $h(\Delta)=2\Delta-4$ is sufficient as long as $n$ is much larger than $\Delta$, while recalling extremal examples due to Allen and Cooley that demonstrate this would be tight for each $\Delta$ (see also~\cite{KUE-MYC-OST}). When the maximum degree of the tree is sufficiently tightly bounded, it is known that few additional vertices are required in the tournament. Specifically, K\"uhn, Mycroft and Osthus~\cite{KUE-MYC-OST} proved that, if $\Delta$ is a fixed constant, then every $(1+o(1))n$-vertex tournament contains a copy of every $n$-vertex oriented tree with maximum degree $\Delta(T)\leqslant\Delta$ (however many leaves it has). Mycroft and Naia~\cite{MYC-NAI} later showed that the same conclusion holds even if the bound on $\Delta(T)$ is relaxed to one polylogarithmic in $n$. Here, we will relax the bound on $\Delta(T)$ much further still, showing that a degree bound linear in $n$ is sufficient, as follows.

\begin{theorem}\label{thm:n+an}
Let $\alpha>0$. There exists $c>0$ and $n_0\in\mathbb{N}$ such that for any $n\geqslant n_0$, if $G$ is a $(1+\alpha)n$-vertex tournament and $T$ is an $n$-vertex oriented tree with $\Delta(T)\leqslant cn$, then $G$ contains a copy of $T$.
\end{theorem}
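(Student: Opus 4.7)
The plan is to dichotomize on the number of leaves $k$ of $T$ and use Theorem~\ref{thm:n+k+an} as a black box. If $k \leq \alpha n/2$, then $(1+\alpha/2)n + k \leq (1+\alpha)n$, so Theorem~\ref{thm:n+k+an} applied with parameter $\alpha/2$ immediately gives a copy of $T$ in $G$. I therefore focus on the case $k > \alpha n/2$, where the hypothesis $\Delta(T) \leq cn$ must play an essential role.

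In this case I would pass to a ``skeleton'' subtree of $T$ with few leaves, embed it via Theorem~\ref{thm:n+k+an}, and then attach the discarded vertices by a bipartite matching argument. Concretely, define $T^{(0)} = T$ and $T^{(i+1)} = T^{(i)} - L(T^{(i)})$, and let $T^\star := T^{(i^\star)}$ where $i^\star$ is the smallest index with $|L(T^{(i^\star)})| \leq \alpha n/10$ (such $i^\star$ exists since the nested sequence eventually becomes empty). Write $L^\star := V(T) \setminus V(T^\star)$; this set comes with a natural layered structure in which each $v \in L^\star$ is adjacent in $T$ either to a vertex of $T^\star$ or to a vertex in a later-removed layer. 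Since $(1+\alpha)n \geq (1+\alpha/10)|V(T^\star)| + \alpha n/10 \geq (1+\alpha/10)|V(T^\star)| + |L(T^\star)|$, Theorem~\ref{thm:n+k+an} applied to $T^\star$ with parameter $\alpha/10$ provides an embedding $\phi \colon V(T^\star) \to V(G)$ using $|V(T^\star)|$ vertices of $G$, and the remaining $|V(G)| - |V(T^\star)| \geq |L^\star| + \alpha n$ unused vertices of $G$ are more than enough to receive $L^\star$.

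The main obstacle is extending $\phi$ to an embedding of all of $T$ by placing the vertices of $L^\star$ layer by layer, starting from the layer adjacent to $T^\star$. Each placement is a bipartite matching problem and, by Hall's theorem, reduces to showing that for every subset $S$ of the current layer, the union (over $v \in S$) of the appropriately oriented neighbourhoods of $\phi(p(v))$, intersected with the currently unused vertices of $G$, has size at least $|S|$; here $p(v)$ denotes the parent of $v$ in $T$. The critical case is when $S$ is concentrated on a small collection of ``heavy'' parents, each with $\Omega(cn)$ children in $L^\star$; since $\Delta(T) \leq cn$, there are only $O(1/c)$ such heavy parents. Handling it requires $\phi$ to map each heavy parent onto a vertex of $G$ with prescribed large in- or out-degree into the unused part of $G$. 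Producing such a degree-controlled embedding calls for a mild refinement of Theorem~\ref{thm:n+k+an} that respects a constant-sized set of vertex constraints; I expect this to be the hardest step of the proof, and would attempt it either by revisiting the proof of Theorem~\ref{thm:n+k+an} or by reserving in advance a small collection of high in/out-degree vertices of $G$ as candidate targets for the heavy parents before invoking the theorem on the remainder of $T^\star$.
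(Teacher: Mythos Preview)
Your reduction of the few-leaves case to Theorem~\ref{thm:n+k+an} is correct. The many-leaves case, however, has a genuine gap that is deeper than the ``mild refinement'' you anticipate.

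The Hall condition for large $S$ (in particular $S=L^\star$) requires that $\bigcup_{v\in S}N^{\diamond_v}_G(\phi(p(v)))$ cover all but roughly $\alpha n$ of the unused vertices. This is a global \emph{coverage} constraint on the collective placement of the parent set, not a degree constraint on finitely many heavy parents. Two things break your plan. First, the parent set need not have bounded size: for the depth-$2$ out-tree with branching factor $\sqrt{n}$ (so $\Delta(T)=\sqrt{n}+1\ll cn$), one peel already gives $T^\star=K_{1,\sqrt{n}}$; there are $\sqrt{n}$ parents and none has $\Omega(cn)$ children, so reserving $O(1/c)$ high-degree targets addresses nothing. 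Second, even parents with large out-degree can have almost-coinciding out-neighbourhoods. In the transitive tournament on $(1+\alpha)n$ vertices $v_1\to v_2\to\cdots$, a black-box application of Theorem~\ref{thm:n+k+an} may well embed $T^\star$ with the root at $v_{2\alpha n}$; then every $\phi(u_i)$ lies beyond position $2\alpha n$, and $\bigcup_i N^+(\phi(u_i))$ has size at most $(1-\alpha)n$, strictly smaller than the $n-\sqrt{n}-1$ leaves that must be placed. No constant-sized list of vertex constraints on the embedding of $T^\star$ rules this out.

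This spreading problem is precisely what the paper isolates as the heart of Theorem~\ref{thm:n+an}. The paper does not deduce Theorem~\ref{thm:n+an} from Theorem~\ref{thm:n+k+an}; the two are proved in a common framework (Sections~\ref{sect:T_1-unbounded}--\ref{sect:final-proofs}) and diverge exactly at the step of embedding the core and its attached small trees. For the bounded-degree case the paper proves Theorem~\ref{thm:n+an-partial} via Corollary~\ref{cor:caterpillar}: a caterpillar-like spine is found in the reduced graph of a regularity partition, and the core $T_0$ is distributed along it so that the attached subtrees can be allocated to essentially all clusters of $G$. That explicit spreading of the core across the tournament is the missing idea in your approach.
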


We prove Theorem~\ref{thm:n+k+an} and Theorem~\ref{thm:n+an} under a common framework, using regularity methods and random homomorphisms to reduce these theorems to critical cases amenable to a more direct study. Though our methods apply to all the trees covered by these theorems, the results discussed above imply that the critical case for consideration are those trees with $\Omega(n)$ leaves. The most difficult cases arise when these leaves are all close to each other within the tree, connected via some smaller core tree (see Figure~\ref{fig:example-trees}). The challenge is then to be able to distribute these leaves around the tournament despite their location being quite tightly restricted by the location of the vertices in the core tree. For Theorem~\ref{thm:n+an}, the maximum degree condition will imply the core tree cannot be too small, and we exploit this to distribute the vertices of the core tree around the tournament. Here, the key novelty in our methods is the identification of the small core tree in the most challenging cases, and its embedding around the tournament.

For Theorem~\ref{thm:n+k+an}, we will be able to contract this small core tree in the most challenging cases to a single vertex without increasing the number of leaves. As the core tree is small, if we can find a copy of this contracted tree then we will be able to recover the original tree using suitable regularity techniques. The critical case will then be trees which have one very high degree vertex, whose removal results in components of at most constant size. Further simplification will allow us to assume that this high degree vertex has either in-degree or out-degree 0. This simplification focuses in on the hardest cases in our proof. To embed a tree $T$ with one high out-degree vertex $x$ with in-degree 0 into a tournament $G$, a natural approach is to place $x$ at the vertex of $G$ with highest out-degree, maximising the attachment possibilities for the components of $T-\{x\}$. Often, this is a good strategy (indeed, this approach will always succeed for the first tree depicted in Figure~\ref{fig:example-trees}), but when many vertices of $T$ are reached from $x$ by travelling along a path beginning with an forwards edge followed by a backwards edge (such as for the second tree depicted in Figure~\ref{fig:example-trees}), this may fail. Key to our proof is to use the failure in these cases to identify structural properties of the tournament, and thus a better location for the high out-degree vertex. This is the most significant novelty in our proof of Theorem~\ref{thm:n+k+an}, and enables the most difficult trees to be found in tournaments.

\begin{figure}
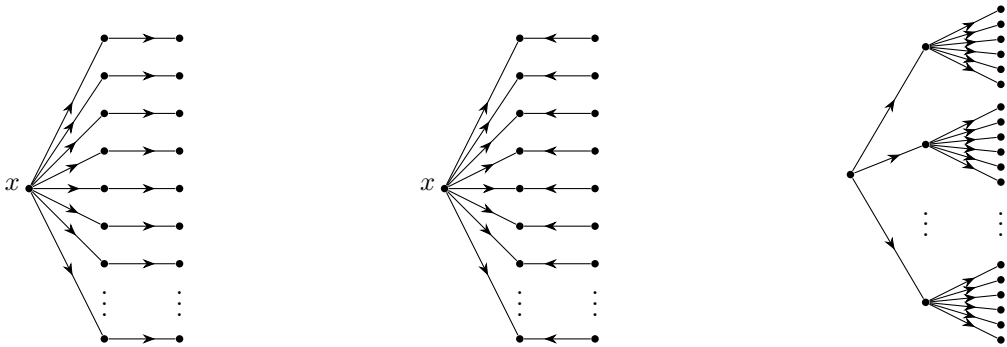

    \centering
    \begin{minipage}[b]{0.33\textwidth}
        \DiagramIntroExampleA
    \end{minipage}\hfill
    \begin{minipage}[b]{0.33\textwidth}
        \DiagramIntroExampleB
    \end{minipage}
    \begin{minipage}[b]{0.33\textwidth}
        \DiagramDoubleOutStars
    \end{minipage}
    \caption{Examples of oriented trees with many leaves close to one another. While the first two trees have $\Delta(T)\approx n/2$, the third tree may be realised with $\Delta(T)\leqslant cn$ for a small constant $c$, making it a case of particular interest for Theorem~\ref{thm:n+an}.}\label{fig:example-trees}
\end{figure}

In Section~\ref{sect:preliminaries}, we will state our notation before giving a more detailed outline of the proof, and of this paper.%, before decomposing trees to identify their small cores and giving other preliminary results.

\section{Preliminaries}\label{sect:preliminaries}
\subsection{Notation}\label{sect:notation}

For a directed graph (digraph) $G$, we use $V(G)$ to denote the vertex set of $G$ and $E(G)$ to denote the edge set of $G$. We write $|G|=|V(G)|$ for the order of $G$. Each element of $E(G)$ is an ordered pair $(u,v)$ (which we write as $uv$, or $u\rightarrow_G v$), where $u,v\in V(G)$. If $uv\in E(G)$, then we say that $v$ is an \emph{out-neighbour} of $u$, and that $u$ is an \emph{in-neighbour} of $v$. Given $v\in V(G)$, the \emph{out-neighbourhood} of $v$, written $N_G^+(v)$, is the set of out-neighbours of $v$ in $V(G)$, and the \emph{in-neighbourhood} of $v$, written $N_G^-(v)$ is the set of in-neighbours of $v$ in $V(G)$. Throughout, we use $+$ and $-$ interchangeably with `out' and `in' respectively. For $X,Y\subseteq V(G)$ and $\diamond\in\{+,-\}$, we write $N_G^\diamond(X)=(\cup_{v\in X}N_G^\diamond(v))\setminus X$ and $N_G^\diamond(X,Y)=N_G^\diamond(X)\cap Y$. For each $\diamond\in\{+,-\}$, the \emph{$\diamond$-degree} of $v$ in $G$ is $d_G^\diamond(v)=|N_G^\diamond(v)|$, and for $X,Y\subseteq V(G)$ we also write $d_G^\diamond(X,Y)=|N_G^\diamond(X,Y)|$. For a vertex $v$, we also define its neighbourhood to be $N_G(v)=N_G^+(v)\cup N_G^-(v)$ and its degree to be $d_G(v)=|N_G(v)|$, and similarly define $N_G(X)=N_G^+(X)\cup N_G^-(X)$ for a set $X\subseteq V(G)$. We denote by $G[X]$ the induced sub-digraph of $G$ with vertex set $X$ and let $G-X=G[V(G)\setminus X]$. Subscripts are omitted wherever they are clear from context, as are rounding signs wherever they are not crucial.

An \emph{oriented graph} is a digraph with at most one edge between any pair of vertices. A \emph{tournament} $G$ is an oriented graph whose underlying graph is a complete graph, i.e., for each $u,v\in V(G)$ with $u\neq v$, exactly one of $uv$ or $vu$ is in $E(G)$. An \emph{oriented tree} (respectively, \emph{oriented path}) is an oriented graph whose underlying graph is a tree (respectively, path). The \emph{maximum degree} of an oriented tree $T$ is the maximum degree of its underlying tree, and denoted $\Delta(T)$. A \emph{directed path} from $v_0$ to $v_\ell$ is an oriented path of the form $v_0\rightarrow v_1\rightarrow \ldots\rightarrow v_\ell$. The \emph{length} of a path $P$ is $|P|-1$. If $G,H$ are digraphs, a \emph{homomorphism} $\phi$ from $G$ to $H$ is a function $\phi:V(G)\to V(H)$ such that $\phi(u)\phi(v)\in E(H)$ whenever $uv\in E(G)$. We sometimes write $\phi:G\to H$ to denote a homomorphism from $G$ to $H$, and refer to $\phi$ as an \emph{embedding} of $G$ into $H$.

Having proved that, for example, a result holds for $\diamond=+$, we will occasionally deduce the same result for $\diamond=-$ by \emph{directional duality}. That is, reversing all the relevant orientations and applying the result with $\diamond=+$ implies, after reversing the edges again, the result with $\diamond=-$. Where the symbol $\pm$ appears in a formula, we mean the formula holds for both $+$ and $-$ in place of $\pm$. For a set $X$ and a function $f:X\to\mathbb{R}$, if $A\subseteq X$ we will often write $f(A)$ to mean $\sum_{x\in A}f(x)$ and $f(x_1,\ldots,x_k)$ to mean $f(\{x_1,\ldots,x_k\})$. For an event $E$ depending on the parameter $n$, we will say that $E$ holds \emph{with high probability} if $\mathbb{P}(E)\to1$ as $n\to\infty$. We also use standard hierarchy notation. That is, for $a,b\in(0,1]$, we write $a\ll b$ to mean that there is a non-decreasing function $f:(0,1]\to(0,1]$ such that the subsequent statement holds whenever $a\leqslant f(b)$.

\subsection{Proof outline}\label{sect:outline}
We will now sketch the proofs for both Theorem~\ref{thm:n+k+an} and Theorem~\ref{thm:n+an} together. In the introduction, we discussed how, for both results, we need to take particular care with trees which contain some small core subtree that restricts the distribution of the other vertices in the tree around the tournament. Therefore, we will identify a small core in any tree $T$, from which $T$ can be recovered by first appending a collection of constant-sized trees, then connecting components by constant-length paths, and then iteratively attaching a small number of additional leaves. This decomposition is independent of the directions of the edges of $T$, and so we state it for non-oriented trees. More precisely, given any tree $T$, we find $T_0\subseteq T_1\subseteq T_2\subseteq T_3\subseteq T_4=T$ (shown in Figure~\ref{fig:tree-decomposition}), such that
\begin{enumerate}[nosep,label=\roman{enumi})]
    \item\label{mini-tree-decomp-i} $T_0$ is small, 
    \item\label{mini-tree-decomp-ii} $T_1$ is formed by adding constant-sized trees, each attached with an edge to some vertex of $T_0$,
    \item\label{mini-tree-decomp-iii} $T_2$ is formed by adding (unattached) constant-sized trees to $T_1$,
    \item\label{mini-tree-decomp-iv} $T_3$ is formed by adding long but constant-sized paths connecting the components of $T_2$, and
    \item\label{mini-tree-decomp-v} $T_4$ is formed by attaching constant-sized trees to $T_3$, so that few vertices are added in total.
\end{enumerate}
Having found such a decomposition, we need a strategy for embedding these pieces. We note first that the vertices added in \ref{mini-tree-decomp-iii} and \ref{mini-tree-decomp-v} above pose little trouble given the spare vertices in our tournament. Indeed, within, say, any $\alpha n/2$ vertices in a tournament, any oriented tree with up to $\alpha n/6$ vertices can be found using known results (see Theorem~\ref{thm:any_linear_bound}). This allows the new constant-sized trees in \ref{mini-tree-decomp-iii} to be found greedily. For \ref{mini-tree-decomp-v}, we note that setting aside a small random subset of the spare vertices preserves some in- and out-neighbours for almost all the remaining vertices in the tournament (see Proposition~\ref{prop:random-subset}). Carrying out the embedding for \ref{mini-tree-decomp-i}--\ref{mini-tree-decomp-iv} within the tournament induced on these \emph{good} remaining vertices, will then allow us to extend the embedding greedily to cover the final vertices in \ref{mini-tree-decomp-v} (see Corollary~\ref{cor:tree-extension}).

Thus, our focus is on how to embed the vertices in $T_0$ so that this can be extended to an embedding of $T_1$, and how to embed the paths at \ref{mini-tree-decomp-iv}. In certain tournaments the paths at \ref{mini-tree-decomp-iv} can also be embedded straightforwardly by reserving a random set of vertices for this purpose. Where this is not possible, by removing a small set of vertices from the tournament, we will be able to partition the vertices into a sequence of linearly-sized sets, with all edges between the sets directed forwards along the sequence. This partition allows us to divide the tree naturally into pieces, which can then be found separately along the sequence of sets. We note that this is a streamlined version of a decomposition due to K\"uhn, Mycroft and Osthus~\cite{KUE-MYC-OST,KUE-MYC-OST-2}.

Let us assume then that the tournament is sufficiently well connected that the paths at \ref{mini-tree-decomp-iv} can be embedded within a reserved random set of vertices. We need then to embed $T_0$ so that the vertices of $T_1-V(T_0)$ can be distributed throughout the tournament. To do this we will use the regularity lemma for digraphs, so that we may assign vertices to clusters before embedding them. The challenge is to identify some good clusters for $T_0$, for which we can assign the vertices in $V(T_1)\setminus V(T_0)$ across the other regularity clusters. The whole of $T_1$ can then be embedded using relatively standard regularity techniques, in combination with the result that any oriented tree with $\ell$ vertices can be found in tournaments with only $O(\ell)$ vertices.

Embedding the core $T_0$ of the tree and extending it to cover $T_1$ is the only part where the proofs of Theorem~\ref{thm:n+k+an} and Theorem~\ref{thm:n+an} that differ. For Theorem~\ref{thm:n+k+an}, the core tree can always be embedded within a single regularity cluster, which will allow us to reduce the problem to embedding trees where the core is a single vertex, $x$ say (which may have very high degree), and further reduction will allow us to assume that all components of $T-x$ are attached to $x$ by out-edges. Similar to the discussion in the introduction, here it would be natural to try embedding $x$ to a cluster with as many out-edges as possible in a suitable `reduced digraph' (see Section~\ref{sect:regularity}). This may fail, but we try this anyway, essentially embedding as much of $T_1$ as possible. If the embedding fails, it will be due to certain structural properties of the tournament which will allow us to move the embedding of $x$, along with some of the embedded vertices, to complete the embedding. This part of the proof, with its division into a number of detailed subcases, is the most technical aspect of our proof, but solves the key problem and allows the proof of Theorem~\ref{thm:n+k+an}.

Fortunately, embedding $T_0$ and extending this to cover $T_1$ is less involved for Theorem~\ref{thm:n+an}. The maximum degree condition in this case ensures that $T_0$ necessarily contains at least a large constant number of vertices (for example, for the third tree shown in Figure~\ref{fig:example-trees}, we may identify $T_0$ with the star consisting of all non-leaf vertices). Thus, it is possible to distribute the vertices of $T_0$ across several regularity clusters if required for the even distribution of $V(T_1)\setminus V(T_0)$ throughout the tournament. For this, we identify a particular caterpillar-like structure which spans most of the clusters in the regularity graph (see Section~\ref{sect:caterpillar}).

Each aspect of the proof is discussed in more detail before it is carried out. In Section~\ref{sect:tree_decomposition}, we define precisely our tree decomposition and show that such a decomposition can always be found. In Section~\ref{sect:tree-embedding-results}, we state some established tree embedding results. We then recall and discuss the regularity lemma is Section~\ref{sect:regularity}, before covering some probabilistic results in Section~\ref{sect:probabilistic-results}. In Section~\ref{sect:T_1-unbounded}, we embed the core tree $T_0$ and extend the embedding to cover $T_1$ for Theorem~\ref{thm:n+k+an} (see Theorem~\ref{thm:n+k+an-partial}), deferring the most technical parts to Section~\ref{sect:technical} where we prove a key intermediate result, Theorem~\ref{thm:extending-distillation}. In Section~\ref{sect:T_1-bounded}, we embed the core tree $T_0$ and extend the embedding to cover $T_1$ for Theorem~\ref{thm:n+an} (see Theorem~\ref{thm:n+an-partial}). These embeddings of $T_0$ extended to $T_1$ allow us then to prove both Theorem~\ref{thm:n+k+an} and~\ref{thm:n+an} in Section~\ref{sect:final-proofs}. We then finish with the deferred proof of Theorem~\ref{thm:extending-distillation} in Section~\ref{sect:technical}.

\subsection{Tree decomposition}\label{sect:tree_decomposition}

We now give the tree decomposition discussed in the proof outline (see Figure~\ref{fig:tree-decomposition}).

\begin{lemma}\label{lm:tree_decomposition}
Let $1/n\ll1/m\ll\eta,1/q$ with $q\geqslant 2$. Then, any $n$-vertex tree $T$ contains forests $T_0\subseteq T_1\subseteq T_2\subseteq T_3\subseteq T_4=T$, such that $T_3$ is a tree, and the following properties hold.
    \stepcounter{propcounter}
    \begin{enumerate}[label =\emph{\textbf{\Alph{propcounter}\arabic{enumi}}}]
		\item\label{tree1} $|T_0|\leqslant\eta n$.
		\item\label{tree2} $T_1$ is formed from $T_0$ by the vertex-disjoint addition of trees $S_v$, $v\in V(T_0)$, so that, for each $v\in V(T_0)$, $S_v-v$ is a forest with each component tree having size at most $m$.
		\item\label{tree3} $T_2$ is the vertex-disjoint union of $T_1$ and a forest with each component tree having size at most $m$.
		\item\label{tree4} $T_3$ is formed from $T_2$ by connecting components by paths of length $q$.
		\item\label{tree5} $|V(T_4)\setminus V(T_2)|\leqslant \eta n$.
	\end{enumerate}
\end{lemma}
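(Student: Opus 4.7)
The plan is to follow the approach of Kathapurkar and Montgomery~\cite{KAT-MON}: identify a small ``skeleton'' within $T$ capturing all essential branching, append small subtrees hanging off the skeleton to form $T_1$, and use long bare paths of $T$ (suitably chopped into length-$q$ pieces) to provide the connectors in $T_3\setminus T_2$. I would root $T$ at an arbitrary vertex $r$ and write $T_v$ for the subtree rooted at $v$. Fix an intermediate threshold $m'$ with $1/m'\ll\eta,1/q$ and let $T^{*}=\{v:|T_v|\geqslant m'\}$, which is a subtree of $T$ containing $r$. Since the subtrees rooted at distinct leaves of $T^{*}$ are pairwise vertex-disjoint and each of size at least $m'$, $T^{*}$ has at most $n/m'$ leaves, so the set $X'\subseteq V(T^{*})$ of leaves and branching vertices of $T^{*}$ satisfies $|X'|\leqslant 2n/m'$. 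The remainder of $T^{*}$ decomposes into bare paths (in $T^{*}$) joining vertices of $X'$.

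I would build $T_0$ by taking $X'$ together with, on each bare path $P$ of $T^{*}$, a sparse set of further vertices chosen so that (i) every vertex of $P$ at which a small subtree of $T\setminus V(T^{*})$ attaches lies in $T_0$ (so that the subtree can be placed in $T_1\setminus T_0$ in accordance with \ref{tree2}), and (ii) between any two consecutive $T_0$-vertices on $P$ the bare-in-$T$ sub-path between them has length exactly $q$ whenever possible, with at most one residual sub-path of length $<q$ per such segment. Then $T_1$ is $T_0$ plus all small subtrees of $T$ attached at $T_0$-vertices, each of size $<m'\leqslant m$, giving \ref{tree2}. I take $T_2=T_1$ together with the short residual sub-paths, treated as disjoint trees of size $<q\leqslant m$, verifying \ref{tree3}. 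I form $T_3$ from $T_2$ by reinserting, as length-$q$ connectors, exactly the length-$q$ sub-paths of bare paths of $T^{*}$ between consecutive $T_0$-vertices; since $T^{*}$ was connected and each connector joins two components of $T_2$, the resulting $T_3$ is a tree, verifying \ref{tree4}. Finally $T_4=T$ absorbs any remaining residuals to give \ref{tree5}.

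The main obstacle is the simultaneous bound $|T_0|\leqslant\eta n$ and $|V(T_4)\setminus V(T_2)|\leqslant\eta n$. The breakpoint contribution from (ii) is at most $|T^{*}|/q\leqslant n/q$, and the total size of residual sub-paths is at most $q$ times the number of intervals between consecutive $T_0$-vertices along bare paths of $T^{*}$, so both are controllable provided $q$ and $m'$ are chosen in the right range. The truly delicate term is the count of \emph{attachment} vertices in (i): a priori this is bounded only by the number of components of $T\setminus V(T^{*})$, which could be as large as $n-|T^{*}|$. The resolution is to refine the construction by splitting bare paths of $T^{*}$ into ``dense'' (many attachments) and ``sparse'' cases: a bare path of $T^{*}$ with at least $\eta m'/q$ attachments witnesses that many disjoint pendant bushes outside $T^{*}$, so by pigeonhole the total number of such dense bare paths is at most $(n-|T^{*}|)/(\eta m'/q)\ll \eta n$, and these can be absorbed wholesale into $T_0$ with total cost bounded by the mass they carry; the remaining, sparse bare paths contribute $O(n/m')$ attachments in total by definition. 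Tuning the hierarchy $1/n\ll 1/m\ll 1/m'\ll\eta,1/q$ appropriately then ensures $|T_0|\leqslant\eta n$, and the same accounting yields $|V(T_4)\setminus V(T_2)|\leqslant\eta n$, completing \ref{tree1} and \ref{tree5}.
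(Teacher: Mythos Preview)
Your approach has a genuine gap, visible already when $T$ is a bare path. You place into $T_0$ a breakpoint every $q$ steps along each bare path of $T^*$, so that $|T_0|\geqslant |T^*|/q$; since $|T^*|$ can be $\Theta(n)$ and the hierarchy $1/m\ll\eta,1/q$ imposes \emph{no} relation between $\eta$ and $1/q$ (for instance $q=2$ and $\eta=10^{-6}$ is permitted), this already violates \ref{tree1}. The same count shows your length-$q$ connectors number roughly $|T^*|/q$ and so contribute roughly $|T^*|$ vertices to $V(T_3)\setminus V(T_2)$, breaking \ref{tree5} as well. Your claim that the breakpoint contribution $n/q$ is ``controllable'' overlooks that $q$ is a given parameter, not one you may tune. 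Separately, the dense/sparse patch for attachment vertices does not close either: for a caterpillar (a path with one leaf per vertex), $T^*$ is a single dense bare path of length $\Theta(n)$, and absorbing it wholesale into $T_0$ again gives $|T_0|=\Theta(n)$; bounding the \emph{number} of dense paths says nothing about their total \emph{length}.

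The missing idea is an intermediate length scale $k$ with $1/m\ll 1/k\ll\eta,1/q$. The paper covers the skeleton $T'$ by $r\leqslant n/k$ vertex-disjoint bare paths of length $k$ (via \cite[Lemma~2.8]{KAT-MON}) so that $|T'-P_1-\cdots-P_r|\leqslant\eta n/4$, and within each $P_i$ locates a sub-path $P_i'$ whose first and last $q-1$ vertices $X_i,Y_i$ carry total attached mass at most $\eta k/4$ and whose interior block $Q_i$ in $T$ has size at most $m$. Then $T_0=T'-P_1'-\cdots-P_{r'}'$, the $Q_i$ become the new components of $T_2\setminus T_1$, and only \emph{two} length-$q$ connectors are spent per $P_i'$. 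This yields $|V(T_4)\setminus V(T_2)|\leqslant 2r\cdot\eta k/4\leqslant\eta n$, and attachment vertices inside $P_i'$ are absorbed into $Q_i$ rather than $T_0$, so the delicate bookkeeping you attempt is unnecessary.
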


\begin{figure}
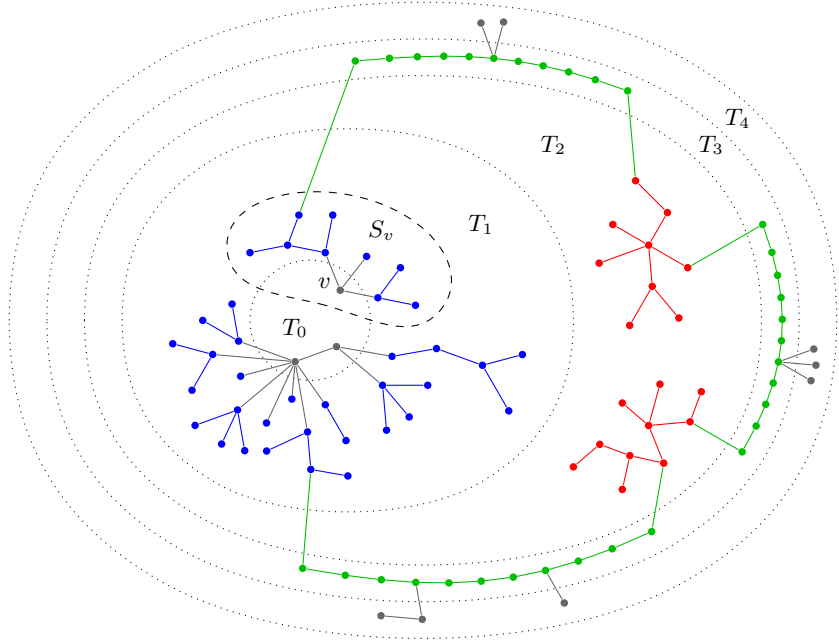

\DiagramTreeDecomposition
\vspace{-0.75cm}
\caption{A simplified example of the tree decomposition $T_0\subseteq T_1\subseteq T_2\subseteq T_3\subseteq T_4=T$ described by Lemma~\ref{lm:tree_decomposition}. In this illustration, the forest $T_0$ consists of a single edge together with an isolated vertex, and there are three paths, each with length $q=12$, connecting components of $T_2$ to form the tree $T_3$.}\label{fig:tree-decomposition}
\end{figure}

This is a modified version of a result of Kathapurkar and the second author (see~\cite{KAT-MON}); though its proof is very similar, we include it for completeness (first stating a structural result on which the proof relies).
\begin{lemma}[{\cite[Lemma~4.1]{MON-POK-SUD}}]\label{lm:removing-paths-from-trees}
Let $\ell,k\geqslant 2$. Suppose $T$ is a tree with at most $\ell$ leaves. Then, there is some $r$ and some vertex-disjoint bare paths $P_1,\ldots,P_r$ with length $k$ such that $|T-P_1-\ldots-P_r|\leqslant 6k\ell+2|T|/(k+1)$. 
\end{lemma}

\begin{proof}[Proof of Lemma~\ref{lm:tree_decomposition}]
Choose $\epsilon>0$ and $k\in\mathbb{N}$ such that $1/m\ll\epsilon\ll1/k\ll\eta,1/q$. Fix an arbitrary vertex $t\in V(T)$. We start by finding a subtree $T'$ of $T$ which includes $t$ and has few leaves, and is such that $T-V(T')$ is a forest of components each having size at most $m$. We do this by including in $T'$ every vertex which appears on the path in $T$ from $t$ to many other vertices. That is, for each $v\in V(T)$, let $w(v)$ be the number of vertices $u\in V(T)$ whose path from $t$ to $u$ includes $v$ (in particular, $v$ is such a vertex). Let $T'$ be the subgraph of $T$ induced on all the vertices $v\in V(T)$ with $w(v)\geqslant m+1$.

For each $v\in V(T')$, let $S_v$ be the tree containing $v$ in $T-(V(T')\setminus\{v\})$. Note that $S_v-v$ is a forest with each component tree having size at most $m$. Indeed, suppose $T''$ is a component of $S_v-v$, and let $v'$ be the neighbour of $v$ in $T''$. Since every path from a vertex $u\in V(T'')$ to $t$ in $T$ goes through $v'$ (and then $v$), we have that $m\geqslant w(v')\geqslant |T''|$ (and, in fact, the final inequality is an equality). Observe further that, for any leaf $v$ of $T'$, $|S_v-v|=w(v)-1\geqslant m$, and, therefore, $T'$ can have at most $n/m\leqslant \epsilon n$ leaves.

We say a subpath $P$ of $T'$ is a bare path if all of the internal vertices $v$ of $P$ have $d_{T'}(v)=2$, and we denote by $T'-P$ the graph formed from $T'$ by removing all the edges and internal vertices of $P$. Using Lemma~\ref{lm:removing-paths-from-trees}, find in $T'$ vertex-disjoint bare paths $P_1,\ldots,P_r$ with length $k$ such that
\begin{equation}
    |T'-P_1-\ldots-P_r|\leqslant6k\cdot\epsilon n+2n/(k+1)\leqslant\eta n/4
\end{equation}
Note that $r\leqslant n/k$. For each path $P_i$, if possible, find within $P_i$ a path $P_i'$ with length at least $k-2\eta^3k$, such that, letting $X_i$, $Y_i$ be the subpaths of $P_i'$ induced by the first and last $q-1$ vertices of $P_i'$, the following hold.
\begin{enumerate}[label=(\roman{enumi})]
    \item $\sum_{v\in V(X_i)}|S_v|,\sum_{v\in V(Y_i)}|S_v|\leqslant\eta k/4$.
    \item Letting $Q_i$ be the component of $T-X_i-Y_i$ containing $P_i'-X_i-Y_i$, we have $|Q_i|\leqslant m$.
\end{enumerate}

Say, with relabelling, these paths are $P_1',\ldots,P_{r'}'$. Let $T_0=T'-P_1'-\ldots-P_{r'}'$. We will show that $|T_0|\leqslant\eta n$. Consider first the number of paths $P_i$ which do not have length $q-2$ subpaths $X_i,Y_i$, each contained within $\eta^3 k$ of each end of $P_i$, and for which $\sum_{v\in V(X_i)}|S_v|,\sum_{v\in V(Y_i)}|S_v|\leqslant\eta k/4$. There are at most $n/(\lfloor\eta^3k/(q-1)\rfloor(\eta k/4))\leqslant\eta n/4k$ such paths. Of the remaining paths, at most $n/m$ may fail to produce a $P_i'$ due to having $|Q_i|>m$. Thus, we have $r'\geqslant r-\eta n/4k-n/m\geqslant r-\eta n/2k$.

Note that, for each $i\in[r']$, $|V(P_i)\setminus V(P_i')|\leqslant2\eta^3k$. Therefore
\[
|T_0|\leqslant|T'-P_1-\ldots-P_r|+k(r-r')+r'(2\eta^3 k)\leqslant\eta n/4 + k(\eta n/2k)+r(2\eta^3 k)\leqslant\eta n,
\]
and hence \itref{tree1} holds. Let $T_1=T[\cup_{v\in V(T_0)}V(S_v)]$. Recall that for each $v\in V(T')$, $S_v-v$ is a forest with each component tree having size at most $m$. Therefore, \itref{tree2} holds. Let $T_2=T_1\cup(\cup_{i\in[r']}Q_i)$, and note that \itref{tree3} holds. Note that
\[
|V(T)\setminus V(T_2)|=\sum_{i\in[r']}\sum_{v\in V(X_i)\cup V(Y_i)}|S_v|\leqslant 2r(\eta k/4)\leqslant\eta n,
\]
and hence \itref{tree5} holds. Let $T_3=T[V(T_2)\cup(\cup_{i\in[r']}(V(X_i)\cup V(Y_i)))]$ and note that \itref{tree4} holds. Finally, the only vertices missing from $T_3$ are those in $S_v-v$ for each $v\in \cup_{i\in[r']}(V(X_i)\cup V(Y_i))$, and hence $T_3$ is a tree. 
\end{proof}

\subsection{Tree embedding results}\label{sect:tree-embedding-results}

We will often embed small parts of a tree into a subset of a tournament with many spare vertices. To do this we could use any result embedding an $n$-vertex tree into a tournament with $O(n)$ vertices, but for convenience we will use the following result of El Sahili~\cite{El_S}.

\begin{theorem}[{\cite[Corollary~2]{El_S}}]\label{thm:any_linear_bound}
For each $n\geq 2$, every $(3n-3)$-vertex tournament contains a copy of every $n$-vertex oriented tree.
\end{theorem}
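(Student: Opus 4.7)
The plan is to prove the statement by induction on $n$. The base case $n=2$ holds because every $3$-vertex tournament contains both possible orientations of a single edge. For the inductive step, take an $n$-vertex oriented tree $T$ and a $(3n-3)$-vertex tournament $G$. Since any tree has at least two leaves, pick a leaf $v\in V(T)$ and let $u$ be its unique neighbour; by directional duality we may assume the edge between them is oriented $u\to v$ in $T$. Write $T'=T-v$, an $(n-1)$-vertex oriented tree. The inductive hypothesis applied to $T'$ asserts that $T'$ embeds into any $(3(n-1)-3)=(3n-6)$-vertex sub-tournament of $G$, so we have three vertices of slack to exploit when extending the embedding by the leaf $v$.

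To exploit this slack, I would use a median ordering $w_1,\ldots,w_{3n-3}$ of $G$, chosen to maximise the number of edges $w_iw_j$ with $i<j$. The standard fact about median orders is that every vertex $w_i$ has at least $\lceil(j-i)/2\rceil$ out-neighbours in any interval $\{w_{i+1},\ldots,w_j\}$ to its right, and dually on the left. In particular, the last three vertices $X=\{w_{3n-5},w_{3n-4},w_{3n-3}\}$ have the property that most vertices to their left send an edge to at least one of them. The strategy is then to apply the inductive hypothesis inside $G-X$ to obtain an embedding $f\colon T'\hookrightarrow G-X$, and try to extend by mapping $v$ to some out-neighbour of $f(u)$ in $X$.

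The main obstacle is that the inductive embedding is given to us as a black box, and could place the special vertex $u$ on a position from which no edge of the desired orientation reaches $X$. To handle this I would strengthen the inductive statement so that it tracks one distinguished vertex of $T'$, namely $u$, and guarantees that its image has an out-neighbour in a prescribed three-element reserve. Concretely, one proves by induction the statement: for every $n$-vertex oriented tree $T$ with a designated vertex $u\in V(T)$ and any $(3n-3)$-vertex tournament $G$ with a designated three-element set $X$, there is an embedding of $T$ into $G$ such that the image of $u$ has an out-neighbour (respectively in-neighbour) in $X\setminus f(V(T))$, provided $X$ is chosen appropriately using the median ordering. When peeling the leaf $v$ off from $u$, the median-order density guarantees that for any choice of image for $u$, enough of the three reserved vertices are out-neighbours of $f(u)$.

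The hard part will be identifying exactly which strengthened statement admits a clean induction while retaining the $3n-3$ bound; in particular, managing the reserve when $u$ itself has small out-degree in $T$ or when several of $u$'s leaf-neighbours must be embedded simultaneously. In the latter case, rather than peeling one leaf at a time, I would peel off the entire leaf-star at $u$ at once and distribute the leaves among a suitably larger reserve, using median-order estimates to bound the total number of spare vertices required. Iterating this peeling argument and matching the accumulated slack to the $3(n-k)-3$ budget for the tree after $k$ leaves have been removed should close the induction.
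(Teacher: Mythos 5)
This statement is not proved in the paper at all: it is quoted verbatim from El Sahili (cited as Corollary~2 of \cite{El_S}) and used as a black box, so there is no internal proof to compare against. Judged on its own, your proposal is not a proof but a plan with a gap that you yourself flag and never close. The crux of the leaf-peeling induction is precisely the step you defer: the inductive hypothesis hands you an embedding of $T-v$ into a $(3n-6)$-vertex subtournament with \emph{no} control over where the attachment vertex $u$ lands, and reserving three vertices $X$ does not help unless you can guarantee $f(u)$ has an out-neighbour in $X$. The ``strengthened statement'' that is supposed to provide this control is never formulated, and it is far from clear that any such statement closes under the same induction with the $3n-3$ budget: once you track a designated vertex and a prescribed reserve, peeling the next leaf changes the designated vertex and forces the reserve to survive inside a subtournament, and the bookkeeping you sketch (``peel the whole leaf-star, distribute among a larger reserve'') is exactly where the difficulty lives.

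There is also a concrete false step in the median-order part. The property of a median order $w_1,\ldots,w_{3n-3}$ is that $w_i$ dominates at least half of any interval $\{w_{i+1},\ldots,w_j\}$; this says nothing about the last three vertices specifically, so the claim that ``most vertices to their left send an edge to at least one of'' $\{w_{3n-5},w_{3n-4},w_{3n-3}\}$ does not follow --- a vertex early in the order may dominate only the near half of what follows it and lose to all of the final three. The known proofs achieving the $3n-3$ bound (El Sahili, building on the Havet--Thomass\'e median-order machinery) do not proceed by a local ``one leaf costs three vertices'' induction; they embed the tree globally along a median order with an amortised accounting of how many tournament vertices each tree vertex may consume. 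As written, your argument would need either a correct strengthened inductive statement with a proof, or a switch to such a global embedding scheme, before it could be considered a proof of the theorem.
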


The following corollary shows how Theorem~\ref{thm:any_linear_bound} can be used to extend a partial copy of a tree to a full copy, provided each vertex in the partial copy has sufficient out- and in-degree to the remaining vertices in the tournament.

\begin{corollary}\label{cor:tree-extension}
Let $G$ be a tournament with disjoint subsets $U,V\subseteq V(G)$. Let $T$ be a tree, and suppose $T'\subseteq T$ is a subtree such that there is a copy $S'$ of $T'$ in $G[V]$. If $d_G^\pm(v,U)\geqslant3|V(T)\setminus V(T')|$ for every $v\in V$, then $S'$ can be extended to a copy of $T$ in $G$, with $T-V(T')$ copied to $U$.
\end{corollary}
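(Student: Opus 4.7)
\smallskip

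The plan is to extend the embedding $S'$ one component of $T - V(T')$ at a time, placing each inside $U$ using Theorem~\ref{thm:any_linear_bound} applied to a carefully chosen neighbourhood.

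Since $T'$ is a subtree of the tree $T$, the forest $T - V(T')$ has components $C_1, \ldots, C_s$, each of which is attached to $T'$ by a unique edge; write this edge as $e_i$, let $v_i \in V(T')$ be its endpoint in $T'$, let $c_i \in V(C_i)$ be its endpoint in $C_i$, and let $\diamond_i \in \{+,-\}$ be the direction such that $c_i \in N_T^{\diamond_i}(v_i)$. Write $\phi_0$ for the embedding of $T'$ given by $S'$, and proceed by induction: suppose that for some $i \in [s]$ we have extended $\phi_0$ to an embedding $\phi_{i-1}$ of $T' \cup C_1 \cup \cdots \cup C_{i-1}$ with the vertices outside $V(T')$ mapped into $U$, using a set $U_{i-1} \subseteq U$ of size $\sum_{j < i} |C_j|$.

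To embed $C_i$, set
\[
W_i \;=\; N_G^{\diamond_i}(\phi_{i-1}(v_i)) \cap (U \setminus U_{i-1}).
\]
Since $\phi_{i-1}(v_i) \in V$, the hypothesis gives $d_G^{\diamond_i}(\phi_{i-1}(v_i), U) \geqslant 3|V(T) \setminus V(T')|$, and $|U_{i-1}| \leqslant |V(T)\setminus V(T')| - |C_i|$, so
\[
|W_i| \;\geqslant\; 3|V(T)\setminus V(T')| - \bigl(|V(T)\setminus V(T')| - |C_i|\bigr) \;\geqslant\; 3|C_i| - 3.
\]
Applying Theorem~\ref{thm:any_linear_bound} to the sub-tournament $G[W_i]$ yields a copy of the (oriented) tree $C_i$ inside $G[W_i]$, and since every vertex of $W_i$ is a $\diamond_i$-neighbour of $\phi_{i-1}(v_i)$, mapping $c_i$ to the corresponding vertex of this copy preserves the orientation of the attachment edge $e_i$. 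Combining this with $\phi_{i-1}$ gives the extended embedding $\phi_i$, and after $s$ steps we obtain the desired copy of $T$ in $G$ extending $S'$ with $V(T)\setminus V(T')$ mapped into $U$.

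There is no real obstacle here: the only point requiring care is the bookkeeping ensuring that $|W_i|$ remains large enough to apply Theorem~\ref{thm:any_linear_bound} after the earlier components have been placed, which is exactly why the factor of $3$ appears in the degree condition.
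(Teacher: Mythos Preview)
Your proof is correct and follows essentially the same approach as the paper's: greedily embed the components of $T-V(T')$ one at a time into the appropriate $\diamond$-neighbourhood inside $U$, using Theorem~\ref{thm:any_linear_bound} and the degree hypothesis to guarantee enough space remains at each step. The paper phrases this as a maximality argument rather than an explicit induction, but the content is identical.
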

\begin{proof}
Label the components of $T-V(T')$ as $T_1,\ldots,T_r$, and take the largest $s\leqslant r$ such $S'$ can be extended to a copy $S$ of $T[V(T')\cup(\cup_{i\in[s]}V(T_i))]$. Suppose that $s<r$. Then, if $\diamond\in\{+,-\}$ is such that $T_{s+1}$ is attached to $T'$ by a $\diamond$-neighbour, and $v\in V(S')$ is the copy of the attachment point, then
\begin{equation*}
    d_G^\diamond(v,U\setminus V(S))\geqslant3|V(T)\setminus V(T')|-|T_1|-\ldots-|T_s|\geqslant 3|T_{s+1}|,
\end{equation*}
and so, by Theorem~\ref{thm:any_linear_bound}, $N_G^\diamond(v,U\setminus V(S))$ contains a copy of $T_{s+1}$, contradicting the maximality of $s$. Thus, $S$ is a copy of $T$ in $G$.
\end{proof}

We will also need to embed a tree into a subset of a tournament with a number of spare vertices depending on the number of leaves of the tree. Any such bound would suffice, but we will use the following result.

\begin{theorem}[{\cite[Theorem~1.1]{BEN-MON}}]\label{thm:n+Ck}
There is some $C>0$ such that every $(n+Ck)$-vertex tournament contains a copy of every $n$-vertex oriented tree with $k$ leaves.
\end{theorem}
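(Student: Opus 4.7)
The plan is to exploit the fact that an $n$-vertex tree $T$ with $k$ leaves contains at most $2k-2$ vertices of degree at least $3$, so almost all of its vertices lie on \emph{bare paths}, i.e., paths whose internal vertices have degree exactly two in $T$. Consequently, $T$ can be encoded by a small topological tree $T^{\ast}$ on $O(k)$ vertices (the branching vertices and leaves of $T$, together with one auxiliary vertex per bare path if convenient), equipped with a prescribed length for each edge of $T^{\ast}$ that records the subdivision used in $T$.

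First, I would apply the regularity lemma for digraphs to $G$, obtaining a reduced digraph $R$ on clusters $V_1,\ldots,V_m$ of roughly equal size, which inherits tournament-like robust connection properties. I would then reserve a small random subset $U\subseteq V(G)$ of size about $Ck/2$ to host the embedding of $T^{\ast}$, verifying by standard concentration that $U$ is large enough to contain a copy of $T^{\ast}$ via Theorem~\ref{thm:any_linear_bound}, and that most vertices in $G-U$ retain the typical out- and in-degrees into each cluster that are needed for regularity-based path embeddings.

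Next, having embedded $T^{\ast}$ into $U$ to obtain prescribed images for the branching vertices and leaves of $T$, the remaining task is to route each bare path of $T$ as a directed path in $G-U$ of the prescribed length, joining the already-assigned images of its endpoints. I would do this by routing each bare path through a sequence of clusters in $R$ using its robust connection structure, and then realising it concretely within those clusters via standard regularity path-embedding lemmas. The slack of $Ck$ extra vertices in $G$ supplies the flexibility needed to match the path-length demands of the bare paths with the actual capacities of the clusters.

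The principal obstacle, and the heart of the proof, is embedding \emph{all} of the bare paths simultaneously with their precisely prescribed lengths while jointly using essentially every vertex of $G-U$. This requires a careful distribution step on the reduced digraph — a Hall-type or flow argument — to apportion path-length demands among cluster capacities, followed by a sequential embedding in which each regularity-based step reserves a suitable buffer so that later paths can still be embedded. The placement of the branching vertices during the embedding of $T^{\ast}$ must itself be coordinated with this apportionment (rather than chosen blindly), so that the required endpoint pairs lie in clusters between which the routing and length-matching are actually feasible; this coordination, rather than any single embedding step, is the delicate part of the argument.
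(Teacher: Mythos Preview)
This statement is not proved in the paper: it appears in Section~\ref{sect:tree-embedding-results} as a quotation of \cite[Theorem~1.1]{BEN-MON}, and is used as a black box. There is therefore no proof in this paper to compare your proposal against.

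As for your sketch on its own merits, there are some genuine gaps. First, you propose to route each bare path of $T$ as a \emph{directed} path in $G-U$, but in an oriented tree a bare path can carry any pattern of forward and backward edges, so you would need to embed arbitrary oriented paths of prescribed length between prescribed endpoints, not directed ones. Second, and more seriously, you explicitly identify the hard step --- apportioning exact path-length demands to cluster capacities and then realising all the paths simultaneously and vertex-disjointly --- but do not actually supply a mechanism for it; ``a Hall-type or flow argument'' is not a proof here, since the constraints are not simply matching constraints (each path needs an exact length, not a lower bound, and the feasible clusters for a path depend on where its endpoints were embedded, which you also say must be coordinated with the apportionment). Third, a regularity-based argument only applies once $n$ is large relative to the regularity constants, so the claim for all $n$ would need a separate treatment of small $n$. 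In short, your outline names the right decomposition of $T$ but leaves the central difficulty unresolved; the actual proof in \cite{BEN-MON} does not proceed via regularity.
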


\subsection{Regularity}\label{sect:regularity}
Our embeddings use the regularity lemma for digraphs, by now a well-established tool in the study of tournaments (see, for example, \cite{KUE-MYC-OST,KUE-MYC-OST-2,MYC-NAI}). As with the regularity lemma for graphs, this partitions most of the vertices of a tournament into clusters so that edges behave pseudorandomly between most pairs of clusters. We will now recall the notation needed to state the regularity lemma for digraphs.

Let $G$ be a digraph. For disjoint subsets $A,B\subseteq V(G)$, define the \emph{directed density} from $A$ to $B$ to be
\begin{equation*}
    d(A,B)=\frac{|E(A,B)|}{|A||B|},
\end{equation*}
where $E(A,B)$ denotes the set of edges of $G$ directed from $A$ towards $B$. Note that, if $G$ is tournament, then $d(B,A)=1-d(A,B)$. We say that $(A,B)$ forms an \emph{$\epsilon$-regular pair} if, for every $X\subseteq A$ such that $|X|\geqslant\epsilon|A|$ and every $Y\subseteq B$ such that $|Y|\geqslant\epsilon|B|$, we have $|{d}(X,Y)-d(A,B)|\leqslant\epsilon$. Note that, for tournaments, $|d(X,Y)-d(A,B)|\leqslant\epsilon$ if and only if $|d(Y,X)-d(B,A)|\leqslant\epsilon$. We say that $(A,B)$ forms an $\epsilon$-regular pair of density at least $\mu$ if, in addition to forming an $\epsilon$-regular pair, we also have $d(A,B)\geqslant \mu$.

We will use the following directed version of Szemer\'edi's regularity lemma proved by Alon and Shapira~\cite{ALO_SHA}.

\begin{theorem}[Regularity lemma for digraphs]\label{thm:digraph_regularity}
Let $1/r_2\ll 1/r_1\ll\epsilon$. Every digraph on a vertex set $V$ of order at least $r_1$ partitions as $V=V_0\cup V_1\cup\ldots\cup V_r$, with $r_1\leqslant r\leqslant r_2$, satisfying the following.
\stepcounter{propcounter}
\begin{enumerate}[label =\emph{\textbf{\Alph{propcounter}\arabic{enumi}}}]
    \item\label{R1} $|V_0|\leqslant\epsilon|V|$.
    \item\label{R2} $|V_1|=\ldots=|V_r|$.
    \item\label{R3} All but at most $\epsilon r^2$ pairs $(V_i,V_j)$ with $1\leqslant i<j\leqslant r$ are $\epsilon$-regular.
\end{enumerate}
\end{theorem}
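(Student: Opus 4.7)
The plan is to adapt Szemerédi's classical energy-increment proof to the directed setting, the only new feature being that all densities are oriented. For a partition $\mathcal{P}$ of $V$ as $V_0 \cup V_1 \cup \cdots \cup V_r$ with $V_0$ serving as an exceptional set, define the index
\begin{equation*}
q(\mathcal{P}) = \sum_{1 \leq i,j \leq r} \frac{|V_i||V_j|}{|V|^2}\, d(V_i, V_j)^2,
\end{equation*}
where the sum runs over \emph{ordered} pairs so that both edge orientations are counted. Since each $d(V_i, V_j) \in [0,1]$, we have $0 \leq q(\mathcal{P}) \leq 2$, and the boundedness of $q$ will drive termination.

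The core technical step is a directed defect Cauchy--Schwarz lemma: if $\mathcal{Q}$ refines $\mathcal{P}$ then $q(\mathcal{Q}) \geq q(\mathcal{P})$; moreover, if the ordered pair $(V_i, V_j)$ fails to be $\epsilon$-regular, witnessed by $X \subseteq V_i$ and $Y \subseteq V_j$ with $|d(X,Y) - d(V_i, V_j)| > \epsilon$, then any refinement separating $X$ from $V_i \setminus X$ and $Y$ from $V_j \setminus Y$ raises the $(i,j)$-contribution to $q$ by at least $\epsilon^{4}\,|V_i||V_j|/|V|^2$. With this in hand, the iteration is standard: start from an arbitrary partition with at least $r_1$ parts; if $\epsilon$-regularity holds (up to the allowed $\epsilon r^2$ bad ordered pairs), stop; otherwise, for each bad pair fix witness subsets and replace $\mathcal{P}$ by the common refinement of all these witnesses within each $V_i$. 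Each $V_i$ is thereby split into at most $2^{2r}$ pieces, and the increment lemma summed over bad pairs raises $q$ by at least $\epsilon^{5}$. Because $q \leq 2$, the iteration halts after at most $2\epsilon^{-5}$ rounds, yielding a partition whose number of parts is bounded by a tower of height $O(\epsilon^{-5})$, which fixes the upper bound $r_2$.

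Finally, to enforce \itref{R1} and \itref{R2}, regroup each non-exceptional part into blocks of a common size $m = \lfloor (1-\epsilon)|V|/r \rfloor$ and sweep the residues into $V_0$; a routine counting gives $|V_0| \leq \epsilon|V|$, and regularity degrades by only a negligible additive amount provided we began the iteration with a sufficiently small auxiliary parameter, say $\epsilon/2$, to absorb this loss. The main obstacle, and the only genuinely new ingredient relative to the undirected proof, is ensuring the index and the defect inequality correctly account for both edge directions simultaneously; once the sum in $q$ is taken over ordered pairs, and the defect bound is applied separately to each orientation, the rest of the argument carries over essentially line-by-line, which is precisely the observation underlying the Alon--Shapira extension being invoked here.
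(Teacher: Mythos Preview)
The paper does not give its own proof of this theorem: it is quoted as a known result of Alon and Shapira and used as a black box. Your sketch is the standard energy-increment argument, which is indeed the approach Alon and Shapira take, so there is nothing substantive to compare. One small remark: in the statement as written the regularity condition is indexed by unordered pairs $i<j$, whereas your index $q$ runs over ordered pairs; for general digraphs $(V_i,V_j)$ and $(V_j,V_i)$ can fail regularity independently, so you should either state the conclusion for ordered pairs (which is what your argument naturally gives, and what Alon--Shapira prove) or note that the paper's formulation suffices because it is only ever applied to tournaments, where regularity of $(A,B)$ and $(B,A)$ coincide.
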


We now state the definition of an \emph{$\epsilon$-regular partition}. For convenience, we use a slightly different definition of an $\epsilon$-regular partition of a tournament than is directly produced by Theorem~\ref{thm:digraph_regularity}, but which is gained through the removal of few clusters (see  Corollary~\ref{cor:regular_subtournament}).

\begin{defn}\label{defn:regular-partition}
An \emph{$\epsilon$-regular partition} of a tournament $G$ is a partition $V(G)=V_1\cup\ldots\cup V_r$ with $|V_1|=\ldots=|V_r|$ such that, for each fixed $i\in[r]$, $(V_i,V_j)$ forms an $\epsilon$-regular pair for all but at most $\sqrt{\epsilon}r$ many $j\in[r]$.
\end{defn}

\begin{corollary}\label{cor:regular_subtournament}
Let $\alpha>\beta>0$ and $1/n\ll1/r_2\ll1/r_1\ll\epsilon\ll\beta$. Let $G$ be a $(1+\alpha)n$-vertex tournament. Then, there is a subtournament $G'\subseteq G$ with $|G'|\geqslant(1+\alpha-\beta)n$, and an $\epsilon$-regular partition $V(G')=V_1\cup\ldots\cup V_r$ with $r_1\leqslant r\leqslant r_2$.
\end{corollary}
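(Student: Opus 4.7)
The plan is to apply Theorem~\ref{thm:digraph_regularity} to $G$ with carefully chosen auxiliary parameters, and then clean up the resulting partition by discarding the exceptional set $V_0$ together with a small number of badly-behaved clusters. Specifically, I would first pick auxiliary constants $\epsilon', s_1, s_2$ satisfying $1/s_2 \ll 1/s_1 \ll \epsilon' \ll \epsilon$, with $s_1 \geqslant 2r_1$ and $s_2 \leqslant r_2/2$; such a choice is possible by the hierarchy $1/n \ll 1/r_2 \ll 1/r_1 \ll \epsilon$. Applying Theorem~\ref{thm:digraph_regularity} to $G$ with these parameters then yields a partition $V(G) = V_0 \cup V_1 \cup \ldots \cup V_s$ with $s_1 \leqslant s \leqslant s_2$, equal cluster sizes $|V_1| = \ldots = |V_s|$, $|V_0| \leqslant \epsilon'|V(G)|$, and at most $\epsilon' s^2$ pairs $(V_i,V_j)$ that fail to be $\epsilon'$-regular.

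The next step is a standard double-counting argument: each non-regular pair contributes to two indices, so $\sum_{i \in [s]} |\{ j \in [s] : (V_i,V_j) \text{ not } \epsilon'\text{-regular}\}| \leqslant 2\epsilon' s^2$. Consequently the set $B \subseteq [s]$ of indices $i$ with more than $\sqrt{\epsilon'} s$ non-regular partners has $|B| \leqslant 2\sqrt{\epsilon'} s$. I would then set $G' := G - V_0 - \bigcup_{i \in B} V_i$, relabel the surviving clusters as $V_1, \ldots, V_r$ with $r = s - |B|$, and verify the required conditions. First, the number of discarded vertices is at most $|V_0| + |B|\cdot|V_1| \leqslant (\epsilon' + 2\sqrt{\epsilon'})|V(G)| \leqslant 3\sqrt{\epsilon'}(1+\alpha)n \leqslant \beta n$, since $\epsilon' \ll \beta$, giving $|G'| \geqslant (1+\alpha-\beta)n$. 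Second, $r_1 \leqslant s_1/2 \leqslant s/2 \leqslant r \leqslant s \leqslant s_2 \leqslant r_2$. Third, the surviving clusters still have equal size. Finally, for every surviving $i$, the number of surviving $j$ for which $(V_i,V_j)$ fails to be $\epsilon'$-regular (and hence $\epsilon$-regular) is at most $\sqrt{\epsilon'} s \leqslant 2\sqrt{\epsilon'}\, r \leqslant \sqrt{\epsilon}\, r$, which is exactly the bound required by Definition~\ref{defn:regular-partition}.

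There is no genuine obstacle here beyond the parameter bookkeeping: the argument is a standard clean-up of the regularity lemma output, using Markov's inequality (in double-counting form) to bound the number of clusters that participate in too many non-regular pairs. The only care needed is in the choice of $\epsilon'$, which must be small enough relative to $\beta$ that the discarded fraction of vertices stays below $\beta/(1+\alpha)$, and in the choice of $s_1, s_2$, which must be set so that the surviving cluster count $r$ still lands in $[r_1, r_2]$ after deleting at most $2\sqrt{\epsilon'} s$ clusters.
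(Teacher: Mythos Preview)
Your proof is correct and follows essentially the same approach as the paper: apply the regularity lemma, then discard the clusters that are irregular with too many others via a counting argument. The only difference is cosmetic --- you introduce an auxiliary $\epsilon' \ll \epsilon$ to make the final count $\sqrt{\epsilon'}\,s \leqslant \sqrt{\epsilon}\,r$ work, whereas the paper applies Theorem~\ref{thm:digraph_regularity} directly with $\epsilon$ and lower bound $2r_1$, but uses the threshold $\sqrt{\epsilon}\,\bar r/2$ (rather than $\sqrt{\epsilon}\,\bar r$) for ``bad'' clusters, which after the deletion still yields at most $\sqrt{\epsilon}\,r$ irregular partners per surviving cluster without any extra parameter.
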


\begin{proof}
Given a tournament $G$, using Theorem~\ref{thm:digraph_regularity}, take a partition $V(G)=V_0\cup V_1\cup\ldots\cup V_{\bar{r}}$, with $2r_1\leqslant \bar{r}\leqslant r_2$, satisfying \itref{R1}-\itref{R3}. By reordering, we may suppose there is some $r$ with $0\leqslant r\leqslant \bar{r}$ such that, for each fixed $i\in[\bar{r}]$, $(V_i,V_j)$ forms an $\epsilon$-regular pair with at least $(1-\sqrt{\epsilon}/2)\bar{r}$ many $j\in[\bar{r}]$ if and only if $i\in[r]$. By \itref{R3}, we find $(\bar{r}-r)\sqrt{\epsilon}\bar{r}/2\leqslant \epsilon \bar{r}^2$, and hence $r\geqslant(1-2\sqrt{\epsilon})\bar{r}\geqslant r_1$. Let $G'=G[V_1\cup\ldots\cup V_r]$. The desired properties for $G'$ then follow by noting that $|V(G)\setminus V(G')|\leqslant |V_0|+\frac{\bar{r}-r}{\bar{r}}|G|\leqslant(\epsilon+2\sqrt{\epsilon})|G|\leqslant\beta n$, and that $\sqrt{\epsilon}\bar{r}/2\leqslant \sqrt{\epsilon} r$.
\end{proof}

We will use the following simple proposition on vertex degrees in $\eps$-regular partitions.

\begin{proposition}\label{prop:outneighbourhood_to_many_clusters}
Let $\epsilon,\mu>0$ and $r,m\in\mathbb{N}$. Suppose $G$ is a tournament with disjoint subsets $V_0,V_1,\ldots,V_r\subseteq V(G)$ of size $|V_0|=|V_1|=\ldots=|V_r|=m$, such that $(V_0,V_i)$ is an $\epsilon$-regular pair of density at least $\mu$ for each $1\leqslant i\leqslant r$. Fix a subset $U\subseteq\cup_{i\in[r]} V_i$. Then, all but at most $\epsilon m$ vertices of $V_0$ have at least $(\mu-\epsilon)(|U|-\epsilon rm)$ out-neighbours in $U$.
\end{proposition}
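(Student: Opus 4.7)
The plan is a standard contradiction argument via $\epsilon$-regularity, with a preliminary classification of the clusters $V_i$ by how much of $U$ they contain. For each $i\in[r]$, write $U_i=U\cap V_i$, and call the index $i$ \emph{large} if $|U_i|\geqslant\epsilon m$, and \emph{small} otherwise. Since each of the at most $r$ small indices contributes fewer than $\epsilon m$ vertices to $U$, one has $\sum_{i\text{ large}}|U_i|\geqslant|U|-\epsilon rm$, so the large indices already capture almost all of $U$.

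Let $B\subseteq V_0$ denote the set of vertices with fewer than $(\mu-\epsilon)(|U|-\epsilon rm)$ out-neighbours in $U$; the goal is to show $|B|\leqslant\epsilon m$. Suppose for contradiction that $|B|>\epsilon m$. For each large index $i$, one then has both $|B|\geqslant\epsilon|V_0|$ and $|U_i|\geqslant\epsilon|V_i|$, so the $\epsilon$-regularity of $(V_0,V_i)$ gives $d(B,U_i)\geqslant d(V_0,V_i)-\epsilon\geqslant\mu-\epsilon$, and hence $|E(B,U_i)|\geqslant(\mu-\epsilon)|B||U_i|$. Summing over large $i$ and discarding the nonnegative contributions from small indices yields
\begin{equation*}
\sum_{v\in B}d_G^+(v,U)=\sum_{i\in[r]}|E(B,U_i)|\geqslant(\mu-\epsilon)|B|\sum_{i\text{ large}}|U_i|\geqslant(\mu-\epsilon)|B|(|U|-\epsilon rm),
\end{equation*}
which contradicts the definition of $B$, by which the left-hand sum is strictly less than $(\mu-\epsilon)|B|(|U|-\epsilon rm)$. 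Hence $|B|\leqslant\epsilon m$, as required.

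The argument is essentially mechanical, and no step poses a substantial obstacle. The only point worth flagging is that the $\epsilon rm$ slack in the stated lower bound is exactly what is required to absorb the clusters $V_i$ on which $|U_i|$ is too small for $\epsilon$-regularity to apply; the proposition is then simply the translation of cluster-wise regularity into a per-vertex out-degree statement.
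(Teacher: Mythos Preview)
Your proof is correct and follows essentially the same approach as the paper: both assume the bad set has size at least $\epsilon m$, use regularity on each cluster to lower-bound the edges from the bad set into $U$, sum, and contradict the definition of the bad set. The only cosmetic difference is that the paper writes a single bound $|E(W,U_i)|\geqslant|W|(\mu-\epsilon)(|U_i|-\epsilon m)$ valid for all $i$ (noting it is trivially true when $|U_i|\leqslant\epsilon m$), whereas you explicitly split into large and small indices; the resulting summed bound is identical.
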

\begin{proof}
Let $W$ be the set of vertices of $V_0$ which have fewer than $(\mu-\epsilon)(|U|-\epsilon rm)$ out-neighbours in $U$, and suppose that $|W|\geqslant\epsilon m$. Then, for each $i\in[r]$, because $(V_0,V_i)$ is an $\epsilon$-regular pair of density at least $\mu$, there are at least $|W|(\mu-\epsilon)(|U\cap V_i|-\epsilon m)$ edges directed from $W$ to $U\cap V_i$, noting that this is trivial if $|U\cap V_i|\leqslant\epsilon m$. Therefore, there are at least $|W|(\mu-\epsilon)(|U|-\epsilon rm)$ edges directed from $W$ to $U$. However, from the definition of $W$, the number of edges from $W$ to $U$ is less than $|W|(\mu-\epsilon)(|U|-\epsilon rm)$, a contradiction.
\end{proof}

Our proofs will often allocate the vertices of a tree to the clusters of a regularity partition, before applying variations of standard regularity methods to embed these vertices so that they are (mostly) embedded to their assigned cluster. For this we will use, in part, the following simple proposition, which embeds a tree from an assignment in this way, provided that the tree is small and also that the vertices of the tree are not assigned to too many different clusters.

\begin{proposition}\label{prop:branch-embed}
Let $1/m\ll\epsilon\ll\beta,\mu,1/\ell$. Suppose $G$ is a tournament with subsets $V_1,\ldots,V_\ell\subseteq V(G)$ of size $|V_1|=\ldots=|V_\ell|=m$, and, for $j\in[\ell]$, let $U_j\subseteq V_j$ have size $|U_j|\geq\beta m$. Let $T$ be an oriented tree with $|T|\leqslant\epsilon m$, and suppose $\varphi:V(T)\to[\ell]$ is such that if $uv\in E(T)$ and $\varphi(u)\neq\varphi(v)$, then $(V_{\varphi(u)},V_{\varphi(v)})$ is an $\epsilon$-regular pair of density at least $\mu$. Then, there is an embedding $\psi:T\to G$ with $\psi(v)\in U_{\varphi(v)}$ for each $v\in V(T)$.
\end{proposition}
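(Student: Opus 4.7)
My plan is to decompose $T$ along its same-cluster edges and then embed the resulting pieces in sequence using Theorem~\ref{thm:any_linear_bound}. Call a maximal subtree of $T$ whose vertices all share a common $\varphi$-value a \emph{macro-component}; let $C_1,\dots,C_s$ be the macro-components of $T$, so each $C_j$ lives in a single cluster $V_{\varphi(C_j)}$. Contracting each macro-component to a single vertex defines a \emph{macro-tree} $M$ whose edges correspond to the cross-cluster edges of $T$; I would root $M$ at some $C_1$ and, for each $j>1$, let $u_jv_j$ be the unique cross-cluster edge joining $C_{p(j)}$ to $C_j$ (with $u_j\in V(C_{p(j)})$, $v_j\in V(C_j)$, and let $\diamond_j$ record the direction of this edge from $u_j$). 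The macro-components $C_1,C_2,\dots,C_s$ are processed in BFS order on $M$, extending a partial embedding $\psi$ at each step.

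To prepare, for each $j$ let $I_j$ be the set of cluster-direction pairs $(i,\diamond)$ arising from a tree-edge between some vertex of $C_j$ and a vertex of $V(T)\setminus V(C_j)$ mapped to cluster $i$, so $|I_j|\leq 2(\ell-1)$ and each such pair is an $\epsilon$-regular pair of density at least $\mu$ in the relevant direction. A standard consequence of $\epsilon$-regularity then shows that
\[
W_j := \{w\in V_{\varphi(C_j)} : |N^{\diamond}(w)\cap U_i|\geq(\mu-\epsilon)|U_i|\ \text{for every}\ (i,\diamond)\in I_j\}
\]
misses at most $2\ell\epsilon m$ vertices of $V_{\varphi(C_j)}$. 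Setting $D_j := (W_j\cap U_{\varphi(C_j)})\setminus\psi(V_{\mathrm{emb}})$, where $V_{\mathrm{emb}}$ denotes the already-embedded vertices of $T$, the hierarchy $\epsilon\ll\beta,1/\ell$ and the bound $|T|\leq\epsilon m$ give $|D_j|\geq\beta m/2$. For $j>1$, put $w_j=\psi(u_j)$; the previous step will have placed $w_j$ in $W_{p(j)}$, and since $(\varphi(C_j),\diamond_j)\in I_{p(j)}$ the target set $B_j:=N^{\diamond_j}(w_j)\cap D_j$ has size at least $\mu\beta m/2$. Because $\epsilon\ll\mu\beta$, this exceeds $3|C_j|$, so Theorem~\ref{thm:any_linear_bound} applied to the tournament $G[B_j]$ produces a copy of $C_j$ lying entirely inside $B_j$; for $j=1$ the same argument is applied inside $G[D_1]$. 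The image of $v_j$ then lies in $N^{\diamond_j}(w_j)$, correctly orienting the cross-cluster edge $u_jv_j$, while the remaining vertices of $C_j$, all chosen from $W_j$, automatically satisfy the regularity-good condition needed to serve as attachment points when their descendants in $M$ are processed later.

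The main obstacle is the complete absence of regularity information for same-cluster edges of $T$: within an arbitrary tournament on a single cluster, a greedy vertex-by-vertex embedding has no control over interior in- and out-neighbourhoods, and a rooted version of Theorem~\ref{thm:any_linear_bound} is not readily available. The key idea that sidesteps this is to embed the \emph{entire} macro-component $C_j$ at once, inside the small but well-oriented set $B_j$: this trades the hard rooted-embedding problem inside the unstructured tournament $G[V_{\varphi(C_j)}]$ for an unrooted one, which Theorem~\ref{thm:any_linear_bound} dispatches as soon as $|B_j|\geq 3|C_j|$, as the hierarchy guarantees.
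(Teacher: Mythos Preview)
Your proof is correct and follows essentially the same approach as the paper's own argument: both partition $T$ into its maximal single-cluster pieces, process them in an order that keeps the embedded part connected, and at each step embed the current piece wholesale via Theorem~\ref{thm:any_linear_bound} into the intersection of the appropriate neighbourhood of the attachment vertex with a set of ``regularity-good'' vertices in the target $U_j$. The only cosmetic differences are that the paper phrases the induction as a maximal-$s$ argument rather than an explicit BFS, and requires the degree condition with respect to \emph{all} $j$ forming a regular pair with the current cluster (which still loses at most $2\ell\epsilon m$ vertices) rather than just those in your set $I_j$.
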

\begin{proof}
Let $V(T)=Y_1\cup\ldots\cup Y_r$ be a partition such that
\stepcounter{propcounter}
\begin{enumerate}[label =\textbf{\Alph{propcounter}\arabic{enumi}}]
\item For each $i\in [r]$, $T[Y_i]$ is a connected component of $T[\phi^{-1}(j)]$ for some $j\in[\ell]$.
\item For each $i\in [r]$, $T[Y_1\cup \ldots\cup Y_i]$ is a tree.
\end{enumerate}
Let $s\in\{0\}\cup[r]$ be maximal such that, if $T_s=T[Y_1\cup\ldots\cup Y_s]$, then there is an embedding $\psi:T_s\to G$ with $\psi(v)\in V_{\varphi(v)}$ for every $v\in V(T_s)$, and, for every $v\in V(T_s)$ and $j\in[\ell]$ for which $(V_{\varphi(v)},V_j)$ is an $\epsilon$-regular pair, we have $d_G^+(\psi(v),U_j)\geqslant (d(V_{\varphi(v)},V_j)-\epsilon)\beta m$ and $d_G^-(\psi(v),U_j)\geqslant (d(V_j,V_{\varphi(v)})-\epsilon)\beta m$. Suppose, for contradiction, that $s<r$. If $s=0$, then let $y\in Y_1$ be arbitrary and set $Z_{1}=U_{\varphi(y)}$. If instead we have $s>0$, then let $x\in V(T_s)$, $y\in Y_{s+1}$ and $\diamond\in\{+,-\}$ be such that $y\in N_T^\diamond(x)$, and set $Z_{s+1}=N_G^\diamond(\psi(x),U_{\varphi(y)})$. In either case, we find that $|Z_{s+1}|\geqslant\beta\mu m/2$ and $Z_{s+1}\subseteq U_{\varphi(y)}$. For each $j\in[\ell]$ such that $(V_{\varphi(y)},V_j)$ is an $\epsilon$-regular pair, all but at most $\epsilon m$ vertices $z$ of $Z_{s+1}$ satisfy $d_G^+(z,U_j)\geqslant (d(V_{\varphi(y)},V_j)-\epsilon)\beta m$ and all but at most $\epsilon m$ vertices $z$ of $Z_{s+1}$ satisfy $d_G^-(z,U_j)\geqslant (d(V_j,V_{\varphi(y)})-\epsilon)\beta m$. Therefore, as $\epsilon\ll\beta,\mu,1/\ell$, there is a subset $Z_{s+1}'\subseteq Z_{s+1}\setminus\psi(V(T_s))$ with $|Z_{s+1}'|\geqslant\beta\mu m/4$, such that, for every $z\in Z_{s+1}'$ and $j\in[\ell]$ for which $(V_{\varphi(y)},V_j)$ is an $\epsilon$-regular pair, we have $d_G^+(z,U_j)\geqslant (d(V_{\varphi(y)},V_j)-\epsilon)\beta m$ and $d_G^-(z,U_j)\geqslant (d(V_j,V_{\varphi(y)})-\epsilon)\beta m$. But then, by Theorem~\ref{thm:any_linear_bound}, there is a copy of $T[Y_{s+1}]$ in $G[Z_{s+1}']$, and so we can extend $\psi$ to cover $Y_{s+1}$, a contradiction to the maximality of $s$.
\end{proof}

As is common, given an \emph{$\epsilon$-regular partition} $V_1\cup\ldots\cup V_r$ of a tournament $G$, we will consider the reduced digraph $R$ for the partition which has $V(R)=[r]$, and $ij\in E(R)$ exactly when $(V_i,V_j)$ is an $\epsilon$-regular pair with density comfortably larger than $\epsilon$. We will sometimes delete edges arbitrarily from $R$ so that there is at most 1 edge between any pair of vertices. As a small proportion of pairs of clusters in an $\epsilon$-regular partition may not form an $\epsilon$-regular pair, this will not necessarily result in a tournament. For this, we define an \emph{$\epsilon$-almost tournament}, as follows.

\begin{defn}
An \emph{$\epsilon$-almost tournament} $R$ is a digraph with 
at most one edge between any pair of vertices, and in which, for each $v\in V(R)$, there are at most $\epsilon |R|$ vertices $u\in V(R)$ with $vu\notin E(R)$ and $uv\notin E(R)$.
\end{defn}

We will use the following simple property of $\epsilon$-almost tournaments, which shows they each have some vertex with a good number of both in- and out-neighbours.

\begin{proposition}\label{prop:max_digraph_split}
Let $R$ be an $\epsilon$-almost tournament on $r$ vertices. Then, there exists a vertex $v\in V(R)$ such that $d_R^+(v),d_R^-(v)\geqslant\frac{r-1}{4}-\epsilon r$.
\end{proposition}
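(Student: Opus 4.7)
The plan is to argue by contradiction. Since each vertex of $R$ is non-adjacent to at most $\epsilon r$ others in the underlying undirected sense, we have $d_R^+(v)+d_R^-(v)\geqslant r-1-\epsilon r$ for every $v\in V(R)$. Suppose the conclusion fails. Then every vertex lies in $P=\{v:d_R^-(v)<(r-1)/4-\epsilon r\}$ or in $N=\{v:d_R^+(v)<(r-1)/4-\epsilon r\}$, and these sets are disjoint: any $v\in P\cap N$ would satisfy $d_R^+(v)+d_R^-(v)<(r-1)/2-2\epsilon r$, which is incompatible with $d_R^+(v)+d_R^-(v)\geqslant r-1-\epsilon r$. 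Hence $V(R)=P\sqcup N$.

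The main step would be a double count of the edges inside $P$. On the one hand, by the definition of $P$,
\begin{equation*}
e_R(P)=\sum_{v\in P}d_R^-(v,P)\leqslant\sum_{v\in P}d_R^-(v)<|P|\Big(\frac{r-1}{4}-\epsilon r\Big).
\end{equation*}
On the other hand, since each vertex of $P$ is non-adjacent to at most $\epsilon r$ vertices of $R$, at most $|P|\epsilon r/2$ pairs inside $P$ fail to span an edge, and so $e_R(P)\geqslant\binom{|P|}{2}-|P|\epsilon r/2$. Combining these bounds and dividing by $|P|$ yields $|P|<(r+1)/2-\epsilon r$, and the symmetric count of out-edges inside $N$ (using $d_R^+(v)<(r-1)/4-\epsilon r$ for $v\in N$) gives $|N|<(r+1)/2-\epsilon r$.

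Summing the two bounds would give $r=|P|+|N|<r+1-2\epsilon r$, which is the desired contradiction provided $\epsilon r\geqslant 1/2$---a regime implicit in the proposition's application, as $r$ is typically at least the size of a regularity partition and hence $r\gg 1/\epsilon$. I do not anticipate any serious obstacle; the only real subtlety is using the almost-tournament condition to force $P\cap N=\emptyset$, and recognising that the threshold $(r-1)/4-\epsilon r$ is exactly what makes the two symmetric bounds on $|P|$ and $|N|$ just fail to cover all $r$ vertices.
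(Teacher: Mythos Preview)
Your approach is correct and genuinely different from the paper's. The paper instead completes $R$ to a tournament $H\supseteq R$ by orienting every non-adjacent pair arbitrarily. In $H$ it uses the standard fact that any $2m+1$ vertices of a tournament contain one of out-degree at least $m$; with $m=(r-1)/4$ this gives at most $2m$ vertices with $d_H^+<m$ and at most $2m$ with $d_H^-<m$, so since $r>4m$ some $v$ has $d_H^\pm(v)\geq m$, and then $d_R^\pm(v)\geq d_H^\pm(v)-\epsilon r$ follows in one line. The point of the completion trick is that the $\epsilon r$ loss enters exactly once at the end, whereas in your argument it is baked into both the threshold defining $P,N$ and the lower bound on $e_R(P)$, and this double loss is precisely why you end up needing $\epsilon r\geq 1/2$.

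That said, your caveat is honest and well placed: the proposition is only ever invoked with $1/r\ll\epsilon$, so $\epsilon r$ is huge there. In fact the statement as written is not literally true for all $r$ --- for instance, the tournament on six vertices formed from two disjoint directed triangles with all nine cross-edges oriented from one triangle to the other has every vertex satisfying $\min(d^+,d^-)=1<5/4=(r-1)/4$ --- so both the paper's argument and yours tacitly rely on $r$ not being tiny. Your restriction $\epsilon r\geq 1/2$ is therefore not a defect of your method so much as an explicit acknowledgement of a hypothesis that the paper's version also needs but leaves implicit.
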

\begin{proof}
Let $H$ be any tournament with $V(H)=V(R)$ such that $R\subseteq H$, and let $m=\frac{r-1}{4}$. Any set of $2m+1$ vertices in $H$ contains a vertex with out-degree at least $m$ and a vertex with in-degree at least $m$. Therefore, all but at most $2m$ vertices of $H$ have out-degree at least $m$, and all but at most $2m$ vertices of $H$ have in-degree at least $m$. Therefore, as $n>4m$, there is some $v\in V(H)$ with $d_H^+(v),d_H^-(v)\geqslant\frac{r-1}{4}$. Then, $v\in V(R)$ satisfies $d_R^+(v),d_R^-(v)\geqslant\frac{r-1}{4}-\epsilon r$.
\end{proof}

\subsection{Probabilistic results}\label{sect:probabilistic-results}
Parts of our embeddings will be random, or use some reserved random set. To analyse these parts, we will use the following probabilistic bounds (see, for example, \cite{intro-random-graphs}). 
The first is a Chernoff bound, and the second is Hoeffding's inequality.

\begin{lemma}\label{lm:chernoff} If $X$ is a binomial variable with standard parameters~$n$ and $p$, denoted $X=\mathrm{Bin}(n,p)$, and $\epsilon$ satisfies $0<\epsilon\leq 3/2$, then
\[
\mathbb{P}(|X-\mathbb{E}X|\geq \epsilon \mathbb{E} X)\leq 2\exp\left(-\epsilon^2\mathbb{E} X/3\right).\hfill\qedhere
\]
\end{lemma}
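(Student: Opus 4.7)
The plan is to prove this via the standard Chernoff moment generating function argument. Write $X = \sum_{i=1}^n X_i$ as a sum of independent Bernoulli$(p)$ variables, set $\mu = \mathbb{E}X = np$, and split the event $\{|X - \mu|\geq\epsilon\mu\}$ into the upper tail $\{X \geq (1+\epsilon)\mu\}$ and the lower tail $\{X \leq (1-\epsilon)\mu\}$; the factor of $2$ in the conclusion comes from a union bound at the end. Note that the lower tail is vacuously empty for $\epsilon > 1$ since $X \geq 0$, so only $\epsilon \leq 1$ matters there.

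For the upper tail I would apply Markov's inequality to $e^{tX}$ with a parameter $t > 0$ to be optimised, and use independence to obtain
\[
\mathbb{P}(X \geq (1+\epsilon)\mu) \leq e^{-t(1+\epsilon)\mu}\prod_{i=1}^{n}\mathbb{E}[e^{tX_i}] = e^{-t(1+\epsilon)\mu}(1-p+pe^t)^n.
\]
The elementary bound $1 - p + pe^t \leq \exp(p(e^t-1))$ then reduces this to $\exp(\mu(e^t-1 - t(1+\epsilon)))$, and setting $t = \ln(1+\epsilon)$ yields the sharp estimate $(e^{\epsilon}/(1+\epsilon)^{1+\epsilon})^\mu$. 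An analogous computation with $t < 0$ gives the lower-tail bound $(e^{-\epsilon}/(1-\epsilon)^{1-\epsilon})^\mu$.

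The remaining step, which I expect to be the main (and essentially only) obstacle, is the calculus inequality $(1+\epsilon)\ln(1+\epsilon) - \epsilon \geq \epsilon^2/3$ for $0 < \epsilon \leq 3/2$, since this is precisely what converts the upper-tail bound into $e^{-\epsilon^2\mu/3}$. One can verify it by expanding $\ln(1+\epsilon)$ as a power series and checking that the residual is non-negative term-by-term, or by a direct monotonicity argument on the difference of the two sides; a numerical check at $\epsilon = 3/2$ confirms that this is roughly the largest range on which the constant $1/3$ works. The lower tail admits an easier analogous inequality giving the stronger bound $e^{-\epsilon^2\mu/2}$ on $0 < \epsilon \leq 1$, and summing the two tail estimates while using $e^{-\epsilon^2\mu/2} \leq e^{-\epsilon^2\mu/3}$ produces the desired $2e^{-\epsilon^2\mu/3}$.
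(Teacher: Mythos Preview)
The paper does not prove this lemma: it is stated as a standard probabilistic bound with a citation to a textbook (``see, for example, \cite{intro-random-graphs}'') and no argument is given. Your outline is the standard moment-generating-function proof and is correct; the calculus inequality $(1+\epsilon)\ln(1+\epsilon)-\epsilon\geq\epsilon^2/3$ does indeed hold up to $\epsilon=3/2$, which is exactly why the hypothesis in the lemma is stated with that cutoff. So there is nothing to compare against here beyond noting that you have supplied a proof where the paper simply quotes the result.
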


\begin{theorem}\label{thm:hoeffding-inequality}
Let $X_1,\ldots,X_n$ be independent random variables with $X_i$ bounded by the interval $[a_i,b_i]$ for $i\in[n]$. Let $X=\sum_{i\in[n]}X_i$. Then, for any $t>0$, we have
\begin{equation*}
    \mathbb{P}(|X-\mathbb{E}X|\geqslant t)\leqslant2\exp{\left(-\frac{2t^2}{\sum_{i\in[n]}(b_i-a_i)^2}\right)}.
\end{equation*}
\end{theorem}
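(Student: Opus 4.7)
The plan is to use the standard Chernoff-type exponential moment method. For the upper tail, I would start by applying Markov's inequality to $\exp(s(X-\mathbb{E}X))$ for an arbitrary $s>0$: using independence of the $X_i$'s,
\begin{equation*}
\mathbb{P}(X-\mathbb{E}X\geqslant t)\leqslant e^{-st}\mathbb{E}\bigl[e^{s(X-\mathbb{E}X)}\bigr]=e^{-st}\prod_{i=1}^n\mathbb{E}\bigl[e^{s(X_i-\mathbb{E}X_i)}\bigr].
\end{equation*}
The lower tail $\mathbb{P}(X-\mathbb{E}X\leqslant-t)$ then follows by applying the same argument with $-X_i$ in place of $X_i$ (each $-X_i$ is bounded in an interval of the same length $b_i-a_i$), and a union bound over the two tails will produce the factor of $2$ in the stated inequality.

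The key ingredient is then Hoeffding's lemma: if $Y$ is a random variable with $Y\in[a,b]$ and $\mathbb{E}Y=0$, then $\mathbb{E}[e^{sY}]\leqslant\exp\bigl(s^2(b-a)^2/8\bigr)$ for every $s\in\mathbb{R}$. To prove this, I would use the convexity of $y\mapsto e^{sy}$ to write
\begin{equation*}
e^{sy}\leqslant\tfrac{b-y}{b-a}e^{sa}+\tfrac{y-a}{b-a}e^{sb}\qquad\text{for } y\in[a,b],
\end{equation*}
take expectations (using $\mathbb{E}Y=0$) to get $\mathbb{E}[e^{sY}]\leqslant\tfrac{b}{b-a}e^{sa}-\tfrac{a}{b-a}e^{sb}$, and then set $\varphi(s)=\log\bigl(\tfrac{b}{b-a}e^{sa}-\tfrac{a}{b-a}e^{sb}\bigr)$. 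A direct computation gives $\varphi(0)=\varphi'(0)=0$, and writing $p=-a e^{s(b-a)}/(b-ae^{s(b-a)}\!\cdot\!\text{stuff})$ appropriately shows $\varphi''(s)\leqslant(b-a)^2/4$, after which Taylor's theorem yields $\varphi(s)\leqslant s^2(b-a)^2/8$.

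Applying Hoeffding's lemma to each centred variable $Y_i=X_i-\mathbb{E}X_i\in[a_i-\mathbb{E}X_i,b_i-\mathbb{E}X_i]$ (an interval of length $b_i-a_i$), the product bound becomes
\begin{equation*}
\mathbb{P}(X-\mathbb{E}X\geqslant t)\leqslant\exp\!\left(-st+\tfrac{s^2}{8}\sum_{i=1}^n(b_i-a_i)^2\right).
\end{equation*}
Finally, I would optimise over $s>0$ by setting $s=4t/\sum_i(b_i-a_i)^2$, which produces the exponent $-2t^2/\sum_i(b_i-a_i)^2$ and completes the upper-tail bound. Combined with the analogous lower-tail estimate, this proves the theorem.

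The main obstacle is the analytical step bounding $\varphi''$ in Hoeffding's lemma: the convexity upper bound is easy, but verifying $\varphi''(s)\leqslant(b-a)^2/4$ requires rewriting the derivative in the form $p(1-p)(b-a)^2$ for some $p\in[0,1]$ (which corresponds to a Bernoulli variance interpretation) and then using $p(1-p)\leqslant 1/4$. Everything else is essentially algebraic bookkeeping around the standard Chernoff scheme.
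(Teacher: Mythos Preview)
Your proof outline is the standard, correct derivation of Hoeffding's inequality via the exponential moment method and Hoeffding's lemma. Note, however, that the paper does not actually prove this theorem: it is stated as a well-known probabilistic bound with a reference to \cite{intro-random-graphs}, so there is no ``paper's own proof'' to compare against. Your approach is exactly the classical one found in such references.
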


It will often be convenient for most of the vertices to have large in- and out-degree into a reserved random set, for which we use the following result.

\begin{proposition}\label{prop:random-subset}
Fix $p>0$. Let $G$ be a tournament with $n\leqslant|G|\leqslant3n$. Let $U\subseteq V(G)$ be a random subset, with elements from $V(G)$ chosen independently at random with probability $p$. Let $V'$ be the set of vertices $v\in V(G)\setminus U$ for which $d^\pm(v,U)\geqslant p^2n$. Then, with high probability, $pn/2\leqslant|U|\leqslant4pn$, and $|V(G)\setminus V'|\leqslant12pn$.
\end{proposition}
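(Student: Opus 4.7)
The plan is to handle the two conclusions separately, each via a Chernoff bound, but with one deterministic preparation step for the degree statement.

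For the size of $U$, note that $|U|$ is binomial with mean $p|G|\in[pn,3pn]$, so Lemma~\ref{lm:chernoff} applied with constant $\epsilon$ (e.g.\ $\epsilon=1/2$) immediately yields $pn/2\leqslant|U|\leqslant 4pn$ with probability $1-2\exp(-\Omega(pn))\to 1$.

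The main observation I need for the degree statement is that a tournament cannot have too many vertices of small out-degree (nor of small in-degree). Set $B^+=\{v\in V(G):d_G^+(v)<2pn\}$. Looking at the sub-tournament $G[B^+]$, the sum of its out-degrees equals $\binom{|B^+|}{2}$, so
\[
\binom{|B^+|}{2}=\sum_{v\in B^+}d_{G[B^+]}^+(v)\leqslant\sum_{v\in B^+}d_G^+(v)<2pn\cdot|B^+|,
\]
which forces $|B^+|\leqslant 4pn$. The same argument applied with edges reversed gives $|B^-|\leqslant 4pn$, where $B^-=\{v:d_G^-(v)<2pn\}$.

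Now for each $v\notin B^+$, the quantity $d^+(v,U)$ is a sum of independent indicators with mean at least $2p^2n$, so Lemma~\ref{lm:chernoff} gives $\mathbb{P}(d^+(v,U)<p^2n)\leqslant 2\exp(-p^2n/6)$, and analogously for in-degrees when $v\notin B^-$. Taking a union bound over the at most $3n$ vertices, with high probability every $v\in V(G)\setminus(B^+\cup B^-)$ satisfies $d^\pm(v,U)\geqslant p^2n$, and hence lies in $V'$ if it also avoids $U$. Combining this with the high-probability bound $|U|\leqslant 4pn$, we conclude that
\[
V(G)\setminus V'\subseteq U\cup B^+\cup B^-,
\]
which has size at most $4pn+4pn+4pn=12pn$, as required. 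No step here presents a real obstacle; the only non-routine ingredient is the deterministic count of low-degree vertices, which is needed because Chernoff alone would fail for vertices whose expected degree into $U$ is below $p^2n$.
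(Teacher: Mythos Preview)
Your proof is correct and follows essentially the same approach as the paper's: Chernoff for $|U|$, a deterministic bound of $4pn$ on the number of vertices with small out-degree (and similarly for in-degree), Chernoff for the remaining vertices, and a union bound. The only cosmetic difference is that the paper phrases the deterministic step as ``any set of $4pn+1$ vertices contains a vertex of out-degree at least $2pn$'', which is exactly your edge-counting argument in disguise.
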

\begin{proof}
By Lemma~\ref{lm:chernoff} and the fact that $pn\leqslant\E|U|\leqslant3pn$, we have $pn/2\leqslant|U|\leqslant4pn$ with high probability. If $v\in V(G)$ is such that $d_G^\pm(v)\geqslant 2pn$, then, by setting $\epsilon=1/2$ in Lemma~\ref{lm:chernoff}, the probability that $d^\pm(v,U)\geqslant p^2n$ fails for $v$ is at most $4\exp{(-p^2n/6)}$. Any set of $4pn+1$ vertices in $G$ contains a vertex with out-degree at least $2pn$ and a vertex with in-degree at least $2pn$. So at most $4pn$ vertices $v$ of $G$ have $d_G^+(v)<2pn$ and at most $4pn$ vertices of $G$ have $d_G^-(v)<2pn$. Therefore, the probability that $|V(G)\setminus V'|\leqslant|U|+8pn$ fails is at most $12n\exp{(-p^2n/6)}$. So $U$ satisfies both $pn/2\leqslant|U|\leqslant4pn$ and $|V(G)\setminus V'|\leqslant12pn$ with high probability.
\end{proof}

\section{Theorem~\ref{thm:n+k+an}: embedding the core and attached small  trees}\label{sect:T_1-unbounded}
In this section, following the proof outline in Section~\ref{sect:outline}, we embed $T_0$ and $T_1$ for Theorem~\ref{thm:n+k+an}. In the embeddings we may assume that $T_0$ is connected (i.e., that it is a tree not just a forest). Indeed, if $T_0$ is not a tree, then we may add arbitrary edges between its components to make it so. Note that if arbitrary edges were to be added in this way, then $T_1$ would be a tree due to \itref{tree2}. Thus, our embedding of $T_0$ and $T_1$ will follow by identifying $T_1$ with $T$ in the following theorem.

\begin{theorem}\label{thm:n+k+an-partial}
Let $1/n\ll\eta\ll\bar{\alpha}<1$. Suppose $T$ is an $n$-vertex $k$-leaf oriented tree with a subtree $T_0\subseteq T$, such that $|T_0|\leqslant\eta n$ and every component of $T-V(T_0)$ has size at most $\eta n$. Then, any $((1+\bar{\alpha})n+k)$-vertex tournament contains a copy of $T$.
\end{theorem}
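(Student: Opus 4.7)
The plan is to apply the regularity lemma for digraphs to $G$ and then find a balanced assignment of $V(T)$ to the regularity clusters that respects the orientations and densities of the reduced digraph. To begin, using Corollary~\ref{cor:regular_subtournament}, we pass to a subtournament $G'\subseteq G$ with $|G'|\geqslant (1+\bar\alpha/2)n+k$ and an $\epsilon$-regular partition $V_1\cup\cdots\cup V_r$, choosing the regularity parameters so that $1/r\gg\eta$. This makes the cluster size $m=|V_i|$ comfortably larger than $3\eta n$, so each cluster can host a copy of $T_0$ or of any single component of $T-V(T_0)$. We also set aside a small random subset $U\subseteq V(G)\setminus V(G')$ to be used for last-minute greedy extensions; by Proposition~\ref{prop:random-subset}, almost every vertex of $V(G')$ has proportional in- and out-degree into $U$. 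Let $R$ be the resulting reduced digraph on $[r]$, where $ij\in E(R)$ whenever $(V_i,V_j)$ is an $\epsilon$-regular pair of density at least some threshold $\mu$, chosen so that $R$ is an $\epsilon$-almost tournament.

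The central step is then to produce a cluster $V^{*}$ and an assignment $\varphi\colon V(T)\setminus V(T_0)\to[r]\setminus\{*\}$ satisfying: (i) every edge of $T$ joining $T_0$ to a component of $T-V(T_0)$ corresponds to an appropriately oriented edge of $R$ incident with $V^{*}$; (ii) every edge of $T$ whose endpoints are sent by $\varphi$ to different clusters corresponds to an oriented edge of $R$; (iii) each component of $T-V(T_0)$ is mapped entirely into a single cluster other than $V^{*}$, possible as $|C|\leqslant\eta n\leqslant m/3$ for every such component $C$; and (iv) the assignment is balanced, with each cluster receiving at most $m-\bar\alpha m/4$ vertices, the $k$-vertex surplus in $G$ being budgeted to absorb the contribution of the $k$ leaves via the bound of Theorem~\ref{thm:n+Ck}. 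We would prove the existence of such a $(V^{*},\varphi)$ as the deferred Theorem~\ref{thm:extending-distillation}.

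Given the assignment, the embedding proceeds in two stages. First, we copy $T_0$ into $V^{*}$ using Theorem~\ref{thm:any_linear_bound}; choosing this copy at random inside $V^{*}$ and invoking Proposition~\ref{prop:outneighbourhood_to_many_clusters}, we ensure that with positive probability each image $\psi(v)$ has the correct in- and out-degrees into every cluster $V_{\varphi(c)}$ that receives a component attached to $v$ in $T$. Second, for each component $C$ of $T-V(T_0)$ we embed $C$ inside $V_{\varphi(C)}$: its attachment vertex is placed in the appropriate in- or out-neighbourhood of $\psi$ of its attachment in $T_0$, and the remainder of $C$ is embedded by Theorem~\ref{thm:any_linear_bound} inside this neighbourhood, whose size is at least $3|C|$ by regularity. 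Any residual leaves of $T$ that do not fit into the slack of their assigned cluster are absorbed greedily using Corollary~\ref{cor:tree-extension} and the reserved set $U$.

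The main obstacle is constructing the assignment $(V^{*},\varphi)$. As flagged in the proof outline, the hardest subcase is when $T_0$ collapses to a single vertex $x$ of very large out-degree with all components of $T-\{x\}$ hanging off $x$ by out-edges. Here the natural candidate for $V^{*}$ is a cluster maximising some out-quantity in $R$, but when $T$ resembles the second tree in Figure~\ref{fig:example-trees} (many sub-branches of the form ``out, then in''), this simple choice need not admit a valid $\varphi$. The idea is to try the natural choice anyway and, when it fails, to exploit the failure as structural information about $R$ (few clusters reachable from $V^{*}$ by alternating out--in steps), which then determines a better relocation of $V^{*}$ together with a reassignment of some already-placed clusters. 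Converting these heuristics into a uniform case analysis is the technical content of Theorem~\ref{thm:extending-distillation}; once it is in place, Theorem~\ref{thm:n+k+an-partial} follows from the regularity-plus-extension scheme above.
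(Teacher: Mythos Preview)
Your constraint (iii), that each component of $T-V(T_0)$ is mapped entirely into a single cluster, is the fatal gap. Take $T=T_k^-$ (the second tree in Figure~\ref{fig:example-trees}): a root $x$ with $k$ out-neighbours $y_1,\dots,y_k$, each $y_i$ carrying one in-leaf $z_i$. Then $n=2k+1$, there are $k$ leaves, and $|G|\approx 3k$. Under (iii), every component $\{y_i,z_i\}$ must go to some cluster $V_j$ that is an out-neighbour of $V^*$ in $R$. But in an $\epsilon$-almost tournament on $r$ vertices the maximum out-degree can be as low as about $r/2$, so the out-clusters of any $V^*$ have total capacity about $(r/2)\cdot(3k/r)=3k/2$, while you must place $2k$ vertices there. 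No relocation of $V^*$ rescues this: it is the single-cluster-per-component restriction itself that fails, not the choice of anchor cluster. Your last paragraph correctly diagnoses that the naive $V^*$ can fail, but then proposes to fix it only by moving $V^*$, which cannot work for this tree.

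The paper's proof avoids this by letting vertices within a single component land in different clusters. It fixes an $18$-vertex oriented forest $H$ whose vertices encode the orientation pattern of the first one to three edges on the path from $T_0$, defines a homomorphism $f\colon V(T)\setminus V(T_0)\to V(H)$, and converts the sizes $|f^{-1}(v)|$ into a weight function $\beta$ on $V(H)$. Theorem~\ref{thm:extending-distillation} then produces a cluster index $j_t$ together with a \emph{random} homomorphism $\phi\colon H\to D$ into the weighted reduced digraph, with per-cluster expectation bounds that control both total load and, via the densities $d(j_t,j)$, how many attachment vertices each cluster can absorb. Each component independently samples $\phi$ and composes with $f$ to obtain its allocation $\hat\varphi$; so for $T_k^-$, the $y_i$ are sent to out-clusters of $j_t$ while the $z_i$ may be sent to in-clusters of those, which can be in-clusters of $j_t$. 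The embedding then proceeds by first placing $T_0$ on a set $Z\subseteq V_{j_t}$ of vertices that are typical with respect to the random $\phi$, then realising $\hat\varphi$ component by component, with a switching argument (Claim~\ref{clm:X-in-X'}) to handle the few components whose sampled allocation cannot be realised, and the reserved random set $U$ absorbing the residue.
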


We require, for Theorem~\ref{thm:n+k+an-partial}, the components of $T-V(T_0)$ to be bounded above by $\eta n$, for some appropriately small $\eta$. This is not required for its application, where these components will have constant size (see \emph{\ref{tree2}} earlier), but this small linear bound follows  at no additional cost. As discussed in Section~\ref{sect:outline}, to embed the core $T_0$ and extend this embedding to $T$, we will first allocate the vertices of the tree to regularity clusters. This allocation requires care beyond that in previous embeddings of trees in tournaments (see~\cite{KUE-MYC-OST,KUE-MYC-OST-2,MYC-NAI}) as the large degree of some vertices in the tree require edges not just to have sufficient density for regularity embedding techniques to be effective (i.e., $\eps\ll\mu$ in Proposition~\ref{prop:branch-embed}), but sufficient density for potentially linearly many neighbours of a vertex to be embedded within the same regularity cluster. For this, we find it convenient to consider an $\epsilon$-regular partition of clusters $V_1\cup\ldots\cup V_r$ as a weighted complete looped digraph $D$ with vertex set $[r]$ and edge weights $d(e)\in[0,1]$, $e\in E(D)$, indicating the edge density between $\epsilon$-regular pairs of clusters. We call the sets of edge weights we typically encounter \emph{$\epsilon$-complete}, as follows.

\begin{defn}
Given a complete looped digraph $D$ on vertex set $[r]$, we say edge weights $d(e)\in [0,1]$, $e\in E(D)$, are \emph{$\epsilon$-complete} if the following holds.
\stepcounter{propcounter}
\begin{enumerate}[label = {\bfseries\emph{\Alph{propcounter}}}]
\item\label{prop:likeregularity} For each $j\in [r]$, $d(j,j)=1$ and, for all but at most $\epsilon r$ values of $i\in [r]\setminus \{j\}$, $d({i,j})+d({j,i})=1$.
\end{enumerate}
\end{defn}

From this perspective, an allocation of the vertices of an oriented tree $T$ to the regularity clusters is a map from $V(T)$ to the vertices of a complete looped digraph $D$ with edge weights representing the density of edges between regularity clusters. Considering the role of $\mu$ in Proposition~\ref{prop:branch-embed}, it is important that whenever endpoints of an edge of $T$ are allocated to different clusters, those clusters form an $\epsilon$-regular pair of density at least $\mu$. When $D$ is introduced in the proof later, we will adjust the weights on some edges to ensure that $d(i,j)=0$ whenever $d(V_i,V_j)<\mu$. Thus, a valid allocation will be a homomorphism from $T$ to $D$, defined as follows.

\begin{defn}
Given a digraph $H$ and a complete looped digraph $D$ with associated edge-weights $d(e)\in[0,1]$, $e\in E(D)$, we say that a function $\phi:V(H)\to V(D)$ is a \emph{homomorphism from $H$ to $D$} if $d(\phi(v),\phi(w))>0$ whenever $vw\in E(H)$.
\end{defn}

We need to find such a homomorphism from $T$ to $D$ satisfying additional properties, such as a limit to how many vertices are assigned to each cluster. The allocation we find for Theorem~\ref{thm:n+k+an-partial} will always assign the vertices of $T_0$ to a single cluster, whose index we call $j_t$, and then distribute the vertices of the components of $T-V(T_0)$ across the other regularity clusters. Rather than considering the components of $T-V(T_0)$ directly, we will work with a small vertex-weighted digraph $H$, which represents an average of the components of $T-T(V_0)$. We will find a probability distribution $\mathcal{D}$ on the set of homomorphisms from $H$ to $D$ so that when we assign vertices of each component of $T-V(T_0)$ according to an independent sampling of $\mathcal{D}$, then, with high probability, the resulting homomorphism from $T$ to $D$ has the required properties. Working in this randomised setting allows us to obtain this homomorphism concisely and without the need to consider the many distinct oriented trees which may appear as components of $T-V(T_0)$.

The existence of the probability distribution $\mathcal{D}$ is asserted by Theorem~\ref{thm:extending-distillation} below, which is used in this section as a starting point for Theorem~\ref{thm:n+k+an-partial}. The proof of Theorem~\ref{thm:extending-distillation} itself is far more involved, and so we defer this to Section~\ref{sect:technical}. Before stating Theorem~\ref{thm:extending-distillation}, and also explaining the statement in more detail, we first define the digraph $H$ used to represent the components of $T-V(T_0)$ (see also Figure~\ref{fig:H}). To simplify the notation relating to $H$, we make use of the following definition.

\begin{defn}\label{def:fully-looped-forest}
Say that a digraph $F$ is a \emph{fully-looped oriented forest} if $F$ has a looped edge on every vertex, and the deletion of all looped edges from $F$ leaves an oriented forest.
\end{defn}

%%%% H DEFINITION %%%%

\newcommand{\HDefinitionIntro}{Let $H$ be the fully-looped oriented forest with vertex and edge sets given by}
\newcommand{\HDefinitionEquation}{
\begin{aligned}
    V(H)&=\left\{x^+,y^+,z^+,u^+,w^+,\bar{x}^+,\bar{z}^+,\bar{u}^+,\bar{w}^+,x^-,y^-,z^-,u^-,w^-,\bar{x}^-,\bar{z}^-,\bar{u}^-,\bar{w}^-\right\},\\
    E(H)&=\left\{
        \begin{array}{c}
            x^+y^+,z^+x^+,z^+u^+,w^+z^+,\bar{z}^+\bar{x}^+,\bar{z}^+\bar{u}^+,\bar{w}^+\bar{z}^+,\\
            y^-x^-,x^-z^-,u^-z^-,z^-w^-,\bar{x}^-\bar{z}^-,\bar{u}^-\bar{z}^-,\bar{z}^-\bar{w}^-
        \end{array}
    \right\}\cup\{vv:v\in V(H)\}.
\end{aligned}}
\newcommand{\HDefinitionOutro}{For each $\diamond\in\{+,-\}$, let $X^\diamond=\{x^\diamond,\bar{x}^\diamond\}$. Let $X=X^+\cup X^-$.}

\newcommand{\HDefinitionInitial}{
\HDefinitionIntro
\begin{equation}\label{eq:H-definition}
\HDefinitionEquation
\end{equation}
\HDefinitionOutro}

\newcommand{\HDefinitionSubsequent}{
\HDefinitionIntro
\begin{equation}\tag{\ref{eq:H-definition}}
\HDefinitionEquation
\end{equation}
\HDefinitionOutro}

%%%%%%%%%%%%%%%%

\HDefinitionInitial\,

\begin{figure}
\DiagramH
\vspace{-0.75cm}
\caption{The fully-looped oriented forest $H$ (with looped edges omitted).}\label{fig:H}
\end{figure}

We are now ready to state Theorem~\ref{thm:extending-distillation}. Recall that, for the function $\beta:V(H)\to[0,1]$, if $A\subseteq V(H)$ we will often write $\beta(A)$ to mean $\sum_{v\in A}\beta(v)$ and $\beta(v_1,\ldots,v_k)$ to mean $\beta(\{v_1,\ldots,v_k\})$.

%%%%%%%%%%%%%%%%%%%%%%%%%%%%%%%%%%
% These counters are here so we can keep the labelling when we restate the theorem later.
\stepcounter{propcounter}
\newcounter{technicalcounter}
\setcounter{technicalcounter}{\value{propcounter}}
%%%%%%%%%%%%%%%%%%%%%%%%%%%%%%%%%%
\begin{restatable}{theorem}{technical}\label{thm:extending-distillation}
Let $1/r\ll\epsilon\ll\mu\ll\alpha<1$. Let $H$ be the fully-looped oriented forest with vertex and edge sets given by~\eqref{eq:H-definition}. For each $\diamond\in\{+,-\}$, set $X^\diamond=\{x^\diamond,\bar{x}^\diamond\}$, and set $X=X^+\cup X^-$. Let $\beta:V(H)\to[0,1]$ be a function satisfying $\sum_{v\in V(H)}\beta(v)=1$ with $\beta(y^+)\geqslant\beta(x^+)$ and $\beta(y^-)\geqslant\beta(x^-)$, and, for every $v\in V(H)$, $\beta(v)\geqslant\mu$. Let $D$ be a complete looped digraph on vertex set $[r]$ with $\epsilon$-complete edge weights $d(e)$ for $e\in E(D)$. Let
\begin{equation}\label{eq:gamma-def}
    \gamma=\max{\{\beta(x^+,\bar{x}^+),\beta(z^+,\bar{z}^+)\}}+\max{\{\beta(x^-,\bar{x}^-),\beta(z^-,\bar{z}^-)\}}.
\end{equation}
Then, there is a fixed $j_t\in [r]$ and a probability distribution $\mathcal{D}$ on the set of functions from $V(H)$ to $V(D)$, such that, if $\phi$ is sampled according to $\mathcal{D}$, then the following properties hold.
\begin{enumerate}[label = {\bfseries \emph{\Alph{technicalcounter}\arabic{enumi}}}]
    \item\label{ext-good-plan-1} With probability 1, $\phi$ is a homomorphism from $H$ to $D$, and $j_t\notin\phi(\{x^+,\bar{x}^+,x^-,\bar{x}^-\})$.
    \item \label{ext-good-plan-2} For each $j\in[r]$, $\E(\beta(\phi^{-1}(j)))\leq \frac{1+\gamma+\alpha}{r}$.
    \item\label{ext-good-plan-3} For each $j\in[r]$, either
    \begin{enumerate}[label = {\bfseries \emph{\Alph{technicalcounter}\arabic{enumi}.\arabic{enumii}}}]
        \item\label{ext-good-plan-3-1} $\E(\beta(\phi^{-1}(j)\cap X^+))\leq d(j_t,j)\cdot \frac{1+\gamma+\alpha}{r}$ and $\E(\beta(\phi^{-1}(j)\cap X))\leq d(j,j_t)\cdot\frac{1+\gamma+\alpha}{r}$, or
        \item\label{ext-good-plan-3-2} $\E(\beta(\phi^{-1}(j)\cap X^-))\leq d(j,j_t)\cdot \frac{1+\gamma+\alpha}{r}$ and $\E(\beta(\phi^{-1}(j)\cap X))\leq d(j_t,j)\cdot\frac{1+\gamma+\alpha}{r}$.
    \end{enumerate}
    \item\label{ext-good-plan-4} With probability 1, we have $|\phi(e)|=2$ for every non-looped edge $e$ of $H$.
\end{enumerate}
\end{restatable}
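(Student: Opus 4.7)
The graph $H$ is a vertex-disjoint union of four trees, one containing each of $z^+,\bar z^+,z^-,\bar z^-$, so my plan is to sample the image of each tree independently, exploiting directional duality to reduce to the $+$-case. For each component, I prescribe marginal distributions $\pi_v$ on $[r]$ for every $v\in V(H)$, then construct a joint distribution $\phi$ with these marginals and supported on maps that send every edge of $H$ to a directed edge of $D$ of positive density (forcing \itref{ext-good-plan-1}) and to distinct clusters (forcing \itref{ext-good-plan-4}). Since each component is a tree, such a joint distribution can be built by edge-by-edge conditional sampling from a root outwards, provided each pair of adjacent marginals is compatible in the Hall sense: for every $b\in[r]$, $\pi_v(b)$ is at most the $\pi_u$-mass of $\{a:d(a,b)>0\}$ (and symmetrically for the reverse edge direction).

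\textbf{Choosing $j_t$ and the marginals.} By \itref{prop:likeregularity}, $d(c,k)+d(k,c)=1$ for all but $\epsilon r$ pairs, so $\sum_{c,k}d(c,k)\approx r(r+1)/2$ and averaging over $c$ yields some $j_t$ for which $D_+:=\sum_k d(j_t,k)$ is close to $(r+1)/2$, and hence also $D_-:=\sum_k d(k,j_t)$ is close to $(r+1)/2$ (since their sum is $r+1\pm O(\epsilon r)$). I then set $\pi_{x^+},\pi_{\bar x^+}$ proportional to $d(j_t,\cdot)$ restricted to $[r]\setminus\{j_t\}$ (so $\pi_{x^+}(j)=d(j_t,j)/D_+$, and the same for $\pi_{\bar x^+}$), and $\pi_{x^-},\pi_{\bar x^-}$ proportional to $d(\cdot,j_t)$ restricted to $[r]\setminus\{j_t\}$, with the remaining $\pi_v$ close to uniform on $[r]$. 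The key observation making the arithmetic work is that $\gamma\geq\beta(X^+)+\beta(X^-)=\beta(X)$ since each of the two maxima in the definition of $\gamma$ exceeds the corresponding $\beta(X^\pm)$; combined with $D_\pm\approx r/2$, this gives $\beta(X^\pm)/D_\pm\leq(1+\gamma+\alpha)/r$, matching exactly the $X^\pm$-bound in \itref{ext-good-plan-3}.

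\textbf{Verification and expected obstacle.} For \itref{ext-good-plan-2}, summing contributions, the non-$X$-vertices give near $\beta(V(H)\setminus X)/r$ and the $X$-vertices give $\beta(X^+)d(j_t,j)/D_++\beta(X^-)d(j,j_t)/D_-$, which combine to at most $(1+\gamma+\alpha)/r$ using $\gamma\geq\beta(X)$ and $D_\pm\approx r/2$. For \itref{ext-good-plan-3} at each $j\in[r]$, I pick case \itref{ext-good-plan-3-2} when $d(j_t,j)\geq d(j,j_t)$ and case \itref{ext-good-plan-3-1} otherwise; the $X^\pm$ bound follows from $\beta(X^\pm)/D_\pm\leq(1+\gamma+\alpha)/r$, and the $X$-total bound follows analogously, using the pointwise factor $\min\{d(j_t,j),d(j,j_t)\}$ in the computation above. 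Hall-compatibility between adjacent marginals holds because each $\pi_v$ takes values $O(1/r)$ while most $b\in[r]$ have at least $(1-O(\sqrt{\epsilon}))r$ clusters $a$ with $d(a,b)>0$, and \itref{ext-good-plan-4} is handled by excluding already-used clusters during sampling at negligible cost. The main obstacle, and the reason this is the technical core of the paper, is that the clean marginal prescription above can break at particular $j$ with extreme density profiles (e.g.\ when $d(j_t,\cdot)$ concentrates on very few clusters), or when $\beta$ is skewed so that $\gamma$ is dominated by the $z^\pm,\bar z^\pm$-mass rather than by $\beta(X)$; handling these will require a case analysis partitioned by which of $\beta(x^\pm,\bar x^\pm),\beta(z^\pm,\bar z^\pm)$ are largest, replacing the uniform marginals of $z^\pm,\bar z^\pm$ by density-weighted ones, and possibly re-choosing $j_t$ from a better-behaved subset of $[r]$.
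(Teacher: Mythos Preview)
Your Hall-compatibility claim is the fatal gap. You assert that ``most $b\in[r]$ have at least $(1-O(\sqrt\epsilon))r$ clusters $a$ with $d(a,b)>0$'', but $\epsilon$-completeness (property~\itref{prop:likeregularity}) only says $d(a,b)+d(b,a)=1$ for all but $\epsilon r$ many $a$; it says nothing about which direction carries the weight. A transitive tournament, with $d(i,j)=1$ if $i<j$ and $0$ otherwise, is $0$-complete, yet vertex $b=1$ has $d(a,1)=0$ for every $a\neq 1$. In that example, with $j_t\approx r/2$, your marginal $\pi_{x^+}$ is supported on $\{j_t+1,\ldots,r\}$, and the edge $x^+y^+$ forces $\phi(y^+)>\phi(x^+)$, so $\pi_{y^+}$ must be supported on $\{j_t+2,\ldots,r\}$ --- far from uniform. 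Chaining through $w^+<z^+<x^+<y^+$ forces several marginals to concentrate on the top half, so the contributions to $\E(\beta(\phi^{-1}(j)))$ at large $j$ blow past $(1+\gamma+\alpha)/r$. The obstacles you flag at the end (concentration of $d(j_t,\cdot)$, skew in $\beta$) are not the real issue; the issue is the global orientation structure of $D$, which your marginal prescription cannot see.

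The paper's route is quite different from a marginals-plus-coupling scheme. It first reduces to a one-sided version (Theorem~\ref{thm:overarching}, only the $+$-part of $H$, dropping \itref{ext-good-plan-4}) via a degree-threshold partition $[r]=J^+\cup J^-$; the full $H$ and \itref{ext-good-plan-4} are recovered afterwards by a separate minimisation argument. The one-sided theorem is then proved by a greedy maximality-and-switching method: $j_t$ is chosen to \emph{maximise} out-weight (not to balance), one greedily packs as many deterministic homomorphisms as possible subject to capacity constraints, and the maximality either yields enough homomorphisms to sample from or exposes structural features of $D$ (e.g.\ most remaining edges between leftover sets point one way) that either contradict the choice of $j_t$ or, in the hardest case, identify a better $j_t'$ to switch to mid-proof. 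The case split is on where the non-$X$ mass of $\beta$ sits (mostly on $y$; mostly on $\{y,u,\bar u\}$; mostly on $\{z,w,\bar z,\bar w\}$), and each case requires a different switching argument rather than a tweak of marginals.
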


The technical nature of Theorem~\ref{thm:extending-distillation} is a result of the complications involved in finding an appropriate allocation of the vertices of $T-V(T_0)$ to the regularity clusters represented by $D$. Having chosen a single cluster (indexed by $j_t$) for the embedding of $T_0$, the restriction on where a vertex $u$ in $T-V(T_0)$ can be embedded depends on the path from $T_0$ to $u$ in $T$, and its edge directions. However, it will turn out that all we need to consider is what proportion of the vertices in $T-V(T_0)$ have paths from $T_0$ beginning with any given oriented path of length at most 3. Accordingly, in the application of Theorem~\ref{thm:extending-distillation}, each vertex $u$ of $T-V(T_0)$ will be associated to a vertex of $H$ depending on the orientation of the first few edges on the path from $T_0$ to $u$. Then, each vertex $v$ of $H$ will be given weight $\beta(v)$ roughly equal to the proportion of vertices of $T-V(T_0)$ associated to $v$. It is in this sense that the digraph $H$ together with the weight function $\beta$ represents an average of the components of $T-V(T_0)$. Vertices in $X$ represent the vertices connected by an edge to $T_0$ in $T$, which may therefore be neighbours of any very high degree vertices in $T_0$, and so we need to pay particular attention to how often they are assigned to each regularity cluster.

Identifying $H$ and $\beta$ in this way helps contextualise the statement of Theorem~\ref{thm:extending-distillation}. Suppose $j_t$ is fixed as the index corresponding to the image of $T_0$, and $\phi$ is sampled according to $\mathcal{D}$. By \itref{ext-good-plan-1}, we will have with probability 1 that $\phi$ is a homomorphism, so that the regularity properties can be used to embed any edges assigned between two regularity clusters, and, for convenience, no vertex in $X$ is assigned to $j_t$. \itref{ext-good-plan-2} ensures that (on average) not too much weight is allocated to a single cluster. \itref{ext-good-plan-3} ensures that (on average) the weight of vertices in $X^+$ (i.e., those which need to be attached by an out-edge to $T_0$, which is allocated to $j_t$), or $X^-$, allocated to each cluster is not too much, where the limit is dictated by the appropriate density from that cluster to $j_t$, or from $j_t$ to that cluster. Once $T_0$ has been embedded into a suitable subset of $V_{j_t}$, this provision ensures that the embedding may be easily extended to greedily cover $N_T(V(T_0))$, which is otherwise a key difficulty. Finally, \itref{ext-good-plan-4} is present to later ensure that vertices of $N_T(V(T_0))$ are allocated to a different cluster to their neighbours, which will assist with the embedding process. 

We use the parameter $\gamma$ (see \eqref{eq:gamma-def} in Theorem~\ref{thm:extending-distillation}) to control the total size of components we can embed using $\phi$ relative to the size of the tournament from which $D$ is derived. As we use this for Theorem~\ref{thm:n+k+an} it should be related to the number of leaves. In the application of Theorem~\ref{thm:extending-distillation}, the weight on the vertices with base label `$x$' or `$z$' is distributed so that it can be bounded based on the number of leaves of the original tree (and in certain cases uses a lower bound than that required for Theorem~\ref{thm:n+k+an}).

We now sketch the proof of Theorem~\ref{thm:n+k+an-partial} from Theorem~\ref{thm:extending-distillation}. Given an $n$-vertex $k$-leaf oriented tree $T$ satisfying the conditions of the theorem, we must construct an embedding $\psi$ of $T$ into an arbitrary $((1+\overline{\alpha})n+k)$-vertex tournament $G$. In broad terms, we do the following.
\begin{enumerate}
    \item Define a homomorphism $f$ from $T-V(T_0)$ to the fully-looped oriented forest $H$, with $f^{-1}(\{x^\diamond,\overline{x}^\diamond\})=N_T^\diamond(V(T_0))$ for each $\diamond\in\{+,-\}$.
    \item Define a weight function $\beta:V(H)\to[0,1]$, so that $\beta(v)$ is approximately the proportion of vertices of $T-V(T_0)$ mapped onto $v$ by $f$.
    \item Take an $\epsilon$-regular partition of (a large subtournament of) $G$ with vertex classes $V_1,\ldots,V_r$. Let $D$ be the complete looped digraph on $[r]$ with associated edge weights $d(j,j')$ approximately corresponding to the density of edges between $\epsilon$-regular pairs $(V_j,V_{j'})$.
    \item Apply Theorem~\ref{thm:extending-distillation} to obtain a probability distribution $\mathcal{D}$ satisfying \itref{ext-good-plan-1}--\itref{ext-good-plan-4}.
    \item Identify a good set of vertices $Z\subseteq V_{j_t}$ so that, for each $z\in Z$, if $\phi$ is sampled according to $\mathcal{D}$, then with high probability $z$ has at least the expected number of $\diamond$-neighbours in $V_{\phi(x^\diamond)}$ and $V_{\phi(\overline{x}^\diamond)}$ for each $\diamond\in\{+,-\}$.
    \item Using Theorem~\ref{thm:any_linear_bound}, let $\psi:T_0\to G$ embed $T_0$ into $Z$.
    \item By sampling $\phi$ according to $\mathcal{D}$ (with a small amount of conditioning) for each component of $T-V(T_0)$, and then composing $f$ with the outcomes, obtain a homomorphism $\hat{\varphi}$ from $T-V(T_0)$ to $D$.
    \item\label{extend-to-neighbours} Extend $\psi$ to cover $N_T(V(T_0))$, with $v$ embedded to $V_{\hat{\varphi}(v)}$ for every $v\in V(T_0)$. Note that this is easy with the established properties of $Z$, as $T[N_T(V(T_0))]$ consists of isolated vertices.
    \item\label{extend-to-rest} Use Proposition~\ref{prop:branch-embed} for each component of $T-V(T_0)$ to extend $\psi$ to cover $V(T)$, with $v$ embedded to $V_{\hat{\varphi}(v)}$ for every $v\in V(T_0)$.
\end{enumerate}

% We now sketch the proof of Theorem~\ref{thm:n+k+an-partial} from Theorem~\ref{thm:extending-distillation}. In this proof, we first define a homomorphism $f$ from $T-V(T_0)$ to $H$, and then use $f$ to define an appropriate weight function $\beta:V(H)\to[0,1]$. Then, taking an $\epsilon$-regular partition of a tournament $G$ with vertex classes $V_1,\ldots,V_r$, we choose the appropriate edge-weighted complete looped digraph $D$. Applying Theorem~\ref{thm:extending-distillation}, we obtain $j_t\in[r]$ and the probability distribution $\mathcal{D}$. We then embed $T_0$ into a subset of good vertices inside $V_{j_t}$. By independently sampling $\phi$ according to $\mathcal{D}$ for each component of $T-V(T_0)$, we then get a homomorphism $\hat{\varphi}$ from $T-V(T_0)$ to $D$. We will see later (in Claim~\ref{clm:varphi}) that $\hat{\varphi}$ is a good guide for assigning vertices of $T-V(T_0)$ to the clusters $V_1,\ldots,V_r$.

In the actual proof, some of the steps above require additional details that are omitted from this simplified overview. One notable case concerns steps~\ref{extend-to-neighbours} and~\ref{extend-to-rest}. Step~\ref{extend-to-neighbours} is essential, as if we were to embed the components of $T-V(T_0)$ one-by-one without first fixing the images of $N_T(V(T_0))$, we could fail due to occupying too many out-neighbours or too many in-neighbours of vertices in $\psi(V(T_0))$ which need lots of components attaching. However, having fixed the images of $N_T(V(T_0))$ in step~\ref{extend-to-neighbours}, the embedding of the components of $T-V(T_0)$ is restricted in a way that possibly prevents us from being able to exactly follow the allocation given by $\hat{\varphi}$ (in particular, Proposition~\ref{prop:branch-embed} cannot apply directly, as one vertex in each component has already had its embedding fixed). To handle this, we consider large components and small components separately. Precisely, using $S_x$ to denote the component of $T-V(T_0)$ containing $x\in N_T(V(T_0))$, we partition $N_T(V(T_0))=X_0\cup Y_0$ such that $S_x$ is at most constant-sized whenever $x\in X_0$ (and larger than constant-sized whenever $x\in Y_0$). We then embed as much of $\cup_{x\in X_0\cup Y_0}V(S_x)$ as possible. Given $x\in X_0$, if it is not possible to embed $S_x$ according to $\hat{\varphi}$, then we may still be able to find an embedding for $S_x$ by switching its allocation with an identical component. Indeed, if a significant number of $x\in X_0$ are yet to have their corresponding component embedded, then the bound on $|S_x|$ for $x\in X_0$ helps to find a suitable identical pair. Thus, we can extend the embedding to cover most of $\cup_{x\in X_0}V(S_x)$. On the other hand, $Y_0$ is small, and so the corresponding larger components can all be embedded greedily using specially reserved sets for their roots. Finally, any remaining components of $T-V(T_0)$ that are not embedded can then be handled by greedily embedding them to a random subset $U\subseteq V(G)$ reserved at the beginning of the proof.

\newcounter{embedstep}
\newcommand{\embedstep}{\stepcounter{embedstep}\textbf{[Step~\arabic{embedstep}]\ }}
\begin{proof}[Proof of Theorem~\ref{thm:n+k+an-partial}]
Let $\alpha=\bar{\alpha}/35$ and introduce constants $\mu,\epsilon,r_1,r_2$ such that $\eta\ll1/r_2\ll1/r_1\ll\epsilon\ll\mu\ll\alpha$. For each $x\in N_T(V(T_0))$, let $S_x$ be the component of $T-V(T_0)$ containing $x$.

Let $G$ be a $((1+35\alpha)n+k)$-vertex tournament, and note that $n\leqslant |G|\leqslant 3n$. Let $U\subseteq V(G)$ be a random subset, with elements from $V(G)$ chosen independently at random with probability $2\alpha$, and let $V'$ be the set of vertices $v\in V(G)\setminus U$ with $d_G^\pm(v,U)\geqslant4\alpha^2n$. By Proposition~\ref{prop:random-subset}, we may proceed assuming that $|U|\geqslant\alpha n$ and $|V'|\geqslant((1+11\alpha)n+k)$, where the step numbering matches the sketch given before this proof.

\embedstep Define $f_0:N_T(V(T_0))\to\{x^+,\bar{x}^+,x^-,\bar{x}^-\}$ as follows. For each $\diamond\in\{+,-\}$ and $v\in N_T^\diamond(V(T_0))$, set $f_0(v)=x^\diamond$ if $N_T^\diamond(v)\setminus V(T_0)\neq\emptyset$, and set $f_0(v)=\bar{x}^\diamond$ if $N_T^\diamond(v)\setminus V(T_0)=\emptyset$. Then, let $f:V(T)\setminus V(T_0)\to V(H)$ be the homomorphism from $T-V(T_0)$ to $H$ extending $f_0$ such that $f^{-1}(\{x^+,\bar{x}^+,x^-,\bar{x}^-\})=N_T(V(T_0))$, $f^{-1}(\{z^+,\bar{z}^+\})=N_T^-(N_T^+(V(T_0)))\setminus V(T_0)$, and $f^{-1}(\{z^-,\bar{z}^-\})=N_T^+(N_T^-(V(T_0)))\setminus V(T_0)$. (Note that this homomorphism is unique, as each vertex in $(N_T^-(N_T^+(V(T_0)))\cup N_T^+(N_T^-(V(T_0))))\setminus V(T_0)$ has only one viable candidate for its image among $\{z^+,\bar{z}^+,z^-,\bar{z}^-\}$, and once those vertices have their images fixed, each remaining vertex has only one viable candidate for its image among $\{y^+,u^+,w^+,\bar{u}^+,\bar{w}^+,y^-,u^-,w^-,\bar{u}^-,\bar{w}^-\}$.)

\embedstep Let $\beta:V(H)\to[0,1]$ be given by setting, for each $v\in V(H)$,
\begin{equation}\label{eq:beta_definition}
    \beta(v)=\frac{|f^{-1}(v)|+2\mu n}{|V(T)\setminus V(T_0)|+36\mu n},
\end{equation}
and note that, because $|V(H)|=18$, $\beta$ is a function satisfying $\sum_{v\in V(H)}\beta(v)=1$, and $\beta(v)\geq\mu$ for every $v\in V(H)$. Set
\begin{equation}\label{eq:gamma}
    \gamma=\max{\{\beta(x^+,\bar{x}^+),\beta(z^+,\bar{z}^+)\}}+\max{\{\beta(x^-,\bar{x}^-),\beta(z^-,\bar{z}^-)\}}.
\end{equation}
We remark that, for each $\diamond\in\{+,-\}$, if $v\in f^{-1}(x^\diamond)$, then there is some $v'\in N_T^\diamond(v)\setminus V(T_0)$ with $f(v')=y^\diamond$. Therefore, $\beta(y^+)\geq\beta(x^+)$ and $\beta(y^-)\geq\beta(x^-)$. We also remark that, for each $x\in N_T(V(T_0))$, the number of leaves of $T$ appearing in $S_x$ is at least $\max{\{1,|f^{-1}(\{z^+,\bar{z}^+,z^-,\bar{z}^-\})\cap V(S_x)|\}}$, and hence
\begin{align*}
    k&\geqslant\sum_{x\in N_T(V(T_0))}\max{\{1,|f^{-1}(\{z^+,\bar{z}^+,z^-,\bar{z}^-\})\cap V(S_x)|\}}\\
    &\geqslant\max{\{|N_T^+(V(T_0))|,|f^{-1}(\{z^+,\bar{z}^+\})|\}}+\max{\{|N_T^-(V(T_0))|,|f^{-1}(\{z^-,\bar{z}^-\})|\}}\\
    &=\max{\{|f^{-1}(\{x^+,\bar{x}^+\})|,|f^{-1}(\{z^+,\bar{z}^+\})|\}}+\max{\{|f^{-1}(\{x^-,\bar{x}^-\})|,|f^{-1}(\{z^-,\bar{z}^-\})|\}}.
\end{align*}
Therefore, by \eqref{eq:beta_definition} and \eqref{eq:gamma}, $\gamma\leq\alpha+k/n$ and hence $|V'|\geqslant(1+\gamma+10\alpha)\cdot n$.

\embedstep By Corollary~\ref{cor:regular_subtournament}, there is some $r$ with $r_1\leq r\leq r_2$ and disjoint subsets $V_1,\ldots,V_r\subseteq V'$, with $|V_1|=\ldots=|V_r|\geqslant (1+\gamma+9\alpha)\cdot n/r$, such that $V_1\cup\ldots\cup V_r$ is an $\epsilon$-regular partition of $G[V_1\cup\ldots\cup V_r]$. Let $D$ be a complete looped digraph on vertex set $[r]$ with edge weights $d(e)$, $e\in E(D)$ given by setting $d(j,j)=1$ for every $j\in[r]$, $d(j,j')=0$ for every $jj'\in E(D)$ for which $j\neq j'$ and $(V_j,V_{j'})$ is not an $\epsilon$-regular pair, and, for every $jj'\in E(D)$ for which $(V_j,V_{j'})$ forms an $\epsilon$-regular pair, setting
\begin{equation*}
d(j,j')=
\begin{cases*}
    1 & if $d(V_j,V_{j'})>1-\mu$, \\
    d(V_{j},V_{j'}) & if $\mu\leqslant d(V_{j},V_{j'})\leqslant1-\mu$,\\
    0 & if $d(V_{j},V_{j'})<\mu$.
\end{cases*}
\end{equation*}
We remark that the edge weights $d(e)$, $e\in E(D)$ are $\sqrt{\epsilon}$-complete, and, for $j\neq j'$, if $d(j,j')>0$, then $(V_j,V_{j'})$ is an $\epsilon$-regular pair with density satisfying $d(V_j,V_{j'})\geqslant\mu$ and $d(V_j,V_{j'})\geqslant(1-\mu)\cdot d(j,j')$.

\embedstep By Theorem~\ref{thm:extending-distillation} applied to $\beta$ and $D$, there is some $j_t\in [r]$ and probability distribution $\mathcal{D}$ on the set of functions from $V(H)$ to $V(D)$, such that, if $\phi$ is sampled according to $\mathcal{D}$, then \itref{ext-good-plan-1}-\itref{ext-good-plan-4} hold. Let $J_1$ be the set of $j\in[r]$ for which we have \itref{ext-good-plan-3-1}, and let $J_2=[r]\setminus J_1$, so that \itref{ext-good-plan-3-2} holds for every $j\in J_2$. For $j\in[r]$, let $U_j,W_j\subseteq V_j$ be disjoint subsets with $|U_j|=(1+\gamma+4\alpha)\cdot n/r$ and $|W_j|=3\alpha\cdot n/r$.

\embedstep Let $Z$ be the set of $z\in V_{j_t}\setminus(U_{j_t}\cup W_{j_t})$ such that the following holds with probability at least $1-\sqrt{\epsilon}$ whenever $\phi$ is sampled according to $\mathcal{D}$.
\begin{equation}\label{goodstuff}
\begin{aligned}
    d_G^+(z,U_{\phi(x^+)})&\geqslant d(j_t,\phi(x^+))\cdot(1+\gamma+3\alpha)\cdot n/r, &\qquad& d_G^+(z,W_{\phi(x^+)})\geqslant 2\alpha\mu\cdot n/r,\\
    d_G^+(z,U_{\phi(\bar{x}^+)})&\geqslant d(j_t,\phi(\bar{x}^+))\cdot(1+\gamma+3\alpha)\cdot n/r, &\qquad& d_G^+(z,W_{\phi(\bar{x}^+)})\geqslant 2\alpha\mu\cdot n/r,\\
    d_G^-(z,U_{\phi(x^-)})&\geqslant d(\phi(x^-),j_t)\cdot(1+\gamma+3\alpha)\cdot n/r, &\qquad& d_G^-(z,W_{\phi(x^-)})\geqslant 2\alpha\mu\cdot n/r,\\
    d_G^-(z,U_{\phi(\bar{x}^-)})&\geqslant d(\phi(\bar{x}^-),j_t)\cdot(1+\gamma+3\alpha)\cdot n/r, &\qquad& d_G^-(z,W_{\phi(\bar{x}^-)})\geqslant 2\alpha\mu\cdot n/r.
\end{aligned}
\end{equation}

\begin{claim}\label{clm:Z-size}
$|Z|\geqslant\alpha\cdot n/r$.
\end{claim}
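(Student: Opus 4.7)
The plan is to run a routine regularity-plus-Markov argument. First, by the chosen sizes, $|V_{j_t}\setminus(U_{j_t}\cup W_{j_t})|\geq(1+\gamma+9\alpha)n/r-(1+\gamma+4\alpha)n/r-3\alpha n/r=2\alpha n/r$, so it will suffice to show that fewer than $\alpha n/r$ vertices of this set fail the criterion defining $Z$.

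Next, I would fix an arbitrary outcome of $\phi$ and write $j_1,j_2,j_3,j_4$ for $\phi(x^+),\phi(\bar{x}^+),\phi(x^-),\phi(\bar{x}^-)$ respectively; by \itref{ext-good-plan-1}, none of these equals $j_t$. The main subtlety, and the only place where care is needed, is to establish that $d(j_t,j_1),d(j_t,j_2),d(j_3,j_t),d(j_4,j_t)\geq\mu$. This does not follow from $\phi$ being a homomorphism, because none of the corresponding edges lies in $H$. Instead, I would invoke \itref{ext-good-plan-3}: since this outcome has positive probability and each $\beta(x^\diamond),\beta(\bar{x}^\diamond)\geq\mu$, whichever of \itref{ext-good-plan-3-1} or \itref{ext-good-plan-3-2} applies to the cluster in question forces the corresponding density to be strictly positive, hence at least $\mu$ by the construction of $D$. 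Consequently, each of the pairs $(V_{j_t},V_{j_1}),(V_{j_t},V_{j_2}),(V_{j_3},V_{j_t}),(V_{j_4},V_{j_t})$ is $\epsilon$-regular with directed density at least $(1-\mu)\mu$ in the relevant direction.

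With those density lower bounds in hand, a routine application of $\epsilon$-regularity (using $|U_{j_i}|,|W_{j_i}|\gg\epsilon|V_{j_i}|$) gives that, for each of the eight conditions in \eqref{goodstuff}, at most $\epsilon|V_{j_t}|$ vertices of $V_{j_t}$ violate it; the slack $\epsilon\ll\mu\ll\alpha$ is enough to translate the regularity-based degree bound into the stated thresholds (one checks, for instance, $(d(V_{j_t},V_{j_1})-\epsilon)|U_{j_1}|\geq d(j_t,j_1)(1+\gamma+3\alpha)n/r$ and $(d(V_{j_t},V_{j_1})-\epsilon)|W_{j_1}|\geq 2\alpha\mu n/r$). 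Summing over the eight conditions, for each outcome of $\phi$ at most $8\epsilon|V_{j_t}|$ vertices of $V_{j_t}$ fail \eqref{goodstuff}. Taking expectation over $\phi$ and applying Markov's inequality, the number of $z\in V_{j_t}$ for which \eqref{goodstuff} fails with probability exceeding $\sqrt{\epsilon}$ is at most $8\sqrt{\epsilon}|V_{j_t}|\leq 24\sqrt{\epsilon}n/r\leq\alpha n/r$, since $\epsilon\ll\alpha$. Combined with the first paragraph this gives $|Z|\geq 2\alpha n/r-\alpha n/r=\alpha n/r$, as required.
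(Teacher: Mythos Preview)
Your proof is correct and follows essentially the same approach as the paper: for each fixed outcome of $\phi$, bound the number of bad vertices in $V_{j_t}$ by $O(\epsilon n/r)$ using $\epsilon$-regularity, then average over $\phi$ and apply a Markov-type count to bound the set of vertices where \eqref{goodstuff} fails with probability exceeding $\sqrt{\epsilon}$. The paper phrases the final step as a contradiction argument but the content is identical. Your justification via \itref{ext-good-plan-3} that $d(j_t,\phi(x^+)),d(j_t,\phi(\bar{x}^+)),d(\phi(x^-),j_t),d(\phi(\bar{x}^-),j_t)>0$ is in fact more careful than the paper's, which cites only \itref{ext-good-plan-1} for membership in $\Omega$ even though the homomorphism property alone does not control edges incident with $j_t$.
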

\begin{proof}[Proof of Claim~\ref{clm:Z-size}]
Let $\bar{Z}$ be the set of $z\in V_{j_t}$ such that \eqref{goodstuff} fails with probability at least $\sqrt{\epsilon}$. If $|\bar{Z}|<\alpha\cdot n/r$, then, as $|V_{j_t}\setminus(U_{j_t}\cup W_{j_t})|\geqslant2\alpha\cdot n/r$, the claim follows. So assume for contradiction that $|\bar{Z}|\geqslant\alpha\cdot n/r$.

Let $\Omega$ be the set of homomorphisms $\bar{\phi}:H\to D$ such that $j_t\notin\bar{\phi}(\{x^+,\bar{x}^+,x^-,\bar{x}^-\})$ and $d(j_t,\bar{\phi}(x^+))$, $d(j_t,\bar{\phi}(\bar{x}^+))$, $d(\bar{\phi}(x^-),j_t))$, and $d(\bar{\phi}(\bar{x}^-),j_t)$ are all positive, and hence, by our choice of $d(e)$, $e\in E(D)$, are all at least $\mu$. Note that, by \itref{ext-good-plan-1}, $\mathbb{P}(\phi\in\Omega)=1$. Given $\bar{\phi}\in\Omega$, let $B_{\bar{\phi}}$ be the set of $z\in V_{j_t}$ such that \eqref{goodstuff} fails for $\phi=\bar{\phi}$. We claim that $|B_{\bar{\phi}}|\leqslant24\epsilon\cdot n/r$ for every $\bar{\phi}\in\Omega$. Indeed, if $\bar{\phi}\in\Omega$, then $(V_{j_t},V_{\bar{\phi}(x^+)})$ is an $\epsilon$-regular pair of density $d(V_{j_t},V_{\bar{\phi}(x^+)})\geqslant \min{\{d(j_t,\bar{\phi}(x^+)),1-\mu\}}\geqslant\mu$, and so the number of $z\in V_{j_t}$ for which we do not have $d_G^+(z,U_{\bar{\phi}(x^+)})\geqslant d(j_t,\bar{\phi}(x^+))\cdot(1+\gamma+3\alpha)\cdot n/r$ is at most $\epsilon|V_{j_t}|\leqslant3\epsilon\cdot n/r$ (using, for example, Proposition~\ref{prop:outneighbourhood_to_many_clusters}, with $r=1$ and $\mu'=d(V_{j_t},V_{\bar{\phi}(x^+)})$). More generally, if $\bar{\phi}\in\Omega$, then $(V_{j_t},V_{\bar{\phi}(x^+)})$, $(V_{j_t},V_{\bar{\phi}(\bar{x}^+)})$, $(V_{\bar{\phi}(x^-)},V_{j_t})$, and $(V_{\bar{\phi}(\bar{x}^-)},V_{j_t})$ all form $\epsilon$-regular pairs (of density at least $\mu$), and thus each one of the inequalities of \eqref{goodstuff} fails for at most $3\epsilon\cdot n/r$ many $z\in V_{j_t}$. Hence we have $|B_{\bar{\phi}}|\leqslant8\cdot3\epsilon \cdot n/r=24\epsilon\cdot n/r$ for every $\bar{\phi}\in\Omega$, as claimed. But then
\begin{equation*}
    \alpha\sqrt{\epsilon}\cdot n/r\leqslant|\bar{Z}|\cdot\sqrt{\epsilon}\leqslant\sum_{\bar{\phi}\in\Omega}\mathbb{P}(\phi=\bar{\phi})\cdot|B_{\bar{\phi}}|\leqslant24\epsilon\cdot n/r,
\end{equation*}
a contradiction as $\epsilon\ll\alpha$.
\renewcommand{\qedsymbol}{$\boxdot$}
\end{proof}
\renewcommand{\qedsymbol}{$\square$}

\embedstep Note that, by Claim~\ref{clm:Z-size} and as $\eta\ll\alpha,1/r_2$, $|Z|\geqslant3\eta n\geqslant 3|T_0|$. Therefore, using Theorem~\ref{thm:any_linear_bound}, let $\psi:T_0\to G$ be an embedding so that $\psi(V(T_0))\subseteq Z$. For each $x\in N_T(V(T_0))$, let $z_x\in Z$ be the image under $\psi$ of the unique neighbour of $x$ in $V(T_0)$. Our aim now is to extend $\psi$ to cover the components $S_x$, $x\in N_T(V(T_0))$, with each $\psi(x)$ in the appropriate in- or out-neighbourhood of $z_x$.

\embedstep Given $v\in V(T)\setminus V(T_0)$, let $x(v)\in N_T(V(T_0))$ be the unique vertex such that $v\in V(S_{x(v)})$. For each $x\in N_T(V(T_0))$, choose a homomorphism $\phi_x:H\to D$ by sampling $\phi$ according to $\mathcal{D}$ conditioned on the event that \eqref{goodstuff} holds for $z=z_x$. Define a function $\hat{\varphi}:V(T)\setminus V(T_0)\to[r]$ by setting $\hat{\varphi}(v)=\phi_{x(v)}(f(v))$. We remark that $\hat{\varphi}$ is a homomorphism from $T-V(T_0)$ to $D$, with $|\hat{\varphi}(V(S_x))|\leqslant|H|$ for every $x\in N_T(V(T_0))$.

Before continuing with the final steps of the embedding procedure, we will first establish some additional notation and useful properties of $\hat{\varphi}$ . Let $X_0$ be the set of $x\in N_T(V(T_0))$ with $|S_x|\leqslant1/\mu^3$, and let $Y_0=N_T(V(T_0))\setminus X_0$, so that $|S_x|>1/\mu^3$ whenever $x\in Y_0$. Note that $|Y_0|\leqslant\mu^3n$. Roughly speaking, we will try to embed each $v\in V(T)\setminus V(T_0)$ into $V_{\hat{\varphi}(v)}$, with each $v\in Y_0$ embedded into $W_{\hat{\varphi}(v)}$ and each $v\in V(T)\setminus (V(T_0)\cup Y_0)$ embedded into $U_{\hat{\varphi}(v)}$. This motivates the following claim, which we will prove later. For this, for each $j\in[r]$ and $\diamond\in\{+,-\}$, let $X_j^\diamond$ (respectively, $Y_j^\diamond$) be the set of vertices in $X_0$ (respectively, $Y_0$) which are $\diamond$-neighbours of $V(T_0)$ and allocated to $V_j$ by $\hat{\varphi}$. That is, for each $j\in [r]$ and $\diamond\in\{+,-\}$, let $X^\diamond_j=X_0\cap f^{-1}(X^\diamond)\cap\hat{\varphi}^{-1}(j)$ and $Y^\diamond_j=Y_0\cap f^{-1}(X^\diamond)\cap\hat{\varphi}^{-1}(j)$.

\begin{claim}\label{clm:varphi}
With probability at least $3/4$, the following properties hold.
\stepcounter{propcounter}
\begin{enumerate}[label =\emph{\textbf{\Alph{propcounter}\arabic{enumi}}}]
    \item\label{varphi1} For every $j\in[r]$, $|\hat{\varphi}^{-1}(j)|\leqslant(1+\gamma+3\alpha)\cdot n/r$.
    \item\label{varphi2} For every $j\in[r]$ and $x\in X_j^+\cup X_j^-$,
    \begin{enumerate}[label=\emph{\textbf{\Alph{propcounter}\arabic{enumi}.\arabic{enumii}}}]
        \item\label{varphi2-1} if $j\in J_1$, then $d_G^+(z_x,U_j)\geqslant|X_j^+|$ if $x\in X_j^+$ and $d_G^-(z_x,U_j)\geqslant|X_j^+\cup X_j^-|$ if $x\in X_j^-$;
        \item\label{varphi2-2} if $j\in J_2$, then $d_G^-(z_x,U_j)\geqslant|X_j^-|$ if $x\in X_j^-$ and $d_G^+(z_x,U_j)\geqslant|X_j^+\cup X_j^-|$ if $x\in X_j^+$.
    \end{enumerate}
    \item\label{varphi3} $|Y_j^+\cup Y_j^-|\leqslant\alpha\mu\cdot n/r$ for every $j\in[r]$.
\end{enumerate}
\end{claim}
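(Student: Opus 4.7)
I plan to establish \itref{varphi1}, \itref{varphi2}, and \itref{varphi3} each with failure probability at most $1/12$, then conclude by a union bound. The key structural feature to exploit is that the samples $\{\phi_x : x \in N_T(V(T_0))\}$ are mutually independent, and since $z_x \in Z$, the conditioning on \eqref{goodstuff} at $z_x$ inflates probabilities by at most $1/(1-\sqrt\epsilon)$, a factor that is absorbed by $\epsilon \ll \mu, \alpha$. I will also use repeatedly that $|f^{-1}(v)| \leq (1+36\mu)\beta(v)n$ from \eqref{eq:beta_definition}, and that $\beta(v) \geq \mu$ combined with \itref{ext-good-plan-3} forces $\mathbb{P}(\phi(v) = j) \leq 3/(\mu r)$ for each $v \in X$ (using $\gamma \leq 1$).

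For \itref{varphi1}, write $|\hat\varphi^{-1}(j)| = \sum_x |V(S_x) \cap f^{-1}(\phi_x^{-1}(j))|$. Swapping the order of summation and applying \itref{ext-good-plan-2} yields $\mathbb{E}|\hat\varphi^{-1}(j)| \leq (1+\gamma+2\alpha)n/r$. Each summand lies in $[0, |S_x|]$ with $\max_x |S_x| \leq \eta n$ and $\sum_x |S_x|^2 \leq \eta n^2$, so Theorem~\ref{thm:hoeffding-inequality} with $t = \alpha n/r$ gives failure probability at most $2\exp(-2\alpha^2/(r^2 \eta))$, which can be made arbitrarily small since $\eta \ll 1/r_2$. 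Property \itref{varphi3} is analogous: $|Y_j^+ \cup Y_j^-|$ is a sum of $|Y_0| \leq \mu^3 n$ independent Bernoulli indicators, with $\mathbb{E}|Y_j^+ \cup Y_j^-| \leq |Y_0| \cdot 3/(\mu r(1-\sqrt\epsilon)) \leq 4\mu^2 n/r$, so Hoeffding with slack $\alpha\mu n/(2r)$ (valid as $\mu \ll \alpha$) gives failure probability $2\exp(-\alpha^2 n/(2r^2 \mu))$.

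For \itref{varphi2}, each $\phi_x$ is sampled with \eqref{goodstuff} at $z_x$ automatically satisfied, so for $j \in J_1$ it suffices to show $|X_j^+| \leq d(j_t, j)(1+\gamma+3\alpha)n/r$ and $|X_j^+ \cup X_j^-| \leq d(j, j_t)(1+\gamma+3\alpha)n/r$, and symmetrically for $j \in J_2$ using \itref{ext-good-plan-3-2}. Repeating the expectation computation with \itref{ext-good-plan-3-1} in place of \itref{ext-good-plan-2} yields the same bounds with $3\alpha$ replaced by $2\alpha$. Since these variables are sums of independent $\{0,1\}$-indicators (over at most $n$ terms), Hoeffding with slack $d(j_t, j) \alpha n/r$ (resp.\ $d(j, j_t)\alpha n/r$) closes the gap. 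Whenever $X_j^+ \neq \emptyset$, the conditioning \eqref{goodstuff} at any $z_x$ with $\hat\varphi(x) = j$ together with the definition of the weights $d(e)$ forces $d(j_t, j) \geq \mu$ (and similarly $d(j, j_t) \geq \mu$ when $X_j^- \neq \emptyset$), so the slack is at least $\mu\alpha n/r$ and the failure probability is at most $2\exp(-2\mu^2\alpha^2 n/r^2)$; when the relevant set is empty the statement is vacuous.

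The main obstacle is this last point: ensuring the Hoeffding slack of order $n/r$ fits inside the density-weighted target $d(j_t, j)\alpha n/r$, which could in principle be zero. The vacuous-or-nonempty dichotomy above, combined with the fact that $d(j_t, j), d(j, j_t) \in \{0\} \cup [\mu, 1]$ by construction, resolves the issue cleanly; the remaining book-keeping (the $1/(1-\sqrt\epsilon)$ factor from per-$x$ conditioning and the $(1+36\mu)$ factor from converting $\beta$-weights to $|f^{-1}(\cdot)|$-weights) is absorbed by the slack $\alpha$ in passing from $(1+\gamma+2\alpha)$ to $(1+\gamma+3\alpha)$.
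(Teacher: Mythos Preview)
Your proposal is correct and follows essentially the same approach as the paper: bound the failure probability of each property separately by combining the expectation estimates from \itref{ext-good-plan-2} and \itref{ext-good-plan-3} with Hoeffding's inequality, absorbing the $(1-\sqrt\epsilon)^{-1}$ and $(1+36\mu)$ conversion factors into the $\alpha$-slack. One small correction: the assertion that $X_j^+\neq\emptyset$ forces $d(j_t,j)\geq\mu$ does not follow from \eqref{goodstuff} (which only concerns edge counts in $G$, not the abstract weights $d(e)$), but rather from \itref{ext-good-plan-3-1}---if $d(j_t,j)=0$ then $\E(\beta(\phi^{-1}(j)\cap X^+))=0$, whence $\mathbb{P}(\phi(v)=j)=0$ for all $v\in X^+$, and conditioning on \eqref{goodstuff} cannot create probability mass. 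The paper avoids the density-weighted slack altogether by taking a fixed Hoeffding slack of $\mu^2 n/r$ (which is absorbed since $d(j_t,j)\geq\mu$ once nonzero), but your variant works just as well with this corrected justification.
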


We therefore proceed with the assumption that properties \itref{varphi1}-\itref{varphi3} hold.

\embedstep Extend $\psi$ to cover $X_0$ as follows.
\begin{itemize}
    \item For each $j\in J_1$, using \itref{varphi2-1}, greedily extend $\psi$ to first cover $X_j^+$, and then to cover $X_j^-$, so that $\psi(X_j^+\cup X_j^-)\subseteq U_j$.
    \item For each $j\in J_2$, using \itref{varphi2-2} greedily extend $\psi$ to first cover $X_j^-$, and then to cover $X_j^+$, so that $\psi(X_j^+\cup X_j^-)\subseteq U_j$.
\end{itemize}

\embedstep Let $X'\subseteq X_0\cup Y_0$ be a maximal set such that there exists a homomorphism $\varphi$ from $T-V(T_0)$ to $D$ and an extension of $\psi$ covering $\cup_{x\in X'}V(S_x)$ such that the following properties hold.
\stepcounter{propcounter}
\begin{enumerate}[label =\textbf{\Alph{propcounter}\arabic{enumi}}]
    \item\label{extension-1}$\varphi(v)=\hat{\varphi}(v)$ for every $v\in X_0\cup(\cup_{x\in Y_0}V(S_x))$.
    \item\label{extension-2}$|\varphi(V(S_x))|\leqslant|H|$ for every $x\in X_0\cup Y_0$.
    \item\label{extension-3}$|\varphi^{-1}(j)|=|\hat{\varphi}^{-1}(j)|$ for every $j\in[r]$.
    \item\label{extension-4} If $x\in X'\cap Y_0$ then $\psi(x)\in W_{\varphi(x)}$, and if $v\in (\cup_{x\in X'}V(S_x))\setminus Y_0$ then $\psi(v)\in U_{\varphi(v)}$.
\end{enumerate}
We remark that this is well-defined, as we may take $X'=\emptyset$ and $\varphi=\hat{\varphi}$.

For this maximal $X'$, take $(\varphi,\psi)$ so that \ref{extension-1}-\ref{extension-4} hold, and let $A=\psi(V(T_0)\cup X_0\cup(\cup_{x\in X'}V(S_x)))$. Note that, by~\itref{varphi1},~\ref{extension-3} and~\ref{extension-4}, we have
\begin{equation}\label{eq:U-space}
|U_j\setminus A|\geq\alpha\cdot n/r
\end{equation}
for every $j\in[r]$. We now show that $X'$ includes all of $Y_0$ and almost all of $X_0$.
\begin{claim}\label{clm:Y-in-X'}
$Y_0\subseteq X'$.
\end{claim}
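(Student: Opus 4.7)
The plan is to show that if some $y \in Y_0 \setminus X'$ exists, we can extend $\psi$ to an embedding of $S_y$ and set $X' := X' \cup \{y\}$ (with $\varphi := \hat\varphi$ unchanged), preserving \ref{extension-1}-\ref{extension-4} and contradicting maximality. Letting $\diamond \in \{+,-\}$ satisfy $y \in N_T^\diamond(V(T_0))$, so $f(y) \in X^\diamond$, the idea is to place $y$ at some $w \in W_{\hat\varphi(y)}$ that is a $\diamond$-neighbour of $z_y$ and embed $V(S_y) \setminus \{y\}$ into the unused parts of the $U_j$'s according to $\hat\varphi$.

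First, I would locate a good $w \in W_{\hat\varphi(y)}$. Since $\phi_y$ was sampled conditionally on \eqref{goodstuff} for $z = z_y$, we have $d_G^\diamond(z_y, W_{\hat\varphi(y)}) \geq 2\alpha\mu \cdot n/r$. By \itref{varphi3}, at most $\alpha\mu \cdot n/r$ vertices of $W_{\hat\varphi(y)}$ are already in the image of $\psi$. Since $|\hat\varphi(V(S_y))| \leq |H|$ is a constant and the pair $(V_{\hat\varphi(y)}, V_j)$ is $\epsilon$-regular of density at least $\mu$ for each $j \in \hat\varphi(V(S_y))$ (by \itref{ext-good-plan-1} and \itref{ext-good-plan-4} together with our choice of $D$), all but at most $2|H|\epsilon \cdot |W_{\hat\varphi(y)}|$ vertices of $W_{\hat\varphi(y)}$ have at least $(\mu-\epsilon)|V_j|$ in-neighbours and at least $(\mu-\epsilon)|V_j|$ out-neighbours in each such $V_j$. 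Picking $w$ in the intersection of these three conditions and setting $\psi(y) = w$ then keeps enough neighbourhood room for the remainder of the embedding.

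Second, I would extend $\psi$ to $S_y - y$ via a pre-rooted variant of Proposition~\ref{prop:branch-embed}. The restriction $\hat\varphi|_{V(S_y)}$ is a homomorphism from $S_y$ to $D$ with $\hat\varphi(u) \neq \hat\varphi(v)$ for every $uv \in E(S_y)$ (by \itref{ext-good-plan-1} and \itref{ext-good-plan-4} applied after composing with $f$), and the corresponding cluster pairs are $\epsilon$-regular of density at least $\mu$. Since $|S_y| \leq \eta n \leq \epsilon \cdot n/r$ by the hierarchy $\eta \ll 1/r_2 \ll \epsilon$ and \eqref{eq:U-space} gives $|U_j \setminus R| \geq \alpha \cdot n/r$ for every $j \in [r]$, there is ample room. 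Using $w$ as the pre-placed image of $y$ (whose in- and out-neighbourhoods into each $V_j$, $j \in \hat\varphi(V(S_y))$, were arranged above) and $U_j \setminus R$ in place of $U_j$ for each $j \neq \hat\varphi(y)$, the usual inductive embedding in the proof of Proposition~\ref{prop:branch-embed} extends $\psi$ to an embedding of $S_y$ with $\psi(v) \in U_{\hat\varphi(v)} \setminus R$ for $v \in V(S_y) \setminus \{y\}$.

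With the updated $\psi$ and $\varphi = \hat\varphi$ unchanged, properties \ref{extension-1}-\ref{extension-3} are preserved and \ref{extension-4} holds by construction, contradicting the maximality of $X'$. The main obstacle is the simultaneous choice of $w$: it must be a $\diamond$-neighbour of $z_y$, unused, and have good in/out-neighbourhoods into each of the (constantly many) relevant clusters. This is controlled by the buffer $2\alpha\mu \cdot n/r$ in \eqref{goodstuff}, the bound $\alpha\mu \cdot n/r$ from \itref{varphi3}, and the regularity bounds together with $\epsilon \ll \mu, 1/|H|$.
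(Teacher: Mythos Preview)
Your proposal is correct and follows essentially the same approach as the paper: show that for any $y\in Y_0\setminus X'$ there is room in $N_G^\diamond(z_y,W_{\varphi(y)})\setminus R$ (via \eqref{goodstuff} and \itref{varphi3}) and in each $U_j\setminus R$ (via \eqref{eq:U-space}), then invoke Proposition~\ref{prop:branch-embed} to embed $S_y$ and contradict maximality. The paper's write-up is more compressed---it simply cites Proposition~\ref{prop:branch-embed} directly, with the $\diamond$-neighbourhood of $z_y$ in $W_{\varphi(y)}$ serving as the target set for the cluster $\varphi(y)$---whereas you spell out the choice of $w$ and describe a ``pre-rooted variant'', but the content is the same.

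One small slip: you write ``with $\varphi:=\hat\varphi$ unchanged'', but you should keep the \emph{current} $\varphi$, not reset it to $\hat\varphi$; these may differ on components $S_x$ with $x\in X_0$ where switching has occurred. This is harmless here because \ref{extension-1} guarantees $\varphi=\hat\varphi$ on $V(S_y)$ (since $y\in Y_0$), so your use of $\hat\varphi$ for the embedding of $S_y$ is equivalent, but the contradiction to maximality requires exhibiting $X'\cup\{y\}$ together with the existing $\varphi$, not a fresh one.
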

\begin{proof}[Proof of Claim~\ref{clm:Y-in-X'}]
For any $\diamond\in\{+,-\}$ and $x\in N_T^\diamond(V(T_0))\cap Y_0$, we have 
\begin{equation*}
    |N_G^\diamond(z_x,W_{\varphi(x)})\setminus A|\overset{(\ref{goodstuff})}{\geq}2\alpha\mu\cdot n/r-|Y_j^+\cup Y_j^-|\overset{\itref{varphi3}}{\geq}\alpha\mu\cdot n/r.
\end{equation*}
So if $x\in Y_0\setminus X'$, then, by Proposition~\ref{prop:branch-embed} and \eqref{eq:U-space}, $\psi$ can be extended to cover $V(S_x)$ with $\psi(x)\in W_{\varphi(x)}\setminus A$ and $\psi(v)\in U_{\varphi(v)}\setminus A$ whenever $v\in V(S_x)\setminus\{x\}$, contradicting the maximality of $X'$. So we must have $Y_0\subseteq X'$.
\renewcommand{\qedsymbol}{$\boxdot$}
\end{proof}
\renewcommand{\qedsymbol}{$\square$}

\begin{claim}\label{clm:X-in-X'}
$|X_0\setminus X'|\leqslant \mu^4n$.
\end{claim}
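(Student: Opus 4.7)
The plan is to show that for all but $O(\epsilon n)$ vertices $x \in X_0$, one could add $x$ to $X'$, contradicting maximality, thereby giving $|X_0 \setminus X'| \leq O(\epsilon n) \ll \mu^4 n$. For each $x \in X_0$, I would call $\psi(x) \in V_{\hat{\varphi}(x)}$ \emph{good} if, for every edge $xv \in E(S_x)$ with $\hat{\varphi}(v)=j'$ and direction $\diamond\in\{+,-\}$, we have $d_G^\diamond(\psi(x), U_{j'}\setminus R) \geq (d(V_{\hat{\varphi}(x)},V_{j'})-\epsilon)\cdot \alpha n/r$, a quantity much larger than $|S_x|\leq 1/\mu^3$.

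The first step is to count bad vertices. By \eqref{eq:U-space}, $|U_{j'}\setminus R|\geq \alpha n/r \gg \epsilon |V_{j'}|$, so $\epsilon$-regularity still applies to $(V_{\hat{\varphi}(x)},U_{j'}\setminus R)$: as in Proposition~\ref{prop:outneighbourhood_to_many_clusters}, the number of vertices in $V_{\hat{\varphi}(x)}$ failing the required degree bound is at most $\epsilon|V_{\hat{\varphi}(x)}|$ per relevant edge. Since $H$ has constantly many edges and there are $r$ clusters, the total number of bad vertices in $G$ is $O(|H|\epsilon n)$, which is $\ll \mu^4 n$ given $\epsilon\ll \mu$. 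As each bad $\psi(x)$ occupies a distinct bad vertex, at most $O(\epsilon n)$ vertices $x\in X_0$ have $\psi(x)$ bad; consequently more than $\mu^4 n/2$ elements of $X_0\setminus X'$ (if this set were too large) would have $\psi(x)$ good.

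The second step is to show that any $x\in X_0\setminus X'$ with $\psi(x)$ good can be absorbed into $X'$. I would assume (WLOG) that the fixed pair $(\varphi,\psi)$ satisfies $\varphi|_{V(S_x)}=\hat{\varphi}|_{V(S_x)}$ for every $x\in X_0\setminus X'$. This is consistent with \ref{extension-1}--\ref{extension-4}: on non-$X'$ components $\varphi$ is constrained only through the global histogram condition \ref{extension-3}, and any discrepancy can be compensated within the $X'$-components by swapping allocations between rooted-iso-equivalent pieces (of which there are boundedly many types, since $|S_x|\leq 1/\mu^3$). Given this, set $\varphi''=\varphi$ and extend $\psi$ to $V(S_x)$ by embedding $S_x$ from the fixed root $\psi(x)$ into the clusters $\{U_{\hat{\varphi}(v)}\setminus R\}_{v\in V(S_x)}$. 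Since $\psi(x)$ is good and each $|U_{j'}\setminus R|\geq \alpha n/r \gg |S_x|$, a fixed-root variant of the argument inside Proposition~\ref{prop:branch-embed} produces the embedding step by step, and the resulting $(\varphi'',\psi'')$ satisfies \ref{extension-1}--\ref{extension-4} for $X'\cup\{x\}$, contradicting maximality.

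The main obstacle is justifying the WLOG selection of $\varphi$ in the second step: one must show the global histogram condition \ref{extension-3} can be preserved while forcing $\varphi|_{V(S_x)}=\hat{\varphi}|_{V(S_x)}$ on all of $X_0\setminus X'$. This is a rebalancing argument on the $X'$-components, exploiting the fact that the rooted iso-types of the small components form a bounded collection and thus admit compatible reshuffling of their cluster allocations without disturbing the existing embedding $\psi|_{\cup_{X'} V(S_{x'})}$.
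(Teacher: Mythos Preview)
The proposal has a genuine gap in the first step. Your bound of $O(|H|\epsilon n)$ on the total number of bad vertices is not justified. The set of vertices in $V_j$ with low degree to $U_{j'}\setminus R$ has size at most $\epsilon|V_j|$, but this is for a \emph{fixed} target cluster $j'$. The clusters relevant to $x\in X_0$ are $\{\hat\varphi(v):v\in N_{S_x}(x)\}$, which depend on the independently sampled homomorphism $\phi_x$; two vertices $x,x'\in X_0$ with $\hat\varphi(x)=\hat\varphi(x')=j$ can have entirely disjoint sets of relevant targets. Thus the union over all targets of the bad sets in $V_j$ has size up to $2r\cdot\epsilon|V_j|$, not $|H|\cdot\epsilon|V_j|$. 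Since $1/r_1\ll\epsilon$ in the hierarchy (so $\epsilon r\gg 1$), this union can be all of $V_j$, making the count vacuous. Moreover, the greedy embedding of $X_0$ that precedes the claim uses only the tight degree bounds from \itref{varphi2} and makes no attempt to steer $\psi(x)$ away from its $x$-specific bad set, so nothing prevents many (or all) of the pre-placed images $\psi(x)$ from being bad for their particular $\phi_x$.

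The paper's argument sidesteps this entirely by not requiring the pre-placed $\psi(x)$ to be good. Instead, it pigeonholes on the pair (cluster $j$, rooted isomorphism type $S$) to find a large set $X_j^S\subseteq X_0\setminus X'$, and then applies Proposition~\ref{prop:branch-embed} with the root allowed to land anywhere in $\psi(X_j^S)$ --- a set of size $\gg\epsilon|V_j|$. That proposition chooses a good root internally, landing at some $\psi(x_2)$; the resulting copy is then reinterpreted as an embedding of $S_{x_2}$ (legitimate since $S_{x_1}\cong S_{x_2}$), and $\varphi$ is repaired via the automorphism swapping $S_{x_1}$ and $S_{x_2}$. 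Your second-step ``rebalancing'' is essentially this swap repackaged globally, but even if it can be made to work, without a corrected first step you cannot identify an $x\in X_0\setminus X'$ whose component can be embedded from the already-fixed image $\psi(x)$.
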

\begin{proof}[Proof of Claim~\ref{clm:X-in-X'}]
For each $m\in\mathbb{N}$, let $g(m)$ denote the number of rooted oriented trees with at most $m$ vertices. Suppose, for contradiction, that $|X_0\setminus X'|>\mu^4n$. Then there is some $j\in[r]$ with $|(X_j^+\cup X_j^-)\setminus X'|>\mu^4\cdot n/r$. Therefore, there is some rooted oriented tree $S$ such that, if $X_j^S$ is the set of $x\in(X_j^+\cup X_j^-)\setminus X'$ for which $S_x$ is isomorphic to $S$, then $|X_j^S|\geqslant(\mu^4/g(\lfloor1/\mu^3\rfloor))\cdot n/r$.

Choose $x_1\in X_j^S$ arbitrarily. By Proposition~\ref{prop:branch-embed} and \eqref{eq:U-space}, there is a copy of $S_{x_1}$ in $G$, with each $v\in V(S_{x_1})\setminus\{x_1\}$ copied to $U_{\varphi(v)}\setminus A$ and $x_1$ copied to $\psi(X_j^S)$, and let $x_2\in X_j^S$ be such that $\psi(x_2)$ is the image of $x_1$ in this copy. Because $S_{x_1}$ and $S_{x_2}$ are isomorphic, we may regard this as a copy of $S_{x_2}$, and use this copy to extend $\psi$ to cover $V(S_{x_2})$. \itref{ext-good-plan-4} here ensures that the only vertex in $\psi(X_j^S)$ appearing in the copy of $S_{x_2}$ is that identified with $x_2$, and so this extension of $\phi$ does not conflict with the existing images of $X_j^S$.

Let $\rho$ be an automorphism of $T-V(T_0)$ with $\rho(S_{x_1})=S_{x_2}$, $\rho(S_{x_2})=S_{x_1}$, and $\rho(v)=v$ whenever $v\notin V(S_{x_1})\cup V(S_{x_2})$. Note that $\psi(v)\in U_{\varphi(\rho(v))}$ whenever $v\in (\cup_{x\in X'\cup\{x_2\}}V(S_x))\setminus Y_0$, and so $\varphi\circ\rho$ is a homomorphism from $T-V(T_0)$ to $D$ also satisfying~\ref{extension-1}-\ref{extension-4}. So using this extension of $\psi$ and the homomorphism $\varphi\circ\rho$, we may add $x_2$ to $X'$, a contradiction.
\renewcommand{\qedsymbol}{$\boxdot$}
\end{proof}
\renewcommand{\qedsymbol}{$\square$}

We now have an embedding of a subtree $T[\psi^{-1}(A)]\subseteq T$ into $G[V']$, where, using Claims~\ref{clm:Y-in-X'} and~\ref{clm:X-in-X'},
\begin{equation*}
    |V(T)\setminus\psi^{-1}(A)|\leqslant\sum_{x\in (X_0\cup Y_0)\setminus X'}|S_x|\leqslant\mu n.
\end{equation*}
Recall that we also have $d_G^\pm(v,U)\geqslant4\alpha^2n\geqslant3\mu n$ for every $v\in V'$. Therefore, by Corollary~\ref{cor:tree-extension}, this embedding can be extended to an embedding of $T$ into $G$ with the vertices of $V(T)\setminus\psi^{-1}(A)$ embedded into $U$. All that remains now is to prove Claim~\ref{clm:varphi}.

\begin{proof}[Proof of Claim~\ref{clm:varphi}]
We will prove that each of the properties \itref{varphi1}-\itref{varphi3} fails with probability at most $1/16$, and so the claim then follows.

\itref{varphi1}: As each $\phi_x$ was chosen previously by sampling $\phi$ conditioned on an event which holds with probability at least $(1-\sqrt{\epsilon})$, we have that for any $v\in V(T)\setminus V(T_0)$, $j\in[r]$,
\begin{equation}\label{eq:conditioning}
    \mathbb{P}(\phi_{x(v)}(f(v))=j)\leqslant(1-\sqrt{\epsilon})^{-1}\mathbb{P}(\phi(f(v))=j).
\end{equation}
Thus, we find that, for any $w\in V(H)$, $j\in[r]$,
\begin{align}
    \E(|\hat{\varphi}^{-1}(j)\cap f^{-1}(w)|)&=\sum_{v\in f^{-1}(w)}\mathbb{P}(\phi_{x(v)}(f(v))=j)\nonumber\\
    &\overset{\eqref{eq:conditioning}}{\leqslant}\sum_{v\in f^{-1}(w)}(1-\sqrt{\epsilon})^{-1}\mathbb{P}(\phi(f(v))=j)\overset{\text{(\ref{eq:beta_definition})}}{\leqslant}(1+\sqrt{\mu})\cdot\E(\beta(\phi^{-1}(j)\cap \{w\}))\cdot n.\label{eq:aux-rv}
\end{align}
For any $j\in[r]$, we have
\begin{align}
    \E(|\hat{\varphi}^{-1}(j)|)&=\sum_{w\in V(H)}\E(|\hat{\varphi}^{-1}(j)\cap f^{-1}(w)|)\overset{\text{(\ref{eq:aux-rv}),\itref{ext-good-plan-2}}}{\leqslant}\E(\beta(\phi^{-1}(j)))\cdot n+\alpha\cdot n/r\overset{\itref{ext-good-plan-2}}{\leqslant}(1+\gamma+2\alpha)\cdot n/r.\label{eq:exp-j}
\end{align}
Therefore, as
\begin{equation}\label{eq:sum-squares}
    \sum_{x\in X_0\cup Y_0}|S_x|^2\leqslant \sum_{x\in X_0\cup Y_0}|S_x|\cdot\max_{x\in X_0\cup Y_0}{|S_x|}\leqslant \eta n^2,
\end{equation}
we find that
\begin{align*}
    \mathbb{P}(|\hat{\varphi}^{-1}(j)|>(1+\gamma+3\alpha)\cdot n/r)&\overset{\eqref{eq:exp-j}}{\leqslant}\mathbb{P}(||\hat{\varphi}^{-1}(j)|-\E(|\hat{\varphi}^{-1}(j)|)|\geqslant\alpha\cdot n/r)\\
    &\hspace{-0.65cm}\overset{\text{Theorem~\ref{thm:hoeffding-inequality}}}{\leqslant}2\exp{\left(-\frac{2\alpha^2\cdot n^2/r^2}{\sum_{x\in X_0\cup Y_0}|S_x|^2}\right)}\overset{\eqref{eq:sum-squares}}{\leqslant}2\exp{\left(-\frac{2\alpha^2}{\eta r^2}\right)},
\end{align*}
and so the probability that \itref{varphi1} fails is at most $2r\cdot\exp{(-2\alpha^2/\eta r^2)}<1/16$.

\itref{varphi2}: We first note that, for any $j\in[r]$ and $\diamond\in\{+,-\}$,
\begin{equation}\label{eq:exp-j-X}
    \E(|X_j^\diamond|)=\sum_{w\in X^\diamond}\E(|\hat{\varphi}^{-1}(j)\cap f^{-1}(w)|)\overset{\text{(\ref{eq:aux-rv})}}{\leqslant}(1+\sqrt{\mu})\cdot\E(\beta(\phi^{-1}(j)\cap X^\diamond))\cdot n.
\end{equation}
Also,
\begin{align*}
    \mathbb{P}(||X_j^\diamond|-\E(|X_j^\diamond|)|\geqslant\mu^2\cdot n/r)&\overset{\text{Theorem~\ref{thm:hoeffding-inequality}}}{\leqslant}2\exp{\left(-\frac{2\mu^4\cdot n^2/r^2}{|X_0\cap f^{-1}(X^\diamond)|}\right)}\leqslant2\exp{\left(-\frac{2\mu^4}{r^2}\cdot n\right)}.
\end{align*}
Therefore, with probability at least $1-4r\cdot\exp{(-(2\mu^4/r^2)\cdot n)}>15/16$, we have
\begin{equation}\label{eq:exp-Xj-reduced}
    ||X_j^\diamond|-\E(|X_j^\diamond|)|\leqslant\mu^2\cdot n/r\qquad\text{for every $j\in[r]$, $\diamond\in\{+,-\}$.}
\end{equation}
Thus, it is enough to show that \itref{varphi2} follows from \eqref{eq:exp-Xj-reduced}. Indeed, for $j\in J_1$, if $\mathbb{P}(|X_j^+|>0),\mathbb{P}(|X_j^-|>0)>0$, then $d(j_t,j),d(j,j_t)>\mu$, and so for any $x\in X_j^+$ we have
\begin{equation*}
    |X_j^+|\overset{\eqref{eq:exp-Xj-reduced}}{\leqslant}\E(|X_j^+|)+\mu^2\cdot n/r\overset{\eqref{eq:exp-j-X},\itref{ext-good-plan-3-1}}{\leqslant}(1+\sqrt{\mu})d(j_t,j)(1+\gamma+\alpha)\cdot n/r+\mu^2\cdot n/r\overset{\eqref{goodstuff}}{\leqslant}d_G^+(z_x,U_j),
\end{equation*}
and for any $x\in X_j^-$ we have
\begin{align*}
    |X_j^+\cup X_j^-|&\overset{\eqref{eq:exp-Xj-reduced}}{\leqslant}\E(|X_j^+\cup X_j^-|)+2\mu^2\cdot n/r\\
    &\hspace{-0.40cm}\overset{\eqref{eq:exp-j-X},\itref{ext-good-plan-3-1}}{\leqslant}(1+\sqrt{\mu})d(j,j_t)(1+\gamma+\alpha)\cdot n/r+2\mu^2\cdot n/r\overset{\eqref{goodstuff}}{\leqslant}d_G^-(z_x,U_j),
\end{align*}
and so~\itref{varphi2-1} holds. If instead $|X_j^+|=0$ with probability 1 or $|X_j^-|=0$ with probability 1, then the same conclusion holds. Similarly, if $j\in J_2$ then \eqref{eq:exp-Xj-reduced} implies~\itref{varphi2-2}.

\itref{varphi3}: Note that if $x\in N_T(V(T_0))$ and $j\in[r]$, then, because $\beta(f(x))\geqslant\mu$,
\begin{align*}
    \mathbb{P}(\hat{\varphi}(x)=j)&=\mathbb{P}(\phi_x(f(x))=j)\overset{\eqref{eq:conditioning}}{\leqslant}(1-\sqrt{\epsilon})^{-1}\mathbb{P}(\phi(f(x))=j)\\
    &=(1-\sqrt{\epsilon})^{-1}\frac{1}{\beta(f(x))}\cdot\E(\beta(\phi^{-1}(j)\cap\{f(x)\}))\overset{\text{\itref{ext-good-plan-2}}}{\leqslant}6/\mu r,
\end{align*}
and so, for any $j\in[r]$, we have $\E(|Y_j^+\cup Y_j^-|)\leqslant6\mu^2\cdot n/r$. Therefore, for any $j\in[r]$,
\begin{align*}
\mathbb{P}(|Y_j^+\cup Y_j^-|>\alpha\mu\cdot n/r)&\leqslant\mathbb{P}(||Y_j^+\cup Y_j^-|-\E(|Y_j^+\cup Y_j^-|)|>\mu^2\cdot n/r)\\
&\hspace{-0.65cm}\overset{\text{Theorem~\ref{thm:hoeffding-inequality}}}{\leqslant}2\exp{\left(-\frac{2\mu^4\cdot n^2/r^2}{|Y_0|}\right)}\leqslant2\exp{\left(-\frac{2\mu^4}{r^2}\cdot n\right)},
\end{align*}
and so, the probability that \itref{varphi3} fails is at most $r\cdot\exp{(-(2\mu^4/r^2)\cdot n)}<1/16$.
\renewcommand{\qedsymbol}{$\boxdot$}
\qedhere
\renewcommand{\qedsymbol}{$\square$}
\qedsymbol
\end{proof}
\renewcommand{\qedsymbol}{}
\end{proof}
\renewcommand{\qedsymbol}{$\square$}

\section{Theorem~\ref{thm:n+an}: embedding the core and attached small  trees}\label{sect:T_1-bounded}
In this section, following the proof outline in Section~\ref{sect:outline}, we embed $T_0$ and $T_1$ for Theorem~\ref{thm:n+an}, doing so in the form of the following result, Theorem~\ref{thm:n+an-partial}. (This compares to our work in Section~\ref{sect:T_1-unbounded} for Theorem~\ref{thm:n+k+an}, proving Theorem~\ref{thm:n+k+an-partial}.) 

\begin{theorem}\label{thm:n+an-partial}
Let $1/n\ll\eta\ll\alpha$. Suppose $T$ is an $n$-vertex oriented tree with a subtree $T_0\subseteq T$, such that $|T_0|\leqslant\eta n$ and $T$ is formed from $T_0$ by attaching to each vertex $v$ of $T_0$ a tree $S_v$ with $|S_v|\leqslant\eta n$. Then, any $(1+\alpha)n$-vertex tournament contains a copy of $T$.
\end{theorem}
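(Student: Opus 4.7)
The plan follows the skeleton of the previous section: reserve a random subset of $V(G)$ for a final cleanup, apply the regularity lemma to the remainder, build a template in the reduced digraph matching the structure of $T$, and then realise that template as an embedding using the usual regularity techniques. Specifically, I would first apply Proposition~\ref{prop:random-subset} to reserve a random set $U \subseteq V(G)$ of density about $\alpha/10$, so that most vertices outside $U$ have $\Omega(\alpha^2 n)$ in- and out-neighbours in $U$; then apply Corollary~\ref{cor:regular_subtournament} to the complement of $U$ to obtain an $\epsilon$-regular partition $V_1 \cup \ldots \cup V_r$, and form the reduced $\sqrt{\epsilon}$-almost tournament $R$ on $[r]$ whose edges correspond to pairs of density at least $\mu$. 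The natural constant hierarchy is $\eta \ll 1/r_2 \ll 1/r_1 \ll \epsilon \ll \mu \ll \alpha$.

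Without the $+k$ slack that was available in Theorem~\ref{thm:n+k+an-partial}, embedding $T_0$ into a single cluster is not in general possible: if the tree $T - V(T_0)$ happens to have, say, almost all of its mass attached to $V(T_0)$ by out-edges, then a single cluster would need to have out-neighbour capacity essentially $n$, which cannot be guaranteed. The key step will instead be to spread $T_0$ across the spine of a caterpillar-like substructure of $R$. Specifically, the plan is to find a walk $j_1 j_2 \ldots j_s$ in $R$ with $s = \Omega(r)$, together with a partition of the remaining cluster indices into leg sets $L_i^+, L_i^-$ for each spine vertex $j_i$, such that $(V_{j_i}, V_k)$ is $\epsilon$-regular with density at least $\mu$ for each $k \in L_i^+$, and likewise $(V_k, V_{j_i})$ for each $k \in L_i^-$. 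I would construct this by iteratively peeling off spine vertices one at a time: at each step, applying Proposition~\ref{prop:max_digraph_split} to the residual almost-tournament yields a new vertex with balanced in- and out-degree into the surviving clusters, and the relevant portions of its out- and in-neighbourhoods are then absorbed into the corresponding leg sets.

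Once the caterpillar is in place, the embedding proceeds as follows. Construct a homomorphism $\varphi : T \to R$ by folding $T_0$ onto a short prefix of the spine, which is easy since $|T_0| \leqslant \eta n \ll n/r$ so that at most $O(\eta n)$ vertices of $T_0$ land on each spine cluster, and then, for each $v \in V(T_0)$ sent to some $j_i$, greedily assigning each component $S$ of $S_v - v$ to a single leg cluster in $L_i^+$ or $L_i^-$ according to whether the attachment edge from $v$ at the root of $S$ is an out- or in-edge. The capacity budgets should be chosen so that the total assignment to any leg cluster comfortably fits within its $(1+\alpha)n/r$ capacity, which is possible since $|S_v - v| \leqslant \eta n$ is much smaller than a cluster. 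A component-by-component application of Proposition~\ref{prop:branch-embed} then realises $\varphi$ as an actual embedding $\psi$ of $V(T_0)$ together with almost all of the components $\{S_v - v : v \in V(T_0)\}$ into $G \setminus U$, and any remaining components are absorbed greedily via Corollary~\ref{cor:tree-extension} using the guaranteed in- and out-degrees from the good vertices of $V(G) \setminus U$ into $U$.

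The main obstacle will be the caterpillar construction itself: the iterative peeling must produce a spine long enough, and with sufficiently balanced in- and out-leg capacity at each vertex, to accommodate whatever split between out-attached and in-attached mass the adversarial tree $T$ happens to present, while the leg sets still cover essentially all of $[r]$. This is precisely where the slack $\alpha n$ in the tournament size is spent: the combined out-leg and in-leg capacities over all spine vertices are each required to exceed the corresponding demand in $T$ by a factor of $(1 + \Omega(\alpha))$, so that the greedy allocation never overflows any cluster. Once this purely combinatorial step in the reduced digraph has been established, the remainder of the proof is a routine adaptation of the regularity-based arguments developed for Theorem~\ref{thm:n+k+an-partial}.
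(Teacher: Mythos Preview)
Your high-level plan (regularity, caterpillar in the reduced digraph, spread $T_0$ over the spine, hang components on legs) is right, and indeed this is what the paper does. But two concrete points in your execution would fail.

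First, the spine cannot just be a walk. You need to embed the tree $T_0$ across the spine clusters, and $T_0$ has arbitrary edge directions; a walk only guarantees edges between consecutive spine vertices. The paper's Corollary~\ref{cor:caterpillar} instead finds a \emph{transitive tournament} $j_1\to j_2\to\cdots\to j_s$ in $R$ (so every forward pair is an edge), together with a partition $V(T_0)=X_1\cup\cdots\cup X_s$ in which no edge of $T_0$ runs from $X_i$ to $X_j$ with $i>j$. This is what makes the embedding of $T_0$ with $X_\ell\subseteq V_{j_\ell}$ possible.

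Second, and more seriously, a generic caterpillar built before looking at $T$ cannot work, and your ``short prefix'' idea collapses. If the spine has length $\Omega(r)$ and the leg clusters are partitioned along it, then placing $T_0$ on a short prefix means only the legs of that prefix are available for the components of $T-V(T_0)$; the total capacity there is a small fraction of $n$, nowhere near enough. The paper avoids this by building the caterpillar \emph{after} the tree: it first chooses the partition $V(T_0)=X_1\cup\cdots\cup X_s$ (with $s$ a constant depending only on $\alpha,\epsilon$, not on $r$), and then finds the spine so that the number of out- and in-legs at $j_\ell$ is at least $\frac{r}{(1+\alpha/4)n}\sum_{v\in X_\ell}|S_v^\pm|$. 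This tailoring of leg sizes to the tree's mass distribution is the heart of Lemma~\ref{lm:caterpillar_updated}/Corollary~\ref{cor:caterpillar}, and iterated applications of Proposition~\ref{prop:max_digraph_split} alone do not deliver it: at each step one must absorb the right number of out- and in-neighbours into legs and then recurse \emph{inside} the out-neighbourhood (to get the transitive structure), not on the whole residual graph.

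A minor point: the paper does not reserve a random set $U$ for this theorem; once the caterpillar is tailored to $T$, all of $T$ is embedded directly via Lemma~\ref{lm:bounded_degree_partial_1}, with no cleanup phase needed.
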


Note that there is no direct maximum degree imposed on $T$ in Theorem~\ref{thm:n+an-partial}, but, as (exactly) one tree is attached to each vertex in $T_0$ to get $T$, it follows that $\Delta(T)\leq 2\eta n$. As with Theorem~\ref{thm:n+k+an-partial}, the proof of Theorem~\ref{thm:n+an-partial} consists of a procedure for constructing an embedding of any relevant oriented tree into an arbitrary tournament of size $(1+\alpha)n$. On a high level, the procedure is as follows.

\begin{enumerate}
    \item Take an $\epsilon$-regular partition of (a large subtournament of) $G$ with vertex classes $V_1,\ldots,V_r$. Let $R$ be a $\sqrt{\epsilon}$-almost tournament (see Section~\ref{sect:regularity}) on $[r]$ such that $(V_i,V_j)$ is an $\epsilon$-regular pair of not-too-small density whenever $i\to_R j$.
    \item\label{s-allocation} For some $s$, find subsets $\{j_\ell\},I_\ell^+,I_\ell^-\subseteq[r]$ for $\ell\in[s]$, all disjoint, along with a partition $V(T_0)=X_1\cup\ldots\cup X_s$, such that
    \begin{enumerate}
        \item the edge relations indicated in Figure~\ref{fig:caterpillar} are satisfied in $R$,
        \item there are no edges of $T_0$ directed from $X_i$ to $X_j$ whenever $i>j$, and
        \item for each $\diamond\in\{+,-\}$ and $\ell\in[r]$, the total number of vertices contained in components of $T-T_0$ attached to $X_\ell$ by a $\diamond$-edge is at most $|I_\ell^\diamond|\cdot(1+\alpha/4)\cdot n/r$.
    \end{enumerate}
    \item Take any partition $V(T_0)=Y_1\cup\ldots\cup Y_\tau$ such that, for each $t\in[\tau]$, $T[Y_t]$ is a connected component of some $T_0[X_\ell]$ and $T_0[Y_1\cup\ldots\cup Y_t]$ is a tree.
    \item\label{s-embedding} Initially let $\psi$ be an empty embedding. Then, for each $t\in[\tau]$ in turn, extend $\psi$ to cover $\cup_{v\in Y_t}V(S_v)$ by embedding $T[Y_t]\subseteq T[X_\ell]$ into $G[V_{j_\ell}]$ and, for each $\diamond\in\{+,-\}$, embedding any components of $T-T_0$ attached to $Y_t$ by a $\diamond$-edge into $G[\cup_{i\in I_\ell^\diamond}V_i]$.
\end{enumerate}

\begin{figure}
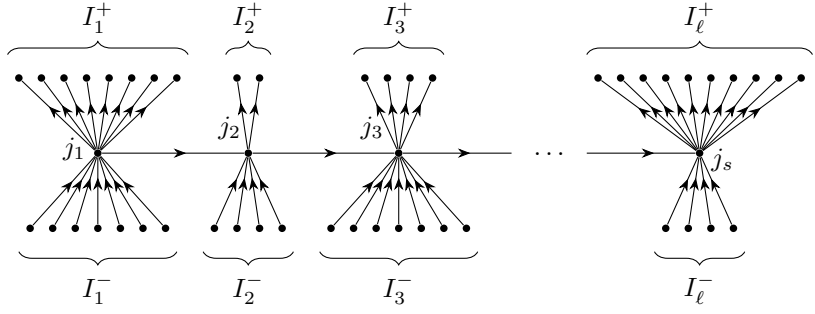

\DiagramCaterpillarVertices
\vspace{-0.75cm}
\caption{A `caterpillar-like' digraph, as appearing in Lemma~\ref{lm:caterpillar_updated} and Corollary~\ref{cor:caterpillar}. While the other edges are omitted for legibility, $j_{1}\rightarrow j_{2}\rightarrow \ldots \rightarrow j_{s}$ is the underlying directed path of a transitive tournament.}\label{fig:caterpillar}
\end{figure}

In order to ensure the successful execution of this procedure, two key technical lemmas are required. The first of these, which is proven in Section~\ref{sect:caterpillar}, handles the entirety of step~\ref{s-allocation}, where vertices of $T$ are effectively allocated to regularity clusters. The other, which is proven in Section~\ref{sect:n+an-embedding} before the full proof of Theorem~\ref{thm:n+an-partial}, shows that a star-like tree can be embedded into a collection of regularity clusters which form a star in the corresponding reduced graph, thus verifying each of the claimed extensions of $\psi$ in step~\ref{s-embedding} is possible.

% As with Theorem~\ref{thm:n+k+an-partial}, the proof of Theorem~\ref{thm:n+an-partial} is broken into two main parts -- in Section~\ref{sect:caterpillar} we allocate the vertices of $T$ to regularity clusters, before embedding the vertices according to this allocation in Section~\ref{sect:n+an-embedding}.

\subsection{Allocating vertices for Theorem~\ref{thm:n+an-partial}}\label{sect:caterpillar}

Let us consider step~\ref{s-allocation} of the embedding procedure for Theorem~\ref{thm:n+an-partial} in more detail. In order to allocate vertices of $V(T)$ approximately evenly among regularity clusters, we will find a `caterpillar-like' digraph, realised through the sets $\{j_\ell\},I_\ell^+,I_\ell^-\subseteq[r]$ for $\ell\in[s]$, covering most of $R$ (see Figure~\ref{fig:caterpillar}). The vertices $j_1,\dots,j_s$ form a transitive tournament running along the body of the caterpillar, with $I_\ell^+$ and $I_\ell^-$ being the out- and in-leaves attached to $j_\ell$. As we will embed each $v\in V(T_0)$ into some $V_{j_\ell}$ and components of $T-T_0$ attached to $v$ by a $\diamond$-edge into $\cup_{i\in I_\ell^\diamond}V_i$, not just any caterpillar will do; we must ensure that for some partition $V(T_0)=X_1\cup\ldots\cup X_s$, where the edges between $X_i$ and $X_j$ are faithful to the ordering of $i$ and $j$, $I_\ell^\diamond$ is always large enough so that there is enough space in $\cup_{i\in I_\ell^\diamond}V_i$ to embed components of $T-T_0$ attached to $X_\ell$ by a $\diamond$-edge.

% To allocate the vertices of an oriented tree $T$ from Theorem~\ref{thm:n+an-partial} to regularity clusters in some $\epsilon$-regular partition $V_1\cup\ldots \cup V_r$, we first find a homomorphism from $T$ to a simpler `caterpillar-like' digraph (see Figure~\ref{fig:caterpillar}). This maps the vertices of the core $T_0\subset T$ into a small transitive tournament, with the components of $T-V(T_0)$ assigned to an in- or out-leaf from this transitive tournament according to the direction of the edge from $T_0$ to the component. The number of in- and out-leaves from each transitive tournament vertex is chosen so that the number of vertices of $V(T)\setminus V(T_0)$ mapped onto each one is approximately the same. 
For this reason, the `caterpillar-like' digraph and the corresponding partition of $V(T_0)$ are essentially found together. A general method for finding a `caterpillar-like' digraph satisfying certain constraints is presented in Lemma~\ref{lm:caterpillar_updated}. This is then applied to a weight function naturally arising from the relative distribution of components of $T-T_0$, in order to produce a full description of the `caterpillar-like' digraph and corresponding partition of $V(T_0)$ in Corollary~\ref{cor:caterpillar}. As Corollary~\ref{cor:caterpillar} will eventually apply to the reduced digraph $R$ for an $\epsilon$-regular partition $V_1\cup\ldots\cup V_r$ (see Section~\ref{sect:regularity}), it applies to an $\epsilon$-almost tournament $R$. In its statement, \itref{caterpillar11} ensures that $j_1,\ldots,j_s$ forms a transitive tournament, \itref{caterpillar01} ensures the faithfulness of the partition $V(T_0)=X_1\cup\ldots\cup X_s$, and \itref{caterpillar21} ensures enough space in the corresponding regularity clusters for the subsequent embedding. Thus overall, Corollary~\ref{cor:caterpillar} ensures the successful completion of step~\ref{s-allocation} of the embedding procedure sketched earlier.

% The transitive tournament of the `caterpillar-like' digraph is found with vertex set $\{j_1,j_2,\ldots,j_s\}$ (where \itref{caterpillar11} guarantees it is a transitive tournament), with sets of out-leaves $I_i^+$ and in-leaves $I_i^-$ of $j_i$, for each $i\in [s]$. The condition \itref{caterpillar21} ensures there are enough in- and out-leaves to allow the approximately even distribution of $V(T)\setminus V(T_0)$ in the simplification of $T$.

\begin{lemma}\label{lm:caterpillar_updated}
Let $\eps>0$ and $\bar{s},m,r\in \N$. Let $n^+_i,n^-_i\in \N$, $i\in [\bar{s}]$, be such that $m\leq n_i^++n_i^-\leq 4m$ for each $i\in [\bar{s}]$. Suppose that $R$ is an oriented graph on $[r]$ in which, for each $j\in[r]$,
\begin{equation}\label{eqn:Rbigenough}
d^+_R(j)+d^-_R(j)\geq|R|-m\geq(25+1000\log{\bar{s}})m+\sum_{i\in [\bar{s}]}(n_i^++n_i^-).
\end{equation}
Then, there is some $s\in [\bar{s}]$ for which there exists $0=i_0<i_1<\ldots<i_{s-1}<i_{s}=\bar{s}$, and subsets $\{j_\ell\}, I_\ell^+,I_\ell^-\subseteq[r]$ for $\ell\in[s]$, all disjoint, with the following properties.
\stepcounter{propcounter}
\begin{enumerate}[label =\emph{\textbf{\Alph{propcounter}\arabic{enumi}}}]
\item\label{caterpillar1} $j_{\ell_1}\rightarrow_Rj_{\ell_2}$ whenever $\ell_1<\ell_2$.
\item\label{caterpillar2} For each $\ell\in[s]$ and $\diamond\in\{+,-\}$, we have $I_\ell^\diamond\subseteq N_R^\diamond(j_\ell)$, and
\begin{equation*}
    |I_\ell^\diamond|=\sum_{i=i_{\ell-1}+1}^{i_\ell}n^\diamond_i.
\end{equation*}
\end{enumerate}
\end{lemma}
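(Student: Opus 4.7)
The plan is an iterative greedy procedure that processes the sections $1,2,\ldots,\bar{s}$ in order, choosing one $j_\ell$ at a time and using it to cover a consecutive block of sections until one of its pools is exhausted. At step $\ell$, write $A_\ell$ for the set of vertices of $R$ not yet used as some $j_{\ell'}$ or within $I_{\ell'}^\pm$ for $\ell'<\ell$, and let $B_\ell:=A_\ell\cap\bigcap_{\ell'<\ell}N_R^+(j_{\ell'})$ be the current available common out-neighbourhood of the previously chosen $j_{\ell'}$'s. I would pick $j_\ell\in B_\ell$ whose restricted degrees $|N_R^+(j_\ell)\cap A_\ell|$ and $|N_R^-(j_\ell)\cap A_\ell|$ are each a positive constant fraction of $|A_\ell|$; then set $i_\ell$ to be the largest index exceeding $i_{\ell-1}$ for which the cumulative demands $\sum_{i=i_{\ell-1}+1}^{i_\ell}n_i^+$ and $\sum_{i=i_{\ell-1}+1}^{i_\ell}n_i^-$ both fit inside the respective restricted neighbourhoods of $j_\ell$, and take $I_\ell^+$ and $I_\ell^-$ as arbitrary subsets of $N_R^+(j_\ell)\cap(A_\ell\setminus\{j_\ell\})$ and $N_R^-(j_\ell)\cap(A_\ell\setminus\{j_\ell\})$ of the required sizes. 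Iterate until $i_\ell=\bar{s}$.

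Granted such a construction, property \itref{caterpillar1} is immediate because $j_{\ell_2}\in B_{\ell_2}\subseteq N_R^+(j_{\ell_1})$ for each $\ell_1<\ell_2$, and \itref{caterpillar2} is ensured by the size specifications of $I_\ell^\pm$. Since each $n_i^++n_i^-\leq 4m$ is much smaller than the linear-in-$|A_\ell|$ restricted degrees of $j_\ell$, each step advances $i_\ell>i_{\ell-1}$, so the procedure terminates in some $s\leq\bar{s}$ iterations.

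The technical content is therefore the existence of a suitable balanced $j_\ell$ inside $B_\ell$ at every step. The global condition $d_R^+(v)+d_R^-(v)\geq|R|-m$ carries over to every induced sub-oriented-graph of $R$ (each vertex retains at most $m$ non-neighbours), and a standard averaging argument shows that in any induced subgraph on $\Omega(m)$ vertices, a constant fraction of vertices are balanced in the sense above. To ensure we can actually find such a vertex lying in $B_\ell$, I would show by induction on $\ell$ that $|B_\ell|$ remains a positive fraction of $|A_\ell|$: each previously chosen $j_{\ell'}$ removes from the common out-neighbourhood at most $d_R^-(j_{\ell'})+m$ vertices, and a careful choice of balanced $j_{\ell'}$ controls this shrinkage. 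Since each balanced choice of $j_\ell$ consumes a constant fraction of $|A_\ell|$ before triggering the switch (the exhausted pool has size $\Theta(|A_\ell|)$), $|A_\ell|$ shrinks geometrically in $\ell$, so the number of iterations is $O(\log(|R|/m))=O(\log\bar{s})$. This is exactly the scale matched by the $1000\log\bar{s}$ factor of slack in the hypothesis, which absorbs the $O(m)$ overhead incurred per switch from maintaining the common out-neighbourhood.

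The main obstacle is reconciling two competing requirements on the $j_{\ell'}$'s: to keep $|B_\ell|$ a positive fraction of $|A_\ell|$ one would like $d_R^-(j_{\ell'})$ small, but to make each $j_{\ell'}$ useful for covering multiple sections one needs both restricted degrees large (i.e., balanced). The slack $(25+1000\log\bar{s})m$ in the hypothesis is calibrated precisely so that the cumulative cost of this trade-off, carried across the $O(\log\bar{s})$ iterations dictated by the geometric shrinkage, never exceeds the available room; this bookkeeping is the technical heart of the proof.
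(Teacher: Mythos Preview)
Your overall strategy---pick a balanced vertex, greedily cover as many consecutive sections as possible until one neighbourhood pool is exhausted, iterate inside the out-neighbourhood, and observe that each step eats a constant fraction so that $O(\log\bar{s})$ rounds suffice---is exactly the right idea and matches the paper's approach in spirit. The paper implements it as a strong induction on $\bar{s}$: it finds a balanced $j$, sets $j_1=j$, covers sections $1,\ldots,s'$ (where $s'\geq\bar{s}/20$ because $d^-_R(j)\geq|R|/5$ and each $n_i^-\leq 4m$), and then recurses on $R'=R[N_R^+(j)\setminus I_1^+]$, verifying that the hypothesis \eqref{eqn:Rbigenough} holds for $R'$ with the remaining $\bar{s}-s'\leq 19\bar{s}/20$ sections (this is where the $1000\log\bar{s}$ slack is spent, at a rate of roughly $5m$ per round).

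The gap in your write-up is the separation of $A_\ell$ (all unused vertices) from $B_\ell$ (the common out-neighbourhood). You need $j_\ell\in B_\ell$ to be balanced \emph{with respect to $A_\ell$}, but the standard averaging argument you invoke only produces vertices balanced with respect to the set they are drawn from---it gives balanced vertices in $R[B_\ell]$ or in $R[A_\ell]$, not a vertex of $B_\ell$ balanced in $A_\ell$. Moreover, even granting such a $j_\ell$, your claim that $|B_\ell|$ stays a positive fraction of $|A_\ell|$ is unsupported: passing from $B_\ell$ to $B_{\ell+1}$ intersects with $N_R^+(j_\ell)$, and nothing in ``balanced with respect to $A_\ell$'' prevents $B_\ell$ from consisting almost entirely of in-neighbours of $j_\ell$, collapsing $B_{\ell+1}$. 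The paper avoids both issues simply by taking all future vertices---centres \emph{and} leaves---from $N_R^+(j)\setminus I_1^+$, i.e.\ by setting $A_\ell=B_\ell$ throughout. With that one change your sketch becomes the paper's proof.
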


\begin{proof} Fix $m\in \N$ and $\eps>0$. We will show, by strong induction on $\bar{s}$, that the lemma holds for each $\bar{s}\geq 1$.

First, suppose $\bar{s}=1$. It follows from \eqref{eqn:Rbigenough} that $R$ is a $(1/25)$-almost tournament with $|R|\geqslant25m$, and therefore, by Proposition~\ref{prop:max_digraph_split}, there is some $j_1\in[r]$ such that $d_R^+(j_1),d_R^-(j_1)\geqslant |R|/5 \geq 4m$. If we set $I_1^\diamond\subseteq N_R^\diamond(j_1)$ with $|I_1^\diamond|=n_1^\diamond$ for $\diamond\in\{+,-\}$, then all the required properties are satisfied.

Suppose then that $\bar{s}>1$. It follows from \eqref{eqn:Rbigenough} that $R$ is an $(1/25)$-almost tournament with $|R|\geqslant25m$, and therefore, by Proposition~\ref{prop:max_digraph_split}, there is some $j\in[r]$ such that $d_R^+(j),d_R^-(j)\geqslant |R|/5$. Now, by \eqref{eqn:Rbigenough}, we have $d_R^+(j)+d^-_R(j)\geq \sum_{i\in [\bar{s}]}(n^+(i)+n^-(i))$.
Therefore, at least one of $\sum_{i\in[\bar{s}]}n^+(i)\leqslant d_R^+(j)$ or $\sum_{i\in[\bar{s}]}n^-(i)\leqslant d_R^-(j)$ holds. If both inequalities hold, then the desired result follows by taking $s=1$, $i_0=0$, $i_1=\bar{s}$, $j_1=j$, and $I_1^\diamond\subseteq N_R^\diamond(j)$ with $|I_1^\diamond|=\sum_{i\in [\bar{s}]}n_i^\diamond$ for each $\diamond\in\{+,-\}$. Otherwise, by directional duality, we may assume that $\sum_{i\in[\bar{s}]}n^+(i)\leqslant d_R^+(j)$ and $\sum_{i\in[\bar{s}]}n^-(i)> d_R^-(j)$.

Then, let $s'\in [\bar{s}-1]$ be maximal such that
\[
\sum_{i\in[s']}n^-(i)\leq d_R^-(j).
\]
Note that, as
\[
d_R^-(j)\geq \frac{|R|}{5}\overset{\eqref{eqn:Rbigenough}}{\geq} \frac{1}{5}\sum_{i\in [\bar{s}]}(n_i^++n_i^-)\geq \frac{\bar{s}m}{5}
\]
and
$n_i^-\leq (n_i^++n_i^-)\leq 4m$ for each $i\in [\bar{s}]$, we have $s'\geq \bar{s}/20$. Furthermore, by the maximality of $s'$, we have
\begin{equation}\label{eqn:frommin}
\sum_{i\in[s']}n^-(i)\geq d_R^-(j)-4m.
\end{equation}

Let $i_0=0$, $i_1=s'$, and $j_1=j$. Let $I_1^-\subseteq d_R^-(j)$ have size $\sum_{i\in[s']}n^-(i)$. Using that $d_R^+(j)\geq \sum_{i\in [\bar{s}]}n^+(i)$, let $I_1^+\subseteq d_R^+(j)$ have size $\sum_{i\in [s']}n^+(i)$ and let $I=N_R^+(j)\setminus I^+_1$.

 Now, $\bar{s}-s'\leq \bar{s}-\bar{s}/20= 19\bar{s}/20$ so that $1000\log (\bar{s}-s')\leq -5+1000\log \bar{s}$, and hence
\begin{align*}
|I|&= d_R^+(j)-|I_I^+|\overset{\eqref{eqn:frommin}}{\geq} d_R^+(j)-|I_1^+|+d_R^-(j)-|I_1^-|-4m\\
&\hspace{-1mm}\overset{\eqref{eqn:Rbigenough}}{\geq} (25+1000\log \bar{s})m+\sum_{i\in [\bar{s}]}(n_i^++n_i^-)-|I_1^+|-|I_1^-|-4m\\
&= (21+1000\log \bar{s})m+\sum_{i\in [\bar{s}]\setminus [s']}(n_i^++n_i^-)\\
&\geq (26+1000\log (\bar{s}-s'))m+\sum_{i\in [\bar{s}]\setminus [s']}(n_i^++n_i^-).
\end{align*}
Let $R'=R[I]$, and note that, for each $j\in V(R')$, by \eqref{eqn:Rbigenough} we have $d_{R'}^+(j)+d_{R'}^-(j)\geq |R'|-m= |I|-m$, so that, in combination with the above calculation,
\begin{equation*}
d^+_{R'}(v)+d^-_{R'}(v)\geq|R'|-m\geq(25+1000\log (\bar{s}-s'))m+\sum_{i\in [\bar{s}]\setminus [s']}(n_i^++n_i^-),
\end{equation*}
for each $v\in V(R')$. Therefore, by the inductive hypothesis for $\bar{s}-s'$, there is (with relabelling) some $s\in[\bar{s}]$ for which there exists $s'=i_1<i_2<\ldots<i_{s}=\bar{s}$ and subsets $\{j_\ell\}, I_\ell^+, I_\ell^-\subseteq V(R')=N_{R}^+(j)\setminus I_1^+$ for $\ell\in[s]\setminus [1]$, all disjoint, such that $j_{\ell_1}\rightarrow_{R}j_{\ell_2}$ whenever $\ell_1<\ell_2$, and, for each $\ell\in[s]$ and $\diamond\in\{+,-\}$, we have $|I_\ell^\diamond|=\sum_{i=i_{\ell-1}+1}^{i_\ell}n^\diamond_i$. Thus, the required properties are satisfied, completing the proof.
\end{proof}

\begin{corollary}\label{cor:caterpillar} Let $1/n\ll \eps,\eta, 1/r\ll\alpha\leq1$. Suppose $T$ is an $n$-vertex oriented tree with a subtree $T_0\subseteq T$, such that $|T_0|\leqslant\eta n$ and $T$ is formed from $T_0$ by attaching to each vertex $v$ of $T_0$ a tree $S_v^+$ in which $v$ only has out-neighbours and a tree $S^-_v$ in which $v$ only has in-neighbours, so that $|S^+_v|,|S^-_v|\leqslant\eta n$.
Let $R$ be a $\eps$-almost tournament with vertex set $[r]$.

Then, there is some $s\leqslant\alpha/100\epsilon$ for which there exists a partition $V(T_0)=X_1\cup\ldots\cup X_{s}$ and subsets $\{j_\ell\}, I_\ell^+,I_\ell^-\subseteq[r]$ for $\ell\in[s]$, all disjoint, with the following properties.
\stepcounter{propcounter}
\begin{enumerate}[label =\emph{\textbf{\Alph{propcounter}\arabic{enumi}}}]
\item\label{caterpillar11} $j_{\ell_1}\rightarrow_Rj_{\ell_2}$ whenever $\ell_1<\ell_2$.
\item\label{caterpillar01} There are no edges of $T_0$ directed from $X_i$ to $X_j$ with $i,j\in [s]$ and $i>j$.
\item\label{caterpillar21} For each $\ell\in[s]$ and $\diamond\in\{+,-\}$, we have $I_\ell^\diamond\subseteq N_R^\diamond(j_\ell)$, and
\begin{equation*}
    |I_\ell^\diamond|\geq \frac{r}{(1+\alpha/4)n}\cdot \sum_{v\in X_\ell}|S_v^\diamond|.
\end{equation*}
\end{enumerate}
\end{corollary}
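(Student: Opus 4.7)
The plan is to reduce Corollary~\ref{cor:caterpillar} to Lemma~\ref{lm:caterpillar_updated} by first partitioning $V(T_0)$ into consecutive blocks of a topological ordering of $T_0$, and then letting the lemma coarsen this partition while selecting the cluster vertices $j_\ell$ and leaf sets $I_\ell^\pm$. Since $T_0$ is an oriented tree, hence acyclic, fix a topological ordering $v_1,\ldots,v_{|T_0|}$ of $V(T_0)$; any grouping into consecutive intervals of this ordering automatically satisfies~\itref{caterpillar01}, as every edge of $T_0$ runs forwards in the order. Writing $w_i=|S_{v_i}^+|+|S_{v_i}^-|$, we have $w_i\leq 2\eta n$ and $\sum_i w_i=n-|T_0|\leq n$.

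Set $\tau=100\epsilon n/\alpha$ and greedily form consecutive blocks $X_1,X_2,\ldots$ by starting a new block and appending vertices in order until its cumulative weight first reaches $\tau$; since $2\eta n\ll\tau$, each block's weight lies in $[\tau,2\tau]$. If the final block has weight less than $\tau$, merge it into its predecessor, so that every block has weight in $[\tau,3\tau]$ and there are at most $\bar{s}\leq\lceil n/\tau\rceil\leq\alpha/(100\epsilon)$ blocks in total. For each $i\in[\bar{s}]$ and $\diamond\in\{+,-\}$, define $n_i^\diamond=\lceil\frac{r}{(1+\alpha/4)n}\sum_{v\in X_i}|S_v^\diamond|\rceil$ and set $m=\lceil\frac{r\tau}{(1+\alpha/4)n}\rceil$; the weight bounds on the $X_i$ immediately give $m\leq n_i^++n_i^-\leq 4m$.

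It remains to verify the hypotheses of Lemma~\ref{lm:caterpillar_updated}. Since $R$ is an $\epsilon$-almost tournament, each vertex $v$ has $d_R^+(v)+d_R^-(v)\geq r-1-\epsilon r\geq r-m$, as $m\gtrsim\epsilon r/\alpha\gg\epsilon r$. Also $\sum_i(n_i^++n_i^-)\leq r/(1+\alpha/4)+2\bar{s}$, so the numerical condition $r-m\geq(25+1000\log\bar{s})m+\sum_i(n_i^++n_i^-)$ rearranges to roughly $\alpha^2\gtrsim\epsilon\log(\alpha/\epsilon)$, which holds since $\epsilon\ll\alpha$. The lemma then yields $s\leq\bar{s}\leq\alpha/(100\epsilon)$, indices $0=i_0<\cdots<i_s=\bar{s}$, vertices $j_\ell$, and sets $I_\ell^\pm$ satisfying~\itref{caterpillar11} with $|I_\ell^\diamond|=\sum_{i=i_{\ell-1}+1}^{i_\ell}n_i^\diamond$. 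Taking $X_\ell':=\bigcup_{i=i_{\ell-1}+1}^{i_\ell}X_i$ produces the required partition: \itref{caterpillar01} is preserved since each $X_\ell'$ is still an interval of the topological ordering, and~\itref{caterpillar21} follows directly from the choice of $n_i^\diamond$. The main obstacle is calibrating $\tau$: large enough that each block's weight dominates the maximum vertex weight $2\eta n$ (so that $n_i^++n_i^-\in[m,4m]$), yet small enough that $\bar{s}\leq\alpha/(100\epsilon)$ and the $(1000\log\bar{s})m$ term in the lemma's density requirement can still be absorbed by $r\alpha/(4+\alpha)$.
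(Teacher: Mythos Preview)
Your approach is essentially identical to the paper's: take a topological ordering of $T_0$, greedily group into consecutive blocks of comparable attached weight, feed the block weights into Lemma~\ref{lm:caterpillar_updated}, and then coarsen according to the indices $i_0<\cdots<i_s$ that the lemma returns.

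The one genuine slip is your specific calibration $\tau=100\epsilon n/\alpha$. The hypothesis $\epsilon,\eta,1/r\ll\alpha$ imposes \emph{no} relation among $\epsilon$, $\eta$ and $1/r$; they sit at the same level of the hierarchy. So your claim ``$2\eta n\ll\tau$'' amounts to $\eta\alpha\ll\epsilon$, which need not hold (e.g.\ $\epsilon$ may be far smaller than $\eta$), and then a single vertex weight can exceed $\tau$ and the blocking fails to land in $[\tau,2\tau]$. Similarly, your $m\approx\epsilon r/\alpha$ may be tiny, since $\epsilon r$ is uncontrolled; then $r-1-\epsilon r\geq r-m$ can fail. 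The paper resolves exactly this by inserting an intermediate constant $c$ with $\epsilon,1/r\ll c\ll\alpha$ and $c\geq 2\eta$, and taking the block threshold $\bar m=cn$ (so $m\approx cr$). With that single change your argument goes through verbatim and matches the paper's proof.
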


\begin{proof} Pick $c\geq2\eta$ such that $\epsilon,1/r\ll c\ll \alpha$. Let $\bar{m}=cn$. Let $n_0=|T_0|$ and let $v_1,\ldots,v_{n_0}$ order $V(T_0)$ such that $i<j$ whenever $v_i\rightarrow_T v_j$. Let $\bar{s}$ be the largest integer for which there are integers $0=k_0<k_1<\ldots<k_{\bar{s}}\leqslant n_0$ such that $\bar{m}\leq \sum_{k=k_{\ell-1}+1}^{k_{\ell}}(|S_{v_k}^+|+|S_{v_k}^-|)\leq 2\bar{m}$ for each $\ell\in [\bar{s}]$. Now, as $|S_{v_k}^+|+|S_{v_k}^-|\leq\bar{m}$ for each $k\in [n_0]$, we must have by this maximality that $\sum_{k=k_{\bar{s}}+1}^{n_0}|S_{v_k}^+|+|S_{v_k}^-|<\bar{m}$, and therefore, as $T$ has $n$ vertices, we have that $\bar{s}\geq n/3\bar{m}=1/3c\geq 1$. Furthermore, setting $W_\ell=\{v_{k_{\ell-1}+1},\ldots,v_{k_\ell}\}$ for each $\ell\in [\bar{s}-1]$ and $W_{\bar{s}}=\{v_{k_{\bar{s}-1}+1},\ldots,v_{n_0}\}$, we have, for each $\ell\in [\bar{s}]$, that
\[
\bar{m}\leq \sum_{v\in W_\ell}(|S_v^+|+|S_v^-|)\leq 3\bar{m}.
\]
Finally, note that
\begin{equation}\label{eqn:sbound}
\frac{n}{3\bar{m}}\leq \bar{s}\leq \frac{2n}{\bar{m}}.
\end{equation}

Now, for each $i\in [\bar{s}]$, let
\[
n_i^\diamond=\left\lceil \frac{r}{n(1+\alpha/4)}\sum_{v\in W_i}|S_v^\diamond| \right\rceil.
\]
Let $m=r\bar{m}/{n}(1+\alpha/4)$, so that $cr/2\leq m\leq cr$ and, for each $i\in [\bar{s}]$, $m\leq n_i^++n_i^-\leq 4m$. From~\eqref{eqn:sbound}, we have $r/4m\leq \bar{s}\leq 2r/m$. Therefore, as $\bar{s}\geq 1/3c$ and $1/r\ll c\ll \alpha$, we have
\begin{equation}\label{eqn:logss}
2\bar{s}+(26+1000\log \bar{s})m\leq \frac{4r}{m}+\left(\frac{10^5\log \bar{s}}{\bar{s}}\right)r\leq \frac{8}{c}+\frac{\alpha r}{16}\leq \frac{\alpha r}{8}.
\end{equation}

Note that
\[
\sum_{i\in [\bar{s}]}(n_i^++n_i^-)\leq 2\bar{s}+\frac{r}{(1+\alpha/4)n}\cdot \sum_{i\in [\bar{s}]}\sum_{v\in W_i}(|S_v^+|+|S_v^-|)\leq 2\bar{s}+\frac{r(1+\eta)}{(1+\alpha/4)}\leq 2\bar{s}+(1-\alpha/8)r,
\]
as $\eta,1/r\ll \alpha$, so that, by \eqref{eqn:logss}, we have
\[
r\geq (26+1000\log{\bar{s}})m+\sum_{i\in [\bar{s}]}(n^+_i+n_i^-).
\]
Finally, we have $m\geq cr/2\geq \eps r$, so that, as $R$ is an $\eps$-almost tournament, for each $v\in V(R)$, we have $d^+_R(v)+d^-_R(v)\geq |R|-\eps|R|\geq |R|-m$.

Thus, by Lemma~\ref{lm:caterpillar_updated}, there is some $s\in [\bar{s}]$ for which there exists $0=i_0<i_1<\ldots<i_{s-1}<i_{s}=\bar{s}$, and subsets $\{j_\ell\}, I_\ell^+,I_\ell^-\subseteq[r]$ for $\ell\in[s]$, all disjoint, such that \itref{caterpillar1} and \itref{caterpillar2} hold. Letting $X_\ell=\cup_{i=j_{\ell-1}}^{j_\ell}W_i$ for each $\ell\in [s]$ then gives the required partition.
\end{proof}

%%%%%%%%%%%%%%%%%%%%%%%%%%%%%%%%%%%%%%%%%%%%%%%%%%%%%%%
%%%%%%%%%%%%%%%%%%%%%%%%%%%%%%%%%%%%%%%%%%%%%%%%%%%%%%%
%%%%%%%%%%%%%%%%%%%%%%%%%%%%%%%%%%%%%%%%%%%%%%%%%%%%%%%

\subsection{Embedding vertices for Theorem~\ref{thm:n+an-partial}}\label{sect:n+an-embedding}

Before proving Theorem~\ref{thm:n+an-partial}, we require one further lemma, to be used repeatedly in step~\ref{s-embedding} of the embedding procedure described previously. Lemma~\ref{lm:bounded_degree_partial_1} asserts that vertices of $T$ allocated to a single star segment of the `caterpillar-like' digraph can be embedded into the corresponding regularity clusters. More precisely, in the context of the earlier sketch, it shows that, for $Y\subseteq X_\ell$, it is possible to embed $T[Y]$ into $G[V_{j_\ell}]$, with components of $T-T_0$ attached to $Y$ by an out-edge into $G[\cup_{i\in I_\ell^+} V_i]$ and components of $T-T_0$ attached to $Y$ by an in-edge into $G[\cup_{i\in I_\ell^-}V_i]$, provided enough available space remains in the relevant clusters (where in the statement of Lemma~\ref{lm:bounded_degree_partial_1} we identify $T[Y]$ with $T_0$, $V_{j_\ell}$ with $V_0$, $\cup_{i\in I_\ell^+}V_i$ with $V_1^+,\ldots,V_k^+$, and $\cup_{i\in I_\ell^-}V_i$ with $V_1^-,\ldots,V_\ell^-$).

\begin{lemma}\label{lm:bounded_degree_partial_1}
Fix $\alpha\geqslant\beta>0$, $\mu>0$ and let $1/m\ll\eta\ll1/r\ll\epsilon\ll\gamma\ll \mu,\beta$. Let $G$ be a tournament. Suppose, for some $k,\ell\leqslant r$, there are disjoint subsets $V_0,V_1^+,\ldots,V_k^+,V_1^-,\ldots,V_\ell^-$ of $V(G)$, all of size $(1+\alpha)m$, such that $(V_0,V_i^+)$ is an $\epsilon$-regular pair of density at least $\mu$ for $i\in[k]$, and $(V_i^-,V_0)$ is an $\epsilon$-regular pair of density at least $\mu$ for $i\in[\ell]$.

Suppose $T$ is an oriented tree with a subtree $T_0\subseteq T$, such that $|T_0|\leqslant\eta m$, and $T$ is formed from $T_0$ by attaching to each vertex $v$ of $T_0$ trees $S_v^+$, $S_v^-$ with $d^-_{S_v^+}(v)=0$, $d^+_{S_v^-}(v)=0$, and $|S_v^+|,|S_v^-|\leqslant\eta m$.

Let $W\subseteq V_0$ be a set with $|W|\geqslant\gamma m$, and let $U_i^+\subseteq V_i^+$, $i\in[k]$, and $U_i^-\subseteq V_i^-$, $i\in[\ell]$ be sets such that $\sum_{i\in[k]}|U_i^+|\geqslant\sum_{v\in V(T_0)}|S_v^+|+k\beta m$ and $\sum_{i\in[\ell]}|U_i^-|\geqslant\sum_{v\in V(T_0)}|S_v^-|+\ell\beta m$.

Then, there is a copy of $T$ in $G$, with $T_0$ copied to $W$ and $T-V(T_0)$ copied to $U_1^+\cup\ldots\cup U_k^+\cup U_1^-\cup\ldots\cup U_\ell^-$.

\end{lemma}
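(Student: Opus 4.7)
The plan is to embed $T_0$ first into a well-chosen subset $W^*\subseteq W$, then, for each $v\in V(T_0)$, attach a copy of $S_v^+$ inside some $U_i^+$ using out-neighbours of $\psi_0(v)$ and a copy of $S_v^-$ inside some $U_j^-$ using in-neighbours. I would start by discarding any $U_i^+$ or $U_j^-$ of size below $\beta m/2$; since every $|S_v^\pm|\le\eta m\ll\beta m$, the hypothesised excess capacity $k\beta m$ (respectively $\ell\beta m$) absorbs this loss with at least $k\beta m/2$ (respectively $\ell\beta m/2$) of slack left among the surviving bins. For each surviving bin, Proposition~\ref{prop:outneighbourhood_to_many_clusters} applied to the regular pair $(V_0,V_i^+)$ with subset $U_i^+$ (and similarly to each $(V_i^-,V_0)$ with $U_i^-$) shows that all but at most $\epsilon m$ vertices of $V_0$ have at least $(\mu-\epsilon)(|U_i^+|-\epsilon m)\ge\mu|U_i^+|/2$ out-neighbours in $U_i^+$. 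Letting $W^*\subseteq W$ be the intersection of these ``good'' sets, $|W^*|\ge|W|-2r\epsilon m\ge\gamma m/2\ge 3|T_0|$, so Theorem~\ref{thm:any_linear_bound} gives an embedding $\psi_0:T_0\hookrightarrow G[W^*]$.

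Next I would allocate each tree $S_v^+$ to a bin $U_{i(v)}^+$ (and each $S_v^-$ to a bin $U_{j(v)}^-$) so that the total tree size assigned to any $U_i^\pm$ is at most $|U_i^\pm|-\beta m/4$. A greedy first-fit suffices: each tree has size at most $\eta m\ll\beta m$, and at every step the remaining free capacity summed over all bins is at least $k\beta m/4$, so some bin has at least $\beta m/4$ free. Processing the $v$'s in any order, I would then embed $S_v^+$ with $v\mapsto\psi_0(v)$ and $V(S_v^+)\setminus\{v\}$ inside the currently unused portion $U_{i(v)}^{+,\mathrm{avail}}\subseteq U_{i(v)}^+$, and similarly $S_v^-$ inside $U_{j(v)}^{-,\mathrm{avail}}$. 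Since $|U_{i(v)}^{+,\mathrm{avail}}|\ge\beta m/4$ throughout, the pair $(V_0,U_{i(v)}^{+,\mathrm{avail}})$ inherits $O(\epsilon/\beta)$-regularity of density at least $\mu-\epsilon$, and applying Proposition~\ref{prop:outneighbourhood_to_many_clusters} to this inherited pair gives, outside an exceptional subset of $V_0$ of size $O(\epsilon m/\beta)$, that $d^+(w,U_{i(v)}^{+,\mathrm{avail}})\ge\mu\beta m/16\ge 3|S_v^+|$. Once this bound holds at $\psi_0(v)$, the copy of $S_v^+$ is built by placing the children of $v$ in $S_v^+$ at distinct out-neighbours of $\psi_0(v)$ inside $U_{i(v)}^{+,\mathrm{avail}}$ and extending each subtree below a child via Corollary~\ref{cor:tree-extension} inside $U_{i(v)}^{+,\mathrm{avail}}$ (using Proposition~\ref{prop:max_digraph_split} to ensure most vertices of the sub-tournament $V_{i(v)}^+$ have approximately balanced in- and out-degrees); the $S_v^-$'s are handled symmetrically.

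The main obstacle is that the exceptional set above shifts as $U_{i(v)}^{+,\mathrm{avail}}$ shrinks during processing, so one cannot naively preselect $W^*$ once and trust that every $\psi_0(v)$ is good at the moment its turn comes. I would handle this by observing that as $U_{i(v)}^{+,\mathrm{avail}}$ can only shrink, the exceptional set can only grow, and hence at every intermediate state it is contained in the exceptional set for the smallest (end-of-process) $\mathrm{avail}$ set, which still has size at least $\beta m/4$ and so only an $O(\epsilon m/\beta)$-size exceptional set. Strengthening the initial definition of $W^*$ to also exclude the union of these worst-case exceptional sets across all (at most $r$) bins removes only a further $O(r\epsilon m/\beta)$ vertices, which is absorbed by the $\gamma m/2$ margin once $\epsilon$ is taken small enough relative to $r,\gamma,\beta$ (consistent with the stated hierarchy). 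With $W^*$ chosen in this refined way, every $\psi_0(v)$ remains good for its allocated bin throughout the sequential embedding, and the combined copy of $T$ is thereby produced inside $G$ as required.
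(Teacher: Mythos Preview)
There is a genuine gap, and it stems from a misreading of the hierarchy. The lemma has $1/r\ll\epsilon$, meaning $r$ is chosen \emph{after} $\epsilon$ and may be arbitrarily large compared with $1/\epsilon$; in particular $r\epsilon\gg 1$. Both of your key size estimates---$|W^*|\ge|W|-2r\epsilon m\ge\gamma m/2$ at the start, and the later removal of ``$O(r\epsilon m/\beta)$ vertices absorbed by the $\gamma m/2$ margin''---require $r\epsilon\ll\gamma$, which directly contradicts the hierarchy. So $W^*$ as defined may well be empty. Your parenthetical that this is ``consistent with the stated hierarchy once $\epsilon$ is taken small enough relative to $r$'' is exactly backwards.

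Even setting this aside, your fix for the shifting exceptional sets is circular: you want $W^*$ to avoid the exceptional set of the end-of-process set $U_{i}^{+,\mathrm{avail}}$, but that set is only determined once the embedding is carried out, and the embedding depends on $W^*$. Monotonicity of the exceptional set under shrinking (with a fixed degree threshold) is fine, but it does not let you exclude the exceptional set of an object that does not yet exist. The paper avoids both problems. First, it applies Proposition~\ref{prop:outneighbourhood_to_many_clusters} to the \emph{unions} $\bigcup_i U_i^+$ and $\bigcup_i U_i^-$, so the exceptional set has size $O(\epsilon m)$ independently of $r$. Second, and crucially, it does \emph{not} embed all of $T_0$ up front: it partitions $V(T_0)$ into $p\le 32/(\beta\mu)$ pieces $X_1,\dots,X_p$ with $\sum_{v\in X_j}|S_v^\pm|$ bounded, builds a nested chain $W_1\subseteq\cdots\subseteq W_p=W$ in which every $w\in W_j$ has many in- and out-neighbours in $W_{j+1}$, and at stage $j$ embeds $T_0[X_j]$ into the vertices of $W_j$ that are good for the \emph{current} available sets, immediately attaching the corresponding $S_v^\pm$. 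Because $p$ is bounded by constants in $\beta,\mu$ (not $r$) and each stage alters the available sets by a controlled amount, the goodness established at the start of a stage survives that stage. This interleaved, staged embedding is the idea your argument is missing.
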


\begin{figure}
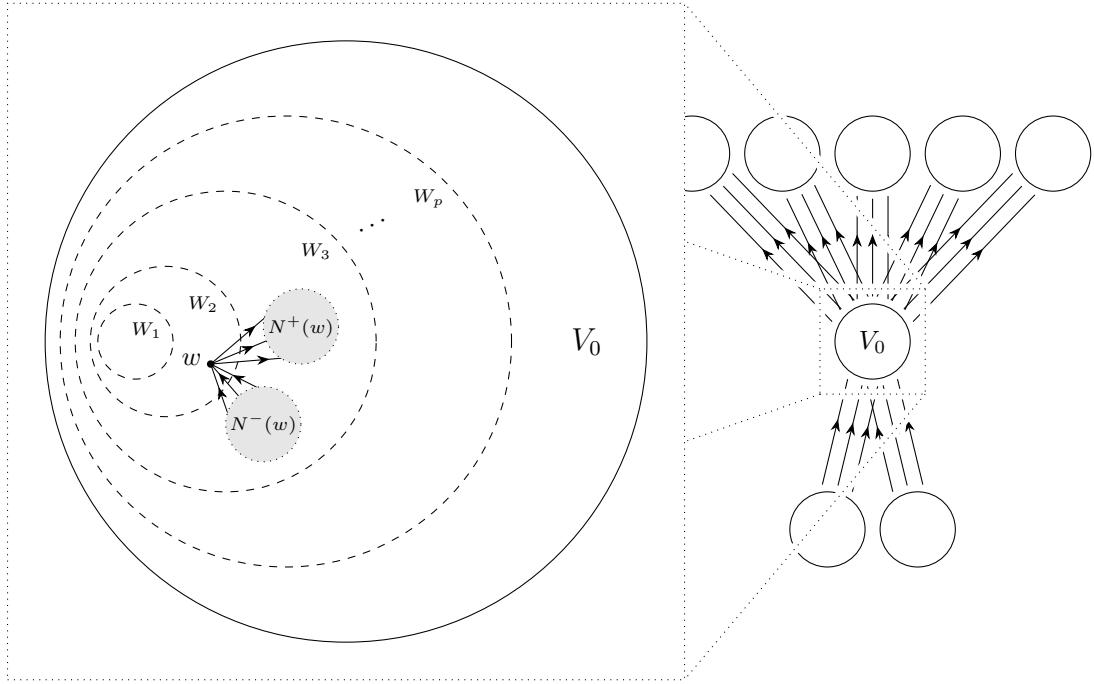

\DiagramCaterpillarEmbedding
\vspace{-0.75cm}
\caption{The sets $W_1\subseteq\ldots\subseteq W_p$ in the proof of Lemma~\ref{lm:bounded_degree_partial_1}. The sets are chosen so that each vertex $w\in W_j$ has sufficiently many in- and out-neighbours in $W_{j+1}$.}\label{fig:caterpillar-embedding}
\end{figure}

\begin{proof}
For the smallest possible $p$, take a partition $V(T_0)=X_1\cup\ldots\cup X_p$ such that, for each $j\in[p]$, $T_0[X_1\cup\ldots\cup X_j]$ is a tree, $\sum_{v\in X_j}|S_v^+|\leqslant k\beta \mu m/4$, and $\sum_{v\in X_j}|S_v^-|\leqslant\ell\beta \mu m/4$. This is possible for $p=|T_0|$, so a smallest such $p$ will exist. We in fact claim that $p\leqslant32/\beta\mu$. Indeed, for this smallest possible $p$, take a partition that minimises $\sum_{j\in[p]}j|X_j|$. Suppose there is some $j'<p$ for which both $\sum_{v\in X_{j'}}|S_v^+|\leqslant k\beta\mu m/8$ and $\sum_{v\in X_{j'}}|S_v^-|\leqslant\ell\beta\mu m/8$. Let $x\in X_{j'+1}$ be such that $T_0[X_1\cup\ldots\cup X_{j'}\cup\{x\}]$ is a tree. Then moving $x$ from $X_{{j'}+1}$ to $X_{j'}$ produces a partition which contradicts the minimality of $\sum_{j\in[p]}j|X_j|$. Thus we have
\begin{equation*}
    p\leqslant 1+\frac{\sum_{v\in V(T_0)}|S_v^+|}{(k\beta\mu m/8)}+\frac{\sum_{v\in V(T_0)}|S_v^-|}{(\ell\beta\mu m/8)}\leqslant1+\frac{8\sum_{i\in[k]}|U_i^+|}{k\beta \mu m}+\frac{8\sum_{i\in[\ell]}|U_i^-|}{\ell\beta\mu m}\leqslant32/\beta \mu.
\end{equation*}

Now find vertex sets $W_1\subseteq W_2\subseteq\ldots\subseteq W_p=W$ such that $|W_j|\geqslant|W_{j+1}|/8$ for each $j\in[p-1]$, and $d^\pm(w,W_{j+1})\geqslant|W_{j+1}|/8$ for each $j\in[p-1]$, $w\in W_j$.
This is possible by starting with $W_p$ and iteratively using that fact that at most $|W_{j+1}|/4$ vertices $w$ of $W_{j+1}$ have $d^+(w,W_{j+1})\leqslant|W_{j+1}|/8$, and at most $|W_{j+1}|/4$ vertices $w$ of $W_{j+1}$ have $d^-(w,W_{j+1})\leqslant|W_{j+1}|/8$.

We will now embed $T$ in $p$ stages as follows. At stage $j$, suppose we have already embedded $T[\cup_{j'=1}^{j-1}\cup_{v\in X_{j'}}(V(S_v^+)\cup V(S_v^-))]$. For each $v\in X_1\cup\ldots\cup X_{j-1}$ in turn, consider the forest $F_v^+$ consisting of trees of $T_0[X_j]$ attached to $v$ by out-neighbours of $v$, and suppose $v$ has already been copied to some $w\in W_{j-1}$ (for the case $j=1$, regard all of $T_0[X_j]$ as components attached to a single auxiliary vertex $v$ by out-neighbours, where $v$ has already been copied to an auxiliary vertex $w$ satisfying $W_1\subseteq N_G^+(w)$). Let $Z_{j,v}^+$ be the set of unoccupied out-neighbours of $w$ in $W_j$ which each have at least $3k\beta\mu m/4$ unoccupied out-neighbours in $\cup_{i\in[k]}U_i^+$ as well as $3\ell\beta\mu m/4$ unoccupied in-neighbours in $\cup_{i\in[\ell]}U_i^-$. Because there are always at least $k\beta m$ unoccupied vertices in $\cup_{i\in[k]}U_i^+$ and $\ell\beta m$ unoccupied vertices in $\cup_{i\in[\ell]}U_i^-$, Proposition~\ref{prop:outneighbourhood_to_many_clusters} implies $|Z_{j,v}^+|\geqslant|N^+(w,W_j)|-|T_0|-2\epsilon m\geqslant |W_1|/8-3\epsilon m\geqslant|W|/8^{p+1}\geqslant3\eta m$. Therefore, by Theorem~\ref{thm:any_linear_bound}, there is a copy of $F_v^+$ in $Z_{j,v}^+$. Then, for each $v'\in V(F_v^+)$, if $v'$ has now been copied to $w'$, find a copy of $S_{v'}^+-v'$ in the unoccupied vertices of $N^+(w',\cup_{i\in[k]}U_i^+)$. Because $w'\in Z_{j,v}^+$, and only at most $\sum_{v''\in X_j,v''\neq v'}|S_{v''}^+|$ additional vertices of $N^+(w',\cup_{i\in[k]}U_i^+)$ may become occupied since choosing $Z_{j,v}^+$, at least $3k\beta \mu m/4-\sum_{v''\in X_j,v''\neq v'}|S_{v''}^+|\geqslant 3|S_{v'}^+|$ vertices of $N^+(w',\cup_{i\in[k]}U_i^+)$ remain unoccupied, allowing the copy of $S_{v'}^+-v'$ to be found using Theorem~\ref{thm:any_linear_bound}. Similarly, find a copy of $S_{v'}^--v'$ in the unoccupied vertices of $N^-(w',\cup_{i\in[\ell]}U_i^-)$. We then do the same for the forest $F_v^-$ consisting of trees of $T_0[X_j]$ attached to $X_1\cup\ldots\cup X_{j-1}$ by in-neighbours. Performing this process for each $v\in X_1\cup\ldots\cup X_{j-1}$ completes stage $j$ of the embedding procedure. Upon the completion of stage $p$, we obtain a copy of $T$ in $G$, with $T_0$ copied to $W$ and $T-|T_0|$ copied to $U_1^+\cup\ldots\cup U_k^+\cup U_1^-\cup\ldots\cup U_\ell^-$.
\end{proof}

We now combine Corollary~\ref{cor:caterpillar} and Lemma~\ref{lm:bounded_degree_partial_1} to prove Theorem~\ref{thm:n+an-partial}, indicating throughout where the proof aligns with the procedure sketched at the beginning of this section.

\setcounter{embedstep}{0}
\begin{proof}[Proof of Theorem~\ref{thm:n+an-partial}]
Set $\beta=\alpha/3$, $\mu=1/2$, and introduce constants $\epsilon, r_1, r_2$ such that $\eta\ll1/r_2\ll1/r_1\ll\epsilon\ll\beta$. Let $G$ be a $(1+\alpha)n$-vertex tournament.

\embedstep By Corollary~\ref{cor:regular_subtournament}, there is a subtournament $G'\subseteq G$ with $|G'|\geqslant(1+2\beta)n$, and an $\epsilon$-regular partition $V(G')=V_1\cup\ldots\cup V_r$ with $r_1\leqslant r\leqslant r_2$. Let $R$ be a $\sqrt{\epsilon}$-almost tournament with vertex set $[r]$, such that $(V_i,V_j)$ is an $\epsilon$-regular pair of density at least $\mu$ whenever $i\rightarrow_Rj$.

\embedstep For each $v\in V(T_0)$, let $S_v^+\subseteq S_v$ be the subtree of $S_v$ induced by the vertices whose path from $v$ begins with an out-edge, and let $S_v^-\subseteq S_v$ be the subtree of $S_v$ induced by the vertices whose path from $v$ begins with an in-edge. We remark that $S_v^+$ and $S_v^-$ both contain the vertex $v$, and that $|S_v^+|,|S_v^-|\leqslant\eta n$ for every $v\in V(T_0)$. By Corollary~\ref{cor:caterpillar}, there is some $s\leqslant\alpha/100\epsilon$ for which there exists a partition $V(T_0)=X_1\cup\ldots\cup X_s$ and subsets $\{j_\ell\},I_\ell^+,I_\ell^-\subseteq[r]$ for $\ell\in[s]$, all disjoint, satisfying properties \itref{caterpillar11}-\itref{caterpillar21}. In particular, for each $\ell\in[s]$ and $\diamond\in\{+,-\}$, we have
\begin{equation}
    \sum_{v\in X_\ell}|S_{v}^\diamond|+|I_\ell^\diamond|\beta\cdot n/r\overset{\itref{caterpillar21}}{\leqslant}|I_\ell^\diamond|(1+2\beta)\cdot n/r=\sum_{j\in I_\ell^\diamond}|V_j| \label{eq:caterpillar_embedding_bound}
\end{equation}

Set $\gamma=\beta\mu/8r$. Using \itref{caterpillar11}, Proposition~\ref{prop:outneighbourhood_to_many_clusters}, and $s\leqslant\alpha/100\epsilon$, for each $\ell\in[s]$ at most $s\epsilon(1+\alpha)\cdot n/r\leqslant 2\gamma n$ vertices $w$ of $V_{j_\ell}$ have either some $\ell'>\ell$ for which $d^+(w,V_{j_{\ell'}})\leqslant 4\gamma n$, or some $\ell'<\ell$ for which $d^-(w,V_{j_{\ell'}})\leqslant 4\gamma n$. Therefore, we may take subsets $W_{j_\ell}\subseteq V_{j_\ell}$ for $\ell\in[s]$ such that, $d^+(w,W_{j_{i}})\geqslant2\gamma n$ whenever $i>\ell$ and $w\in W_{j_\ell}$, and $d^-(w,W_{j_{i}})\geqslant2\gamma n$ whenever $i<\ell$ and $w\in W_{j_\ell}$.

\embedstep Now let $V(T_0)=Y_1\cup\ldots\cup Y_\tau$ be any partition such that
\stepcounter{propcounter}
\begin{enumerate}[label =\textbf{\Alph{propcounter}\arabic{enumi}}]
\item For each $t\in [\tau]$, $T[Y_t]$ is a connected component of $T_0[X_\ell]$ for some $\ell\in[s]$.
\item For each $t\in [\tau]$, $T_0[Y_1\cup \ldots\cup Y_t]$ is a tree.\label{Y3}
\end{enumerate}

\embedstep We will now embed $T$ into $G$ so that $X_\ell$ is copied to $W_{j_\ell}$ for each $\ell\in[s]$, and $\cup_{v\in X_\ell}V(S_{v}^\diamond)$ is copied to $\cup_{j\in I_\ell^\diamond}V_j$ for each $\ell\in[s]$ and $\diamond\in\{+,-\}$. The embedding is given in $\tau$ stages as follows. Let $Q_0$ be the empty graph. Suppose after stage $t-1$, we have embedded $T[\cup_{v\in Y_1\cup\ldots\cup Y_{t-1}}V(S_v)]$ to get $Q_{t-1}$. Let $\ell\in[s]$ be such that $Y_t\subseteq X_\ell$. If $t=1$, set $A_t=W_{j_\ell}$. Otherwise, if $t>1$, let $y_t$ be the unique vertex of $Y_1\cup\ldots\cup Y_{t-1}$ with a neighbour in $Y_t$, let $\diamond\in\{+,-\}$ be such that the neighbour in $Y_t$ is a $\diamond$-neighbour, let $z_t$ be the image of $y_t$ in $Q_{t-1}$, and set $A_t=N^\diamond(z_t,W_{j_\ell})\setminus V(Q_{t-1})$. Note that in both cases we find $|A_t|\geqslant\gamma n$. Also, we find for $\diamond\in\{+,-\}$ that
\[
\sum_{j\in I_\ell^\diamond}|V_j\setminus V(Q_{t-1})|\overset{\eqref{eq:caterpillar_embedding_bound}}{\geqslant}\sum_{v\in Y_t}|S_v^\diamond|+|I_\ell^\diamond|\beta n/r.
\]
Therefore, by~\itref{caterpillar21} and Lemma~\ref{lm:bounded_degree_partial_1}, there is a copy of $T[Y_t\cup(\cup_{v\in Y_t}S_v)]$ in $G$ with $Y_t$ copied to $A_t\subseteq W_{j_\ell}$, and $(\cup_{v\in Y_t}V(S_v^\diamond))\setminus Y_t$ copied to $(\cup_{j\in I_\ell^\diamond}V_j)\setminus V(Q_{t-1})$ for $\diamond\in\{+,-\}$. Thus we obtain a copy of $T$ after stage $\tau$.
\end{proof}

\section{Proof of Theorem~\ref{thm:n+k+an} and Theorem~\ref{thm:n+an}}\label{sect:final-proofs}
Recall the decomposition of our tree $T$ from Section~\ref{sect:outline} as $T_0\subseteq T_1\subseteq T_2\subseteq T_3\subseteq T_4=T$. In Sections~\ref{sect:T_1-unbounded} and \ref{sect:T_1-bounded} respectively, we showed how to embed $T_0$ and extend this to $T_1$ for both Theorems~\ref{thm:n+k+an} and~\ref{thm:n+an}. In this section, we will show how a copy of $T_1$ can be extended to a copy of $T$, completing the proof of both theorems. As noted in the proof outline, the main challenge here is to embed the vertices in $V(T_3)\setminus V(T_2)$, where these vertices form paths with constant length between vertices in $T_2$. Indeed, firstly, $T_2-V(T_1)$ is a forest of constant-sized components not directly connected to $T_1$ (see \itref{tree3}), which can be embedded greedily using, for example, Theorem~\ref{thm:any_linear_bound}. Secondly, to reach $T_4$ from $T_3$ we add small tree components on to $T_3$, which is already connected. This can be done by reserving a small random subset of vertices $U$ (using Proposition~\ref{prop:random-subset}) and carrying out the rest of the embedding in the vertices with sufficient out- and in-degree to $U$. Such an embedding can then be completed greedily, giving an embedding of $T_4=T$.

Thus, most of this section will be dedicated to showing how we can extend a copy of $T_2$ to a copy of $T_3$ (using a method effective for both Theorems~\ref{thm:n+k+an} and \ref{thm:n+an}). Recall that $T_3$ is obtained from $T_2$ by attaching paths of fixed length by their endpoints (see \itref{tree4}), but such that the total number of vertices contained in such paths is only a small proportion of the resulting tree (see \itref{tree5}). Thus, with a copy of $T_2$ already found, we will often wish to find paths of a fixed length between certain attachment points. By ensuring these attachment points have plenty of out- and in-neighbours, we need only to be able to connect linear-sized sets with paths of fixed but small length, while avoiding some small set of vertices already used in some paths. As we will see, paths with changes of direction are comparatively easy to find, so we only consider whether we can find such paths so that they are directed paths. We will call tournaments with this connection property \emph{well-connected}, as follows.

\begin{defn}\label{defn:connector}
We say a tournament $G$ is \emph{$(a,b,\ell)$-well-connected} if, for every $A_1,A_2\subseteq V(G)$ with $|A_1|,|A_2|\geqslant a$ and $B\subseteq V(G)$ with $|B|\leqslant b$, there is a directed path in $V(G)\setminus B$ from $A_1$ to $A_2$ with length $\ell$.
\end{defn}

In Lemma~\ref{lm:main-theorems-connector}, we will see that both of our main theorems hold if the tournament $G$ is well-connected. Of course, not every tournament is well-connected, but, in Lemma~\ref{lm:not-connector-split} we will see that any tournament that is not well-connected contains a bipartition of most of its vertices, so that all the relevant edges are directed in the same direction across the bipartition. Through the repeated application of Lemma~\ref{lm:not-connector-split}, we can then decompose the vertices of any tournament as $V(G)=B\cup W_1\cup\ldots\cup W_r$, so that $B$ is small, all possible edges are directed from $W_i$ to $W_j$ for $1\leqslant i<j\leqslant r$, and each $G[W_i]$ is either small or well-connected (see Lemma~\ref{lm:transitive-decomposition}). We then assign the vertices of $T$ to the sets $W_1,\ldots,W_r$, so that any edge of $T$ assigned between some $W_i$ and $W_j$ with $i<j$ is to be embedded as directed from $W_i$ into $W_j$. Thus, we can embed the vertices of $T$ assigned to $W_i$ into $G[W_i]$ independently for each $i\in [r]$, while knowing the other edges of $T$ can then be embedded. As noted in the proof sketch, this is a streamlined version of techniques by K\"uhn, Mycroft and Osthus~\cite{KUE-MYC-OST,KUE-MYC-OST-2}. In~\cite{KUE-MYC-OST,KUE-MYC-OST-2}, a notion of robust out-expansion is used, from which our well-connected property can be derived. As we do not need any other results of robust out-expansion (most notably, we do not use a Hamilton, or almost-spanning, cycle in the reduced digraph), we use the well-connected property directly. This allows the decomposition of~\cite[Lemma~5.2]{KUE-MYC-OST-2} to be simplified to find bipartitions with all the edges directed from one side to another, rather than just most of the edges.

In Section~\ref{sect:connectors}, we will prove a number of results on well-connected tournaments, including the tournament decomposition discussed above. Then, in Section~\ref{sect:final-final-proofs}, after showing our main results hold for well-connected tournaments (i.e., Lemma~\ref{lm:main-theorems-connector}), we prove both Theorems~\ref{thm:n+k+an} and~\ref{thm:n+an}.

\subsection{Well-connected tournaments}\label{sect:connectors}

We start by proving two simple properties of well-connected tournaments in Lemma~\ref{lm:connector-properties}. The first is that removing a small number of vertices from a well-connected tournament maintains some (potentially slightly weaker) connection property. The second shows that $(a,b,\ell)$-well-connected tournaments robustly contain paths of length $\ell$, regardless of the desired orientation of the paths' edges. While Definition~\ref{defn:connector} only refers to directed paths, Thomason \cite{THO} showed that a path with at least one change of direction can be found between two sufficiently large subsets of any tournament, covering all other cases.

\begin{lemma}\label{lm:connector-properties}
Let $a,b,\ell\geqslant 0$, and suppose $G$ is a $(a,b,\ell)$-well-connected tournament.
\begin{enumerate}[label = \textbf{\roman{enumi})}]
    \item\label{connector1} If $C\subseteq V(G)$ has size $c\leqslant b$, then $G-C$ is $(a,b-c,\ell)$-well-connected.
    \item\label{connector2} Suppose $P$ is an oriented path of length $\ell$, and $A_1,A_2,B\subseteq V(G)$ satisfy $|A_1|,|A_2|\geqslant a$, $|B|\leqslant b$. If $a\geqslant b+\ell+3$, then there is a copy of $P$ in $G-B$, with its first vertex in $A_1$ and its last vertex in $A_2$.
\end{enumerate}
\end{lemma}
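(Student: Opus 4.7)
For part~\itref{connector1}, the argument is immediate. Given $A_1, A_2 \subseteq V(G-C)$ with $|A_1|, |A_2| \geq a$ and $B \subseteq V(G-C)$ with $|B| \leq b - c$, the combined set $B \cup C \subseteq V(G)$ has size at most $b$. Applying the $(a, b, \ell)$-well-connectedness of $G$ with subsets $A_1, A_2$ and forbidden set $B \cup C$ yields a directed path of length $\ell$ from $A_1$ to $A_2$ in $V(G) \setminus (B \cup C) = V(G-C) \setminus B$, which is exactly what $(a, b-c, \ell)$-well-connectedness of $G - C$ requires.

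For part~\itref{connector2}, the plan is to split on the orientation of $P$. If $P$ is a directed path of length $\ell$, then the conclusion is immediate from $(a, b, \ell)$-well-connectedness applied to $G$ with the given $A_1, A_2, B$. Otherwise $P$ has a change of direction, and so contains an internal vertex $v_i$ (with $1 \leq i \leq \ell-1$) that is either a source or a sink of $P$, i.e.\ with both $v_{i-1}$ and $v_{i+1}$ having the same orientation relative to $v_i$ in $P$. By directional duality, I may assume $v_i$ is a source, so that $v_i \to v_{i-1}$ and $v_i \to v_{i+1}$ in $P$.

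The strategy for this non-directed case is to combine the source/sink flexibility at $v_i$ with Thomason's classical theorem (cited earlier in the paper) that every $(n+1)$-vertex tournament contains every $n$-vertex oriented path. Since $G$ is $(a, b, \ell)$-well-connected, it must have at least $2a + \ell - 1$ vertices, and with $a \geq b + \ell + 3$ this leaves the tournament $G - B$ with enough vertices for Thomason's theorem to apply to any sub-path of $P$. Concretely, I would first pick endpoints $v_0 \in A_1 \setminus B$ and $v_\ell \in A_2 \setminus B$ (possible since each of these sets has size at least $a - b \geq \ell + 3$), then select a candidate source $v_i \in V(G) \setminus (B \cup \{v_0, v_\ell\})$ whose out-neighbourhood inside $G - B - \{v_0, v_\ell\}$ is large enough to host the first vertices of both halves of $P$, and finally embed the two halves $v_i v_{i-1} \cdots v_0$ and $v_i v_{i+1} \cdots v_\ell$ inside disjoint subsets of $N^+_{G-B}(v_i)$ using Thomason's theorem.

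The main obstacle will be prescribing both endpoints $v_0$ and $v_\ell$ when applying Thomason's theorem, which on its own does not enforce specified endpoints. I expect to handle this by exploiting the source property at $v_i$ to decouple the two halves (each of which needs only one prescribed endpoint after fixing $v_i$), and by absorbing the single-endpoint constraints into a mild extension of Thomason's argument, using the slack provided by $a \geq b + \ell + 3$ to guarantee each half lives inside a subtournament large enough to contain the required sub-path of $P$ ending at the prescribed vertex.
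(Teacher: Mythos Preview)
Your proof of part~\ref{connector1} is correct and matches the paper's argument exactly. For part~\ref{connector2}, the directed case is also handled identically.

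For the non-directed case, however, you have correctly identified the obstacle but not overcome it. Your plan fixes endpoints $v_0\in A_1\setminus B$ and $v_\ell\in A_2\setminus B$ first, then tries to embed each half of $P$ into a neighbourhood of a chosen source vertex $v_i$ using Thomason's $(n+1)$-vertex path theorem. But that theorem gives no control over endpoints, and your ``mild extension'' to impose a single prescribed endpoint is left unspecified. Even granting such an extension, you would also need $v_0$ and $v_\ell$ to lie in the out-neighbourhood of $v_i$, which is not guaranteed by your selection order; reconciling these constraints within the slack $a\geq b+\ell+3$ would require a genuine further argument that you have not supplied.

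The paper sidesteps all of this by invoking stronger results from the same Thomason paper: Theorems~3,~4 and~5 of~\cite{THO} assert directly that if $P$ has at least one change of direction, then the tournament $G[(A_1\cup A_2)\setminus B]$ already contains a copy of $P$ with first vertex in $A_1$ and last vertex in $A_2$, provided $|A_1\setminus B|,|A_2\setminus B|$ are large enough (and $a\geq b+\ell+3$ suffices). So the non-directed case becomes a one-line citation rather than a construction.
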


The second property uses the fact that an oriented path with at least one direction can be found between two sufficiently large subsets of any tournament, as summarised by Lemma~\ref{lm:non-directed-path}. This follows from a combination of several preliminary lemmas used in Thomason's proof that every $(n+1)$ vertex tournament contains a copy of every $n$-vertex oriented path~\cite{THO}. For a formal treatment of how Lemma~\ref{lm:non-directed-path} follows from Thomason's results, we refer the reader to \cite[Corollary~1.14]{BEN}.

\begin{lemma}\label{lm:non-directed-path}
    Let $P$ be an oriented path of length $\ell$ which is not a directed path. Let $G$ be a tournament, and let $X$ and $Y$ be subsets of $V(G)$ of order at least $\ell+3$. Then, there is a copy of $P$ in $G$ with first vertex in $X$ and last vertex in $Y$.
\end{lemma}

\begin{proof}[Proof of Lemma~\ref{lm:connector-properties}]
First, fix a subset $C\subseteq V(G)$ with size $c$. Then, if $A_1,A_2,B\subseteq V(G)$ satisfy $|A_1|,|A_2|\geqslant a$ and $|B|\leqslant b-c$, then, because $G$ is $(a,b,\ell)$-well-connected, there is a directed path in $V(G)\setminus(B\cup C)$ from $A_1$ to $A_2$ with length $\ell$. Therefore, $G-C$ is $(a,b-c,\ell)$-well-connected and \ref{connector1} holds.

Next, suppose $P$ is an oriented path of length $\ell$, and $A_1,A_2,B\subseteq V(G)$ satisfy $|A_1|,|A_2|\geqslant a$, $|B|\leqslant b$. If $P$ is a directed path, then, because $G$ is $(a,b,\ell)$-well-connected there is a copy of $P$ in $G-B$ with first vertex in $A_1$  and last vertex in $A_2$. On the other hand, if $P$ has a change of direction, then by Lemma~\ref{lm:non-directed-path} there is a copy of $P$ in $G[(A_1\cup A_2)\setminus B]$, with first vertex in $A_1$ and last vertex in $A_2$. Therefore, \ref{connector2} holds.
\end{proof}

We will need to set aside a random subset of vertices to use to attach paths to $T_2$ to obtain $T_3$. We need therefore to show  that random subsets of well-connected tournaments can be used to find connecting paths of this sort. To do this, we use \emph{median orders}. Given a tournament $G$, an ordering $v_1,\ldots,v_n$ of $V(G)$ is a median order if it maximises the number of pairs $i<j$ with $v_iv_j\in E(G)$. We use median orders only to recall the following lemma from~\cite{BEN-MON} and apply it to prove Lemma~\ref{lm:random-connector}. For further useful properties of median orders applicable to embedding trees in tournaments see~\cite{HAV-THO,DRO-HAV,BEN-MON}.

\begin{lemma}[{\cite[Lemma~2.9]{BEN-MON}}]\label{lm:connecting-path-length-3}
Suppose $G$ is a tournament with a median order $v_1,\ldots,v_n$. Then, for any $1\leqslant i<j\leqslant n$ with $j-i\geqslant7$, and $A\subset V(G)\setminus\{v_i,v_j\}$ with $|A|\leqslant (j-i-7)/6$, there is a directed $v_i,v_j$-path in $G-A$ with length~3.
\end{lemma}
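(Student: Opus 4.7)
The plan is to construct a length-$3$ path $v_i \to w_1 \to w_2 \to v_j$ explicitly using two standard consequences of the defining exchange-invariance of median orders: (i) any consecutive pair $v_k, v_{k+1}$ satisfies $v_k \to v_{k+1}$ in $G$, since otherwise swapping them would strictly increase the number of forward edges; and (ii) for any sub-interval $\{v_a, \ldots, v_b\}$ of the median order, the first vertex $v_a$ has out-degree at least half in any initial segment $\{v_{a+1}, \ldots, v_{a+t}\}$, and symmetrically $v_b$ has in-degree at least half in any final segment. Both facts follow from the standard observation that the restriction of a median order to any sub-interval is itself a median order of the induced subtournament.

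The natural candidate is to take $w_1 = v_k$ and $w_2 = v_{k+1}$ for a consecutive pair inside $\{v_{i+1}, \ldots, v_{j-1}\} \setminus A$, subject to the additional conditions $v_i \to v_k$ and $v_{k+1} \to v_j$; property~(i) then supplies the middle edge $v_k \to v_{k+1}$ for free, so it remains to find a suitable index $k$. Applied to the full interval, (ii) yields that $v_i$ has at least $\lceil (j-i-1)/2 \rceil$ out-neighbours in $\{v_{i+1}, \ldots, v_{j-1}\}$ and $v_j$ has at least $\lceil (j-i-1)/2 \rceil$ in-neighbours there, but a direct inclusion-exclusion over valid $k$'s falls short by about $2|A|$ after excluding pairs meeting $A$. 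The hypothesis $|A| \leq (j-i-7)/6$ is designed to give back exactly enough slack: I would partition $\{v_{i+1}, \ldots, v_{j-1}\}$ into at least $|A|+1$ consecutive blocks of size~$6$, so that at least one block $B$ is disjoint from $A$, and then refine the count locally inside $B$.

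The main obstacle is this final local step inside a single block of $6$ vertices. The fix is to apply~(ii) to two carefully chosen overlapping sub-intervals (for instance, one ending at the last vertex of $B$, giving a sharper lower bound on $|N^+(v_i) \cap B|$, and a symmetric one starting at the first vertex of $B$, giving a sharper lower bound on $|N^-(v_j) \cap B'|$ where $B'$ is the shift of $B$ by one), and then to intersect the resulting index sets, where each of the $6$ consecutive positions in $B$ is analysed by a small case analysis. Once a suitable consecutive pair $v_k, v_{k+1} \subseteq B$ with $v_i \to v_k$ and $v_{k+1} \to v_j$ is located, the directed path $v_i \to v_k \to v_{k+1} \to v_j$ immediately satisfies the conclusion, with $v_k, v_{k+1} \notin A$ guaranteed by the cleanness of $B$.
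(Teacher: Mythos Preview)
Your proposal has a genuine gap at exactly the point you flag as the ``main obstacle''. The median-order property (ii) gives information only about the \emph{endpoints} of a sub-interval: applied to $\{v_i,\ldots,v_{a+5}\}$ (the interval ending at the last vertex of your clean block $B=\{v_a,\ldots,v_{a+5}\}$) it bounds $|N^+(v_i)\cap\{v_{i+1},\ldots,v_{a+5}\}|$ from below, but says nothing about $|N^+(v_i)\cap B|$ specifically --- all of those out-neighbours may lie strictly before $B$. Concretely, the configuration in which $v_i\to v_k$ for every $i<k\le i+\lfloor(j-i)/2\rfloor$ and $v_k\to v_i$ for every $i+\lfloor(j-i)/2\rfloor<k<j$ is consistent with the interval inequalities (ii) (check $|N^+(v_i)\cap\{v_{i+1},\ldots,v_\ell\}|\ge\lceil(\ell-i)/2\rceil$ for each $\ell$). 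With $A$ a single vertex placed in the first half, the only clean block of six sits in the second half, where $v_i$ has no out-neighbours at all; no consecutive pair $v_k,v_{k+1}\in B$ with $v_i\to v_k$ exists, and your proposed case analysis cannot rescue it. So the block decomposition does not reduce the problem to a local one, and the consecutive-pair ansatz is too rigid.

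The paper does not prove this lemma; it is quoted without proof as \cite[Lemma~2.9]{BEN-MON}. The argument there does not attempt to localise to a clean block. A workable approach must select the two intermediate vertices globally, using the median-order structure to control the interaction between $N^+(v_i)$ and $N^-(v_j)$ across the whole of $\{v_{i+1},\ldots,v_{j-1}\}$ rather than inside a prescribed window.
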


We are now ready to state and prove our lemma, showing that, with high probability, a random subset of vertices in a well-connected tournament induces a well-connected tournament, as follows.

\begin{lemma}\label{lm:random-connector}
Let $1/n\ll\eta\ll1/\ell,\epsilon\ll p$. Suppose $G$ is a $(\epsilon n,\eta n,\ell)$-well-connected tournament with $|G|\leqslant 3n$, and that $U\subseteq V(G)$ is a random subset with vertices included independently with probability $p$. Then, with high probability, $G[U]$ is $(6\epsilon n,\eta^2 n,\ell+6)$-well-connected.
\end{lemma}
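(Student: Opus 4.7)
The plan is to show that the random subset inherits well-connectedness from $G$, via a combination of Chernoff concentration and median-order routing. First, by Lemma~\ref{lm:chernoff} and a union bound over the at most $3n$ vertices of $G$, with high probability $|U| \in [pn/2, 4pn]$ and, for every $v \in V(G)$ and $\diamond \in \{+,-\}$ with $d^\diamond_G(v) \geq \epsilon n/2$, we have $|N^\diamond_G(v) \cap U| \geq (p/2)\,d^\diamond_G(v)$. I condition on these events.

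Now fix arbitrary $A_1, A_2 \subseteq U$ with $|A_i| \geq 6\epsilon n$ and $B \subseteq U$ with $|B| \leq \eta^2 n$, and seek a length-$(\ell+6)$ directed path in $G[U]-B$ from $A_1$ to $A_2$, written as a concatenation of lengths $3 + \ell + 3$. For the two outer length-3 pieces, fix a median order $u_1, \ldots, u_m$ of $G[U]$. Splitting the order in half and using pigeonhole (with a symmetric treatment when both $A_i$ sit on the same side), we may assume at least $3\epsilon n$ vertices of $A_1$ lie in the earlier half and at least $3\epsilon n$ vertices of $A_2$ in the later half. Lemma~\ref{lm:connecting-path-length-3} then provides, for each such $A_1$-vertex, a length-3 directed path in $G[U]-B$ to each $u_j \in U$ at least $7 + 6\eta^2 n$ positions later in the order; and symmetrically for $A_2$ at the other end. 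This produces flexibility sets $M_1, M_2 \subseteq U$ of size $\Omega(|U|) \gg \epsilon n$, from which the middle length-$\ell$ piece must start and end.

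The hard part is then to find the middle length-$\ell$ directed path inside $G[U]-B$ from $M_1$ to $M_2$ (also avoiding the $O(1)$ vertices used in the outer pieces). Since $|M_1|,|M_2| \geq \epsilon n$ viewed as subsets of $V(G)$ and $|B| \leq \eta^2 n \ll \eta n$, well-connectedness of $G$ yields such a path in $G$, but its interior may leave $U$. I would resolve this by combining the degree concentration from Step~1 with the large flexibility of the target: iteratively extend the path from a chosen starting vertex in $M_1$ by selecting an out-neighbour inside $U \setminus B$, which is abundant since $|N^+_G(v_t) \cap U| \geq (p/2)\,d^+_G(v_t) = \Omega(pn)$ for typical current vertices $v_t$. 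The size of $M_2$ then allows the walk to be steered into $M_2$ after exactly $\ell$ steps, while the small budgets $|B| = O(\eta^2 n)$ and the $O(\ell)$ already-used vertices provide ample slack throughout. Concatenating the three pieces gives the desired length-$(\ell+6)$ directed path in $G[U]-B$, establishing the stated well-connectedness of $G[U]$.
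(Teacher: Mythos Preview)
Your argument has a genuine gap in what you correctly identify as ``the hard part'': finding the middle length-$\ell$ directed path inside $G[U]-B$ from $M_1$ to $M_2$. The proposed resolution---iteratively extend from $M_1$ by always choosing an out-neighbour in $U\setminus B$, and then ``steer'' into $M_2$ after exactly $\ell$ steps---is circular. Having many out-neighbours in $U$ at each step does not let you land in a prescribed target set after a prescribed number of steps; that \emph{is} the well-connectedness of $G[U]$ you are trying to prove. And, as you note, you cannot simply invoke the well-connectedness of $G$, because the resulting path may leave $U$. Nothing in your degree-concentration step repairs this.

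The paper's approach avoids this obstacle by reversing the order of path-construction and randomness. It fixes a median order of $G$ (not of $G[U]$), lets $W_1,W_2$ be the first and last $\epsilon n$ vertices, and for each pair $v,w$ in the central range builds $\Theta(\eta n/\ell)$ internally vertex-disjoint directed $v,w$-paths of length $\ell+6$ entirely inside $G$: the middle length-$\ell$ portion comes from well-connectedness of $G$ (paths from $W_2$ to $W_1$), and the two length-$3$ portions from Lemma~\ref{lm:connecting-path-length-3} applied to $G$. Only then does the randomness enter: each such path survives in $U$ independently with probability $p^{\ell+5}$, so Chernoff gives at least $2\eta^2 n$ surviving paths, and a union bound over the $O(n^2)$ pairs $(v,w)$ finishes. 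The missing idea is to build the full supply of paths deterministically in $G$ first and let the random sampling preserve enough of them, rather than trying to construct paths directly inside the random set $U$.
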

\begin{proof}
Let $v_1,\ldots,v_m$ be a median order for $G$. Let $W_1$ and $W_2$ respectively denote the first and last $\epsilon n $ vertices of the median order. Let $V'$ be the middle $m-4\epsilon n$ vertices of the median order.

It is enough to show that, with high probability, for every $v,w\in V'$, there are at least $2\eta^2 n$ internally vertex-disjoint directed $v,w$-paths with length $\ell+6$ and with all internal vertices in $U$. Indeed, then for any $A_1,A_2\subseteq U$ with $|A_1|,|A_2|\geqslant6\epsilon n$ and $B\subseteq U$ with $|B|\leqslant\eta^2 n$, there is some $v\in (V'\cap A_1)\setminus B$ and $w\in (V'\cap A_2)\setminus(B\cup\{v\})$, and hence at least $2\eta^2n$ internally vertex-disjoint directed paths from $A_1\setminus B$ to $A_2\setminus B$ in $G[U]$ with length $\ell+6$. Of these paths, at most $\eta^2n$ contain some internal vertex in $B$, and so there is some directed path in $U\setminus B$ from $A_1$ to $A_2$ of length $\ell+6$, thus demonstrating $G[U]$ is $(6\epsilon n,\eta^2 n,\ell+6)$-well-connected.

Fix $v,w\in V'$. Because $G$ is $(\epsilon n,\eta n,\ell)$-well-connected, and $|W_1|,|W_2|\geqslant\epsilon n$, we can greedily find at least $\eta n/2\ell$ vertex-disjoint directed paths in $V(G)\setminus\{v,w\}$ from $W_2$ to $W_1$ with length $\ell$. Using Lemma~\ref{lm:connecting-path-length-3}, we can greedily and disjointly connect $v$ to the first vertex of each path by a directed path of length 3, while avoiding all other vertices used so far. Indeed, at least $\epsilon n$ vertices in the median order lie between $v$ and the first vertex of each path, while the total number of vertices to be avoided each time is at most $\eta n\leqslant(\epsilon n-7)/6$. Similarly, we can also disjointly connect the last vertex of each path to $w$ by a directed path of length 3, also avoiding any vertex used previously. Therefore, we have at least $\eta n/2\ell$ internally vertex-disjoint directed paths in $V(G)$ from $v$ to $w$, each with length $\ell+6$.

Let $X_{v,w}$ be the number of these directed $v,w$-paths which additionally have all internal vertices in $U$, and note that $X_{v,w}$ is a binomial variable with $\E X_{v,w}\geqslant p^{\ell+5}\eta n/2\ell>3\eta^2 n$. From Lemma~\ref{lm:chernoff}, we have
\begin{equation*}
    \mathbb{P}(X_{v,w}\leqslant 2\eta^2n)\leqslant\mathbb{P}(|X_{v,w}-\E X_{v,w}|\geqslant\E X_{v,w}/3)\leqslant 2\exp{(-\E X_{v,w}/27)}\leqslant2\exp{(-\eta^2 n/9)}.
\end{equation*}
Thus, the probability that the desired property fails is at most $18n^2\exp{(-\eta^2n/9)}$, and so, as $1/n\ll\eta$, the conclusion of the lemma holds with high probability.
\end{proof}

Next we will show that, if a tournament is not well-connected, then, except for a small subset of vertices, we may partition the vertices in two so that all the edges between the parts are directed into the same part.

\begin{lemma}\label{lm:not-connector-split}
Let $\epsilon >0$, $\ell\in\mathbb{N}$, and $1/n\ll\eta\ll\epsilon,1/\ell$. Suppose $G$ is a tournament with $|G|\leqslant 3n$ that is not $(\epsilon n,\eta n,\ell)$-well-connected. Then, there is a partition $V(G)=W_1\cup W_2\cup B$ so that $|W_1|,|W_2|\geqslant \epsilon n/2$, $|B|\leqslant4\ell^{-1}n$, and $x\rightarrow y$ for every $x\in W_1$, $y\in W_2$.
\end{lemma}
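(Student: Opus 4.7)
The plan is to extract a witness from the failure of well-connectedness, build a BFS layering from $A_1$ in $G-B_0$, and exploit the standard ``backward edges only'' property between distant BFS layers to find a transitive cut. First I would take $A_1,A_2\subseteq V(G)$ with $|A_1|,|A_2|\geq\epsilon n$ and $B_0\subseteq V(G)$ with $|B_0|\leq\eta n$ witnessing the failure of $(\epsilon n,\eta n,\ell)$-well-connectedness, so that no directed path of length $\ell$ in $V(G)\setminus B_0$ runs from $A_1$ to $A_2$. For each $v\in V(G)\setminus B_0$ let $d(v)$ be the length of the shortest directed path from $A_1\setminus B_0$ to $v$ in $G-B_0$ (or $\infty$ if none), and set $L_i=d^{-1}(i)$, $L_\infty=d^{-1}(\infty)$.

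Three facts then drive the argument: (i) $L_0\supseteq A_1\setminus B_0$, so $|L_0|\geq\epsilon n-\eta n\geq\epsilon n/2$; (ii) for $u\in L_i$ and $v\in L_j$ with $j\geq i+2$, or with $v\in L_\infty$ and $i$ finite, the edge between them must be directed $v\to u$, since $u\to v$ would force $d(v)\leq i+1$; and (iii) $A_2\cap L_\ell=\emptyset$, for otherwise a vertex of $A_2$ at shortest distance exactly $\ell$ would witness a forbidden length-$\ell$ path. For any $r\in[1,\ell]$, the candidate partition $W_2=L_0\cup\ldots\cup L_{r-1}$, $W_1=L_{r+1}\cup\ldots\cup L_\infty$, $B=B_0\cup L_r$ then has all edges between $W_1$ and $W_2$ directed from $W_1$ to $W_2$ by (ii), and $|W_2|\geq\epsilon n/2$ by (i); using $|B_0|\leq\eta n\ll n/\ell$, it remains to pick $r$ satisfying $|L_r|\leq 3n/\ell$ and $|W_1|\geq\epsilon n/2$.

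I would split into two cases based on the mass beyond $L_\ell$. If $|L_{\ell+1}\cup\ldots\cup L_\infty|\geq\epsilon n/2$, the bound on $|W_1|$ holds for every $r\in[1,\ell]$ and a pigeonhole over the $\ell$ layers $L_1,\ldots,L_\ell$ (with total size at most $|G|\leq 3n$) supplies a layer of size at most $3n/\ell$. Otherwise $|L_{\ell+1}\cup\ldots\cup L_\infty|<\epsilon n/2$; combined with (iii) and $|A_2\setminus B_0|\geq\epsilon n-\eta n$, this forces $|A_2\cap(L_0\cup\ldots\cup L_{\ell-1})|\geq\epsilon n/2-\eta n$, so the ``middle'' layers carry non-trivial mass. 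I would then take $r$ to be the smallest index in $[1,\ell]$ with $|L_r|\leq 3n/\ell$ and use that each strictly earlier layer exceeds $3n/\ell$: the cumulative total of those early layers is bounded by $|G|\leq 3n$, preventing them from absorbing all of $V(G)\setminus B_0\setminus L_0$, so at least $\epsilon n/2$ mass remains in $L_{r+1}\cup\ldots\cup L_\infty$.

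The hard part will be this second case: ensuring the chosen ``first small layer'' sits early enough that enough mass remains above the cut. I would handle the tightest corner by allowing $B$ to absorb a short interval of consecutive layers rather than a single one, which by iterating (ii) still forces every edge across the widened gap to go from $W_1$ to $W_2$, while the pigeonhole bounds on layer sizes keep $|B|\leq 4n/\ell$.
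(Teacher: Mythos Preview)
Your BFS-layering approach is exactly the paper's, and your facts (i) and (ii) are correct. The gap is (iii): knowing only $A_2\cap L_\ell=\emptyset$ is far too weak. The observation you are missing is that $|A_2\cap L_i|\leq\ell+1$ for \emph{every} $i\in\{0,\ldots,\ell\}$: if $|A_2\cap L_i|\geq\ell+2$ then, since every tournament has a Hamiltonian path, $G[A_2\cap L_i]$ contains a directed path of length $\ell-i$; prepending a shortest path of length $i$ from $A_1\setminus B_0$ to its initial vertex yields a directed path of length exactly $\ell$ from $A_1$ to $A_2$ inside $G-B_0$, contradicting the witness. Summing gives $|A_2\cap(L_0\cup\ldots\cup L_\ell)|\leq(\ell+1)^2$, so $A_2\setminus B_0$ lies almost entirely in $L_{\ell+1}\cup\ldots\cup L_\infty$. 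This single observation makes your case split unnecessary: for any $r\in[1,\ell]$ one has $|W_1|\geq |A_2|-|B_0|-(\ell+1)^2\geq\epsilon n/2$ automatically, and pigeonhole over $L_1,\ldots,L_\ell$ supplies a small $L_r$. That is precisely the paper's argument.

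Without this observation your second case does not close. The step ``at least $\epsilon n/2$ mass remains in $L_{r+1}\cup\ldots\cup L_\infty$'' is not justified by what precedes it: knowing that $L_1,\ldots,L_{r-1}$ are each large only bounds $r$ from above, it says nothing about the layers beyond $L_r$. The fact you do derive, that $A_2$ has large intersection with $L_0\cup\ldots\cup L_{\ell-1}$, places that mass in $W_2$, not $W_1$, so it is a red herring. Concretely, nothing in your argument rules out a configuration in which $L_1$ absorbs almost all of $V(G)\setminus B_0$ and $L_2=L_3=\cdots=\emptyset$, with $A_2\setminus B_0\subseteq L_1$: then the first small layer is $r=2$, $W_1=\emptyset$, and widening $B$ to absorb a run of empty layers does not help, while absorbing $L_1$ is far too expensive. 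Such a configuration is in fact impossible as a witness---but only the Hamiltonian-path argument above explains why.
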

\begin{proof}
Using that $G$ is not $(\epsilon n, \eta n, \ell)$-well-connected, let $A_1,A_2,B_0\subseteq V(G)$ be sets such that $|A_1|,|A_2|\geqslant\epsilon n$, $|B_0|\leqslant\eta n$, and there is no directed path in $V(G)\setminus B_0$ from $A_1$ to $A_2$ with length $\ell$. Construct a chain of subsets $U_0\subseteq U_1\subseteq\ldots\subseteq U_\ell$ as follows. Let $U_0=A_1\setminus B_0$, and, for $i\in[\ell]$, let $U_i$ be the set of vertices $x\in V(G)$ such that there is a directed path from $A_1$ to $x$ in $V(G)\setminus B_0$ with length at most $i$. We remark that $|(U_r\setminus U_{r-1})\cap A_2|\leqslant\ell+1$ for any $r\leqslant\ell$, else, by taking a directed path of length $\ell-r$ in $(U_r\setminus U_{r-1})\cap A_2$ together with a path of length $r$ from $A_1$ to that path's initial vertex, we would be able to find a directed path in $V(G)\setminus B_0$ from $A_1$ to $A_2$ of length $\ell$. In particular, we have $|U_r\cap A_2|\leqslant(\ell+1)^2$ for any $r\leqslant\ell$.

Let $r\in[\ell]$ be minimal such that $|U_r\setminus U_{r-1}|\leqslant3\ell^{-1}n$. Set $W_2=U_{r-1}$, $B=B_0\cup(U_r\setminus U_{r-1})$, and $W_1=V(G)\setminus (U_r\cup B_0)$, so that $W_1\cup W_2\cup B$ is a partition of $V(G)$. Because $A_1\setminus B_0\subseteq W_2$, we have $|W_2|\geqslant\epsilon n/2$. Because $A_2\setminus(U_r\cup B_0)\subseteq W_1$ and $1/n\ll\eta\ll\epsilon,1/\ell$, we have $|W_1|\geqslant \epsilon n-(\ell+1)^2-\eta n\geqslant \epsilon n/2$. From the choice of $r$, we have $|B|\leqslant3\ell^{-1}n+\eta n\leqslant 4\ell^{-1}n$. Finally, the fact that $x\rightarrow y$ for every $x\in W_1$, $y\in W_2$ follows from the definition of $U_r$ and $U_{r-1}$.
\end{proof}

Using a repeated application of Lemma~\ref{lm:not-connector-split}, we are now ready to state and prove the tournament decomposition referred to at the start of this section.

\begin{lemma}\label{lm:transitive-decomposition}
Suppose $\eta\ll\epsilon$ and let $\ell=\lceil\epsilon^{-3}\rceil$. Suppose $G$ is a tournament with $|G|\leqslant 3n$. Then, there is a partition $V(G)=B\cup W_1\cup\ldots\cup W_r$ so that $|B|\leqslant\epsilon n$ and the following properties hold.
\stepcounter{propcounter}
\begin{enumerate}[label=\emph{\textbf{\Alph{propcounter}\arabic{enumi}}}]
    \item\label{td1} If $1\leqslant i<j\leqslant r$ and $x\in W_i$, $y\in W_j$, then $x\rightarrow y$.
    \item\label{td2} For $i\in[r]$, if $|W_i|\geqslant\sqrt{\epsilon}n$, then $G[W_i]$ is $(\epsilon n,\eta n,\ell)$-well-connected.
\end{enumerate}
\end{lemma}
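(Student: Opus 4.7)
My plan is to refine a partition of $V(G)$ iteratively, at each step splitting any piece that is both large (size at least $\sqrt{\epsilon}n$) and fails to be $(\epsilon n, \eta n, \ell)$-well-connected. I begin with $r = 1$, $W_1 = V(G)$, $B = \emptyset$, which trivially satisfies~\itref{td1}. At each step, if some current piece $W_i$ has $|W_i| \geqslant \sqrt{\epsilon}\, n$ and $G[W_i]$ is not $(\epsilon n, \eta n, \ell)$-well-connected, I invoke Lemma~\ref{lm:not-connector-split} on $G[W_i]$ (noting $|G[W_i]| \leqslant 3n$, and that $\eta \ll \epsilon$ with $\ell = \lceil\epsilon^{-3}\rceil$ implies $\eta \ll 1/\ell$) to obtain $W_i = W_{i,1} \cup W_{i,2} \cup B_i$ with $|W_{i,1}|, |W_{i,2}| \geqslant \epsilon n/2$, $|B_i| \leqslant 4\ell^{-1}n$, and every edge between $W_{i,1}$ and $W_{i,2}$ oriented from $W_{i,1}$ to $W_{i,2}$. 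I then replace $W_i$ in the sequence by $W_{i,1}, W_{i,2}$ in that order, and add $B_i$ to $B$; the process halts when no piece of size at least $\sqrt{\epsilon}\, n$ violates well-connectedness.

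To bound $|B|$ at termination, I observe that once any split has occurred, every piece of the current partition arises as the product of some split and hence has size at least $\epsilon n/2$. So the total number of pieces never exceeds $|G|/(\epsilon n/2) \leqslant 6/\epsilon$, and consequently the process performs at most $6/\epsilon$ splits in total. Since each split contributes at most $4\ell^{-1}n$ vertices to $B$, this gives
\begin{equation*}
|B| \leqslant \frac{6}{\epsilon} \cdot \frac{4n}{\ell} \leqslant 24\epsilon^{2} n \leqslant \epsilon n,
\end{equation*}
where the middle inequality uses $\ell \geqslant \epsilon^{-3}$ and the last uses that $\epsilon$ may be taken sufficiently small.

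Finally, property~\itref{td1} is preserved through each split: edges between $W_{i,1}$ and $W_{i,2}$ have the correct orientation by Lemma~\ref{lm:not-connector-split}, while all edges between $W_j$ (with $j \neq i$) and either of $W_{i,1}, W_{i,2}$ are identical to the corresponding edges between $W_j$ and $W_i$ and hence respect the inserted ordering; an inductive argument then gives~\itref{td1} for the final partition. Property~\itref{td2} holds at termination by construction. I do not anticipate a real obstacle here: the decomposition is driven by repeated application of Lemma~\ref{lm:not-connector-split}, and the key quantitative point is simply that $\ell$ has been chosen so that the bound $24/(\epsilon\ell)$ on the proportional size of $B$ is absorbed into the $\epsilon$ budget, with room to spare.
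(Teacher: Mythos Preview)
Your proof is correct and follows essentially the same approach as the paper's: iteratively apply Lemma~\ref{lm:not-connector-split} to any large piece that fails to be well-connected, observe that every resulting piece has size at least $\epsilon n/2$ so there can be at most $6/\epsilon$ pieces (and hence at most $6/\epsilon$ splits), and sum the contributions $|B_i|\leqslant 4\ell^{-1}n$ to bound $|B|$. The only cosmetic difference is that the paper picks the \emph{largest} offending piece at each step and phrases termination via a shrinking upper bound on that piece, whereas you bound the total number of pieces directly; these are equivalent.
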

\begin{proof}
Initially, set $B^{(1)}=\emptyset$ and $W_1^{(1)}=V(G)$. Then, for $r\geqslant1$, do the following. We are given a partition $V(G)=B^{(r)}\cup W_1^{(r)}\cup\ldots\cup W_r^{(r)}$ with $|B^{(r)}|\leqslant 5r\epsilon^3n$, such that $|W_i^{(r)}|\geqslant\epsilon n/2$ for each $i\in[r]$, and, if $1\leqslant i<j\leqslant r$ and $x\in W_i^{(r)}$, $y\in W_j^{(r)}$, then $x\rightarrow y$. If we have that $G[W_i^{(r)}]$ is $(\epsilon n,\eta n,\ell)$ well-connected whenever $|W_i^{(r)}|\geqslant\sqrt{\epsilon}n$, then set $B=B^{(r)}$ and $W_i=W_i^{(r)}$ for $i\in[r]$. Otherwise, let $j\in[r]$ be such that $W_j^{(r)}$ is not $(\epsilon n,\eta n,\ell)$-well-connected, with $|W_j^{(r)}|$ maximal (so $|W_j^{(r)}|\geqslant\sqrt{\epsilon}n$). By Lemma~\ref{lm:not-connector-split}, there is a partition $W_j^{(r)}=U_1\cup U_2\cup B_r$ so that $|U_1|,|U_2|\geqslant \epsilon n/2$, $|B_r|\leqslant4\ell^{-1}n\leqslant5\epsilon^3n$, and $x\rightarrow y$ for every $x\in U_1$ and $y\in U_2$. We then set
\begin{align*}
    B^{(r+1)}&=B^{(r)}\cup B_r\\
    W_i^{(r+1)}&=
        \begin{cases}
        W_i^{(r)} & \text{if $1\leqslant i<j$} \\
        U_1 & \text{if $i=j$}\\
        U_2 & \text{if $i=j+1$}\\
        W_{i-1}^{(r)} & \text{ if $j+1<i\leqslant r+1$}
        \end{cases}
\end{align*}
We remark that $V(G)=B^{(r+1)}\cup W_1^{(r+1)}\cup\ldots\cup W_{r+1}^{(r+1)}$ is a partition with $|B^{(r+1)}|\leqslant 5(r+1)\epsilon^2n$, such that $|W_i^{(r+1)}|\geqslant\epsilon n/2$ for each $i\in[r]$, and, if $1\leqslant i<j\leqslant r+1$ and $x\in W_i^{(r+1)}$, $y\in W_j^{(r+1)}$, then $x\rightarrow y$, and so the procedure may continue.

On the $r$\textsuperscript{th} iteration of this procedure, the largest $|W_i^{(r)}|$ that is not $(\epsilon n,\eta n,\ell)$-well-connected has size at most $3n-(r-1)\cdot \epsilon n/2$, and so the procedure will terminate after at most $6\epsilon^{-1}$ iterations, at which point we find $|B^{(r)}|\leqslant30\epsilon^2n\leqslant\epsilon n$.
\end{proof}

\subsection{Proof of Theorem~\ref{thm:n+k+an} and Theorem~\ref{thm:n+an}}\label{sect:final-final-proofs}

We first prove that our two main theorems hold when the tournament is well-connected.

\newcommand{\othereta}{\hat{\eta}}

\begin{lemma}\label{lm:main-theorems-connector}
Suppose $1/n\ll\eta\ll\epsilon\ll\alpha$ and let $\ell=\lceil\epsilon^{-3}\rceil$. Suppose $G$ is a tournament which is $(\epsilon n,\eta n,\ell)$-well-connected, and that $T$ is an $n$-vertex oriented tree.
\begin{enumerate}[label =\emph{(\arabic{enumi})}]
    \item\label{main-lemma-1} Suppose that $|G|=((1+\alpha)n+k)$ where $k$ is the number of leaves of $T$. Then, $G$ contains a copy of $T$.
    \item\label{main-lemma-2} Suppose that $c$ is a constant such that $1/n\ll c\ll\eta$, that $|G|=(1+\alpha)n$, and that $\Delta(T)\leqslant cn$. Then, $G$ contains a copy of $T$.
\end{enumerate}
\end{lemma}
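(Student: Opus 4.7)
The plan is to use the tree decomposition of Lemma~\ref{lm:tree_decomposition}, embedding $T$ in stages, with two random subsets of $V(G)$ reserved for the later stages and the well-connectedness of $G$ used to realise the long connecting paths.

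Introduce constants with $\eta \ll p \ll 1/m \ll \epsilon$ and set $q = \ell + 8$. Apply Lemma~\ref{lm:tree_decomposition} to obtain $T_0 \subseteq T_1 \subseteq T_2 \subseteq T_3 \subseteq T_4 = T$ satisfying \itref{tree1}--\itref{tree5}. Using Proposition~\ref{prop:random-subset} and Lemma~\ref{lm:random-connector}, choose disjoint random subsets $U_1, U_2 \subseteq V(G)$ of size $\Theta(pn)$ with the following properties: the set $V' \subseteq V(G) \setminus (U_1 \cup U_2)$ of vertices $v$ with $d^\pm(v, U_j) \geq p^2 n$ for both $j \in \{1,2\}$ satisfies $|V(G) \setminus V'| = O(pn)$; the subset $U_2' \subseteq U_2$ of vertices with $d^\pm(v, U_1) \geq p^2 n$ satisfies $|U_2 \setminus U_2'| = O(pn)$; and $G[U_2]$ is $(6\epsilon n, 25\eta^{1/2}n, q-2)$-well-connected.

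Next, embed $T_1$ into $G[V']$ componentwise, processing components in sequence into disjoint portions of $V'$. For each component $T_1^{(i)}$ of $T_1$ of size at least $\eta^{1/2} n$, apply Theorem~\ref{thm:n+k+an-partial} (in case \itref{main-lemma-1}) or Theorem~\ref{thm:n+an-partial} (in case \itref{main-lemma-2}) using $T_0 \cap T_1^{(i)}$ as the core; embed the smaller components greedily via Theorem~\ref{thm:any_linear_bound}. In case \itref{main-lemma-1}, the total room used is at most $(1+\bar{\alpha})|T_1| + k + O(\eta n)$, since the excess leaves of $T_1$ beyond the leaves of $T$ are attachment points of the components of $T - V(T_1)$, whose count is bounded by the number of bare paths in the decomposition and so is $O(\eta n)$; in case \itref{main-lemma-2}, the condition $|S_v| \leq d_T(v)\cdot m \leq cnm \leq \eta'|T_1^{(i)}|$ holds for large components by the choice $c \ll \eta/m$, and the total room used is at most $(1+\bar{\alpha})|T_1|$. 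Either bound fits within $|V'| \geq |G| - O(pn)$ for $\bar{\alpha}, \eta, p \ll \alpha$. Extend to a copy of $T_2$ by greedily placing the disjoint small components of $T_2 - V(T_1)$ (each of size at most $m$) into the remaining vertices of $V'$ via Theorem~\ref{thm:any_linear_bound}.

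To extend $T_2$ to $T_3$, embed each connecting path $P$ (of length $q$) in turn: with its endpoints $a, b \in V(T_2)$ embedded at $a', b' \in V'$, set $A_1 = N^{\diamond_a}(a', U_2)$ and $A_2 = N^{\diamond_b}(b', U_2)$ minus previously-used vertices, where $\diamond_a, \diamond_b$ match the first and last edge-directions of $P$; each set has size at least $p^2 n/2 \gg 6\epsilon n$. Apply Lemma~\ref{lm:connector-properties}\ref{connector2} inside $G[U_2]$ to find an oriented path of length $q - 2 = \ell + 6$ with the required orientation from $A_1$ to $A_2$, avoiding previously-used vertices (a set of size at most $\eta n \ll 25\eta^{1/2}n$). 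Finally, extend to $T_4 = T$ using Corollary~\ref{cor:tree-extension} with reserve $U_1$: we have $|V(T_4) \setminus V(T_3)| \leq \eta n$, and each vertex of the current embedding (lying in $V' \cup U_2'$) satisfies $d^\pm(\cdot, U_1) \geq p^2 n \geq 3\eta n$. The main obstacle is the componentwise embedding of $T_1$, where the hypotheses of Theorem~\ref{thm:n+k+an-partial} or Theorem~\ref{thm:n+an-partial} must be verified for each component and the room requirements summed carefully within $|V'|$, with small components handled separately by a greedy argument.
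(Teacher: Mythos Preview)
Your overall strategy is the same as the paper's: decompose $T$ via Lemma~\ref{lm:tree_decomposition}, reserve two random vertex sets, embed $T_1$ via Theorem~\ref{thm:n+k+an-partial} or Theorem~\ref{thm:n+an-partial}, extend greedily to $T_2$, use well-connectedness of a reserved random set for the connecting paths to reach $T_3$, then finish with Corollary~\ref{cor:tree-extension}. However, the proof as written does not go through, for the following reasons.

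\textbf{The constant hierarchy is inconsistent.} You introduce $\eta \ll p \ll 1/m \ll \epsilon$, but this ordering is incompatible with the lemmas you invoke. Lemma~\ref{lm:tree_decomposition} requires $1/m \ll \eta$ (so that $|T_0|\leq\eta n$ and $|V(T_4)\setminus V(T_2)|\leq\eta n$ can be achieved), whereas you have $\eta \ll 1/m$. More seriously, Lemma~\ref{lm:random-connector} requires $\epsilon \ll p$ for $G[U_2]$ to inherit well-connectedness, whereas you have $p \ll \epsilon$. Your own computation exposes this: you assert $p^2 n/2 \gg 6\epsilon n$, but with $p\ll\epsilon$ one has $p^2\ll\epsilon^2\ll\epsilon$, so this is false. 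The paper takes $1/n\ll 1/m\ll\eta\ll\epsilon\ll\alpha$ and uses two different scales for the random sets: probability $2\sqrt{\eta}$ for the final reserve $U_0$ and probability $\alpha/36$ (so $\gg\epsilon$) for the path-embedding set $U_1$.

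\textbf{Parallel random sets cause a gap in the final step.} Even after fixing the hierarchy so that $\epsilon\ll p$, taking $U_1$ and $U_2$ as two disjoint random subsets of $V(G)$ at the same scale $p$ does not close. The internal vertices of the connecting paths land in $U_2$, and Corollary~\ref{cor:tree-extension} then needs these vertices to have $d^\pm(\cdot,U_1)\geq 3\eta n$. You attempt to handle this by restricting to $U_2'$, but then you must embed the paths inside $G[U_2']$, and well-connectedness of $G[U_2']$ requires $|U_2\setminus U_2'|$ to be at most the $b$-parameter $25\eta^{1/2}n$. One has $|U_2\setminus U_2'|=\Theta(p^2 n)$, and with $\eta\ll\epsilon\ll p$ this can far exceed $\eta^{1/2}n$ (e.g.\ if $\eta$ is chosen as small as $p^{10}$, which the hierarchy $\eta\ll\epsilon$ permits). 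The paper sidesteps this entirely by \emph{nesting}: it first takes $U_0$, defines $W_0\subseteq V(G)\setminus U_0$ as the set of vertices with good degree into $U_0$, and only then takes $U_1\subseteq W_0$. Thus every path vertex automatically lies in $W_0$ and has the required degree into $U_0$.

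\textbf{Componentwise embedding of $T_1$ is an avoidable complication.} Processing the components of $T_1$ into genuinely disjoint portions of $V'$ can overrun the available space when many components are small but their total size is large (the $3|T_1^{(i)}|$ greedy cost then dominates). This can be repaired by reusing freed vertices between steps, but it is simpler to do as the paper does: join the components of $T_0$ by auxiliary oriented edges to make $T_1$ a single tree (this does not increase the leaf count), and apply Theorem~\ref{thm:n+k+an-partial} or Theorem~\ref{thm:n+an-partial} once.
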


\begin{proof}
The proof for each statement of this theorem is nearly identical, so here we will present a proof for \itref{main-lemma-1}, and explain in the footnotes any places where the proof for \itref{main-lemma-2} differs.

Fix $\alpha>0$ and introduce a constant $m$ such that $1/n\ll1/m\ll\eta\ll\epsilon\ll\alpha$. Let $\othereta=(\eta/5)^4$ so that $\eta=5\othereta^{1/4}$. Fix an $n$-vertex $k$-leaf oriented tree $T$ and let $G$ be a $((1+\alpha)n+k)$-vertex tournament which is $(\epsilon n,5\othereta^{1/4}n,\ell)$-well-connected. We will show that $G$ contains a copy of $T$, thus proving \itref{main-lemma-1}.\footnote{For \itref{main-lemma-2}, fix $\alpha>0$ and introduce a constant $m$ such that $1/n\ll c\ll1/m\ll\othereta\ll\epsilon\ll\alpha$. Fix an $n$-vertex oriented tree $T$ with $\Delta(T)\leqslant cn$ and let $G$ be a $(1+\alpha)n$-vertex tournament which is $(\epsilon n,5\othereta^{1/4}n,\ell)$-well-connected. We will show that $G$ contains a copy of $T$, thus proving \itref{main-lemma-2}.}

Let $U_0\subseteq V(G)$ be a random subset, with elements from $V(G)$ chosen independently at random with probability $2\sqrt{\othereta}$, and let $W_0$ be the set of vertices $v$ in $V(G)\setminus U_0$ with $d^\pm(v,U_0)\geqslant4\othereta n$. By Proposition~\ref{prop:random-subset}, we have that $|V(G)\setminus W_0|\leqslant 24\sqrt{\othereta} n$ with high probability. $V(G)\setminus U_0$ may be regarded as a random subset of $V(G)$ with elements chosen independently at random with probability $1-2\sqrt{\othereta}$, and so, by Lemma~\ref{lm:random-connector}, we have that $G[V(G)\setminus U_0]$ is $(6\epsilon n,25\sqrt{\othereta}n,\ell+6)$-well-connected with high probability. Therefore, we may proceed assuming that $|V(G)\setminus W_0|\leqslant24\sqrt{\othereta}n$, and, using Lemma~\ref{lm:connector-properties}~i), that $G[W_0]$ is $(6\epsilon n,\sqrt{\othereta}n,\ell+6)$-well-connected.

Let $U_1\subseteq W_0$ be a random subset, with elements from $W_0$ chosen independently at random with probability $\alpha/36$, and let $W_1$ be the set of vertices $v$ in $W_0\setminus U_1$ with $d^\pm(v,U_1)\geqslant36\epsilon n$. By Proposition~\ref{prop:random-subset}, we have that $|W_1\setminus W_0|\leqslant \alpha n/3$ with high probability, and, by Lemma~\ref{lm:random-connector}, we have that $G[U_1]$ is $(36\epsilon n,\othereta n,\ell+12)$-well-connected with high probability. Therefore, we may proceed assuming that $|W_1|\geqslant((1+\alpha/2)n+k)$, and that $G[U_1]$ is $(36\epsilon n,\othereta n,\ell+12)$-well-connected. \footnote{For \itref{main-lemma-2}, we may proceed assuming that $|W_1|\geqslant(1+\alpha/2)n$, and that $G[U_1]$ is $(36\epsilon n,\othereta n,\ell+12)$-well-connected.}

Let $q=\ell+14$. By Lemma~\ref{lm:tree_decomposition}, there exist forests $T_0\subseteq T_1\subseteq T_2\subseteq T_3\subseteq T_4=T$, such that $T_3$ is a tree and properties \itref{tree1}-\itref{tree5} hold. By Theorem~\ref{thm:n+k+an-partial}, $G[W_1]$ contains a copy, $R_1$ say, of $T_1$.\footnote{For \itref{main-lemma-2}, as $\Delta(T)\leqslant cn$, each tree $S_v$ of \itref{tree2} satisfies $|S_v|\leqslant cmn+1\leqslant\othereta n$. By Theorem~\ref{thm:n+an-partial}, $G[W_1]$ has a copy, $R_1$ say, of $T_1$.} By Theorem~\ref{thm:any_linear_bound} applied iteratively to the components of $T_2-V(T_1)$, $G[W_1]-V(R_1)$ then contains a copy of $T_2-V(T_1)$, which, taken together with $R_1$, gives a copy, $R_2$ say, of $T_2$.

Let $P_1,\ldots,P_r$ be the paths of length $\ell+14$ attached to $T_2$ to obtain $T_3$. For $i\in[r]$, let $x_i,y_i$ be the endvertices of $P_i$, let $P_i'=P_i-x_i-y_i$ (so that $P_i'$ has length $\ell+12$), and let $x_i',y_i'$ be the images of $x_i,y_i$ in $R_2$. For each $i\in[r]$ in turn, using Lemma~\ref{lm:connector-properties}~ii), there is a copy $Q_i$ of $P_i'$ in the unoccupied vertices of $G[U_1]$, with first vertex in $N^{\diamond_1}(x_i',U_1)$ and last vertex in $N^{\diamond_2}(y_i',U_1)$, where $\diamond_1,\diamond_2\in\{+,-\}$ are taken so that $x_i'Q_iy_i'$ gives a copy of $P_i$. We remark that we may always proceed as, by~\itref{tree5}, the total number of vertices embedded into $U_1$ is at most $|T_3\setminus T_2|\leqslant\othereta n$, and $G[U_1]$ is $(36\epsilon n,\othereta n,\ell+12)$-well-connected with $d^\pm(v,U_1)\geqslant36\epsilon n$ for every $v\in W_1$. Thus, we obtain a copy, $R_3$ say, of $T_3$ in $G[W_0]$. Finally, using Corollary~\ref{cor:tree-extension}, $R_3$ can be extended to an copy of $T_4=T$ in $G$, with the vertices of $V(T_4)\setminus V(T_3)$ copied to $U_0$.
% Now suppose $T_4$ is obtained from $T_3$ by attaching a tree $R_v$ at each vertex $v\in V(T_3)$. Let $R_v^+\subseteq R_v$ be the subforest of $R_v$ whose components are trees of $R_v-v$ attached to $v$ by an out-edge, and $R_v^-\subseteq R_v$ be the subforest of $R_v$ whose components are trees of $R_v-v$ attached to $v$ by an in-edge. We complete our embedding by considering each $v\in V(T_3)$ in turn. Suppose $v$ has been embed to some $v'\in W_0$. Then the number of unoccupied out-neighbours of $v'$ in $U_0$ is at least $|N^+(v',U_0)|-|T_4\setminus T_3|-|V(R_3)\cap U)|=|N^+(v',U_0)|-|T_4\setminus T_2|\geqslant3\eta n$. So, by Theorem~\ref{thm:any_linear_bound}, a copy of $R_v^+$ exists in the unoccupied vertices of $N^+(v',U_0)$. Similarly, a copy of $R_v^-$ exists in the unoccupied vertices of $N^-(v',U_0)$. After doing this for all $v\in V(T_3)$, we have completed our embedding of $T$ into $G$.
\end{proof}

To finish the proof of both our main results simultaneously we will use \emph{superadditive set functions}.
%Before we conclude this section with the proof of both main results, we introduce a notational tool, which will allow us to write the proofs for both cases simultaneously. 
In the proof of each case, we define a function $f_T:\mathcal{P}(V(T))\to\mathbb{N}_0$ on the power set of $V(T)$, where, for $A\subseteq V(T)$, $f_T(A)$ may be interpreted as representing a rough upper bound on the number of vertices required to guarantee a copy of $T[A]$, according to each of the theorems. The only limitation on $f_T$ required for the proof to work is for it to be \emph{superadditive}.

\begin{defn}
Given a set $X$, we say that a set function $f:\mathcal{P}(X)\to\mathbb{N}_0$ is \emph{superadditive} if $f(A\cup B)\geqslant f(A)+f(B)$ for any disjoint sets $A,B\subseteq X$.
\end{defn}

In particular, we will use the property that, if $f:\mathcal{P}(X)\to\mathbb{N}_0$ is a superadditive set function and $X=A_1\cup\ldots\cup A_r$ is a partition, then
\begin{equation}\label{eq:superadditive-partition}
    f(X)\geqslant\sum_{i\in[r]}f(A_i).
\end{equation}
We also remark that superadditive set functions are increasing, in the sense that if $A\subseteq B$, then $f(A)\leqslant f(B)$.

Given a tree $T$, we will have one particular superadditive set function $f_T:\mathcal{P}(V(T))\to\mathbb{N}_0$ for each main theorem. For Theorem~\ref{thm:n+k+an}, we  take $f_T(A)=|A|+k(A)-2s(A)$, where $k(A)$ denotes the number of leaves of the forest $T[A]$ (and isolated vertices count as two leaves), and $s(A)$ denotes the number of components of the forest $T[A]$. For Theorem~\ref{thm:n+an}, we take  $f_T(A)=|A|$. To see that the first function is superadditive, let $A,B\subseteq V(T)$ be disjoint sets, and compare the forest $T[A\cup B]$ to the forest $T[A]\cup T[B]$. $T[A]\cup T[B]$ can be reached from $T[A\cup B]$ by removing the edges with one endpoint in each of $A$ and $B$ one at a time. Each time an edge is removed, the total number of vertices remains the same, the total number of leaves increases by at most 2, and the total number of components increases by 1. Thus we find that $f_T(A\cup B)\geqslant f_T(A)+f_T(B)$.

We are now ready to prove Theorems~\ref{thm:n+k+an} and~\ref{thm:n+an} using Lemma~\ref{lm:main-theorems-connector}. The proof for each theorem is nearly identical, so we will present a proof for Theorem~\ref{thm:n+k+an}, and explain in the footnotes any places where the proof for Theorem~\ref{thm:n+an} differs. In each case, using Lemma~\ref{lm:transitive-decomposition}, we will have a partition of most of the vertex set of the tournament $G$ into sets $W_1,\ldots,W_r$ such that, if $u\in W_i$ and $v\in W_j$ with $i<j$, then $uv\in E(G)$, and such that each $G[W_i]$ is well-connected if $W_i$ is not too small (i.e., \itref{td1} and \itref{td2} hold). It remains to find a good way to
partition the tree $T$ to embed it across this decomposition. 

For each $i\in [r]$, if $|W_i|\geq \sqrt{\eps}n$, then let $w_i=(1-\alpha/4)|W_i|$, and otherwise let $w_i=|W_i|$. We want to assign a set $U_i$ of $w_i$ vertices of the tree to embed in $W_i$, for each $i\in [r]$. First, if $|W_r|\geq \sqrt{\eps}n$, then we order  the vertices of $T$ as $v_1,\ldots,v_n$  so that all edges of $T$ go forwards in this ordering, and let $U_r$ be the set of the last $w_r$ vertices in this ordering (in this case, we will be able to embed $T[U_r]$ into $G[W_r]$ by \itref{td2} and Lemma~\ref{lm:main-theorems-connector}). If $|W_r|<\sqrt{\eps}n$, then, if possible, we let $U_r$ be $w_r$ out-leaves of $T$, and if it is not possible then we stop. If we have not stopped, then we remove $U_r$ from $T$ and repeat this procedure to find $U_{r-1}$, and so on. Note that if this stops then either we have assigned vertices for each $W_i$, or the remaining forest has at most $\sqrt{\eps}n$ out-leaves. In the latter case we carry out a similar assignment for $W_1,W_2,\ldots$. When this stops either all the vertices have been assigned or the remaining forest has at most $\sqrt{\eps}n$ in-leaves as well as at most $\sqrt{\eps}n$ out-leaves --- such a forest we can embed with $O(\sqrt{\eps}n)$ spare vertices using Theorem~\ref{thm:n+Ck}.

\begin{proof}[Proof of Theorem~\ref{thm:n+k+an} (with appropriate alterations for Theorem~\ref{thm:n+an} indicated)]
Fix $\alpha>0$, and note that we may additionally assume that $\alpha\leqslant1$. Introduce constants $\epsilon, \eta, n_0$ such that $1/n_0\ll\eta\ll\epsilon\ll\alpha$. Given a tree $T$, let $f_T:\mathcal{P}(V(T))\to\mathbb{N}_0$ be the superadditive set function defined by $f_T(A)=|A|+k(A)-2s(A)$, where $k(A)$ denotes the number of leaves of the forest $T[A]$ (and isolated vertices count as two leaves), and $s(A)$ denotes the number of components of the forest $T[A]$.\footnote{For Theorem~\ref{thm:n+an}, fix $\alpha>0$ and introduce constants $\epsilon, \eta, c, n_0$ such that $1/n_0\ll c\ll\eta\ll\epsilon\ll\alpha$. Given a tree $T$, let $f_T:\mathcal{P}(V(T))\to\mathbb{N}_0$ be the superadditive set function defined by $f_T(A)=|A|$.}

Note that, for any $A\subseteq V(T)$,
\begin{equation}\label{eq:f-bounds}
    |A|\leqslant f_T(A)\leqslant 2|A|.
\end{equation}

Let $n\geqslant n_0$. Fix an $n$-vertex $k$-leaf oriented tree $T$ and let $G$ be a $((1+\alpha)n+k)$-vertex tournament, so that $f_T(V(T))+\alpha n\leqslant|G|\leqslant 3n$. We will show that $G$ contains a copy of $T$, thus proving the theorem.\footnote{For Theorem~\ref{thm:n+an}, fix an $n$-vertex oriented tree $T$ with $\Delta(T)\leqslant cn$ and let $G$ be a $(1+\alpha)n$-vertex tournament, so that $f_T(V(T))+\alpha n=|G|\leqslant 3n$. We will show that $G$ contains a copy of $T$, thus proving the theorem.}

By Lemma~\ref{lm:transitive-decomposition}, there is a partition $V(G)=B\cup W_1\cup\ldots\cup W_r$ so that $|B|\leqslant\epsilon n$ and the properties \itref{td1} and \itref{td2} hold (with $\ell=\lceil\epsilon^{-3}\rceil$). For each $i\in [r]$, if $|W_i|\geq \sqrt{\eps}n$, then let $w_i=(1-\alpha/4)|W_i|$, and otherwise let $w_i=|W_i|$.
Partition $[r]$ into intervals $I^-$, $I$, $I^+$ (in that order), so that $I$ is minimal subject to there being disjoint sets $U_i\subseteq V(T)$, $i\in I^-\cup I^+$, for which the following hold.
    \stepcounter{propcounter}
    \begin{enumerate}[label ={\textbf{\Alph{propcounter}\arabic{enumi}}}]
\item\label{assign-1} For each $i\in I^-\cup I^+$, $(1-\epsilon)w_i\leqslant f_T(U_i)\leqslant w_i$.
\item There are no edges from $\cup_{i\in I^+}U_i$ to $V(T)\setminus (\cup_{i\in I^+}U_i)$ in $T$.\label{assign-2}
\item There are no edges from $V(T)\setminus (\cup_{i\in I^-}U_i)$ to $\cup_{i\in I^-}U_i$ in $T$.\label{assign-3}
\item If $|W_i|<\sqrt{\eps}n$, then there are no edges in $T[U_i]$.\label{assign-4}
\item If $i,j\in I^-\cup I^+$ with $i<j$, then there are no edges from $U_j$ to $U_i$ in $T$.\label{assign-5}
\end{enumerate}

Note that this is possible as $I=[r]$ is a valid partition. Let $T'=T-\cup_{i\in I^+\cup I^-}U_i$ and $W=\cup_{i\in I}W_i$. We will show that, for each $i\in I^-\cup I^+$, $G[W_i]$ contains a copy of $T[U_i]$, and $G[W]$ contains a copy of $T'$. Putting these together then gives a copy of $T$, by \ref{assign-2}, \ref{assign-3}, \ref{assign-5}, and \itref{td1}.

If $i\in I^-\cup I^+$ satisfies $|W_i|\geq \sqrt{\eps}n$, then iteratively add edges between leaves (or isolated vertices) of distinct components of $T[U_i]$ to obtain an oriented tree $S_i$ with $V(S_i)=U_i$ and $|S_i|+k_i=f_T(U_i)$, where $k_i$ denotes the number of leaves of $S_i$ (indeed, each additional edge reduces the number of leaves and isolated vertices by 2 and reduces the number of components by 1).\footnote{For Theorem~\ref{thm:n+an}, if $i\in I^-\cup I^+$ satisfies $|W_i|\geqslant\sqrt{\epsilon}n$, then iteratively add edges between distinct components of $T[U_i]$ to obtain an oriented tree $S_i$ with $V(S_i)=U_i$ and $|S_i|+k_i=f_T(U_i)$, where $k_i=0$. Note that this may be accomplished while only adding at most two edges adjacent to any given vertex, and so we may further assume that $\Delta(S_i)\leqslant cn+2\leqslant 2cn$.} We now have
\begin{equation*}
|S_i|=|U_i|\overset{\eqref{eq:f-bounds}}{\geqslant} f_T(U_i)/2\overset{\ref{assign-1}}{\geqslant}(1-\epsilon)w_i/2\geqslant\sqrt{\epsilon}n/4,
\end{equation*}
and
\begin{equation*}
    (1+\alpha/4)|S_i|+k_i\leqslant f_T(U_i)+(\alpha/4)\cdot|U_i|\leqslant(1+\alpha/4)\cdot f_T(U_i)\overset{\ref{assign-1}}{\leqslant}(1+\alpha/4)w_i\leqslant|W_i|.
\end{equation*}
Therefore, by applying Lemma~\ref{lm:main-theorems-connector} with $G[W_i]$ playing the role of $G$ and recalling that $G$ is $(\epsilon n,\eta n,\ell)$-connected by \itref{td2}, $G[W_i]$ contains a copy of $S_i$ and hence also $T[U_i]$. On the other hand, if $|W_i|<\sqrt{\eps}n$, then by \ref{assign-4}, $G[W_i]$ contains a copy of $T[U_i]$, noting that we have $f_T(U_i)=|U_i|$ in this case.

It is left to show that $G[W]$ contains a copy of $T'$. Note that this is trivial if $I=\emptyset$, and so we can assume $I\neq\emptyset$ and label $j_1,j_2\in[r]$ so that $I$ is the interval from $j_1$ to $j_2$. Also, note that, because $\sum_{i\in I^-\cup I^+}f_T(U_i)\leqslant2n$,
\begin{align*}
    |W|&=|G|-|B|-\sum_{i\in I^-\cup I^+}|W_i|\overset{\ref{assign-1}}{\geqslant} f_T(V(T))+\alpha n - \epsilon n - (1-\alpha/4)^{-1}(1-\epsilon)^{-1}\sum_{i\in I^-\cup I^+}f_T(U_i)\\
    &\geqslant f_T(V(T))-\sum_{i\in I^-\cup I^+}f_T(U_i)+\alpha n/4\overset{\eqref{eq:superadditive-partition}}{\geqslant} f_T(V(T'))+\alpha n/4\overset{\eqref{eq:f-bounds}}{\geqslant} |T'|+\alpha n/4.
\end{align*}
If $|W_{j_2}|\geq \sqrt{\eps}n$, then we must have $f_T(V(T'))<(1-\epsilon)w_{j_2}$, otherwise we could order $V(T')$ as $v_1,\ldots,v_{|T'|}$ so that all edges of $T'$ go forwards in this ordering, and define $U_{j_2}=\{v_s,\ldots,v_{|T'|}$\} for some $s$ chosen such that $(1-\epsilon)w_{j_2}\leqslant f_T(U_{j_2})\leqslant w_{j_2}$, a contradiction to the minimality of $I$. Thus, if $|W_{j_2}|\geqslant\sqrt{\epsilon}n$, then $G[W_{j_2}]$, and hence $G[W]$, contains a copy of $T'$ by Lemma~\ref{lm:main-theorems-connector}. Similarly, if $|W_{j_1}|\geq \sqrt{\eps}n$, then $G[W_{j_1}]$, and hence $G[W]$, contains a copy of $T'$. We must have then that $|W_{j_1}|< \sqrt{\eps}n$ and  $|W_{j_2}|< \sqrt{\eps}n$. Thus, by the minimality of $I$, $T'$ has at most $w_{j_2}\leq \sqrt{\eps}n$ out-leaves and at most  $w_{j_1}\leq \sqrt{\eps}n$ in-leaves. As $|W|\geq |T'|+\alpha n/4$, $G[W]$ then contains a copy of $T'$ by Theorem~\ref{thm:n+Ck}, as required.
\end{proof}

\section{Proof of Theorem~\ref{thm:extending-distillation}}\label{sect:technical}
In this section we prove Theorem~\ref{thm:extending-distillation}, which, in the notation in Section~\ref{sect:outline}, finds an index $j_t$ for a regularity cluster for the core $T_0$ of a tree, and a random homomorphism of a fixed digraph $H$ with vertex weight function $\beta$ representing an average component of $T_1-V(T_0)$ (where here, and throughout this section, we use the term \emph{random homomorphism} to refer to any random variable taking values in the set of all possible homomorphisms, in the sense of Theorem~\ref{thm:extending-distillation}). For convenience, we restate the definition of $H$ (see Figure~\ref{fig:H}) and Theorem~\ref{thm:extending-distillation}. \HDefinitionSubsequent

\technical*

To prove Theorem~\ref{thm:extending-distillation}, we first make two key simplifications before dividing into three critical cases. Our first simplification is to work only with the vertices of $H$ representing components attached by an out-edge from $T_0$ (see Figure~\ref{fig:H}). Let $H^+$ and $H^-$ be the subdigraphs of $H$ induced on the vertices with $+$ and $-$ in the superscript, respectively (i.e., the right and the left parts of $H$ in Figure~\ref{fig:H}). Considering each possible location $j\in [r]$ for $j_t$ in Theorem~\ref{thm:extending-distillation}, either a) $j$ has enough weight on its out-edges that a random embedding of $H^-$ can be extended relatively easily (with perhaps some modification) to one of $H$ satisfying our requirements, or b) $j$ has enough weight on its in-edges to similarly extend a random embedding of $H^+$. If many $j\in [r]$ satisfy a), then we may randomly embed $H^-$ into the weighted looped digraph $D$ induced on these $j$. If not, then enough $j\in [r]$ satisfy b), so that we may randomly embed $H^+$ into the weighted looped digraph $D$ induced on these $j$. By appealing to directional duality if necessary, it suffices then to prove a simplified version of Theorem~\ref{thm:extending-distillation} with weight only on $H^+$.

Our second simplification to Theorem~\ref{thm:extending-distillation} is to drop the condition \itref{ext-good-plan-4}; we later show this condition can be recovered without undue difficulty. These two simplifications of Theorem~\ref{thm:extending-distillation} result in Theorem~\ref{thm:overarching}, which we state in Section~\ref{sect:overarching-subcases} after introducing a notational framework of `distillations' in Section~\ref{sect:distillation-setup} in order to have a concise and consistent language for the proofs in this section. The proof of Theorem~\ref{thm:overarching} varies depending on the weight distribution $\beta$ on the vertices in $H$. This falls into three main cases, which we also state in Section~\ref{sect:overarching-subcases}, in the form of Lemmas~\ref{lm:distillation-case-2},~\ref{lm:distillation-case-1} and~\ref{lm:distillation-case-3}, before deducing Theorem~\ref{thm:overarching} from these cases. We then prove the lemma for each of these cases in Section~\ref{sect:3cases-proofs}, before finally deducing Theorem~\ref{thm:extending-distillation} from Theorem~\ref{thm:overarching} in Section~\ref{sect:extending-new}.

\subsection{Distillations}\label{sect:distillation-setup}
We prove Theorem~\ref{thm:extending-distillation} from three specific cases where, roughly speaking, $H$ is replaced by simpler subgraphs of $H$. 
In order to have a concise and consistent language for proving these cases, we will use the notion of a \emph{distillation}, as follows.

\begin{defn}\label{defn:distillation}
A \emph{distillation} is a triple $\mathcal{F}=(F,X,\beta)$, where $F$ is a fully-looped oriented forest, $X\subseteq V(F)$ is a set containing precisely one vertex in each component of $F$, and $\beta:V(F)\to[0,1]$ satisfies $\sum_{v\in V(F)}\beta(v)=1$.
\end{defn}

In Theorem~\ref{thm:extending-distillation}, we have a distillation $(H,X,\beta)$ which we used to represent the average component of $T-V(T_0)$ for Theorem~\ref{thm:n+k+an-partial}. There is some flexibility in how we could have chosen this distillation --- for example we could move all the weight from $y^+$ to $x^+$, or from $u^+$ to $z^+$ and still have a useful distillation of the average component if we can find a matching random homomorphism. However, $H$ is the smallest digraph that records enough structure in the average component to allow every relevant distillation to have a matching random homomorphism. The construction of the random homomorphism falls into three cases depending on the distribution of the weight --- in each case we can move weight off some vertices (different in each case) to simplify the digraph in the distillation for which we find a random homomorphism.

To describe which simplifications of distillations are valid in this way formally, and prove this validity, we will define a transitive relation $\hookrightarrow$ between distillations. Very roughly, given two distillations $\mathcal{F}_0$ and $\mathcal{F}_1$, if $\mathcal{F}_0\hookrightarrow\mathcal{F}_1$, then we can move weight in $\mathcal{F}_0$ (and possibly delete vertices) to transform it into $\mathcal{F}_1$. Formally, we define the relation as follows.

\begin{defn}\label{defn:arrows}
Given distillations $\mathcal{F}_i=(F_i,X_i,\beta_i)$ for $i\in\{0,1\}$, say $\mathcal{F}_0\hookrightarrow\mathcal{F}_1$ if there is a random homomorphism $\rho:F_0\to F_1$ with the following properties.
\stepcounter{propcounter}
\begin{enumerate}[label = {\bfseries \emph{\Alph{propcounter}\arabic{enumi}}}]
\item\label{arrows-1} With probability 1, $\rho(X_0)\subseteq X_1$.
\item \label{arrows-2} $\E(\beta_0(\rho^{-1}(v)))=\beta_1(v)$ for every $v\in V(F_1)$.
\end{enumerate}
\end{defn}

Finally, we need a notion of which distillations are useful --- i.e., which distillations have a matching random homomorphism with properties like those in Theorem~\ref{thm:extending-distillation}. It will be convenient to consider a small collection of distillations and allow a sampling of the random homomorphism to take any one of them as its domain, and so we define the following notion of $\gamma$-goodness on sets of distillations. Roughly speaking, $\gamma$ corresponds to the extra proportion of vertices we need to embed the tree, as in the use of Theorem~\ref{thm:extending-distillation}.

\begin{defn}\label{defn:gamma-good}
Given $\gamma\geq0$, and distillations $\mathcal{F}_i=(F_i,X_i,\beta_i)$, $i\in [m]$, we say $\{\mathcal{F}_i\}_{i=1}^m$ is \emph{$\gamma$-good} if the following holds for any fixed $\alpha>0$: if $1/r\ll\epsilon\ll\alpha$ and $D$ is a complete looped digraph on vertex set $[r]$ with $\epsilon$-complete edge weights $d(e)$, $e\in E(D)$, then there exists some $j_t\in[r]$ and a random $(\phi,i(\phi))$ with the following properties.
\stepcounter{propcounter}
\begin{enumerate}[label = {\bfseries \emph{\Alph{propcounter}\arabic{enumi}}}]
\item\label{good-plan-1} With probability 1, we have that $i(\phi)\in[m]$, that $\phi$ is a homomorphism from $F_{i(\phi)}$ to $D$, and that $j_t\notin\phi(X_{i(\phi)})$.
\item \label{good-plan-2} For each $j\in[r]$, $\E(\beta_{i(\phi)}(\phi^{-1}(j)))\leq \frac{1+\gamma+\alpha}{r}$.
\item\label{good-plan-3} For each $j\in[r]$, $\E(\beta_{i(\phi)}(\phi^{-1}(j)\cap X_{i(\phi)}))\leq d(j_t,j)\cdot \frac{1+\gamma+\alpha}{r}$.
\end{enumerate}
\end{defn}

Note that if $\{\mathcal{F}_i\}_{i=1}^m$ is $\gamma$-good for some $\gamma\geq0$, then $\{\mathcal{F}_i\}_{i=1}^m$ is $\gamma'$-good for every $\gamma'\geq\gamma$. In addition, a set of distillations is $\gamma$-good if and only if it contains a non-empty subset which is $\gamma$-good.

Finally here, we prove the following key lemma that confirms that if a distillation can be simplified via the relation $\hookrightarrow$ to each one of a family of distillations which are collectively $\gamma$-good, then that original distillation is $\gamma$-good, as follows.

\begin{lemma}\label{lm:transfer-good}
Let $\gamma\geqslant0$, and suppose $\mathcal{F}$ and $\mathcal{G}_1,\ldots,\mathcal{G}_m$ are distillations such that $\mathcal{F}\hookrightarrow\mathcal{G}_i$ for every $i\in[m]$. If $\{\mathcal{G}_i\}_{i=1}^m$ is $\gamma$-good, then $\{\mathcal{F}\}$ is $\gamma$-good.
\end{lemma}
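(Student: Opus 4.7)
The plan is to propagate the randomness through the relation $\hookrightarrow$ by composition. Fix a complete looped digraph $D$ on $[r]$ with $\epsilon$-complete edge weights as in Definition~\ref{defn:gamma-good}. I would begin by invoking $\gamma$-goodness of $\{\mathcal{G}_i\}_{i=1}^m$ applied to $D$, which yields a vertex $j_t \in [r]$ and a random pair $(\psi, i(\psi))$ satisfying \itref{good-plan-1}--\itref{good-plan-3} for the family $\{\mathcal{G}_i\}$. Conditionally on $i(\psi) = i$, and independently of $\psi$, I would then sample the random homomorphism $\rho_i : F \to G_i$ witnessing $\mathcal{F} \hookrightarrow \mathcal{G}_i$, and set $\phi := \psi \circ \rho_{i(\psi)}$. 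The pair $(\phi, 1)$, together with the same $j_t$, would then be the candidate witness for $\gamma$-goodness of $\{\mathcal{F}\}$.

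Property \itref{good-plan-1} is immediate: compositions of homomorphisms are homomorphisms, and by \itref{arrows-1} for $\rho_{i(\psi)}$ we have $\phi(X) \subseteq \psi(X_{i(\psi)})$, which avoids $j_t$ by \itref{good-plan-1} for the outer pair. For \itref{good-plan-2}, conditioning on $(\psi, i(\psi))$ and using the independence of $\rho_{i(\psi)}$ from $\psi$ together with \itref{arrows-2} gives
\[
\E\bigl[\beta(\phi^{-1}(j)) \,\big|\, \psi, i(\psi)\bigr] \;=\; \sum_{w \in \psi^{-1}(j)} \E\bigl[\beta(\rho_{i(\psi)}^{-1}(w)) \,\big|\, i(\psi)\bigr] \;=\; \beta_{i(\psi)}(\psi^{-1}(j)),
\]
after which taking $\E_\psi$ and applying \itref{good-plan-2} for $(\psi, i(\psi))$ finishes the bound.

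The only mildly delicate step is \itref{good-plan-3}, since \itref{arrows-2} controls the full $\beta$-mass of $\rho_i^{-1}(w)$, whereas here the statement concerns only the mass on vertices of $X$. This is resolved by the trivial observation that
\[
\sum_{v \in X} \beta(v)\,\mathbb{P}(\rho_i(v) = w) \;\leq\; \sum_{v \in V(F)} \beta(v)\,\mathbb{P}(\rho_i(v) = w) \;=\; \beta_i(w)
\]
for every $w \in V(G_i)$, so the calculation for \itref{good-plan-2} repeats with an inequality in place of the equality; combining this with \itref{arrows-1} to restrict the relevant $w$ to $X_{i(\psi)}$, and then applying \itref{good-plan-3} for $(\psi, i(\psi))$, gives the required bound $d(j_t,j)\cdot\frac{1+\gamma+\alpha}{r}$ on $\E[\beta(\phi^{-1}(j) \cap X)]$. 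I do not expect any real obstacle here: the whole argument is essentially bookkeeping around the two-stage sampling, with the loss of equality in the final step absorbed harmlessly into the inequality being proved.
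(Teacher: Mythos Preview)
Your proposal is correct and follows essentially the same approach as the paper: sample the outer random pair from $\gamma$-goodness of $\{\mathcal{G}_i\}$, then compose with an independently sampled $\rho_{i}$ witnessing $\mathcal{F}\hookrightarrow\mathcal{G}_i$, and verify \itref{good-plan-1}--\itref{good-plan-3} by conditioning and applying \itref{arrows-1}--\itref{arrows-2}. The paper's treatment of \itref{good-plan-3} is written as a chain of inequalities on conditional expectations rather than your pointwise observation $\sum_{v\in X}\beta(v)\mathbb{P}(\rho_i(v)=w)\leq\beta_i(w)$, but the content is identical.
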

\begin{proof}
Let $\mathcal{F}=(F,X,\beta)$ and $\mathcal{G}_i=(G_i,X_i,\beta_i)$ for each $i\in[m]$. For each $i\in[m]$, let $\rho_{i}:F\to G_i$ be a random homomorphism realising $\mathcal{F}\hookrightarrow\mathcal{G}_i$.

Take $1/r\ll\epsilon\ll\alpha$, and let $D$ be a complete looped digraph on vertex set $[r]$ with $\epsilon$-complete edge weights $d(e)$, $e\in E(D)$. Let $j_t\in [r]$ and $(\phi,i(\phi))$ realise that $\{\mathcal{G}_i\}_{i=1}^m$ is $\gamma$-good in the case of $D$. Define $(\psi,k(\psi))$ as follows. First, sample $(\phi,i(\phi))$. Then, with $i(\phi)$ now fixed, sample $\rho_{i(\phi)}$, and set $\psi=\phi\circ\rho_{i(\phi)}$. Let $k(\psi)=1$ with probability 1, and note that \itref{good-plan-1} holds for $(\psi,k(\psi))$.

For $i\in[m]$ let $A_i$ be the event $\{i(\phi)=i\}$. Then, by the law of total expectation,
\begin{align*}
    \E(\beta(\psi^{-1}(j)))&
    =\sum_{i\in[m]}\P(A_i)\cdot\E(\beta(\psi^{-1}(j))\mid A_i)
    =\sum_{i\in[m]}\P(A_i)\cdot\E(\beta(\rho_i^{-1}(\phi^{-1}(j)))\mid A_i)\\
    &\overset{\itref{arrows-2}}{=}\sum_{i\in[m]}\P(A_i)\cdot\E(\beta_i(\phi^{-1}(j))\mid A_i)
    =\E(\beta_{i(\phi)}(\phi^{-1}(j))),
\end{align*}
so \itref{good-plan-2} holds for $(\psi,k(\psi))$, and
\begin{align*}
    \E(\beta(\psi^{-1}(j)\cap X))&
    =\sum_{i\in[m]}\P(A_i)\cdot\E(\beta(\psi^{-1}(j)\cap X)\mid A_i)
    =\sum_{i\in[m]}\P(A_i)\cdot\E(\beta(\rho_{i}^{-1}(\phi^{-1}(j))\cap X)\mid A_i)\\
    &\leqslant\sum_{i\in[m]}\P(A_i)\cdot\E(\beta(\rho_{i}^{-1}(\phi^{-1}(j)\cap\rho_{i}(X)))\mid A_i)\\
    &\overset{\itref{arrows-1}}{\leqslant}\sum_{i\in[m]}\P(A_i)\cdot\E(\beta(\rho_{i}^{-1}(\phi^{-1}(j)\cap X_{i}))\mid A_i)
    \overset{\itref{arrows-2}}{=}\E(\beta_{i(\phi)}(\phi^{-1}(j)\cap X_{i(\phi)})),
\end{align*}
so \itref{good-plan-3} holds for $(\psi,k(\psi))$.
\end{proof}

\subsection{Statement of overarching theorem and subcases}\label{sect:overarching-subcases}
Let $H_0$ be the fully-looped oriented forest with vertices $\{x,y,z,u,w,\bar{x},\bar{z},\bar{u},\bar{w}\}$ and non-looped edges \linebreak $\{xy,zx,zu,wz,\bar{z}\bar{x},\bar{z}\bar{u},\bar{w}\bar{z}\}$, noting that this is the subdigraph of $H$ defined at the start of this section restricted to the vertices with $+$ in the superscript, and with vertices labelled more concisely (see also Figure~\ref{fig:H}). As noted at the start of this section, we will first prove a version of Theorem~\ref{thm:extending-distillation} for this subdigraph of $H$, without the condition \itref{ext-good-plan-4}, before deducing Theorem~\ref{thm:extending-distillation} from this in Section~\ref{sect:extending-new}. Having introduced a framework of distillations, we can now state this version of Theorem~\ref{thm:extending-distillation} concisely, as follows.

\begin{theorem}\label{thm:overarching}
Let $\beta_0:V(H_0)\to[0,1]$ be a function with $\sum_{v\in V(H_0)}\beta_0(v)=1$ and $\beta_0(y)\geqslant\beta_0(x)$, and set $X_0=\{x,\bar{x}\}$. Set $\gamma=\max{\{\beta_0(x,\bar{x}),\beta_0(z,\bar{z})\}}$. Let $\mathcal{H}_0=(H_0,X_0,\beta_0)$. Then, $\{\mathcal{H}_0\}$ is $\gamma$-good.
\end{theorem}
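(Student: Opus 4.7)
My plan is to prove Theorem~\ref{thm:overarching} by a case analysis on the weight function $\beta_0$, in each case reducing to one of Lemma~\ref{lm:distillation-case-1}, \ref{lm:distillation-case-2}, or \ref{lm:distillation-case-3} via Lemma~\ref{lm:transfer-good}. For each $\beta_0$, I will exhibit a family of distillations $\{\mathcal{G}_i\}$ such that $\mathcal{H}_0 \hookrightarrow \mathcal{G}_i$ for every $i$ and such that $\{\mathcal{G}_i\}$ is $\gamma$-good by the applicable case lemma; Lemma~\ref{lm:transfer-good} then gives that $\{\mathcal{H}_0\}$ is $\gamma$-good.

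The natural top-level case split is driven by the identity $\gamma = \max\{\beta_0(x,\bar{x}), \beta_0(z,\bar{z})\}$, so I divide according to which of the two sums achieves this maximum. Within each regime, a secondary split is governed by whether the pendant vertices $u, w, \bar{u}, \bar{w}$ carry non-negligible weight or are small enough to be absorbed into their neighbours. There are two fundamental simplification moves on distillations available to me. The first is absorption of pendant-leaf weight: given a pendant vertex such as $u$ (whose unique neighbour in $H_0$ is $z$), a random homomorphism $\rho$ can send $u \mapsto z$ with probability $\beta_0(u)/(\beta_0(u)+\beta_0(z))$, effectively eliminating $u$ from the distillation while preserving expected weights. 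The second move, which is central to the regime where $\beta_0(x,\bar{x})$ dominates, uses the hypothesis $\beta_0(y) \geq \beta_0(x)$: one can construct a simpler distillation in which the roles of $x$ and $y$ are interchanged (so that $y$ belongs to $X_1$ rather than $x$), with the random homomorphism $\rho$ swapping the images of $x$ and $y$ with a fixed probability in order to equalise expected weights. This is the key move for transferring weight out of $X_0$ and thereby loosening the constraint~\itref{good-plan-3}.

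The main obstacle I anticipate is choosing the case thresholds precisely enough that in each regime, the simplified family $\{\mathcal{G}_i\}$ satisfies the hypotheses of exactly one of the three case lemmas with associated parameter bounded by $\gamma$. The asymmetry of $H_0$ — in particular the absence of any counterpart to $y$ attached to $\bar{x}$ — means that weight at $\bar{x}$ cannot be relocated in the same way as weight at $x$, so some cases will necessarily have to accept $\bar{x}$-weight as a direct contributor to the $X_0$-constraint. Verifying \itref{arrows-2} under compositions of the simplification moves is then a matter of routine probabilistic bookkeeping, as is the verification that \itref{arrows-1} holds (which will follow because the random homomorphisms I use send $X_0$ into the designated root set of the target distillation). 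Once the case split is set up and the reductions are in hand, Theorem~\ref{thm:overarching} follows by assembling the three case lemmas through Lemma~\ref{lm:transfer-good}.
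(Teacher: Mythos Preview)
Your high-level architecture is right: one reduces via Lemma~\ref{lm:transfer-good} to the three case lemmas. But the specific case split you propose does not line up with their hypotheses, and this is a genuine gap rather than a detail to be filled in.

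You propose to split on whether $\gamma=\beta_0(x,\bar{x})$ or $\gamma=\beta_0(z,\bar{z})$. However, the hypotheses of the case lemmas concern a different quantity entirely: Lemma~\ref{lm:distillation-case-2} requires $\beta(y)\geqslant\beta(z,\bar{z})$, while Lemmas~\ref{lm:distillation-case-1} and~\ref{lm:distillation-case-3} (via the intermediate Lemma~\ref{lm:distillation-case-new}) require $\beta(y)\leqslant\beta(z,u,w)$. Knowing which of $\beta_0(x,\bar{x})$ or $\beta_0(z,\bar{z})$ is larger tells you nothing about how $\beta_0(y)$ compares to the $z$-side weight. For a concrete obstruction: take $\beta_0(x)=\beta_0(y)=0.1$, $\beta_0(\bar{x})=0.3$, $\beta_0(z)=0.2$, $\beta_0(u)=0.3$, and all other weights zero. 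Then $\gamma=\beta_0(x,\bar{x})=0.4$, but after absorbing $u,w$ into $z$ as you suggest, $\beta_1(z,\bar{z})=0.5>\beta_1(y)=0.1$, so Lemma~\ref{lm:distillation-case-2} does not apply; yet your split would route this example there.

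The correct top-level split compares $\beta_0(y)$ with $\beta_0(z,u,w,\bar{z},\bar{u},\bar{w})$. When $\beta_0(y)$ is larger, absorb $\{u,w\}$ into $z$ and $\{\bar{u},\bar{w}\}$ into $\bar{z}$ (a deterministic homomorphism to $H_1$) and apply Lemma~\ref{lm:distillation-case-2}. When $\beta_0(y)$ is smaller, merge the barred vertices into their unbarred counterparts (a deterministic homomorphism to $H_2$) and apply Lemma~\ref{lm:distillation-case-new}, which then subdivides further according to whether $\beta_y+\beta_u$ exceeds $\beta_z+\beta_w$. The ``swap $x$ and $y$'' move you describe is neither needed nor used: the hypothesis $\beta_0(y)\geqslant\beta_0(x)$ is passed through unchanged as a hypothesis of Lemma~\ref{lm:distillation-case-2}, not exploited via any interchange of roots.
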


As mentioned before, the proof of Theorem~\ref{thm:overarching} depends on the weight distribution $\beta_0$ on $H_0$. Dividing into cases, solving them, and showing they combine to prove this theorem is no easy task. Doing so while additionally motivating the choice of these cases is more difficult still. However, while we do concentrate on giving as clear and concise a proof of Theorem~\ref{thm:overarching} as possible, we will give some motivation behind the cases by relating them to an embedding of a tree $T$ into a tournament $G$.

In particular, our notation is designed to make the cases as efficient as possible to check, rather than explain the larger understanding that is necessary to produce these cases. To motivate the cases more directly, we now recall the discussion in Section~\ref{sect:outline}. Our aim is to use Theorem~\ref{thm:extending-distillation} to embed a tree $T$ with a small core $T_0$, where $T-V(T_0)$ is a collection of small components. To do this we take the tournament $G$ and find a regularity partition $V_1\cup \ldots\cup V_r$. We then want to make a careful choice of $j_t\in [r]$ and embed $T_0$ into $V_{j_t}$ before distributing the components of $T-V(T_0)$ across the clusters of the regularity partition. The choice of $j_t$ restricts which cluster each given vertex $v\in V(T)\setminus V(T_0)$ can be embedded to. For example, if the path from $T_0$ to $v$ in $T$ is a directed path towards $v$, and we wish to embed $v$ into the cluster $V_i$, then there must be a directed path from $V_{j_t}$ to $V_i$ of edges with positive weight in the reduced digraph obtained from the regularity partition. Fortunately, for our cases we need to consider at most the direction of the first three edges on the path from $T_0$ to $v$ (and the first edge will always be directed away from $T_0$).

Very roughly, we first divide into two cases corresponding to the following situations, where, for example, we use the notation of a $(++)$-path from $u$ to $v$ to be a length two path from $u$ to $v$ comprised of two edges directed forward from $u$ to $v$, with other notation used similarly.
\begin{itemize}
    \item For most of the vertices $v\in V(T)\setminus V(T_0)$, if $T_0$ is embedded to $V_{j_t}$, $i\in [r]$, and there is a $(++)$-path from $V_{j_t}$ to $V_i$ in $D$, then we could embed $v$ to $V_i$.
\item For most of the vertices $v\in V(T)\setminus V(T_0)$, if $T_0$ is embedded to $V_{j_t}$, $i\in [r]$, and there is a $(+-)$-path from $V_{j_t}$ to $V_i$ in $D$ then we could embed $v$ to $V_i$.
\end{itemize}

These cases correspond roughly to Lemma~\ref{lm:distillation-case-2} and Lemma~\ref{lm:distillation-case-new}, respectively. We then further subdivide the latter case into two additional cases, essentially replacing the $(+-)$-path with a $(+-+)$-path and a $(+--)$-path, respectively. This gives us cases 1, 2, and 3 which, in terms of the weight distribution $\beta$ on $H$ correspond to the following roughly descriptions:
\begin{enumerate}
    \item Most of the weight not on $\{x,\bar{x}\}$ is on $y$.
    \item Most of the weight not on $\{x,\bar{x}\}$ is on $\{y,u,\bar{u}\}$ (but Case 1 does not apply).
    \item Most of the weight not on $\{x,\bar{x}\}$ is on $\{z,w,\bar{z},\bar{w}\}$.
\end{enumerate}

% As previously described, $H_0$ is the minimal digraph which records all the structure we need. For example,  we need two components for $H_0$ so that we can have $\beta(y)\geq \beta(x)$. The vertices $\bar{u}$ and $\bar{w}$ are not used explicitly in these cases below, as in the relevant case we are able to move the weight on $\bar{x}$ to $x$, $\bar{z}$ to $z$, and so on.

We now use our concept of distillations and the relation $\hookrightarrow$ to state the lemmas corresponding to these cases and combine them to prove Theorem~\ref{thm:overarching}. We first divide Theorem~\ref{thm:overarching} into two lemmas -- based on the distribution of $\beta_0$, we distill $H_0$ into $H_1$ or $H_2$, where for the former we remove the vertices $\{u,w,\bar{u},\bar{w}\}$ and in the latter we remove $\{\bar{x},\bar{z},\bar{u},\bar{w}\}$. This gives Lemma~\ref{lm:distillation-case-2} (corresponding to Case 1 above) and Lemma~\ref{lm:distillation-case-new}. We then break Lemma~\ref{lm:distillation-case-new} into two further lemmas, Lemma~\ref{lm:distillation-case-1} and~\ref{lm:distillation-case-3} which correspond respectively to Case 2 and 3 above, where in each case we have a set of distillations rather than simplifying to just one distillation. The structure of this division is depicted in Figure~\ref{fig:technical-implications}.

\begin{figure}
\DiagramHTechnical
\vspace{-2mm}
\caption{The underlying digraphs of the distillations described in this section (with looped edges omitted).}\label{fig:H-forests}
\end{figure}

Let $H_1$ be the fully-looped oriented forest with vertices $\{x,y,z,\bar{x},\bar{z}\}$ and non-looped edges $\{xy,zx,\bar{z}\bar{x}\}$.

\begin{lemma}\label{lm:distillation-case-2}
Let $\mathcal{H}=(H_1,\{x,\bar{x}\},\beta)$ be a distillation with $\beta(y)\geqslant\beta(z,\bar{z}),\beta(x)$. Then, $\{\mathcal{H}\}$ is $\beta(x,\bar{x})$-good.
\end{lemma}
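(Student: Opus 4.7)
The goal is, given $\alpha>0$ and a complete looped digraph $D$ on $[r]$ with $\epsilon$-complete edge weights (where $1/r\ll\epsilon\ll\alpha$), to produce $j_t\in[r]$ and a random homomorphism $\phi\colon H_1\to D$ satisfying \itref{good-plan-1}--\itref{good-plan-3} with $\gamma=\beta(x,\bar x)$. Since $\beta(y)\geq\beta(x)$ and $\beta(y)\geq\beta(z,\bar z)$, the vertex $y$ carries the largest weight among the non-$X_0$ vertices of $H_1$; the plan is therefore to spread $\phi(y),\phi(z),\phi(\bar z)$ nearly uniformly over $[r]$, while $\phi(x)$ and $\phi(\bar x)$ are sampled from $[r]\setminus\{j_t\}$ with probabilities proportional to the edge weights $d(j_t,\cdot)$, so that the $X_0$-bound \itref{good-plan-3} holds automatically.

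First, I would pick $j_t\in[r]$ by averaging. Setting $S_i:=\sum_j d(i,j)$, the $\epsilon$-completeness yields $\sum_i S_i\geq r(r+1-\epsilon r)/2$, so there is some $j_t$ with $S_{j_t}\geq r/2-\epsilon r$. After discarding a small fraction of candidates, I would further insist that $j_t$ satisfies a \emph{Hall expansion property}: for every $L\subseteq[r]\setminus\{j_t\}$, both $|N_D^+(L)|$ and $|N_D^-(L)|$ are at least $r\mu_x(L)/(1+\alpha/4)$, where $\mu_x(i):=d(j_t,i)/(S_{j_t}-1)$ for $i\neq j_t$ is the intended marginal of $\phi(x)$ (and $\phi(\bar x)$). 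Such $j_t$ exists because $\epsilon$-completeness forces most vertices of $D$ to have in- and out-neighbourhoods of size close to $r/2$, bounding the number of obstructions.

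Next, sample $\phi(x)$ and $\phi(\bar x)$ independently from $\mu_x$. Using the Hall expansion of $j_t$ and the fractional form of Hall's theorem (equivalently, a max-flow/min-cut argument for a bipartite transportation problem), I would construct joint distributions of the three pairs $(\phi(x),\phi(y))$, $(\phi(x),\phi(z))$, and $(\phi(\bar x),\phi(\bar z))$, each supported on the corresponding positive-weight edges of $D$ (so that $\phi$ is a homomorphism with probability $1$), preserving the $\mu_x$-marginals already fixed, and giving $\phi(y),\phi(z),\phi(\bar z)$ each marginal at most $(1+\alpha/4)/r$ at every $j\in[r]$. Properties \itref{good-plan-1}--\itref{good-plan-3} then follow by direct computation: the $\{x,\bar x\}$ contribution to $\E(\beta(\phi^{-1}(j)))$ is
\[
\gamma\cdot\frac{d(j_t,j)}{S_{j_t}-1}\leq\frac{2\gamma\,d(j_t,j)}{r}(1+O(\epsilon)),
\]
which settles \itref{good-plan-3} since $\gamma\leq 1$ gives $2\gamma\leq 1+\gamma+\alpha$; summing in the $\{y,z,\bar z\}$ contribution of at most $(1-\gamma)(1+\alpha/4)/r$ and using $d(j_t,j)\leq 1$ then yields \itref{good-plan-2}.

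The hard part is constructing the three couplings in the second step. This reduces to verifying Hall's condition: for each $L\subseteq[r]\setminus\{j_t\}$, $|N_D^\pm(L)|\geq r\mu_x(L)/(1+\alpha/4)$. The delicate regime is when $\gamma$ is close to $1$, for then $\mu_x$ concentrates on $N_D^+(j_t)$ and small sets $L$ may carry substantial $\mu_x$-mass while having only modest out-neighbourhoods; the slack $\alpha/4$ in the target marginals of $\phi(y),\phi(z),\phi(\bar z)$, combined with the freedom to discard a small set of ``bad'' candidate vertices $j_t$, is exactly what makes the Hall condition go through uniformly over all such $L$.
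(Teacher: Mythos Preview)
Your approach has a genuine gap: the Hall-type expansion condition you claim for the $(\phi(x),\phi(y))$ coupling can fail, and the failure cannot be repaired by changing $j_t$. Concretely, take $D$ with $0/1$ weights, $V(D)=V_1\cup V_2$ with $|V_1|=|V_2|=r/2$, all edges directed from $V_1$ to $V_2$, the internal tournament on $V_1$ near-regular, and the internal tournament on $V_2$ transitive with sink $v$. Then every vertex of high out-weight lies in $V_1$ and has $S_{j_t}-1\approx 3r/4$, so your $\mu_x$ puts mass $4/(3r)$ on $v$; yet $N_D^+(\{v\})=\{v\}$, and your Hall condition at $L=\{v\}$ reads $1\geq 4/\bigl(3(1+\alpha/4)\bigr)$, which fails for every small $\alpha$. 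Discarding a few candidate $j_t$'s does not help, because \emph{every} $j_t\in V_1$ sees exactly the same obstruction at $v$. More broadly, your scheme treats $y,z,\bar z$ symmetrically and never invokes the hypothesis $\beta(y)\geq\beta(z,\bar z)$; this is already a red flag, since the forward edge $xy$ is genuinely harder than the backward edges $zx,\bar z\bar x$ (sinks of $D$ get full $\mu_x$-mass but have no out-neighbours, whereas sources have $\mu_x$-mass zero and so cause no trouble on the in-side).

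The paper's proof is entirely different and extremal rather than constructive. After fixing $j_t$ of maximal out-weight and splitting each $j\in[r-1]$ into an $A$-copy with weight $d(j_t,j)$ and a $B$-copy with weight $1-d(j_t,j)$, it greedily packs as many homomorphisms $\hat\phi_i:H_1[\{x,y,z\}]\to\bar D$ as possible with $x\in A$ and $y\in B$, preferring also $z\in B$. If enough are found, averaging them gives the random $\phi$ directly. Otherwise the maximality is exploited through a sequence of switching arguments (precisely here the hypotheses $\beta(y)\geq\beta(z)$ and $\beta(y)\geq\beta(x)$ are used) to produce large sets $A_1\subseteq A$ and $B_1\subseteq B$ with essentially no positive weight from $A_1$ to $B_1$; averaging then finds a vertex in $B_1$ with out-weight exceeding that of $j_t$, a contradiction.
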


Let $H_2$ be the fully-looped oriented forest with vertices $\{x,y,z,u,w\}$ and non-looped edges $\{xy,zx,zu,wz\}$.

\begin{lemma}\label{lm:distillation-case-new}
Let $\mathcal{H}=(H_2,\{x\},\beta)$ be a distillation with $\beta(y)\leqslant\beta(z,u,w)$. Then, $\{\mathcal{H}\}$ is $(\max{\{\beta(x),\beta(z)\}})$-good.
\end{lemma}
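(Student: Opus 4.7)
The approach is to reduce $\mathcal{H}$ to distillations handled either by Lemma~\ref{lm:distillation-case-1} or by Lemma~\ref{lm:distillation-case-3}, and then to conclude via Lemma~\ref{lm:transfer-good}. Following the three-case split sketched in Section~\ref{sect:overarching-subcases}, the vertices $z,w$ of $H_2$ correspond to regularity clusters reached from $j_t$ by $(+-)$- and $(+--)$-paths, while $u$ corresponds to a $(+-+)$-reachable cluster. The hypothesis $\beta(y)\leq\beta(z,u,w)$ rules out the $y$-heavy regime (Case~1, handled by Lemma~\ref{lm:distillation-case-2}), leaving two sub-regimes distinguished by whether the weight off $x$ sits predominantly on $\{u,y\}$ or on $\{z,w\}$.

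In the first subcase, $\beta(u)+\beta(y)\geq\beta(z)+\beta(w)$, I would construct a (possibly randomised) homomorphism $\rho:H_2\to G$ where $G$ is the underlying forest of some distillation in the family of Lemma~\ref{lm:distillation-case-1}; the natural choice is to leave $x, y, z, u$ essentially in place and fold the low-weight vertex $w$ onto a vertex of $G$ playing the same structural role (namely, a predecessor of $z$), and then invoke Lemma~\ref{lm:transfer-good}. In the complementary subcase $\beta(z)+\beta(w)>\beta(u)+\beta(y)$, I would symmetrically distill into one of the forests of Lemma~\ref{lm:distillation-case-3}, keeping the $w\to z\to x$ backbone intact and folding $y$ and $u$ onto suitable out-branches of $x$ or $z$. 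In both subcases \itref{arrows-1} is ensured by sending $x$ into the target's distinguished set, and \itref{arrows-2} is a routine weight-tracking verification once $\rho$ is fixed.

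The principal obstacle I anticipate is calibrating the fold so that the $\gamma$-parameter for the target distillation stays at $\max\{\beta(x),\beta(z)\}$. A direct fold of $w$ onto $z$, for instance, transfers $\beta(w)$-worth of weight onto the target's $z$-vertex, which could push the target's $\gamma$-parameter above what is allowed; in this situation $\rho$ can be randomised so that only the expected amount of weight is transferred, or replaced by a small family of distillations rather than a single one, since Lemma~\ref{lm:transfer-good} applies to finite families for exactly this reason. A secondary subtlety is the boundary case $\beta(u)+\beta(y)\approx\beta(z)+\beta(w)$, which needs a consistent choice of subcase so that neither conversion inflates $\gamma$; beyond this calibration I do not expect substantive difficulty beyond careful case analysis, since the structural work is already packaged inside Lemmas~\ref{lm:distillation-case-1} and~\ref{lm:distillation-case-3}.
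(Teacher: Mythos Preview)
Your high-level strategy matches the paper's exactly: split on whether $\beta(y)+\beta(u)\geq\beta(z)+\beta(w)$ and reduce via Lemma~\ref{lm:transfer-good} to the family in Lemma~\ref{lm:distillation-case-1} or Lemma~\ref{lm:distillation-case-3} respectively. The anticipated calibration of $\gamma$ also works out cleanly once the homomorphisms are written down.

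There is, however, a genuine misreading of Lemma~\ref{lm:transfer-good} in your proposal. You write that you would map into ``\emph{some} distillation in the family of Lemma~\ref{lm:distillation-case-1}'' and ``\emph{one of} the forests of Lemma~\ref{lm:distillation-case-3}'', but Lemma~\ref{lm:transfer-good} requires $\mathcal{H}\hookrightarrow\mathcal{G}_i$ for \emph{every} $i$ in the target family. So in the first subcase you must exhibit random homomorphisms into \emph{both} $H_{3,1}$ (edge $xy$) and $H_{3,2}$ (edge $yx$), and in the second subcase into all five of $H_{4,1},\ldots,H_{4,5}$. The family appears on the target side not as a device for avoiding inflation of $\gamma$, as you suggest, but because the $\gamma$-goodness of $\{\mathcal{G}_i\}$ only guarantees that the random $(\phi,i(\phi))$ lands in \emph{some} unspecified member; to pull back to $\mathcal{H}$ one must be able to compose with a $\rho_i$ for whichever $i$ is realised.

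Your description of the homomorphisms is also off in detail. The target forests $H_{3,i}$ have only two vertices, so one cannot ``leave $x,y,z,u$ essentially in place''; the paper instead collapses $\{x,z,w\}$ to $x$ and, with a carefully chosen probability, sends $\{y,u\}$ to $y$ (or everything to $x$) for $H_{3,1}$, and collapses $\{x,y\}$ to $x$ with $\{z,u,w\}$ randomly to $y$ or $x$ for $H_{3,2}$. The randomisation is needed not to control $\gamma$ but to hit the \emph{prescribed} weights $\beta_i(x)=(1+\beta_x)/2$, $\beta_i(y)=(1-\beta_x)/2$ that Lemma~\ref{lm:distillation-case-1} imposes. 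Likewise the five homomorphisms into the $H_{4,i}$ each need their own construction to match the specific $\beta_1,\ldots,\beta_5$ given in Lemma~\ref{lm:distillation-case-3}; this matching constitutes the bulk of the paper's verification.
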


Let $H_{3,1}$ be the fully-looped oriented forest with vertices $\{x,y\}$ and non-looped edges $\{xy\}$. Let $H_{3,2}$ be the fully-looped oriented forest with vertices $\{x,z\}$ and non-looped edges $\{zx\}$.

\begin{lemma}\label{lm:distillation-case-1}
Let $\beta_x\in[0,1]$ and, set $\beta_1(x)=\beta_2(x)=(1+\beta_x)/2$ and $\beta_1(y)=\beta_2(z)=(1-\beta_x)/2$. For $i\in[2]$, set $\mathcal{H}_{i}=(H_{3,i},\{x\},\beta_{i})$. Then, $\{\mathcal{H}_{i}\}_{i=1}^2$ is $\beta_x$-good. 
\end{lemma}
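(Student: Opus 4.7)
The plan is to pick $j_t \in [r]$ and construct a joint distribution of $(\phi(x), \phi(y))$ on $([r] \setminus \{j_t\}) \times [r]$, defining $i(\phi) \in \{1, 2\}$ from the direction in which $D$ has an edge between $\phi(x)$ and $\phi(y)$.

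First, I would pick $j_t$ with $A := \sum_{j \neq j_t} d(j_t, j) \geqslant (1 - \epsilon)(r-1)/2$; such a $j_t$ exists because summing over all possible $j_t$ and using $\epsilon$-completeness of $d$ gives $\sum_{j_t} A(j_t) \geqslant \binom{r}{2}(1 - \epsilon)$. For $\epsilon \ll \alpha$ this implies $A \geqslant (1 + \beta_x) r / (2(1 + \beta_x + \alpha/2))$. Then define target marginals $p_j = d(j_t, j)/A$ for $j \neq j_t$ (with $p_{j_t} = 0$) and
\[
q_j = \frac{2(1 + \beta_x + \alpha/2)/r - p_j(1 + \beta_x)}{1 + \beta_x + \alpha}
\]
for each $j \in [r]$. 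A direct computation, using the bound on $A$, shows that $q_j \geqslant 0$, $\sum_j q_j = 1$, and, crucially, $p_j(1+\beta_x)/2 + q_j(1-\beta_x)/2 \leqslant (1 + \beta_x + \alpha/2)/r$ for every $j$, with equality when $p_j = 2(1+\beta_x+\alpha/2)/((1+\beta_x)r)$.

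Next, I would realise the joint distribution by sampling $\phi(x) = j$ from $p$, and then, conditional on $\phi(x) = j$, sampling $\phi(y) = k$ with probability proportional to $q_k \cdot \mathbb{1}[k \neq j, \, d(j, k) + d(k, j) > 0]$. Set $i(\phi) = 1$ if $d(\phi(x), \phi(y)) > 0$ and $i(\phi) = 2$ otherwise, so that $\phi$ is always a homomorphism from $H_{3, i(\phi)}$ to $D$. By $\epsilon$-completeness, for each fixed $j$ at most $\epsilon r$ values of $k$ satisfy $d(j, k) = d(k, j) = 0$; since $\max_k q_k \leqslant 2/r$, the conditional normaliser is $1 - O(\epsilon)$, and so the actual marginal of $\phi(y)$ equals $(1 \pm O(\epsilon)) q_k$.

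With this in hand, \itref{good-plan-1} is immediate from the construction; \itref{good-plan-3} follows from $\E(\beta_{i(\phi)}(\phi^{-1}(j) \cap \{x\})) = p_j(1+\beta_x)/2 = d(j_t, j)(1+\beta_x)/(2A) \leqslant d(j_t, j)(1+\beta_x+\alpha)/r$; and \itref{good-plan-2} follows from the bound in the previous paragraph, with the $O(\epsilon/r)$ correction from the approximate marginal of $\phi(y)$ absorbed into the remaining $\alpha/2$ of slack. The main difficulty is setting up the joint distribution to respect the homomorphism constraint while simultaneously biasing $\phi(x)$ toward clusters $j$ with $d(j_t, j)$ large (needed for \itref{good-plan-3}) and keeping $\phi(y)$ close to uniform (needed for \itref{good-plan-2} at $j = j_t$): this requires using both orientations $i \in \{1, 2\}$ of the edge of $H_{3, i}$, and relies on $\epsilon$-completeness to keep the set of \emph{bad} pairs $(j, k)$ with no edge of $D$ between them small enough to be absorbed into the $\alpha$-slack.
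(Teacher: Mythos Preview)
Your proposal is correct and follows essentially the same strategy as the paper: choose $j_t$ with near-maximal total out-weight, sample $\phi(x)$ proportionally to $d(j_t,\cdot)$, sample $\phi(y)$ with a complementary weight among the $D$-neighbours of $\phi(x)$, and set $i(\phi)\in\{1,2\}$ according to the direction of the edge between $\phi(x)$ and $\phi(y)$.

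The only difference is cosmetic. The paper first caps the weights, choosing $0\le d_j\le d(r,j)$ with $\sum_j d_j=(\tfrac12-2\epsilon)r$, and then uses the complement $1-d_j$ as the (unnormalised) weight for $\phi(y)$; this makes the two contributions to \itref{good-plan-2} add to exactly $d_j+(1-d_j)=1$ times $(1+\gamma+\alpha)/r$, so the bookkeeping is a one-liner. You instead keep $p_j=d(j_t,j)/A$ and solve explicitly for the marginal $q_j$ that forces the sum in \itref{good-plan-2} to hit the target; this requires the extra observation that $A$ is large enough to make every $q_j\ge 0$, but you verified that. Both routes incur the same $O(\epsilon)$ loss from restricting $\phi(y)$ to $N_D(\phi(x))$, absorbed into the $\alpha$-slack.
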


Let $H_{4,1}$ be the fully-looped oriented forest with vertices $\{x,y,z\}$ and non-looped edges $\{xy,zx\}$.
Let $H_{4,2}$ be the fully-looped oriented forest with vertices $\{x,y\}$ and non-looped edges $\{xy\}$.
Let $H_{4,3}$ be the fully-looped oriented forest with vertices $\{x,z\}$ and non-looped edges $\{zx\}$.
Let $H_{4,4}$ be the fully-looped oriented forest with vertices $\{x,z,w\}$ and non-looped edges $\{zx,wz\}$.
Let $H_{4,5}=H_{4,3}$.

\begin{lemma}\label{lm:distillation-case-3}
Let $\beta_x,\beta_y,\beta_z,\beta_u,\beta_w\in [0,1]$ have sum 1 and $\beta_y+\beta_u\leq\beta_z+\beta_w$. Let $\gamma=\max{\{\beta_x,\beta_z\}}$. Take the following weight functions $\beta_i:V(H_{4,i})\to[0,1]$ for $i\in[5]$.
\[
\beta_1(y)=\max\{\beta_x+\beta_y-\gamma,0\}
\;\;\;\;\;\;\;
\beta_1(z)=\min\{\beta_w+\beta_z,1-\beta_z\}
\;\;\;\;\;\;\;
\beta_1(x)=1-\beta_1(y)-\beta_1(z), 
\]
\[
\beta_2(x)=\beta_3(x)=\beta_x+\beta_z+\beta_w
\;\;\;\;\;\;\;
\beta_2(y)=\beta_3(z)=\beta_y+\beta_u,
\]
\[
\beta_4(x)=\min\{\beta_x+\beta_y+\beta_u,\max\{\beta_x+\beta_y,\beta_z\}\}
\;\;\;\;\;\;\;
\beta_4(w)=\beta_w
\;\;\;\;\;\;\;
\beta_4(z)=1-\beta_4(x)-\beta_4(w),
\]
\[
\beta_5(x)=\beta_x+\beta_y
\;\;\;\;\;\;\;
\beta_5(z)=\beta_z+\beta_u+\beta_w.
\]

For $i\in[5]$, set $\mathcal{H}_i=(H_{4,i},\{x\},\beta_i)$. Then, $\{\mathcal{H}_i\}_{i=1}^5$ is $\gamma$-good.
\end{lemma}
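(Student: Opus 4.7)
My plan is to choose $j_t \in [r]$ with good (first- and second-order) expansion in $D$, then, based on the density structure of $D$ around $j_t$, select one of the five distillations and a random homomorphism from the selected $H_{4,i}$ into $D$. The five weight functions $\beta_i$ are engineered so that whatever local density configuration occurs around $j_t$, at least one distillation has weight fitting within the capacity available at each cluster.

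First, I would pick $j_t$ by an averaging argument on the edge weights of $D$. Using that the weights are $\epsilon$-complete and Proposition~\ref{prop:max_digraph_split}, together with a union bound, I can take $j_t$ for which $\sum_{j \neq j_t} d(j_t, j)$ and $\sum_{j \neq j_t} d(j, j_t)$ both lie in $(1 \pm \alpha/100)\, r/2$, and for which the second-order sums $\sum_j d(j_t,j)\, d(j, k)$ and $\sum_j d(j_t,j)\, d(k, j)$ (as functions of $k$) are close to their averages across $k$. These second-order quantities will control how the vertices $y$ and $z$ of a three-vertex distillation get distributed when I first sample $\phi(x)$ and then the remaining vertices.

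For a chosen distillation $\mathcal{H}_i$, the sampling scheme is natural: draw $\phi(x) \in [r]\setminus\{j_t\}$ with $\P(\phi(x) = j) \propto d(j_t, j)$; then (if $i \in \{1,2\}$) draw $\phi(y)$ conditional on $\phi(x)$ with probability proportional to $d(\phi(x), \cdot)$; (if $i \in \{3, 4, 5\}$) with probability proportional to $d(\cdot, \phi(x))$; and so on for $\phi(z)$ where present. Bounds~\itref{good-plan-2} and~\itref{good-plan-3} then translate into inequalities of the form $\beta_i(v) \leq C_v \cdot (1 + \gamma + \alpha)/r$ at each cluster $j$, where $C_v$ is the appropriate capacity (a first- or second-order density sum). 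Summing the $X$-cap over $j$ yields the global budget $\beta_i(x) \leq (1 + \gamma + \alpha)/2$ (up to $O(\sqrt{\epsilon})$), and analogous global budgets for $\beta_i(y)$ and $\beta_i(z)$ follow from the second-order averages.

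The main technical obstacle is therefore a case analysis verifying that at least one of the five $(i, \beta_i)$ choices satisfies all the induced capacity inequalities simultaneously, with slack $\gamma$. The piecewise $\min$/$\max$ definitions of $\beta_1(z)$ and $\beta_4(x)$ are precisely the boundary values at which the $x$-capacity of distillation $1$ (resp.\ $4$) coincides with its $z$-capacity (resp.\ $y$-capacity), so one of those two distillations handles the generic case, while distillations $2,3,5$ handle degenerate configurations (e.g.\ when most out-density from $j_t$ lies on vertices without out-expansion, so the length-three path collapses to a length-one path). Using the hypothesis $\beta_y + \beta_u \leq \beta_z + \beta_w$, which gives $\beta_x + \beta_z + \beta_w \geq (1 + \beta_x)/2$, together with $\gamma = \max\{\beta_x, \beta_z\}$, one checks by exhausting these configurations that some $i \in [5]$ always works; the hypothesis is exactly what makes the collapsed $\mathcal{H}_2, \mathcal{H}_3$ respect the $(1 + \gamma)/2$ bound on $\beta_i(x)$. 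Once the correct $i$ is chosen, Lemma~\ref{lm:transfer-good}-style verification of \itref{good-plan-1}--\itref{good-plan-3} is routine, with all $O(\epsilon)$ error terms absorbed by $\alpha$ and the condition $\phi(x) \neq j_t$ costing only an $O(1/r)$ rescaling.
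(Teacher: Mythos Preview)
Your approach has a fundamental gap at the first step. You claim to choose $j_t$ with $\sum_{j\neq j_t} d(j_t,j)\in(1\pm\alpha/100)\,r/2$, but no such $j_t$ need exist: take $D$ coming from a tournament on $[r]$ split into halves $L$ and $R$ with all edges directed from $L$ to $R$ and regular tournaments inside each half. Every vertex then has out-weight approximately $3r/4$ or $r/4$, never near $r/2$. Proposition~\ref{prop:max_digraph_split} only guarantees a vertex with $d^\pm\geq (r-1)/4-\epsilon r$; it does not deliver $r/2$, and averaging over vertices does not help since the out-weights can be bimodal. Your second-order expansion claim is likewise unfounded.

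Even setting the choice of $j_t$ aside, deterministically fixing one $i\in[5]$ and then sampling cannot work; the paper's argument is structurally different. It takes $j_t$ with \emph{maximal} out-weight and runs an extremal allocation: greedily pack homomorphisms $\phi_1,\ldots,\phi_{s_0}$ of $H_{4,1}$ into a doubled vertex set $A\cup B$, maximising $s_0$ and then two secondary statistics. If this packing suffices, distillation~1 alone works (Claim~\ref{clm:J_1-not-too-small}). If not, the failure forces structure: the leftover sets $A_0,B_0$ have essentially all weight on $A_0\to B_0$ edges (Claim~\ref{clm:no0to1}), and each packed homomorphism satisfies one of two rigidity properties (Claim~\ref{clm:good-types}). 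The paper then \emph{switches} to a new target $j_t'$ with $a_{j_t'}\in A_0$ (Claim~\ref{clm:I1-and-I2}). Each packed $\phi_i$ is augmented by a dummy vertex $q$ of weight $\gamma$ mapped into $B_1$ (Claim~\ref{clm:s=s}) and re-expressed as a batch of homomorphisms from $H_{4,2},H_{4,3}$ (Claim~\ref{clm-forcase1}) or from $H_{4,4}$ (Claim~\ref{clm-forcase2}), now attached to $j_t'$; remaining capacity in $A_0\cup B_1$ is filled with homomorphisms from $H_{4,5}$ (Claim~\ref{clm:J}). The final $(\phi,i(\phi))$ is a uniform sample over this pool, so $i(\phi)$ is genuinely random with values in $\{2,3,4,5\}$ and mixture dictated by the extremal allocation. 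The five $\beta_i$ are not alternatives among which one is selected per $D$; they are calibrated so that each packed block can be reinterpreted under one of them after the switch, the reserved $\gamma$-weight at $q$ supplying exactly the needed slack. The switching mechanism and the global extremal argument are entirely absent from your plan.
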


We remark that each set of weights defined in Lemma~\ref{lm:distillation-case-3} sum to $1$, either by the choice of $\beta_1(x)$ or $\beta_4(z)$ or as $\beta_x+\beta_y+\beta_z+\beta_u+\beta_w=1$. Furthermore, from the choices they can all immediately be seen to be non-negative except for $\beta_1(x)$ and $\beta_4(z)$, but this we can also show, as follows.

First note that
\begin{align}
\beta_4(z)&=1-\min\{\beta_x+\beta_y+\beta_u+\beta_w,\max\{\beta_x+\beta_y+\beta_w,\beta_z+\beta_w\}\}\nonumber\\
& =\max\{\beta_z,\min\{\beta_u+\beta_z,\beta_x+\beta_y+\beta_u\}\}.\label{eqn-betay}
\end{align}
Therefore, $\beta_4(z)\geq 0$. For use later, we will show that $\beta_1(x)\geq \beta_4(z)$, which also then confirms that $\beta_1(x)\geq 0$.

First suppose that $\beta_z\leq \beta_x+\beta_y$. Then, $1-\beta_z=\beta_u+\beta_y+\beta_x+\beta_w\geq \beta_w+\beta_z$, so that $\beta_1(z)=\beta_w+\beta_z$, and hence
\[
\beta_1(x)=1-\beta_1(y)-\beta_1(z)=1-(\beta_x+\beta_y-\gamma)-(\beta_w+\beta_z)=\beta_u+\gamma\geq \beta_u+\beta_z.
\]
On the other hand, if $\beta_z>\beta_x+\beta_y$, then $\beta_1(y)=0$, so that
\[
\beta_1(x)=1-\beta_1(z)=\max\{1-(\beta_w+\beta_z),\beta_z\}=\max\{\beta_u+\beta_y+\beta_x,\beta_z\}.
\]
Therefore, in both cases, we have $\beta_1(x)\geq \beta_z$ and either $\beta_1(x)\geq \beta_u+\beta_z$ or $\beta_1(x)\geq \beta_u+\beta_y+\beta_x$. Thus, by \eqref{eqn-betay}, we have 
\begin{equation}\label{eqn-beta1x-beta4y}
    \beta_1(x)\geq \beta_4(z).
\end{equation}

We will now outline how Lemmas~\ref{lm:distillation-case-2},~\ref{lm:distillation-case-new},~\ref{lm:distillation-case-1}, and~\ref{lm:distillation-case-3} together imply Theorem~\ref{thm:overarching} (see Figure~\ref{fig:technical-implications}). First, we will show that Theorem~\ref{thm:overarching} follows from Lemmas~\ref{lm:distillation-case-2} and~\ref{lm:distillation-case-new}. We will then show that Lemma~\ref{lm:distillation-case-new} follows from Lemmas~\ref{lm:distillation-case-1} and~\ref{lm:distillation-case-3}. The proof of each of these implications is a straightforward case of verifying certain $\hookrightarrow$ relations between the relevant distillations hold according to the different values $\beta$ may take, and applying Lemma~\ref{lm:transfer-good} to deduce $\gamma$-goodness. All that will remain then is to prove Lemmas~\ref{lm:distillation-case-2},~\ref{lm:distillation-case-1}, and~\ref{lm:distillation-case-3}, which we do in Section~\ref{sect:3cases-proofs}, and then deduce Theorem~\ref{thm:extending-distillation} from Theorem~\ref{thm:overarching}, which we do in Section~\ref{sect:extending-new}.

\begin{figure}
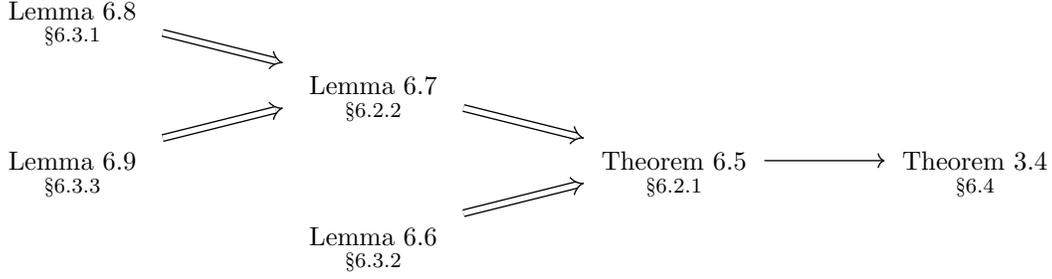

\DiagramTechnicalImplications
\vspace{0cm}
\caption{An overview of how the results of this section combine to prove Theorem~\ref{thm:extending-distillation}. Each implication denoted by $\implies$ indicates a suitable application of Lemma~\ref{lm:transfer-good}.}\label{fig:technical-implications}
\end{figure}

\subsubsection{Proof of Theorem~\ref{thm:overarching} using Lemmas~\ref{lm:transfer-good},~\ref{lm:distillation-case-2}, and~\ref{lm:distillation-case-new}}\label{sect:proof-of-6.5}
Using Lemma~\ref{lm:transfer-good}, it is simple to deduce Theorem~\ref{thm:overarching} from Lemmas~\ref{lm:distillation-case-2} and~\ref{lm:distillation-case-new}.
\begin{proof}[Proof of Theorem~\ref{thm:overarching}.]
Define $\rho_1:H_0\to H_1$ by setting $\rho_1(x)=x$, $\rho_1(y)=y$, $\rho_1(z,u,w)=z$, $\rho_1(\bar{x})=\bar{x}$ and $\rho_1(\bar{z},\bar{u},\bar{w})=\bar{z}$. Define $\rho_2:H_0\to H_2$ by setting $\rho_2(x,\bar{x})=x$, $\rho_2(y)=y$, $\rho_2(z,\bar{z})=z$, $\rho_2(u,\bar{u})=u$ and $\rho_2(w,\bar{w})=w$.

For each $i\in[2]$, let $\beta_i:V(H_i)\to[0,1]$ be given by $\beta_i(v)=\beta_0(\rho_i^{-1}(v))$. Let $\mathcal{H}_1=(H_1,\{x,\bar{x}\},\beta_1)$ and $\mathcal{H}_2=(H_2,\{x\},\beta_2)$. Note that, for each $i\in[2]$, the homomorphism $\rho_i:H_0\to H_i$ realises $\mathcal{H}_0\hookrightarrow\mathcal{H}_i$.

If $\beta_0(y)\geqslant\beta_0(z,u,w,\bar{z},\bar{u},\bar{w})$, then, by Lemma~\ref{lm:distillation-case-2}, $\{\mathcal{H}_1\}$ is $\beta_0(x,\bar{x})$-good. On the other hand, if $\beta_0(y)\leqslant\beta_0(z,u,w,\bar{z},\bar{u},\bar{w})$, then, by Lemma~\ref{lm:distillation-case-new}, $\{\mathcal{H}_2\}$ is $(\max{\{\beta_0(x,\bar{x}),\beta_0(z,\bar{z})\}})$-good. In either of these cases we find $\{\mathcal{H}_0\}$ is $\gamma$-good, by Lemma~\ref{lm:transfer-good}.
\end{proof}

\subsubsection{Proof of Lemma~\ref{lm:distillation-case-new} using Lemmas~\ref{lm:transfer-good},~\ref{lm:distillation-case-1}, and~\ref{lm:distillation-case-3}}\label{sect:proof-of-6.7}
To prove Lemma~\ref{lm:distillation-case-new} follows from Lemmas~\ref{lm:distillation-case-1} and~\ref{lm:distillation-case-3} using Lemma~\ref{lm:transfer-good} requires more checking due to the larger sets of distillations, but this is straightforward, as follows.
\begin{proof}[Proof of Lemma~\ref{lm:distillation-case-new}]
For $v\in V(H_2)$, let $\beta_v=\beta(v)$. Note that we have $\beta_y\leqslant\beta_z+\beta_u+\beta_w$. Define $\mathcal{H}_{3,i}$, $i\in[2]$ and $\mathcal{H}_{4,i}$, $i\in[5]$ as described in Lemmas~\ref{lm:distillation-case-1} and \ref{lm:distillation-case-3}.

We will later prove the following two claims.

\begin{claim}\label{clm:transfer-1}
If $\beta_y+\beta_u\geqslant\beta_z+\beta_w$, then $\mathcal{H}\hookrightarrow\mathcal{H}_{3,i}$ for $i\in[2]$.
\end{claim}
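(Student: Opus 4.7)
The plan is to exhibit, for each $i\in[2]$, a random homomorphism $\rho_i:H_2\to H_{3,i}$ realising $\mathcal{H}\hookrightarrow\mathcal{H}_{3,i}$, using a common choice of the parameter $\beta_x$ of Lemma~\ref{lm:distillation-case-1}. Since $H_{3,i}$ is embedded inside a complete looped digraph at the next stage of the argument (via Lemma~\ref{lm:transfer-good}), edges of $H_2$ whose two endpoints are mapped to the same vertex are permitted, which is what will give the construction room to manoeuvre.

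My first step would be to enumerate the admissible maps. Condition \itref{arrows-1} forces $\rho_i(x)=x$ for $i\in[2]$. For $i=1$, examining the possible images of each edge of $H_2$ in $H_{3,1}$, I find that the edges $zx$ and $wz$ must collapse (forcing $\rho_1(z)=\rho_1(w)=x$), while $\rho_1(y)$ and $\rho_1(u)$ are independently free in $\{x,y\}$. For $i=2$, the edge $xy$ of $H_2$ must collapse (forcing $\rho_2(y)=x$), and the image of $z$ then bifurcates into two admissible local configurations governing $u$ and $w$. I would then parameterise the free images by independent Bernoulli variables and impose condition \itref{arrows-2} to match $\beta_i(x)=(1+\beta_x)/2$ and $\beta_i(y)=(1-\beta_x)/2$; the achievable values of $\E(\beta(\rho_i^{-1}(x)))$ will form intervals with explicit endpoints --- for $\rho_1$ the range is $[\beta(x)+\beta(z)+\beta(w),\,1]$, and for $\rho_2$ the range is $[\beta(x)+\beta(y),\,1]$.

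The key step will then be to choose a common target $\beta^\ast=(1+\beta_x)/2$ lying in both intervals and satisfying $\beta_x\leq \max\{\beta(x),\beta(z)\}$. This should reduce to verifying
\[
\max\{\beta(x)+\beta(z)+\beta(w),\,\beta(x)+\beta(y)\}\leq \frac{1+\max\{\beta(x),\beta(z)\}}{2},
\]
which, via $\sum_v \beta(v)=1$, further reduces to the claim's hypothesis $\beta(z)+\beta(w)\leq\beta(y)+\beta(u)$ together with $\beta(x)+2\beta(y)\leq 1$ (this latter inequality being equivalent to the standing assumption $\beta(y)\leq\beta(z)+\beta(u)+\beta(w)$ of Lemma~\ref{lm:distillation-case-new}). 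I would handle the two cases $\max\{\beta(x),\beta(z)\}=\beta(x)$ and $\max\{\beta(x),\beta(z)\}=\beta(z)$ separately, with the second case using additionally that $\beta(z)\geq\beta(x)$. The main obstacle will be this joint constraint that a single $\beta_x$ must serve both distillations simultaneously; the claim's balance hypothesis $\beta(y)+\beta(u)\geq\beta(z)+\beta(w)$ is precisely what is needed to pin down a valid common target within the allowed bound.
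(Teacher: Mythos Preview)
Your approach is essentially the same as the paper's, but there is one misconception worth flagging. In the claim's context, $\mathcal{H}_{3,i}$ has already been defined (in the proof of Lemma~\ref{lm:distillation-case-new}) via the convention $\beta_v=\beta(v)$ for $v\in V(H_2)$; in particular the parameter $\beta_x$ of Lemma~\ref{lm:distillation-case-1} is fixed to be $\beta(x)$, not something you get to choose. So the ``joint constraint'' you describe is not the obstacle: you must hit the specific target $\beta^\ast=(1+\beta(x))/2$, and the relevant check is that this value lies in both of your intervals $[\beta(x)+\beta(z)+\beta(w),1]$ and $[\beta(x)+\beta(y),1]$. These two inequalities reduce exactly to $\beta(y)+\beta(u)\geq\beta(z)+\beta(w)$ (the hypothesis) and $\beta(y)\leq\beta(z)+\beta(u)+\beta(w)$ (the standing assumption of Lemma~\ref{lm:distillation-case-new}), so your verification goes through once you substitute $\beta_x=\beta(x)$; the case split on $\max\{\beta(x),\beta(z)\}$ is then unnecessary.

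Otherwise your analysis of the admissible homomorphisms and the achievable ranges for $\E(\beta(\rho_i^{-1}(x)))$ is correct. The paper's proof is more direct: rather than enumerating all admissible maps, it writes down a single mixture for each $i$ --- for $\rho_1$, map $(y,u)\mapsto y$ and $(x,z,w)\mapsto x$ with probability $p_1=\frac{1-\beta_x}{2(\beta_y+\beta_u)}$, else everything to $x$; for $\rho_2$, map $(z,u,w)\mapsto y$ and $(x,y)\mapsto x$ with probability $p_2=\frac{1-\beta_x}{2(\beta_z+\beta_u+\beta_w)}$, else everything to $x$ --- and verifies $p_1,p_2\in[0,1]$, which reduces to the same two inequalities.
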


\begin{claim}\label{clm:transfer-2}
If $\beta_y+\beta_u\leqslant\beta_z+\beta_w$, then $\mathcal{H}\hookrightarrow\mathcal{H}_{4,i}$ for $i\in[5]$.
\end{claim}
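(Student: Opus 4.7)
For each $i\in[5]$, the plan is to construct a random homomorphism $\rho_i\colon H_2\to H_{4,i}$ that fixes $x$ and realises $\mathcal{H}\hookrightarrow\mathcal{H}_{4,i}$; the claim then follows by definition of $\hookrightarrow$. Throughout, I use the convention implicit in the proof of Theorem~\ref{thm:overarching} that a ``homomorphism'' in the distillation framework may collapse an edge of the source to a single vertex of the target (the resulting ``loop'' is harmless since in Lemma~\ref{lm:transfer-good} we will ultimately compose with a map into a complete looped digraph). With this convention, each edge $uv\in E(H_2)$ must either map to an edge of $H_{4,i}$ or satisfy $\rho_i(u)=\rho_i(v)$.

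The cases $i\in\{2,3,5\}$ I handle by deterministic maps. For $i=2$ (target edge $xy$), I set $\rho_2(y)=y$, $\rho_2(z)=x$, $\rho_2(u)=y$, $\rho_2(w)=x$: the preimage of $x$ has weight $\beta_x+\beta_z+\beta_w=\beta_2(x)$ and the preimage of $y$ has weight $\beta_y+\beta_u=\beta_2(y)$, and each edge of $H_2$ either maps to $xy$ or collapses. For $i=3$ and $i=5$ (target edge $yx$), I set $\rho(y)=x$ and $\rho(z)=\rho(u)=\rho(w)=y$, and a direct check gives the correct preimage weights.

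For $i=4$ (target path $z\to y\to x$), I would fix $\rho_4(y)=x$, $\rho_4(z)=y$, and $\rho_4(w)=z$ deterministically (so that $xy$ collapses at $x$, $zx$ maps to $yx$, and $wz$ maps to $zy$), leaving only $\rho_4(u)\in\{x,y\}$ as a free choice --- both options are valid since they make $zu$ map to $yx$ or collapse at $y$. Writing $p=\mathbb{P}(\rho_4(u)=x)$ gives $\mathbb{E}(\beta(\rho_4^{-1}(x)))=\beta_x+\beta_y+p\beta_u$, and the formula for $\beta_4(x)$ is tailored so that the required $p$ lies in $[0,1]$: when $\beta_z\le\beta_x+\beta_y$ set $p=0$; when $\beta_z>\beta_x+\beta_y$ set $p=(\min\{\beta_z,\beta_x+\beta_y+\beta_u\}-\beta_x-\beta_y)/\beta_u$. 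In both cases $\rho_4^{-1}(z)=\{w\}$ matches $\beta_4(z)=\beta_w$, and $\beta_4(y)$ is then automatic.

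The main obstacle is $i=1$ (target path $z\to x\to y$), where the edges of $H_{4,1}$ are most restrictive: the homomorphism constraints force $\rho_1(y)\in\{x,y\}$ and $\rho_1(z)\in\{x,z\}$; moreover, $\rho_1(z)=z$ forces $\rho_1(w)=z$ (since $H_{4,1}$ has no edge into $z$), while $\rho_1(z)=x$ allows $\rho_1(u)\in\{x,y\}$ and $\rho_1(w)\in\{x,z\}$. My approach is to parameterise $\rho_1$ by $\mathbb{P}(\rho_1(z)=z)$, $\mathbb{P}(\rho_1(y)=y)$ and, conditional on $\rho_1(z)=x$, the probabilities that $u$ lies in $\rho_1^{-1}(y)$ and that $w$ lies in $\rho_1^{-1}(z)$, then solve the resulting small linear system matching $\mathbb{E}(\beta(\rho_1^{-1}(z)))=\beta_1(z)$ and $\mathbb{E}(\beta(\rho_1^{-1}(y)))=\beta_1(y)$ (so that $\beta_1(x)$ follows by summation). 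The clippings in $\beta_1(z)=\min\{\beta_z+\beta_w,1-\beta_z\}$ and $\beta_1(y)=\max\{\beta_x+\beta_y-\gamma,0\}$ correspond exactly to the boundary cases where one of these probabilities hits $0$ or $1$; the hypothesis $\beta_y+\beta_u\le\beta_z+\beta_w$, together with the inequality $\beta_1(x)\ge\beta_4(y)\ge0$ already verified in~\eqref{eqn-beta1x-beta4y}, will ensure that a valid assignment of probabilities in $[0,1]$ exists in every sub-case. Completing this sub-case analysis is the bulk of the work.
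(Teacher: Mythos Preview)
Your deterministic map for $i=3$ is wrong. With $\rho(x)=\rho(y)=x$ and $\rho(z)=\rho(u)=\rho(w)=y$ you obtain $\beta(\rho^{-1}(x))=\beta_x+\beta_y$ and $\beta(\rho^{-1}(y))=\beta_z+\beta_u+\beta_w$; these are the weights of $\beta_5$, not of $\beta_3$ (recall $\beta_3(x)=\beta_x+\beta_z+\beta_w$ and $\beta_3(y)=\beta_y+\beta_u$). In fact no deterministic map can realise $\mathcal{H}\hookrightarrow\mathcal{H}_{4,3}$ in general: since $H_{4,3}$ has the single edge $y\to x$ and $xy\in E(H_2)$ is directed $x\to y$, the constraint $\rho(x)=x$ forces $\rho(y)=x$, so $y$ always lies in $\rho^{-1}(x)$ and $\rho^{-1}(y)\subseteq\{z,u,w\}$. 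You must therefore hit the target weight $\beta_3(y)=\beta_y+\beta_u$ using only $\beta_z,\beta_u,\beta_w$, which requires randomness. The paper sends $\{z,u,w\}$ as a block to $y$ with probability $p_3=(\beta_y+\beta_u)/(\beta_z+\beta_u+\beta_w)$ and to $x$ otherwise; the hypothesis $\beta_y+\beta_u\le\beta_z+\beta_w$ is exactly what ensures $p_3\le 1$ (indeed $\beta_y+\beta_u\le\beta_z+\beta_w\le\beta_z+\beta_u+\beta_w$).

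For $i=1$ your sketch is considerably more complicated than necessary and you have not completed it. The paper avoids your four-parameter system by again moving $\{z,u,w\}$ as a single block: with one probability $p_1$ send $y$ to $y$ (else to $x$), and with an independent probability $p_1'$ send $\{z,u,w\}$ all to $z$ (else all to $x$). The two equations $p_1\beta_y=\beta_1(y)$ and $p_1'(\beta_z+\beta_u+\beta_w)=\beta_1(z)$ then determine $p_1,p_1'$, and the bounds $\beta_1(y)\le\beta_y$ (since $\gamma\ge\beta_x$) and $\beta_1(z)\le\beta_z+\beta_w\le\beta_z+\beta_u+\beta_w$ give $p_1,p_1'\in[0,1]$ without any case analysis. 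Your cases $i\in\{2,4,5\}$ are fine and essentially match the paper.
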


If $\beta_y+\beta_u\geqslant\beta_z+\beta_w$, then $\mathcal{H}$ is $\gamma$-good by Lemma~\ref{lm:distillation-case-1}, Lemma~\ref{lm:transfer-good} and Claim~\ref{clm:transfer-1}. Otherwise, if $\beta_y+\beta_u\leqslant\beta_z+\beta_w$, then $\mathcal{H}$ is $\gamma$-good by Lemma~\ref{lm:distillation-case-3}, Lemma~\ref{lm:transfer-good} and Claim~\ref{clm:transfer-2}. Therefore, it remains only to prove Claims~\ref{clm:transfer-1} and \ref{clm:transfer-2}.

\begin{proof}[Proof of Claim~\ref{clm:transfer-1}]
For $i\in[2]$, let $\beta_i:V(H_{3,i})\to[0,1]$ be defined as in Lemma~\ref{lm:distillation-case-1}.

To realise $\mathcal{H}\hookrightarrow\mathcal{H}_{3,1}$: If $\beta_y+\beta_u=0$ (and hence, $\beta_1(y)=0$) then let $p_1=0$, and otherwise let
\begin{equation*}
    p_1=\frac{\beta_1(y)}{\beta_y+\beta_u}=\frac{1-\beta_x}{2(\beta_y+\beta_u)}= \frac{\beta_y+\beta_u+\beta_z+\beta_w}{2(\beta_y+\beta_u)}\leq 1.
\end{equation*}
Define $\rho_1:H_2\to H_{3,1}$ by $\rho_1(x,z,w)=x$ and setting $\rho_1(y,u)=y$ with probability $p_1$, and otherwise setting $\rho_1(x,y,z,u,w)=x$.

To realise $\mathcal{H}\hookrightarrow\mathcal{H}_{3,2}$: If $\beta_z+\beta_u+\beta_w=0$ (and hence, $\beta_2(z)=0$) then let $p_2=0$, and otherwise let
\begin{equation*}
    p_2=\frac{\beta_2(z)}{\beta_z+\beta_u+\beta_w}=\frac{1-\beta_x}{2(\beta_z+\beta_u+\beta_w)}= \frac{\beta_y+\beta_z+\beta_u+\beta_w}{2(\beta_z+\beta_u+\beta_w)}\leq 1.
\end{equation*}
Define $\rho_2:H_2\to H_{3,2}$ by $\rho_2(x,y)=x$ and setting $\rho_2(z,u,w)=z$ with probability $p_2$, and otherwise setting $\rho_2(x,y,z,u,w)=x$.
\renewcommand{\qedsymbol}{$\boxdot$}
\end{proof}
\renewcommand{\qedsymbol}{$\square$}

\begin{proof}[Proof of Claim~\ref{clm:transfer-2}]
For $i\in[5]$, let $\beta_i:V(H_{4,i})\to[0,1]$ be defined as in Lemma~\ref{lm:distillation-case-3}.

To realise $\mathcal{H}\hookrightarrow\mathcal{H}_{4,1}$: If $\beta_y=0$ (and hence, $\beta_1(y)=0$) then let $p_1=0$, and otherwise let 
\begin{equation*}
  p_1=\frac{\beta_1(y)}{\beta_y}=\frac{\max\{\beta_x+\beta_y-\max\{\beta_x,\beta_z\},0\}}{\beta_y}\leq \frac{\beta_y}{\beta_y}=1.
\end{equation*}
If $\beta_z+\beta_u+\beta_w=0$ (and hence, $\beta_1(z)=0$) then let $p_1'=0$, and otherwise let
\begin{equation*}
p_1'=\frac{\beta_1(z)}{\beta_z+\beta_u+\beta_w}
\leq
\frac{\beta_w+\beta_z}{\beta_z+\beta_u+\beta_w}\leq 1.
\end{equation*}
Define $\rho_1:H_0\to H_{4,1}$ by $\rho_1(x)=x$, and independently at random with probability $p_1$ setting $\rho_1(y)=y$ and otherwise setting $\rho_1(y)=x$, and independently at random with probability $p_1'$ setting $\rho_1(z,u,w)=z$ and otherwise setting $\rho_1(z,u,w)=x$.

To realise $\mathcal{H}\hookrightarrow\mathcal{H}_{4,2}$: Define $\rho_2:H_2\to H_{4,2}$ by $\rho_2(x,z,w)=x$ and $\rho_2(y,u)=y$.

To realise $\mathcal{H}\hookrightarrow\mathcal{H}_{4,3}$: If $\beta_z+\beta_w+\beta_u=0$ (and hence, $\beta_3(z)=0$) then let $p_3=0$, and otherwise let
\begin{equation*}
    p_3=\frac{\beta_3(z)}{\beta_z+\beta_w+\beta_u}= \frac{\beta_y+\beta_u}{\beta_z+\beta_w+\beta_u}\leq 1.
\end{equation*}
Define $\rho_3:H_2\to H_{4,3}$ by $\rho_3(x,y)=x$ and setting $\rho_3(z,w,u)=z$ with probability $p_3$, and otherwise setting $\rho_3(z,w,u)=x$.

To realise $\mathcal{H}\hookrightarrow\mathcal{H}_{4,4}$: From \eqref{eqn-betay} we have $\beta_z\leq\beta_4(z)\leq\beta_z+\beta_u$. Using this, if $\beta_u=0$ (and hence, $\beta_4(z)=\beta_z$) then let $p_4=0$, and otherwise let
\begin{equation*}
    p_4=\frac{\beta_4(z)-\beta_z}{\beta_u},
\end{equation*}
so that $0\leq p_4\leq 1$ and $p_4\beta_u+\beta_z=\beta_4(z)$. Define $\rho_4:H_2\to H_{4,4}$ by $\rho_4(x,y)=x$, $\rho_4(z)=z$, $\rho_4(w)=w$, and setting $\rho_4(u)=z$ with probability $p_4$, and otherwise setting $\rho_4(u)=x$.

To realise $\mathcal{H}\hookrightarrow\mathcal{H}_{4,5}$: Define $\rho_5:H_2\to H_{4,5}$ by $\rho_5(x,y)=x$ and $\rho_5(z,u,w)=z$.
\renewcommand{\qedsymbol}{$\boxdot$}
\qedhere
\renewcommand{\qedsymbol}{$\square$}
\qedsymbol
\end{proof}
\renewcommand{\qedsymbol}{}
\end{proof}
\renewcommand{\qedsymbol}{$\square$}

\subsection{Proofs of the three cases}\label{sect:3cases-proofs}

We are now ready to prove Lemmas~\ref{lm:distillation-case-2},~\ref{lm:distillation-case-1}, and~\ref{lm:distillation-case-3}, thus completing the proof of Theorem~\ref{thm:overarching}. We give these in order of difficulty, first proving Lemma~\ref{lm:distillation-case-1}, followed by Lemma~\ref{lm:distillation-case-2} and finally Lemma~\ref{lm:distillation-case-3}, with an informal motivating discussion preceding each proof.

\subsubsection{Proof of Lemma~\ref{lm:distillation-case-1}}\label{sec:lmcase1}
In the following proof of Lemma~\ref{lm:distillation-case-1}, we describe the random $(\phi,i(\phi))$ realising the $\beta_x$-goodness of the set $\{\mathcal{H}_i\}_{i=1}^2$. We assume (by relabelling) that $j_t=r$ has at least average out-edge weight, and describe a simple random homomorphism based on these edge weights. As this proof is relatively easy to check we do not motivate this further, but comment on it in the motivation for the other two cases. In the proof, we will use $N_D(j)$ to denote the set of $j'\in V(D)$ with $d(j,j')+d(j',j)>0$.

\begin{proof}[Proof of Lemma~\ref{lm:distillation-case-1}]
Let $\gamma=\beta_x$. Let $1/r\ll\epsilon\ll\alpha$, and let $D$ be a complete looped digraph on vertex set $[r]$ with $\epsilon$-complete edge weights $d(e)$, $e\in E(D)$. We will find a random $(\phi,i(\phi))$ satisfying \itref{good-plan-1}-\itref{good-plan-3}. By relabelling, we can assume that  
\[
\sum_{j\in [r-1]}d(r,j)\geqslant (\textstyle{\frac{1}{2}}-2\epsilon)\cdot r.
\]
For $j\in[r-1]$, choose $0\leqslant d_j\leqslant d(r,j)$ so that $\sum_{j\in[r-1]}d_j=(\frac{1}{2}-2\epsilon)\cdot r$.

Define $(\phi,i(\phi))$ randomly as follows. First, choose $\phi(x)\in [r-1]$ at random, so that $\phi(x)=j$ with probability $d_j/(\frac{1}{2}-2\epsilon)\cdot r$. Then choose $j'\in N_D(\phi(x))$ at random so that $j'=j$ with probability $(1-d_j)/\sum_{j\in N_D(\phi(x))}(1-d_j)$. If $d(\phi(x),j')>0$, then set $i(\phi)=1$ and $\phi(y)=j'$. Otherwise, set $i(\phi)=2$ and $\phi(z)=j'$. Note that, in either case, $\phi$ is a homomorphism from $H_{3,\phi(i)}$ to $D$. By identifying $j_t=r$, \itref{good-plan-1} holds. We now note that, for $j\in[r-1]$,
\begin{equation*}
    \E(\beta_{i(\phi)}(\phi^{-1}(j)\cap\{x\}))=\frac{(1+\beta_x)}{2}\cdot\frac{d_j}{(\frac{1}{2}-2\epsilon)\cdot r}\leqslant d_j\cdot\frac{1+\gamma+\alpha}{r},
\end{equation*}
and, because $\sum_{j\in N_D(j')}(1-d_j)\geqslant(\frac{1}{2}-3\epsilon)\cdot r$ for any $j'\in[r-1]$,
\begin{equation*}
    \E(\beta_{i(\phi)}(\phi^{-1}(j)\cap\{y,z\}))\leqslant\frac{(1-\beta_x)}{2}\cdot\frac{(1-d_j)}{(\frac{1}{2}-3\epsilon)\cdot r}\leqslant (1-d_j)\cdot\frac{1+\gamma+\alpha}{r}.
\end{equation*}
Thus, we deduce \itref{good-plan-2} and \itref{good-plan-3} hold.
\end{proof}

\subsubsection{Proof of Lemma~\ref{lm:distillation-case-2}}\label{sec:lmcase2}
Describing our proofs of the two remaining cases directly is difficult, in part because we are finding homomorphisms to a weighted digraph so we can apply our results to a regularity partition. What we do instead is describe an embedding of a tree $T$ into a tournament $G$ that follows all the major steps in our proof in an analogous, but more comprehensible, way. The embedding we describe is plausible but lacks detail and ignores several subtleties that influence the formal proof -- our aim is to give a step by step embedding (see steps 1 to 8 below, and also Figure~\ref{fig:technical-C}) in a simplified set-up that, by comparison, makes the proof of Lemma~\ref{lm:distillation-case-2} easier to follow.

For this simplified set-up, assume that we have a tree $T$ containing a vertex $t$ with only out-neighbours in $T$, such that $T-t$ consists of small components. Assume further we have a tournament $G$, which is larger than $T$, into which we are attempting to find an embedding $\psi$ of $T$. As $t$ has many out-neighbours in $T$, an obvious choice for $\psi(t)$ is a vertex in $G$ with maximal out-degree -- say $v_t$ is such a vertex and set $\psi(t)=v_t$. Let $A=N_G^+(v_t)$ and $B=V(G)\setminus (A\cup \{v_t\})$. Any component of $T-t$ can be embedded into $G[A]$ to extend the embedding to cover that component (using, for example, Theorem~\ref{thm:any_linear_bound}), but there is not necessarily enough room in $A$ to embed all of the components at once, and so the challenge is to embed components so that many vertices in $B$ are also used.

Let $\hat{H}_1$ have vertex set $\{x,y,z\}$ and edge set $\{xy,zx\}$. For each component of $T-t$, map the out-neighbour of $t$ to $x$, any vertex whose path in $T$ from $t$ begins with two out-edges to $y$, and any vertex whose path in $T$ from $t$ begins with an out-edge then an in-edge to $z$. Thus, all vertices are mapped to $V(\hat{H}_1)$. We always want to embed the vertices of $T$ mapped to $x$ into $A$, so that they are out-neighbours of $v_t$. If all the edges between $A$ and $B$ in $G$ are directed from $A$ to $B$ then, given a component of $T-t$ we could embed vertices mapped to $y$ into $B$ and vertices mapped to $z$ into $A$.  On the other hand, if all the edges between $A$ and $B$ in $G$ are directed from $B$ to $A$ then, given a component, we could embed vertices mapped to $z$ into $B$ and vertices mapped to $y$ into $A$. In practice, we expect the edges between $A$ and $B$ to meet neither extreme, and that some components can be embedded with vertices in $B$ using edges directed from $A$ to $B$ and some using edges directed from $B$ to $A$.

It may be be that all, or almost all, the edges between $A$ and $B$ in $G$ are directed from $A$ to $B$. Here, it is crucial that Lemma~\ref{lm:distillation-case-2} covers the case corresponding to components having enough vertices mapped to $y$ in order to place plenty of vertices into $B$. The other extreme, where almost all the edges between $A$ and $B$ in $G$ are directed from $B$ to $A$, cannot occur unless $B$ is very small, otherwise we can find a vertex in $B$ with high enough out-degree, both in $G[B]$ and into $A$, so that it has higher out-degree than $v_t$.

In trees corresponding to Case 1 (i.e., those for which we use Lemma~\ref{lm:distillation-case-2}), more vertices are mapped to $y$ than $z$, so we prefer to embed components with the vertices mapped to $y$ embedded in $B$. Our embedding is then via the following steps (where we first recap the embedding of $t$), and also sketched in Figure~\ref{fig:technical-C}.

\begin{enumerate}
    \item Embed $t$ to a vertex $v_t$ with a largest out-neighbourhood in $G$, and let $A=N^+_G(v)$ and $B=V(G)\setminus (A\cup\{v_t\})$.
    \item Embed as many components of $T-t$ mapped to $\{x,y,z\}\subseteq V(H_1)$ as possible with the vertex mapped to $x\in V(H_1)$ in $A$ and either a) all other vertices embedded into $B$ or b) the vertices mapped to $y$ embedded into $B$ and the vertices mapped to $z$ embedded into $A$. 
    \item Subject to this, choose the embedding maximising the number of components satisfying a).
    \item Let $A_0$ be the unused vertices in $A$ and $B_0$ be the unused vertices in $B$.
    \item If $B_0$ is small (smaller than the number of leaves of $T$, say), then we do not need to use these vertices and can greedily find the remaining components within $A_0$. Thus, we assume $B_0$ is not too small and that we have not found all our components.
    \item $A_0$ then cannot be too small as we always embedded enough vertices in $B$ compared to $A$.
    \item There cannot be many edges directed from $A_0$ to $B_0$, for otherwise another component could be embedded with vertices mapped to $y$ embedded in $B_0$.
    \item Using a sequence of deductions from the maximality of our component embeddings, we can then find large sets $A_1$ and $B_1$ with $A_0\subseteq A_1\subseteq A$ and $B_0\subseteq B_1\subseteq B$ so that the edges in $G$ are mostly directed from $B_1$ into $A_1$, before concluding there is some vertex in $B_1$ with out-degree approximately $|A_1|+|B_1|/2$, which will be higher than the out-degree of $v_t$, $|A|$, giving a contradiction.
    \end{enumerate}
An example deduction is the following: for components satisfying b), the image of the vertex mapped to $x$ must have few in-neighbours in $B_0$, for otherwise the vertices mapped to $z$ could be embedded into $B_0$ to increase the number of embedded components satisfying a). Consequently, there cannot be many edges from $A_0$ to the images of vertices mapped to $y$ in components satisfying b), else we could move the images of such vertices into $B_0$ and embed a new component using the freed up space. Thus we can add the images of vertices mapped to $y$ in components satisfying b) to $B_1$.

\begin{figure}
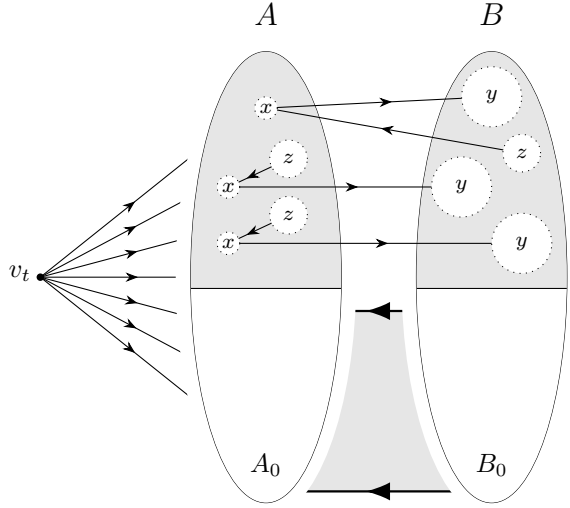

\DiagramTechnicalExampleC
\vspace{-0.75cm}
\caption{An example of how we aim to embed components of $T-t$ into the sets $A$ and $B$ in the simplified set-up. For this case, where more vertices in the components are mapped to $y$, we at first aim to embed these vertices mapped to $y$ in $B$. Once this is no longer possible, we find that edges between the leftover vertices are mostly directed from $B_0$ towards $A_0$.}\label{fig:technical-C}
\end{figure}

We now describe the proof of Lemma~\ref{lm:distillation-case-2} in comparison to these steps. For the lemma, $T$ contains a small core $T_0\subseteq T$ (corresponding to $t$ above) and we have a distillation $(H_1,\{x,\bar{x}\},\beta)$ representing an average component of $T-V(T_0)$, where $H_1$ is the fully-looped oriented forest with vertex set $\{x,y,z,\bar{x},\bar{z}\}$ and edge set $\{xy,zx,\bar{z}\bar{x}\}$. We have a complete looped weighted digraph $D$ which represents a regularity partition, and choose $j_t$ to  maximise the weight on the out-edges from $j_t$ in $D$ (cf.\ 1.\ above). By relabelling, we assume $j_t=r$. Instead of having a partition $A\cup B$ of the other vertices $j\in V(D)$, each vertex is duplicated and lies in both $A$ and $B$ with a weight, representing the proportion of that vertex that is in the out- and in-neighbourhood of $j_t$ respectively (i.e., the proportions $d(j_t,j)$ and $d(j,j_t)$). 

Instead of embedding components, we find homomorphisms $\phi_1,\ldots,\phi_s$ (for some appropriate $s$) from $H_1$, before ultimately picking at random from these homomorphisms to get our required random homomorphism. These homomorphisms effectively allocate space within regularity clusters (represented by vertices of $D$) to embed a batch of components of $T-V(T_0)$, and we similarly aim to allocate as much space in $B$ as possible.
To do this, we first find as many as many homomorphisms $\hat{\phi}_1,\ldots,\hat{\phi}_{s_0}$ from $\hat{H}_1=H_1[\{x,y,z\}]$ as possible so that, ideally, $y$ and $z$ are both embedded into $B$ (see condition~\ref{sub-1} in the proof), and, failing this, at least $y$ is embedded into $B$ (see~\ref{sub-2}), while $x$ is always embedded to $A$  (cf.\ 2.\ and 3.\ above). In doing so, we always ensure that the total weight assigned to any one vertex of $D$ is not too much (see~\ref{sub-3}). Maximising the number of such homomorphisms ($s_0$), and then the number for which \ref{sub-1} is relevant, in fact will allow us to find the remaining homomorphisms $\hat{\phi}_{s_0+1},\ldots,\hat{\phi}_{s}$ before extending them to homomorphisms from $H_1$ (cf.\ 5.\ above). We prove this by assuming it cannot be done and steadily deducing a series of claimed properties of $D$ that ultimately allow us to find a vertex with more weight on its out-edges in $D$ than $j_t$, a contradiction.

\begin{proof}[Proof of Lemma~\ref{lm:distillation-case-2}]
Let $\gamma=\beta(x,\bar{x})$. Let $1/r\ll\epsilon\ll\alpha$. We remark that $\gamma\leqslant 1$, and we may also assume that $\alpha\leqslant 1$, so we have $(1+\gamma+\alpha)\leqslant3$.

Let $D$ be a complete looped digraph on vertex set $[r]$ with $\epsilon$-complete edge weights $d(e)$, $e\in E(D)$. We will find a random $(\phi,i(\phi))$ satisfying \itref{good-plan-1}-\itref{good-plan-3}. By relabelling, we can assume that  
\begin{equation}\label{eq:r-maximal}
\sum_{j\in [r-1]}d(r,j)=\max_{i\in [r]}\sum_{j\in [r]\setminus \{i\}}d(i,j)\geqslant (\textstyle\frac{1}{2}-2\epsilon)\cdot r.
\end{equation}

Take two new disjoint vertex sets $A=\{a_1,\ldots,a_{r-1}\}$ and $B=\{b_1,\ldots,b_{r-1}\}$. Let $\bar{D}$ be the weighted complete looped digraph on $A\cup B$ in which the edges $a_ib_j$, $a_ia_j$, $b_ib_j$ and $b_ia_j$ have weight $d(i,j)$.

Let $s$ be such that $1/s\ll 1/r$. For each $i\in [r-1]$, let $w_{a_i}=d(r,i)\cdot (1+\gamma+\alpha)\cdot s/r$ and $w_{b_i}=(1-d(r,i))\cdot(1+\gamma+\alpha)\cdot s/r$.
Note that
\begin{equation}\label{eqn-sumAB-w}
    \sum_{v\in A\cup B}w_v\geqslant(1+\beta(x,\bar{x})+7\alpha/8)\cdot s,
\end{equation}
and
\begin{equation}\label{eqn-sumB-w}
    \sum_{v\in B}w_v=\sum_{i\in[r-1]}(1-d(r,i))\cdot(1+\gamma+\alpha)\cdot s/r\overset{\eqref{eq:r-maximal}}{<}(\textstyle\frac{1}{2}+\frac{1}{2}\beta(x,\bar{x})+3\alpha/4)\cdot s.
\end{equation}

We aim to find homomorphisms $\phi_1,\ldots,\phi_s:H_1\to \bar{D}$, with $\phi_i(x),\phi_i(\bar{x})\in A$ for each $i\in[s]$ and $\sum_{i\in[s]}\beta(\phi_i^{-1}(v))\leq w_v$ for every $v\in V(\bar{D})$. Then if $\phi:H_1\to D$ is the natural homomorphism induced by a uniform random selection from $\{\phi_1,\ldots,\phi_s\}$, the conclusion of the theorem will hold.

Let $\hat{H}_1=H_1[\{x,y,z\}]$. Let $s_0\leqslant s$ be the largest integer for which there exist homomorphisms $\hat{\phi}_1,\ldots,\hat{\phi}_{s_0}:\hat{H}_1\to \bar{D}$ and indicators $j_1,\ldots,j_{s_0}\in [2]$ such that the following properties hold.
\stepcounter{propcounter}
\begin{enumerate}[label = {\bfseries \Alph{propcounter}\arabic{enumi}}]
\item\label{sub-1} For each $i\in[s_0]$ with $j_i=1$, we have 
    $\hat{\phi}_i(x)\in A$, 
    $\hat{\phi}_i(y)\in B$, and
    $\hat{\phi}_i(z)\in B$.
\item\label{sub-2}
For each $i\in[s_0]$ with $j_i=2$, we have
    $\hat{\phi}_i(x)\in A$, 
    $\hat{\phi}_i(y)\in B$, and
    $\hat{\phi}_i(z)\in A$.
\item\label{sub-3} For each $v\in A\cup B$, $\sum_{i\in [s_0]}\beta(\hat{\phi}_i^{-1}(v))\leq w_v$.
\end{enumerate}

\noindent Subject to this, maximise the number of $i\in[s_0]$ with $j_i=1$. Let $I_1$ be the set of $i\in[s_0]$ with $j_i=1$, and let $I_2$ be the set of $i\in[s_0]$ with $j_i=2$. For each $v\in A\cup B$, let $\hat{w}_v=\sum_{i\in [s_0]}\beta(\hat{\phi}_i^{-1}(v))$, so that, by \ref{sub-3}, we have $\hat{w}_v\leqslant w_v$.

Note that
\begin{equation}\label{eqn-sumAB-w-hat}
\sum_{v\in A\cup B}\hat{w}_v=\sum_{v\in A\cup B}\sum_{i\in [s_0]}\beta(\hat{\phi}_i^{-1}(v))=
\sum_{i\in [s_0]}\beta(\hat{\phi}_i^{-1}(A\cup B))=\beta(x,y,z)\cdot s_0.
\end{equation}

Let $B_0$ be the set of $v\in B$ with  $w_v-\hat{w}_v\geqslant 1$. Let $A_0$ be the set of $v\in A$ with  $w_v-\hat{w}_v\geqslant 2$, noting that we are placing a slightly stronger condition on the definition of $A_0$ to enable a switching argument later on (see the end of the proof of Claim~\ref{clm:case2-4}).

We now show that we are done, unless $\sum_{v\in B}(w_v-\hat{w}_v)$ is not too small.

\begin{claim}\label{clm:case2-J_1-too-large} Either there exists a random $(\phi,i(\phi))$ satisfying \itref{good-plan-1}-\itref{good-plan-3}, or
\begin{equation}\label{eq:1-large}
\sum_{v\in B}(w_v-\hat{w}_v)>(\beta(x,\bar{x})+3\alpha/4)\cdot s.
\end{equation}
\end{claim}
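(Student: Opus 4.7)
The plan is to argue the contrapositive: assume that \eqref{eq:1-large} fails, i.e., $\sum_{v\in B}(w_v-\hat{w}_v)\leq(\beta(x,\bar{x})+3\alpha/4)s$, and from this construct the required random $(\phi,i(\phi))$. As in the proof of Lemma~\ref{lm:distillation-case-1} (and as foreshadowed in the paragraph preceding \ref{sub-1}), it is enough to produce deterministic homomorphisms $\phi_1,\ldots,\phi_s:H_1\to\bar{D}$ with $\phi_i(x),\phi_i(\bar{x})\in A$ and $\sum_{i\in[s]}\beta(\phi_i^{-1}(v))\leq w_v$ for every $v\in A\cup B$. Then the natural homomorphism $\phi:H_1\to D$ obtained by sampling $i$ uniformly from $[s]$ and taking $\phi=\phi_i$ (after identifying $a_k,b_k\in V(\bar{D})$ with $k\in V(D)$), together with $i(\phi)\equiv 1$, verifies \itref{good-plan-1}--\itref{good-plan-3}.

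For the construction I would first extend each $\hat{\phi}_i$, $i\in[s_0]$, to a homomorphism $\phi_i:H_1\to\bar{D}$ by picking some $a_{k_i}\in A$ and setting $\phi_i(\bar{x})=\phi_i(\bar{z})=a_{k_i}$, which is legitimate because the self-loop at $a_{k_i}$ has weight $d(k_i,k_i)=1$ by \itref{prop:likeregularity}. For each $i\in[s]\setminus[s_0]$ I would define $\phi_i$ from scratch by mapping every vertex of $H_1$ to a single $a_{k_i}\in A$ via its self-loop. The only freedom in this construction lies in the choice of the $k_i$'s, which is used to keep the per-vertex weight bounds.

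The key calculation is that, using \eqref{eqn-sumAB-w} and \eqref{eqn-sumAB-w-hat}, the residual weight available in $A$ satisfies
\begin{align*}
\sum_{v\in A}(w_v-\hat{w}_v)&=\sum_{v\in A\cup B}(w_v-\hat{w}_v)-\sum_{v\in B}(w_v-\hat{w}_v)\\
&\geq (1+\beta(x,\bar{x})+7\alpha/8)s-\beta(x,y,z)s_0-(\beta(x,\bar{x})+3\alpha/4)s\\
&=(1+\alpha/8)s-\beta(x,y,z)s_0,
\end{align*}
while the total additional weight placed in $A$ by the above construction equals $\beta(\bar{x},\bar{z})s_0+(s-s_0)\cdot 1=s-\beta(x,y,z)s_0$, leaving $\alpha s/8$ of slack. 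The main obstacle is distributing the $k_i$'s across $[r-1]$ so that no individual $a_k$ is over-filled; I expect to handle this by a greedy assignment that places each demand (of weight $\beta(\bar{x},\bar{z})\leq 1$ for extensions, or $1$ for new $\phi_i$'s) at the $a_k$ currently having the largest remaining residual. Since $s\gg r$ and the total slack $\alpha s/8$ comfortably exceeds $r-1$, this procedure never gets stuck and the resulting $\phi_i$'s satisfy $\sum_{i\in[s]}\beta(\phi_i^{-1}(v))\leq w_v$ for each $v\in A\cup B$, as required.
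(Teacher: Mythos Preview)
Your proposal is correct and matches the paper's proof essentially line for line: the paper also assumes \eqref{eq:1-large} fails, derives the same lower bound $\sum_{v\in A}(w_v-\hat{w}_v)\geq \beta(\bar{x},\bar{z})s_0+(s-s_0)+(\alpha/8)s$ via \eqref{eqn-sumAB-w} and \eqref{eqn-sumAB-w-hat}, greedily extends each $\hat{\phi}_i$ by sending $\bar{x},\bar{z}$ into $A$, fills in $\phi_{s_0+1},\ldots,\phi_s$ as constant maps to single vertices of $A$, and then samples uniformly with $i(\phi)=1$. Your explicit justification of the greedy step (using the $(\alpha/8)s\gg r$ slack) is a detail the paper leaves implicit, but the argument is the same.
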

\begin{proof}[Proof of Claim~\ref{clm:case2-J_1-too-large}]
Suppose that $\sum_{v\in B}(w_v-\hat{w}_v)\leqslant(\beta(x,\bar{x})+3\alpha/4)\cdot s$. Then
\begin{align}
    \sum_{v\in A}(w_v-\hat{w}_v)&=\sum_{v\in A\cup B}(w_v-\hat{w}_v)-\sum_{v\in B}(w_v-\hat{w}_v) \nonumber\\
    &\hspace{-0.60cm}\overset{\eqref{eqn-sumAB-w},\eqref{eqn-sumAB-w-hat}}{\geqslant} (1+\beta(x,\bar{x})+7\alpha/8)\cdot s-\beta(x,y,z)\cdot s_0-(\beta(x,\bar{x})+3\alpha/4)\cdot s \nonumber\\
    &= (1+\alpha/8)\cdot s- \beta(x,y,z)\cdot s_0 \nonumber\\
    &=\beta(\bar{x},\bar{z})\cdot s_0+\beta(x,y,z,\bar{x},\bar{z})\cdot (s-s_0)+(\alpha/8)\cdot s. \label{eq:possible-to-extend}
\end{align}
Greedily extend the homomorphisms $\hat{\phi}_1,\ldots,\hat{\phi}_{s_0}:\hat{H}_1\to \bar{D}$ to homomorphisms $\phi_1,\ldots,\phi_{s_0}:H_1\to \bar{D}$, so that, for every $i\in[s_0]$,  $\phi_i|_{\{x,y,x\}}=\hat{\phi_i}$, and $\phi_i(\bar{x}),\phi_i(\bar{z})\in A$. Then, greedily choose homomorphisms $\phi_{s_0+1},\ldots,\phi_{s}$ so that, for every $i\in[s]\setminus[s_0]$, $\phi_i(V(H_1))=\{a_j\}$ for some $a_j\in A$. These steps are possible, while also ensuring that $\sum_{i\in[s]}\beta(\phi_i^{-1}(v))\leq w_v$ for every $v\in V(\bar{D})$, due to \eqref{eq:possible-to-extend}. Then, by defining $(\phi,i(\phi))$ by sampling $\phi$ from $\phi_1,\ldots,\phi_s$ uniformly at random (identifying the result as a map $V(H_1)\to V(D)$ in the natural way) and setting $i(\phi)=1$, we obtain a random $(\phi,i(\phi))$ satisfying \itref{good-plan-1}-\itref{good-plan-3}.
\renewcommand{\qedsymbol}{$\boxdot$}
\end{proof}
\renewcommand{\qedsymbol}{$\square$}

Thus, we may now assume that (\ref{eq:1-large}) holds, and hence also $|B_0|\geqslant2\epsilon r$.
In particular, as $\hat{\phi}_i(y)\in B$ for each $i\in [s_0]$, we have
\begin{equation*}
    \beta(y)\cdot s_0 \leqslant\sum_{v\in B}\hat{w}_v\overset{(\ref{eq:1-large})}{<}\sum_{v\in B}w_v-(\beta(x,\bar{x})+3\alpha/4)\cdot s\overset{\eqref{eqn-sumB-w}}{<}\frac{1}{2}(1-\beta(x,\bar{x}))\cdot s\leqslant\beta(y)\cdot s,
\end{equation*}
and so we have that $s_0<s$.

\begin{claim}\label{clm:case2-1-1}
If $i\in I_2$ and $v\in B_0$, then $d(v,\hat{\phi}_i(x))=0$. Hence, by~\ref{prop:likeregularity}, given $i\in I_2$, there is some $v\in B_0$ with $d(\hat{\phi}_i(x),v)=1$.
\end{claim}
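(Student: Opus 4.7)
The plan is to prove the first statement by a switching argument and then deduce the second statement immediately using property~\ref{prop:likeregularity} together with the lower bound $|B_0| \geq 2\epsilon r$.

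For the first statement, fix $i \in I_2$ and suppose for contradiction that there is some $v \in B_0$ with $d(v, \hat{\phi}_i(x)) > 0$. I would define a modified homomorphism $\hat{\phi}_i'$ which agrees with $\hat{\phi}_i$ on $x$ and $y$, but sets $\hat{\phi}_i'(z) = v$. Since $\hat{H}_1$ has the edge $zx$, which corresponds to the edge from $v$ to $\hat{\phi}_i(x)$ in $\bar{D}$, and this edge has positive weight by assumption, $\hat{\phi}_i'$ is a valid homomorphism from $\hat{H}_1$ to $\bar{D}$. Now we have $\hat{\phi}_i'(x) \in A$, $\hat{\phi}_i'(y) \in B$, and $\hat{\phi}_i'(z) = v \in B$, so replacing $\hat{\phi}_i$ by $\hat{\phi}_i'$ and changing $j_i$ from $2$ to $1$ gives an assignment satisfying \ref{sub-1} on the modified index.

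The key thing to verify (and arguably the main obstacle, though it is straightforward) is that \ref{sub-3} is preserved after this switch: the only weights that change are a decrease by $\beta(z)$ at $\hat{\phi}_i(z) \in A$ (which does not threaten \ref{sub-3}) and an increase by $\beta(z)$ at $v$. Since $v \in B_0$, we have $w_v - \hat{w}_v \geq 1 \geq \beta(z)$, so $\hat{w}_v + \beta(z) \leq w_v$ and \ref{sub-3} continues to hold. The resulting family of homomorphisms has the same $s_0$ but one more index in $I_1$, contradicting the maximality of $|I_1|$ in our choice of $(\hat{\phi}_i, j_i)_{i \in [s_0]}$. Hence $d(v, \hat{\phi}_i(x)) = 0$ for every $v \in B_0$.

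For the second statement, write $\hat{\phi}_i(x) = a_k$ for some $k \in [r-1]$. By \ref{prop:likeregularity}, all but at most $\epsilon r$ values of $j \in [r] \setminus \{k\}$ satisfy $d(k,j) + d(j,k) = 1$. Observe that $b_k \notin B_0$, since $d(b_k, a_k) = d(k,k) = 1 \neq 0$ would contradict the first part; so every $v = b_j \in B_0$ has $j \neq k$, and the first part then gives $d(j,k) = 0$. Combined with $|B_0| \geq 2\epsilon r$, there is some $v = b_j \in B_0$ with $d(k,j) + d(j,k) = 1$, so that $d(\hat{\phi}_i(x), v) = d(k,j) = 1$, as required.
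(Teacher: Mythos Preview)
Your proof is correct and follows essentially the same switching argument as the paper; the paper's own proof is just the terse version of your first paragraph, and you have additionally spelled out the details for the second statement (which the paper leaves implicit in ``Hence, by~\ref{prop:likeregularity}'').
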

\begin{proof}[Proof of Claim~\ref{clm:case2-1-1}]
Let $i\in I_2$, $v\in B_0$, and suppose that $d(v,\hat{\phi}_i(x))>0$. Then we may instead set $\hat{\phi}_i(z)=v$ and $j_i=1$ and observe that \ref{sub-1}--\ref{sub-3} still hold. As this increases $|I_1|$, this is a contradiction.
\renewcommand{\qedsymbol}{$\boxdot$}
\end{proof}
\renewcommand{\qedsymbol}{$\square$}

Given $v\in B$, let $\bar{w}_v=\beta(y)\cdot |\{i\in I_2:\hat{\phi}_i(y)=v\}|$. We remark that $\bar{w}_v\leqslant\hat{w}_v$, and
\begin{equation}\label{eq:y-space}
    \sum_{j\in[r-1]}\bar{w}_{b_j}\overset{\text{\ref{sub-1},\ref{sub-2}}}{\geqslant}\sum_{j\in[r-1]}\hat{w}_{a_j}-\beta(x)\cdot s.
\end{equation}
Let $B_y$ be the set of $v\in B$ for which $\bar{w}_v\geq 2$.

\begin{claim}\label{clm:case2-2-1}
If $i,i'\in I_2$ are such that $i\neq i'$, then $d(\hat{\phi}_{i'}(y),\hat{\phi}_i(x))=0$. Hence, $d(v,\hat{\phi}_i(x))=0$ whenever $i\in I_2$ and $v\in B_y$.
\end{claim}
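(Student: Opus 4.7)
The plan is to prove the first assertion by a switching argument that contradicts the maximality of $|I_1|$ (subject to the already-fixed maximality of $s_0$), and then deduce the ``hence'' part as a direct consequence by using that $v\in B_y$ guarantees the existence of a suitable $i'$.

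First, I would fix $i,i'\in I_2$ with $i\neq i'$ and suppose for contradiction that $d(v,c)>0$, where $v=\hat{\phi}_{i'}(y)\in B$ and $c=\hat{\phi}_i(x)\in A$. Let $a=\hat{\phi}_i(z)\in A$. The natural candidate switch is to redirect the $z$-vertex of $\hat{\phi}_i$ away from $a\in A$ to $v\in B$: define $\hat{\phi}_i^{\mathrm{new}}$ by $\hat{\phi}_i^{\mathrm{new}}(x)=\hat{\phi}_i(x)$, $\hat{\phi}_i^{\mathrm{new}}(y)=\hat{\phi}_i(y)$, $\hat{\phi}_i^{\mathrm{new}}(z)=v$, and update $j_i^{\mathrm{new}}:=1$. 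Since $d(v,c)>0$, the edge $z\to x$ in $\hat{H}_1$ is carried to an edge of positive weight in $\bar{D}$, so $\hat{\phi}_i^{\mathrm{new}}$ is a valid homomorphism, and its image pattern ($x\in A$, $y\in B$, $z\in B$) satisfies \ref{sub-1}. This modification does not change $s_0$ but moves $i$ from $I_2$ into $I_1$, increasing $|I_1|$ by one, contradicting the maximality of $|I_1|$, provided~\ref{sub-3} is preserved.

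The main obstacle, and the only nontrivial step, is checking that~\ref{sub-3} is preserved at $v$: the net weight change at $v$ is $+\beta(z)$, whereas at $a$ it is $-\beta(z)$. If $\hat{w}_v+\beta(z)\leq w_v$, we are immediately done. Otherwise, I would use the hypothesis of Lemma~\ref{lm:distillation-case-2} that $\beta(y)\geq\beta(z,\bar{z})\geq\beta(z)$, together with the fact that $\hat{\phi}_{i'}(y)=v$ contributes $\beta(y)$ to $\hat{w}_v$, to simultaneously modify $\hat{\phi}_{i'}$: reroute its $y$-image off of $v$ (with a compensating adjustment, for example by replacing $\hat{\phi}_{i'}$ by a suitable degenerate homomorphism using the newly freed weight at $a\in A$, which is available thanks to the loops in $\bar{D}$ having weight $1$). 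The net weight change at $v$ is then $\beta(z)-\beta(y)\leq 0$, while elsewhere the accounting balances using that $\beta$ sums to $1$. This preserves~\ref{sub-3}, keeps $s_0$ unchanged, and still increases $|I_1|$, again contradicting maximality.

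Finally, for the ``hence'' part, suppose $v\in B_y$ and $i\in I_2$. By definition of $B_y$, we have $\bar w_v\geq 2$, so $\beta(y)\cdot|\{i''\in I_2:\hat{\phi}_{i''}(y)=v\}|\geq 2$, giving $|\{i''\in I_2:\hat{\phi}_{i''}(y)=v\}|\geq 2/\beta(y)\geq 2$ (since $\beta(y)\leq 1$). In particular, there exists $i''\in I_2$ with $i''\neq i$ and $\hat{\phi}_{i''}(y)=v$. Applying the first part with $i'$ replaced by this $i''$ yields $d(v,\hat{\phi}_i(x))=d(\hat{\phi}_{i''}(y),\hat{\phi}_i(x))=0$, as desired.
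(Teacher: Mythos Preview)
Your overall switching strategy and the ``hence'' deduction are correct and match the paper's approach. The gap is in your compensating adjustment when $\hat{w}_v+\beta(z)>w_v$. You propose to reroute $\hat{\phi}_{i'}(y)$ off $v$ by, for example, replacing $\hat{\phi}_{i'}$ with a degenerate homomorphism at $a\in A$. But both \ref{sub-1} and \ref{sub-2} require $\hat{\phi}_{i'}(y)\in B$, so sending $\hat{\phi}_{i'}(y)$ (or all of $\hat{\phi}_{i'}$) into $A$ is illegal. Even ignoring that constraint, the weight accounting at $a$ fails: you only freed $\beta(z)$ there, but a degenerate homomorphism deposits $\beta(x,y,z)$.

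What is actually needed, and what the paper does, is to find a new home for $\hat{\phi}_{i'}(y)$ \emph{inside} $B$ with spare capacity. This is exactly the content of Claim~\ref{clm:case2-1-1}: since $i'\in I_2$, there is some $v^\star\in B_0$ with $d(\hat{\phi}_{i'}(x),v^\star)=1$. Setting $\hat{\phi}_{i'}(y)=v^\star$ keeps $\hat{\phi}_{i'}$ a homomorphism (the edge $xy$ goes to an edge of weight $1$), keeps $\hat{\phi}_{i'}(y)\in B$ so \ref{sub-2} still holds for $i'$, and \ref{sub-3} is preserved at $v^\star$ because $w_{v^\star}-\hat{w}_{v^\star}\geq 1\geq\beta(y)$. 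At $v$ the net change is $\beta(z)-\beta(y)\leq 0$, as you noted. With this correction your argument becomes the paper's proof.
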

\begin{proof}[Proof of Claim~\ref{clm:case2-2-1}]
Let $i,i'\in I_2$ be such that $i\neq i'$, and suppose that $d(\hat{\phi}_{i'}(y),\hat{\phi}_i(x))>0$. Let $v'=\hat{\phi}_{i'}(y)$. By Claim~\ref{clm:case2-1-1}, there is some $v\in B_0$ such that $d(\hat{\phi}_{i'}(x),v)=1$. Then, because $\beta(y)\geqslant\beta(z)$, we may instead set $\hat{\phi}_i(z)=v'$, $\hat{\phi}_{i'}(y)=v$, and $j_i=1$, increasing $|I_1|$, a contradiction.
\renewcommand{\qedsymbol}{$\boxdot$}
\end{proof}
\renewcommand{\qedsymbol}{$\square$}

\begin{claim}\label{clm:case2-3-1}
If $i\in I_2$, then $d(v,\hat{\phi}_i(x))>0$ for at least $\epsilon r$ many $v\in A_0$.
\end{claim}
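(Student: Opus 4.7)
The plan is to proceed by contradiction, using a switching argument in the style of Claims~\ref{clm:case2-1-1} and~\ref{clm:case2-2-1}. Suppose there is some $i\in I_2$ for which $d(v,\hat{\phi}_i(x))>0$ holds for fewer than $\epsilon r$ many $v\in A_0$. Let $j^*$ denote the index such that $\hat{\phi}_i(x)=a_{j^*}$.

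First I would exploit this assumption to pin down the structure of the edges at $\hat{\phi}_i(x)$. Combining the contradiction hypothesis with Claim~\ref{clm:case2-1-1}, which gives $d(v,\hat{\phi}_i(x))=0$ for every $v\in B_0$, and applying the $\epsilon$-completeness property~\ref{prop:likeregularity} at $j^*$, I expect that for all but at most $2\epsilon r$ many indices $j\in[r-1]$ for which $a_j\in A_0$ or $b_j\in B_0$, we have $d(j^*,j)=1$. Since $|B_0|\geqslant 2\epsilon r$, this lets me fix a vertex $v^*\in B_0$ with $d(\hat{\phi}_i(x),v^*)=1$.

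With $v^*$ secured, the plan is to produce an improved configuration of homomorphisms contradicting the maximality of either $s_0$ or $|I_1|$. The key observation is that the out-edge from $\hat{\phi}_i(x)$ to $v^*\in B_0$ makes $v^*$ a legitimate image for $y$ in a modified homomorphism based at $\hat{\phi}_i(x)$; however, the absence of any $zx$-edge from $B_0$ to $\hat{\phi}_i(x)$ (from Claim~\ref{clm:case2-1-1}) forces the image of $z$ to remain in $A$. By either reassigning $\hat{\phi}_i(y)$ to $v^*$ and using the freed former location of $\hat{\phi}_i(y)$ as part of a new type-1 homomorphism, or by coupling the switch with another element of $I_2$ in the spirit of Claim~\ref{clm:case2-2-1}, I aim to produce either an additional homomorphism satisfying \ref{sub-1}--\ref{sub-3} (contradicting the maximality of $s_0$) or a reassignment converting $\hat{\phi}_i$ into a type-1 homomorphism while preserving \ref{sub-3} (contradicting the maximality of $|I_1|$).

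The hard part will be selecting the correct switch so that the weight budget~\ref{sub-3} is preserved throughout. The slack available in $A_0$ (where $w_v-\hat{w}_v\geqslant 2$) and in $B_0$ (where $w_v-\hat{w}_v\geqslant 1$), together with the abundance of out-edges from $\hat{\phi}_i(x)$ identified above, should provide sufficient flexibility; nevertheless, careful bookkeeping will be required to match each new placement with a vertex that has spare capacity, and it may be necessary to distinguish cases depending on whether the old image of $\hat{\phi}_i(y)$ lies in $B_y$ or not, since Claim~\ref{clm:case2-2-1} already constrains the edges at such vertices.
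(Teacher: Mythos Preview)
Your switching idea does not lead to a contradiction here, and the paper's argument is of a genuinely different kind.

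The concrete problem with your plan: for $i\in I_2$ you already have $\hat{\phi}_i(y)\in B$, so relocating $\hat{\phi}_i(y)$ to some $v^*\in B_0$ neither converts $i$ to type~1 (that would require moving $\hat{\phi}_i(z)$ from $A$ into $B$, which Claim~\ref{clm:case2-1-1} rules out) nor frees up any capacity in $A$ for the $x$-vertex of a fresh homomorphism. No new homomorphism and no new type-1 index arises this way, and coupling with another $i'\in I_2$ does not help either, since Claim~\ref{clm:case2-2-1} has already extracted everything a pairwise switch can give. The point of Claim~\ref{clm:case2-3-1} is precisely that the elementary switches are exhausted and a different mechanism is required.

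What the paper actually does is a \emph{degree comparison} contradicting the choice of $j_t=r$ in~\eqref{eq:r-maximal}. Combining the contradiction hypothesis with Claim~\ref{clm:case2-1-1} \emph{and} Claim~\ref{clm:case2-2-1} (you omitted the latter, which gives $d(v,\hat{\phi}_i(x))=0$ for all $v\in B_y$), $\epsilon$-completeness forces $d(j^*,j)=1$ for all but at most $2\epsilon r$ indices $j$ with $a_j\in A_0$ or $b_j\in B_0\cup B_y$. One then sets $J=\{j:(w_{a_j}-\hat{w}_{a_j})+(w_{b_j}-\hat{w}_{b_j})+\bar{w}_{b_j}\geq 5\}$, notes that every $j\in J$ satisfies $a_j\in A_0$ or $b_j\in B_0\cup B_y$, and uses the lower bounds~\eqref{eq:1-large} and~\eqref{eq:y-space} on the total leftover weight to show $|J|$ is so large that $\sum_{j}d(j^*,j)>\sum_{j}d(r,j)$. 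The inclusion of $B_y$ and the inequality~\eqref{eq:y-space}, linking the $y$-mass of $I_2$ to the $x$-mass in $A$, are the quantitative ingredients a local switching argument cannot supply.
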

\begin{proof}[Proof of Claim~\ref{clm:case2-3-1}]
Let $i\in I_2$. Suppose that there are fewer than $\epsilon r$ many $v\in A_0$ for which $d(v,\hat{\phi}_i(x))>0$. So, using Claim~\ref{clm:case2-1-1} and Claim~\ref{clm:case2-2-1}, $d(v,\hat{\phi}_i(x))=0$ for all but at most $\epsilon r$ many $v\in A_0\cup B_0\cup B_y$. Then, by~\itref{prop:likeregularity}, for all but at most $2\epsilon r$ many $v\in A_0\cup B_0\cup B_y$, we have $d(\hat{\phi}_i(x),v)=1$.

Let $j'$ be such that $\hat{\phi}_i(x)=a_{j'}$. Let $J$ be the set of $j\in[r-1]$ such that $(w_{a_j}-\hat{w}_{a_j})+(w_{b_j}-\hat{w}_{b_j})+\bar{w}_{b_j}\geqslant 5$. If $j\in J$, then either $a_j\in A_0$ or $b_j\in B_0\cup B_y$. So $d(j',j)=1$ for all but at most $2\epsilon r$ many $j\in J$, and hence $\sum_{j\in[r]\setminus\{j'\}}d(j',j)\geqslant|J|-3\epsilon r$. Noting that $(w_{a_j}-\hat{w}_{a_j})+(w_{b_j}-\hat{w}_{b_j})+\bar{w}_{b_j}\leqslant w_{a_j}+w_{b_j}=(1+\gamma+\alpha)\cdot s/r$ for any $j\in[r-1]$, we have
\begin{align*}
    \sum_{j\in[r]\setminus\{j'\}}d(j',j)&\cdot(1+\gamma+\alpha)\cdot s/r\geqslant|J|\cdot(1+\gamma+\alpha)\cdot s/r-3\epsilon(1+\gamma+\alpha)\cdot s\\
    &\geqslant\sum_{j\in[r-1]}[(w_{a_j}-\hat{w}_{a_j})+(w_{b_j}-\hat{w}_{b_j})+\bar{w}_{b_j}]-5r-9\epsilon s\\
    &\hspace{-0.15cm}\overset{\eqref{eq:1-large}}{\geqslant}\sum_{j\in[r-1]}w_{a_j}+\sum_{j\in[r-1]}(\bar{w}_{b_j}-\hat{w}_{a_j})+(\beta(x,\bar{x})+\alpha/2)\cdot s\\
    &\hspace{-0.15cm}\overset{\eqref{eq:y-space}}{\geqslant}\sum_{j\in[r-1]}w_{a_j}+(\beta(\bar{x})+\alpha/2)\cdot s>\sum_{j\in[r-1]}d(r,j)\cdot(1+\gamma+\alpha)\cdot s/r,
\end{align*}
contradicting \eqref{eq:r-maximal}.
\renewcommand{\qedsymbol}{$\boxdot$}
\end{proof}
\renewcommand{\qedsymbol}{$\square$}

Let $I_Y$ be the set of $i\in[s_0]$ such that there exist distinct $i_0,\ldots,i_\ell\in[s_0]$ with $i_0=i$, such that
\begin{itemize}
    \item $d(\hat{\phi}_{i_{k-1}}(x),\hat{\phi}_{i_k}(y))>0$ for $k\in[\ell]$, and
    \item $d(\hat{\phi}_{i_\ell}(x),v)>0$ for some $v\in B_0$.
\end{itemize}
We remark that $d(\hat{\phi}_i(x),v)=0$ whenever $i\notin I_Y,v\in B_0$, and also that $d(\hat{\phi}_i(x),\hat{\phi}_{i'}(y))=0$ whenever $i\notin I_Y,i'\in I_Y$.

Let $A_1$ be the set of $v\in A$ with $w_v-\sum_{i\in I_Y}\beta(\hat{\phi}_i^{-1}(v)\cap\{x\})\geqslant 1$, and let $B_1$ be the set of $v\in B$ with $w_v-\hat{w}_{v}+\sum_{i\in I_Y}\beta(\hat{\phi}_i^{-1}(v)\cap\{y\})\geqslant 1$.

\begin{claim}\label{clm:case2-4}
If $u\in A_1$ and $v\in B_1$, then $d(u,v)=0$.
\end{claim}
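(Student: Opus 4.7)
The plan is a switching argument toward contradiction. Suppose $d(u,v)>0$ for some $u\in A_1$ and $v\in B_1$; I would construct a modified family of homomorphisms that violates the maximality of the pair $(s_0,|I_1|)$ chosen at the start of the proof. The new homomorphism I would introduce is $\hat{\phi}_{s_0+1}$ with $\hat{\phi}_{s_0+1}(x)=u$, $\hat{\phi}_{s_0+1}(y)=v$, $j_{s_0+1}=1$. The assumption $d(u,v)>0$ validates the $xy$-edge of $H_1$, and to supply the $z$-image I would locate a $v_z\in B_0$ with $d(v_z,u)>0$ using that $|B_0|\geq 2\epsilon r$ together with \ref{prop:likeregularity}, which forces all but $\sqrt{\epsilon}r$ indices to satisfy $d(i,j)+d(j,i)=1$ and hence gives plentifully many $B_0$-vertices with a positive in-edge to $u$. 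The strategy for the rest of the proof is then to use two independent kinds of surgery on the existing collection to free up exactly the amounts of weight needed at $u$ and at $v$ for the addition of $\hat{\phi}_{s_0+1}$, so that condition \ref{sub-3} continues to hold everywhere.

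For the weight at $v$, I would use chain shifts on the $I_Y$-homomorphisms. For each $i\in I_Y$ with $\hat{\phi}_i(y)=v$, apply the witnessing chain $i_0=i,i_1,\ldots,i_\ell$ to perform the cascading reassignment $\hat{\phi}_{i_k}(y)\leftarrow\hat{\phi}_{i_{k+1}}(y)$ for $k<\ell$, together with $\hat{\phi}_{i_\ell}(y)\leftarrow v'$ for some $v'\in B_0$ satisfying $d(\hat{\phi}_{i_\ell}(x),v')>0$. This is a valid sequence of reassignments because the chain provides all the required edge weights, and its net effect is to move exactly $\beta(y)$-weight from $v$ to $v'$. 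After processing every such $i$, the weight at $v$ has dropped by $\sum_{i\in I_Y}\beta(\hat{\phi}_i^{-1}(v)\cap\{y\})$, so the definition of $B_1$ gives residual slack at $v$ of at least $1\geq\beta(y)$, enough to absorb the $y$-image of $\hat{\phi}_{s_0+1}$. The destination vertices in $B_0$ can be chosen distinct (as there are many of them and only boundedly many shifts per argument), so the additions of $\beta(y)$-weight at each $v'$ fit within its $B_0$-slack.

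The main obstacle is the parallel argument at $u$, since the chain structure moves $y$-images, not $x$-images. Here I would perform the more delicate surgery of relocating each $I_Y$-homomorphism $\hat{\phi}_i$ with $\hat{\phi}_i(x)=u$ to a position whose $x$-image lies in $A_0$ rather than at $u$. This is where the slack $w_{u'}-\hat{w}_{u'}\geq 2$ at $u'\in A_0$ (intentionally stronger than the corresponding bound for $B_0$, as noted in the definition) is used: the relocation must simultaneously move the $x$-image of weight $\beta(x)$ and potentially adjust the $z$-image by weight $\beta(z)$, both of which $u'$ can absorb because $2\geq\beta(x)+\beta(z)$. The existence of a suitable $u'$ with all the required edge weights to the remaining images of $\hat{\phi}_i$ follows from Claim~\ref{clm:case2-3-1} for $i\in I_Y\cap I_2$, and from an analogous argument (using that $\hat{\phi}_i(z)\in B_0$ has positive out-weight to many $A_0$-vertices) for $i\in I_Y\cap I_1$. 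After all such relocations, the weight at $u$ has dropped by $\sum_{i\in I_Y}\beta(\hat{\phi}_i^{-1}(u)\cap\{x\})$, so the $A_1$ condition yields $w_u\geq\beta(x)+\hat{w}_u$ (after the relocations), accommodating the $x$-image of $\hat{\phi}_{s_0+1}$. This completes the construction of a modified collection of $s_0+1$ homomorphisms satisfying \ref{sub-1}-\ref{sub-3} with $j_{s_0+1}=1$, directly contradicting the maximality of $s_0$; if instead one of the relocations converts an $I_Y\cap I_2$ homomorphism into an $I_1$-type (because its $z$-image ends up in $B$ rather than $A$), the same construction yields a collection of $s_0$ homomorphisms with strictly larger $|I_1|$, contradicting the secondary maximality clause.
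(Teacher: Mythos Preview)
Your switching argument has the right spirit but two genuine gaps prevent it from going through.

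First, your construction of $\hat{\phi}_{s_0+1}$ with $j_{s_0+1}=1$ requires a vertex $v_z\in B_0$ with $d(v_z,u)>0$. Your justification via \itref{prop:likeregularity} only gives $d(v_z,u)+d(u,v_z)=1$ for most $v_z$; it is entirely possible that $d(v_z,u)=0$ for every $v_z\in B_0$ (indeed Claim~\ref{clm:case2-1-1} shows exactly this happens whenever $u=\hat\phi_i(x)$ for some $i\in I_2$). So the $z$-image cannot be placed in $B$ in general.

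Second, and more seriously, your plan to free weight at $u$ by relocating $x$-images of $I_Y$-homomorphisms to some $u'\in A_0$ does not work. Moving $\hat\phi_i(x)$ to $u'$ requires both $d(u',\hat\phi_i(y))>0$ and $d(\hat\phi_i(z),u')>0$; Claim~\ref{clm:case2-3-1} gives neither of these --- it only supplies many $v\in A_0$ with $d(v,\hat\phi_i(x))>0$, which is the edge needed to relocate a \emph{$z$-image}, not an $x$-image. You also omit the paper's crucial opening step: if any $i\notin I_Y$ has $\hat\phi_i(x)=u$, then $d(u,v')=0$ for all $v'\in B_0$, forcing $v\in B_1\setminus B_0$ and hence $\hat\phi_{i'}(y)=v$ for some $i'\in I_Y$, which puts $i$ into $I_Y$ after all. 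Without this step the $A_1$ condition does not control the total $x$-weight at $u$, so even a successful relocation of the $I_Y$-portion would not create the slack you claim.

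The paper avoids both problems by setting $j_{s_0+1}=2$ and placing $\hat\phi_{s_0+1}(z)=u$ alongside $\hat\phi_{s_0+1}(x)=u$ (the loop weight $d(u,u)=1$ validates the $zx$-edge). The required room at $u$ is then created by moving \emph{$z$-images} of a small set $I_Z\subseteq I_2$ with $\hat\phi_i(z)=u$ out to $A_0$ --- exactly the kind of move Claim~\ref{clm:case2-3-1} supports, and the reason the threshold for $A_0$ was set to $2$ rather than $1$. Only a single $I_Y$-chain shift is then needed to accommodate the $y$-image at $v$.
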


\begin{proof}[Proof of Claim~\ref{clm:case2-4}]
Let $u\in A_1$ and $v\in B_1$, and suppose for contradiction that $d(u,v)>0$.

If $\hat{\phi}_i(x)=u$ for some $i\notin I_Y$, then we must have $d(u,v')=0$ for every $v'\in B_0$, else $i\in I_Y$. So in particular, we would have $v\in B_1\setminus B_0$, and hence $\hat{\phi}_{i'}(y)=v$ for some $i'\in I_Y$. But then $d(\hat{\phi}_i(x),\hat{\phi}_{i'}(y))>0$ for some $i\notin I_Y$, $i'\in I_Y$, a contradiction.

Therefore, we may assume that $\sum_{i\in[s_0]\setminus I_Y}\beta(\hat{\phi}_i^{-1}(u)\cap\{x\})=0$, and hence
\begin{equation}\label{eq:u-space}
    w_u-\hat{w}_u+\sum_{i\in I_2}\beta(\hat{\phi}_i^{-1}(u)\cap\{z\})=w_u-\sum_{i\in[s_0]}\beta(\hat{\phi}_i^{-1}(u)\cap\{x\})\geqslant1.
\end{equation}

Set $\hat{\phi}_{s_0+1}(x),\hat{\phi}_{s_0+1}(z)=u$ and $\hat{\phi}_{s_0+1}(y)=v$, and set $j_{s_0+1}=2$.

Let $I_Z\subseteq I_2$ be a minimal set such that $\hat{\phi}_i(z)=u$ for $i\in I_Z$ and $w_u-\hat{w}_u+\sum_{i\in I_Z}\beta(\hat{\phi}_i^{-1}(u)\cap\{z\})\geqslant1$, noting that \eqref{eq:u-space} shows such a choice is possible. By minimality of $I_Z$, we have $\sum_{i\in I_Z}\beta(\hat{\phi}_i^{-1}(u)\cap\{z\})\leqslant 2$. Using Claim~\ref{clm:case2-3-1}, choose $u_i\in A_0\setminus\{u\}$ for $i\in I_Z$ with $d(u_i,u)>0$.

If $w_v-\hat{w}_v\geqslant\beta(y)$, then set $\ell=0$, $i_\ell=s_0+1$, and $v^\star=v$. Otherwise, we find there is some $i\in I_Y$ with $\hat{\phi}_i(y)=v$, and so let $i_0,\ldots,i_\ell\in[s_0]$ be distinct with $i_0=i$, and $v^\star\in B_0$ be such that $d(\hat{\phi}_{i_{k-1}}(x),\hat{\phi}_{i_k}(y))>0$ for $k\in[\ell]$ and $d(\hat{\phi}_{i_\ell}(x),v^\star)>0$. In either case, set $v_{i_{k-1}}=\hat{\phi}_{i_k}(y)$ for $k\in[\ell]$, and set $v_{i_{\ell}}=v^\star$.

Now, setting $\hat{\phi}_i(z)=u_i$ for $i\in I_Z$ and $\hat{\phi}_{i_k}(y)=v_{i_k}$ for $k\in\{0\}\cup[\ell]$ yields a contradiction to the maximality of $s_0$, proving the claim.
\renewcommand{\qedsymbol}{$\boxdot$}
\end{proof}
\renewcommand{\qedsymbol}{$\square$}

Let $J_A$ be the set of $j\in[r-1]$ with $a_j\in A_1$ and $J_B$ be the set of $j\in[r-1]$ with $b_j\in B_1$. By Claim~\ref{clm:case2-4}, $J_A$ and $J_B$ are disjoint. Let $j'\in J_B$ be such that $\sum_{j\in J_B}d(j',j)$ is maximised. So by Claim~\ref{clm:case2-4},
\begin{equation}\label{eq:jay-out}
    \sum_{j\in[r]\setminus\{j'\}}d(j',j)\geqslant|J_A|+\frac{1}{2}|J_B|-2\epsilon r.
\end{equation}
Also, because $\beta(y)\geqslant\beta(x)$,
\begin{align}
    \sum_{v\in B}\bigg(w_v-\hat{w}_{v}+&\sum_{i\in I_Y}\beta(\hat{\phi}_i^{-1}(v)\cap\{y\})\bigg)\overset{(\ref{eq:1-large})}{>}(\beta(x,\bar{x})+3\alpha/4)\cdot s+\beta(y)\cdot|I_Y| \nonumber\\
    &\geqslant2\beta(x)\cdot|I_Y|+(3\alpha/4)\cdot s=2\sum_{v\in A}\sum_{i\in I_Y}\beta(\hat{\phi}_i^{-1}(v)\cap\{x\})+(3\alpha/4)\cdot s. \label{eq:A-B-compared}
\end{align}
Therefore,
\begin{align*}
    \sum_{j\in[r]\setminus\{j'\}}d(j',j)\cdot&(1+\gamma+\alpha)\cdot s/r\overset{(\ref{eq:jay-out})}{\geqslant}|J_A|\cdot(1+\gamma+\alpha)\cdot s/r+\frac{1}{2}|J_B|\cdot(1+\gamma+\alpha)\cdot s/r-2\epsilon(1+\gamma+\alpha)\cdot s\\
    &\geqslant\sum_{v\in A}(w_v-\sum_{i\in I_Y}\beta(\hat{\phi}_i^{-1}(v)\cap\{x\}))+\frac{1}{2}\sum_{v\in B}\bigg(w_v-\hat{w}_{v}+\sum_{i\in I_Y}\beta(\hat{\phi}_i^{-1}(v)\cap\{y\})\bigg)-10\epsilon s\\
    &\overset{(\ref{eq:A-B-compared})}{>}\sum_{v\in A}w_v+(3\alpha/8)\cdot s-10\epsilon s>\sum_{j\in[r-1]}d(r,j)\cdot(1+\gamma+\alpha)\cdot s/r,
\end{align*}
contradicting \eqref{eq:r-maximal}.

\end{proof}

\subsubsection{Proof of Lemma~\ref{lm:distillation-case-3}}\label{sec:lmcase3}
In this section, we will prove Lemma~\ref{lm:distillation-case-3}. Similarly as in Section~\ref{sec:lmcase2}, we will outline our strategy in a simplified setting, along with a depiction in Figure~\ref{fig:technical-D}, so that this outline may guide the reader through the technical proof. For this, let $T$ again be a tree containing a vertex $t$ with only out-neighbours in $T$, such that $T-t$ consists of small components. Furthermore, let $G$ be a tournament with more vertices than $T$ into which we are attempting to find an embedding $\psi$ of $T$. We proceed initially with a very similar strategy to that described at the start of Section~\ref{sec:lmcase2}, as follows.

Let $v_t$ be a vertex in $G$ with maximal out-degree and set $\psi(t)=v_t$. Let $A=N^+(v_t)$ and $B=V(G)\setminus (A\cup \{v_t\})$. Just as before, we now aim to embed components of $T-t$ so that they can be attached correctly to $v_t$ but so that as many vertices as possible lie in $B$. For the trees relevant for Lemma~\ref{lm:distillation-case-2}, if we carefully maximised the number of components we embedded, then we covered enough vertices in $B$ that we were able to finish the embedding by embedding the remaining components into the unused vertices in $A$. The problem here is that Lemma~\ref{lm:distillation-case-3} covers trees for which this might not be possible. To see this, consider again $\hat{H}_1$ with vertex set $\{x,y,z\}$ and edge set $\{xy,zx\}$, and, for each component of $T-t$, map the out-neighbour of $t$ to $x$, and map the other vertices to $y$ or $z$ according to the direction of the first edge of their path from $t$ in the component as before. If all edges between $A$ and $B$ in $G$ are directed from $A$ to $B$ then the only vertices we can embed into $B$ are those mapped to $y$, and in the trees relevant to Lemma~\ref{lm:distillation-case-3} there may be few or even none of these! That is, it simply may not be the case that we can embed $T$ with $t$ embedded to $v_t$ as before.

Nevertheless, with $t$ embedded to $v_t$, we attempt to embed as many components as possible subject to a careful maximisation as before. In particular, we always have the vertex mapped to $x$ embedded into $A$ and vertices mapped to $z$ embedded into $B$. Previously, we then made a sequence of deductions that led us to find sets $A_1\subseteq A$ and $B_1\subseteq B$ such that almost all the edges of $G$ were directed from $B_1$ to $A_1$, and this led to a contradiction (see step 8 in Section~\ref{sec:lmcase2}). For Lemma~\ref{lm:distillation-case-3}, we again make a (here, simpler) sequence of deductions. Roughly, if $A_0$ and $B_0$ are the vertices in $A$ and $B$ respectively which are not in the image of any component we have embedded, then $G$ must have almost all the possible edges directed from $A_0$ into $B_0$ as well as certain other properties. The vertices in $B_0$ are then the vertices we struggled to cover when embedding components of the tree. However, if we pick a typical vertex $v_t'$ in $A_0$ and try instead to embed the tree starting with embedding $t$ to $v_t'$, then it is easy to cover vertices in $B_0$ with components in $T-t$ as most of these vertices are in the out-neighbourhood of $v_t'$. Even better, the vertices in $A_0\setminus \{v_t'\}$ can also be used relatively easily as there are many edges from $A_0$ to $B_0$ and, now the embedding of $t$ is changed, components of $T-t$ can be embedded with vertices mapped to $\{x,y\}$ embedded into $B_0$ and vertices mapped to $z$ embedded into $A_0$ (and for the trees covered by Lemma~\ref{lm:distillation-case-3} significantly many vertices are mapped to $z$). Of course, the remaining vertices of $G$ -- those which had components embedded to them -- may now be hard to cover, but, from our maximised component embedding and subsequent deductions, we will have information about how we can adjust the embedded components to attach them instead to the new image, $v_t'$, of $t$ while still using roughly the same vertices for that component. This is not always simple, and we allocate some additional vertices in $B_0$ to each embedded component before finding a new version of that component using the old vertices for that component and possibly also the additional vertices allocated from $B_0$. That we can allocate some additional vertices in this fashion relies on the fact that $G$ is larger than $T$, so we can afford not to use every vertex in $G$ in the final embedding.

\begin{figure}
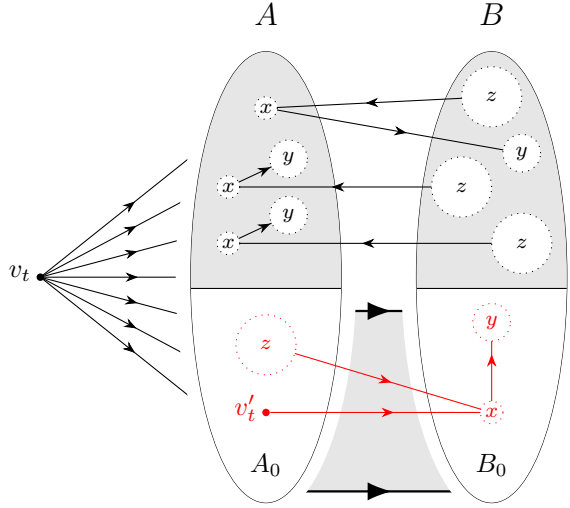

\DiagramTechnicalExampleD
\vspace{-0.75cm}
\caption{In our simplified set-up, for the case corresponding to Lemma~\ref{lm:distillation-case-3}, we may instead begin by aiming to embed vertices of $T-t$ mapped to $z$ in $B$. The component in the lower half of the diagram illustrates how leftover space in $A_0$ and $B_0$ may be used efficiently via a new embedding $v_t'$ for $t$.}\label{fig:technical-D}
\end{figure}

The portioning of the vertices in $B_0$ to allow the rearrangement and reattachment of the components requires quite some delicacy, resulting ultimately in the choice of the functions $\beta_1,\ldots,\beta_5$ in Lemma~\ref{lm:distillation-case-3}. This choice can be motivated, but only at some length and difficulty, so instead we concentrate on writing a proof that can be directly verified. As in Section~\ref{sec:lmcase2}, in the actual setting, with a tree $T$ with small core $T_0$ in mind, given a distillation of an average component, we find homomorphisms to a weighted looped digraph which will represent a regularity partition. We then pick from these homomorphisms randomly to get a random homomorphism. These homomorphisms again essentially allocate room for components of $T-V(T_0)$ in the regularity clusters corresponding to the weighted digraph $D$. 

To find these homomorphisms, we begin by selecting a vertex $j_t\in V(D)$ with at least the average amount of weight on its out-edges. By relabelling, we assume $j_t=r$. Similarly to the proof of Lemma~\ref{lm:distillation-case-2}, we again duplicate vertices of $D$ to get weighted vertex sets $A$ and $B$  representing the proportion of each vertex that is in the out- and in-neighbourhood of $j_t$ respectively, and aim to find homomorphisms which allocate plenty of space in $B$. To do this, we find maximally many homomorphisms from $H_{4,1}$ to $D$ satisfying certain conditions (\ref{sub-0-d2}--\ref{sub-3-d2} in the proof), and further optimising over two additional conditions (stated just after \ref{sub-3-d2}), to get homomorphisms $\phi_1,\ldots,\phi_{s_0}$. Assuming we do not have enough homomorphisms to easily find the remaining ones we wanted (see Claim~\ref{clm:J_1-not-too-small}), we make a sequence of deductions based on this assumption and the maximality (Claim~\ref{clm:no0to1} and Claim~\ref{clm:good-types}) about the weight distribution on $D$. Letting $A_0\subseteq A$ and $B_0\subseteq B$ be the sets of vertices with unallocated weight, an example deduction is that almost all of the weight on the edges of $D$ between $A_0$ and $B_0$ is on edges directed from $A_0$ to $B_0$ (for otherwise we would have found at least one more homomorphism). We then choose a new vertex $j_t'\in[r]$, with $a_{j_t'}\in A_0$, to use instead of $j_t=r$ by selecting it so that $a_{j_t'}$ has sufficient out-weight to certain vertices in the image of the homomorphisms we have found (just before Claim~\ref{clm:I1-and-I2}) -- this will allow us later to rearrange these homomorphisms to embed $x$ into the out-neighbourhood of $j_t'$ instead of $j_t$. While it may not necessarily be possible to ensure the desired conditions on $j_t'$ hold with respect to every homomorphism chosen so far, Claim~\ref{clm:I1-and-I2} confirms that there is a choice of $j_t'$ with the required conditions for rearrangement holding with respect to most of them -- by relabelling, these will be the homomorphisms $\phi_1,\ldots,\phi_{s_1}$. We add a dummy vertex $q$ with weight $\beta_1(q)=\max{\{\beta_x,\beta_z\}}$ to $H_{4,1}$, and then extend $\phi_1,\ldots,\phi_{s_1}$ to homomorphisms $\psi_1,\ldots,\psi_{s_1}$ also covering $q$ (see \ref{Q1}, \ref{Q2} and Claim~\ref{clm:s=s}). The role played by $q$ is to reserve additional weight in the out-neighbourhood of $j_t'$, which may be required for the desired rearrangement. Each $\psi_i$ then represents some allocated space, for which we then find homomorphisms $\phi_{i,j}$ from $H_{4,2},H_{4,3}$ or $H_{4,4}$ which use a proportion of this space (in either Claim~\ref{clm-forcase1} or Claim~\ref{clm-forcase2}) -- a proportion matching the space a tree must cover in the tournament following an eventual application of the lemma (that is, the proportion $|T|/|G|$). The homomorphisms $\phi_{i,j}$ potentially leave plenty of weight remaining on vertices in $A_0$ and $B_0$, but, as noted above in the simplified setting, these vertices are easiest to use for new homomorphisms as there is a lot of weight on edges from $j_t'$ to $B_0$ and from $B_0$ to $A_0$ in $D$. We therefore find more homomorphisms, this time from $H_{4,5}$, to use enough of this weight (that is, so that the proportion of the weight used is at least the matching proportion $|T|/|G|$). Finally, then, we have a collection of homomorphisms from which to pick our random homomorphism to complete the proof.

\begin{proof}[Proof of Lemma~\ref{lm:distillation-case-3}]
Let $\gamma=\max\{\beta_x,\beta_z\}$. Let $1/r\ll\epsilon\ll\alpha$. We remark that $\gamma\leqslant 1$, and we may also assume that $\alpha\leqslant 1$, so we have $(1+\gamma+\alpha)\leqslant3$.

Let $D$ be a complete looped digraph on vertex set $[r]$ with $\epsilon$-complete edge weights $d(e)$, $e\in E(D)$. We will find a random $(\phi,i(\phi))$ satisfying \itref{good-plan-1}-\itref{good-plan-3}. By relabelling, we can assume that 
\begin{equation}\label{eqn:jbig}
\sum_{j\in[r-1]}d(r,j)\geqslant(\textstyle\frac{1}{2}-2\epsilon)\cdot r.
\end{equation}

Take two new disjoint vertex sets $A=\{a_1,\ldots,a_{r-1}\}$ and $B=\{b_1,\ldots,b_{r-1}\}$. Let $\bar{D}$ be the weighted complete looped digraph on $A\cup B$ in which the edges $a_ib_j$, $a_ia_j$, $b_ib_j$ and $b_ia_j$ have weight $d(i,j)$.

Let $s$ be such that $1/s\ll1/r$. For each $i\in [r-1]$, let $w_{a_i}=d(r,i)\cdot (1+\gamma+\alpha)\cdot s/r$ and $w_{b_i}=(1-d(r,i))(1+\gamma+\alpha)\cdot s/r$. Note that
\begin{equation}\label{eqn-wvsum}
\sum_{v\in A\cup B}w_v\geqslant(1+\gamma+\alpha/2)\cdot s,
\end{equation}
and
\begin{equation}\label{eqn:wvbig}
\sum_{v\in A}w_v=\sum_{i\in [r-1]}d(r,i)\cdot(1+\gamma+\alpha)\cdot s/r\overset{\eqref{eqn:jbig}}{\geq}
\frac{1}{2}(1+\gamma+\alpha/2)\cdot s.
\end{equation}

Let $s_0\leqslant s$ be the largest integer for which there exist homomorphisms $\phi_1,\ldots,\phi_{s_0}:H_{4,1}\to \bar{D}$ and indicators $j_1,\ldots,j_{s_0}\in [2]$ such that the following properties hold.
\stepcounter{propcounter}
\begin{enumerate}[label = {\bfseries \Alph{propcounter}\arabic{enumi}}]
\item \label{sub-0-d2} For each $i\in [s_0]$, $d(\phi_i(y),\phi_i(z))+d(\phi_i(z),\phi_i(y))>0$.
\item\label{sub-1-d2} For each $i\in[s_0]$ with $j_i=1$, we have 
    $\phi_i(x)\in A$, 
    $\phi_i(y)\in B$, and
    $\phi_i(z)\in B$.
\item\label{sub-2-d2}
For each $i\in[s_0]$ with $j_i=2$, we have
    $\phi_i(x)\in A$, 
    $\phi_i(y)\in A$, and
    $\phi_i(z)\in B$.
    \item\label{sub-3-d2} For each $v\in A\cup B$, $\sum_{i\in [s_0]}\beta_1(\phi_i^{-1}(v))\leq w_v$.
\end{enumerate}
Given $s_0$, take $\phi_1,\ldots\phi_{s_0}$ and $j_1,\ldots,j_{s_0}$ such that the number of $i\in[s_0]$ with $j_i=1$ is maximised. For each $v\in A\cup B$, let $\tilde{w}_v=\sum_{i\in [s_0]}\beta_1(\phi_i^{-1}(v))$, so that, by \ref{sub-3-d2}, we have  $\tilde{w}_v\leqslant w_v$.

Note that
\begin{equation}\label{eqn-sumAB-wbar}
\sum_{v\in A\cup B}\tilde{w}_v=\sum_{v\in A\cup B}\sum_{i\in [s_0]}\beta_1({\phi}_i^{-1}(v))=
\sum_{i\in [s_0]}\beta_1({\phi}_i^{-1}(A\cup B))=s_0,
\end{equation}
and
\begin{equation}\label{eq:sumA-wbar}
    \sum_{v\in A}\tilde{w}_v=\sum_{v\in A}\sum_{i\in[s_0]}\beta_1(\phi_i^{-1}(v))=\sum_{i\in[s_0]}\beta_1(\phi_i^{-1}(A))\overset{\text{\ref{sub-1-d2},\ref{sub-2-d2}}}{\leqslant}\sum_{i\in[s_0]}\beta_1(x,y)=\beta_1(x,y)\cdot s_0.
\end{equation}

Let $A_0$ be the set of $v\in A$ with  $w_v-\tilde{w}_v\geqslant1$. Let $B_0$ be the set of $v\in B$ with  $w_v-\tilde{w}_v\geqslant1$. Subject to the value of $s_0$, and the value of $|\{i\in[s_0]:j_i=1\}|$, assume that
\begin{equation}\label{eq:minimise-this-please}
    |\{(i,v):i\in [s_0],v\in A_0,d(\phi_i(y),v)>0\}|
\end{equation}
is minimised.

Suppose that $s_0<s$, for otherwise we are done by letting $i(\phi)=1$ and picking $\phi$ from $\{\phi_i:i\in [s]\}$ uniformly at random.
Note that, as $1-\gamma\leq 1-\beta_x=\beta_y+\beta_u+\beta_z+\beta_w\leqslant 2(\beta_z+\beta_w)$, we have
\begin{align}
    \frac{1+\gamma}{2}-\beta_1(x,y)&= \frac{1+\gamma}{2}-(1-\beta_1(z))=\min\{\beta_w+\beta_z,1-\beta_z\}-\frac{1-\gamma}{2}\nonumber \\
    &\geq \min{\left\{\frac{1-\gamma}{2},1-\gamma\right\}}-\frac{1-\gamma}{2}= 0.
\label{eqn-g-x-y}
\end{align}
Therefore,
\begin{align}
    \sum_{v\in A_0}(w_v-\tilde{w}_v)&\geqslant \sum_{v\in A}(w_v-\tilde{w}_v)-r\overset{\eqref{eqn:wvbig},\eqref{eq:sumA-wbar}}{\geqslant}
    \frac{1}{2}(1+\gamma+\alpha/4)\cdot s-\beta_1(x,y)\cdot s_0
    \nonumber\\
    % &\overset{\eqref{eqn:wvbig}}{\geqslant}
    % \frac{1}{2}(1+\gamma+\alpha/4)\cdot s-\beta_1(x,y)\cdot s_0
    % \nonumber\\
    &=
    \left(\frac{1+\gamma}{2}\right)\cdot(s-s_0)+\left(\frac{1+\gamma}{2}-\beta_1(x,y)\right)\cdot s_0+(\alpha/8)\cdot s
    \nonumber\\
    &\hspace{-0.15cm}\overset{\eqref{eqn-g-x-y}}{\geqslant}
    \left(\frac{1+\gamma}{2}\right)\cdot(s-s_0)+(\alpha/8)\cdot s, \label{eq:J_0-not-too-small}
\end{align}
and hence, $|A_0|\geqslant(\alpha/32)\cdot r$.
In addition, we have
\begin{align}
\sum_{v\in A_0\cup B_0}(w_v-\tilde{w}_v)&\geqslant \sum_{v\in A\cup B}(w_v-\tilde{w}_v)-2r\overset{\eqref{eqn-wvsum},\eqref{eqn-sumAB-wbar}}{\geqslant}(1+\gamma+\alpha/2)\cdot s-s_0-2r
\nonumber \\
&\geqslant (s-s_0)+(\gamma+\alpha/4)\cdot s. \label{eq:A_0+B_0}
\end{align}

We now show there are no pairs from $B_0$ to $A_0$ with positive weight.

\begin{claim}\label{clm:no0to1}
If $u\in A_0$ and $v\in B_0$, then $d(v,u)=0$. \end{claim}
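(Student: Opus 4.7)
The natural approach is a proof by contradiction exploiting the maximality of $s_0$. Suppose, for the sake of contradiction, that $d(v,u)>0$ for some $u\in A_0$ and $v\in B_0$. The plan is to build one more homomorphism $\phi_{s_0+1}:H_{4,1}\to\bar{D}$ together with an indicator $j_{s_0+1}\in[2]$ such that $\phi_1,\ldots,\phi_{s_0+1}$ and $j_1,\ldots,j_{s_0+1}$ still satisfy \ref{sub-0-d2}--\ref{sub-3-d2}, directly contradicting the choice of $s_0$.

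Recall that $H_{4,1}$ has vertex set $\{x,y,z\}$ and edge set $\{xy,zx\}$. The construction I would use is: set $\phi_{s_0+1}(x)=\phi_{s_0+1}(y)=u$ and $\phi_{s_0+1}(z)=v$, and set $j_{s_0+1}=2$. Collapsing $x$ and $y$ onto the same vertex $u$ is legitimate because $\bar{D}$ is a looped digraph with $d(u,u)=1>0$, which handles the edge $xy$; the edge $zx$ is handled by our standing assumption $d(v,u)>0$.

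Verification of the remaining conditions is mechanical. For \ref{sub-0-d2}: $d(\phi_{s_0+1}(y),\phi_{s_0+1}(z))+d(\phi_{s_0+1}(z),\phi_{s_0+1}(y))=d(u,v)+d(v,u)>0$. For \ref{sub-2-d2}: the placements $\phi_{s_0+1}(x),\phi_{s_0+1}(y)\in A$ and $\phi_{s_0+1}(z)\in B$ match the requirements attached to $j_{s_0+1}=2$. For \ref{sub-3-d2}, the new contribution at $u$ is $\beta_1(x)+\beta_1(y)=1-\beta_1(z)\leq 1$, which fits inside the slack $w_u-\tilde{w}_u\geq 1$ guaranteed by $u\in A_0$; the new contribution at $v$ is $\beta_1(z)\leq 1\leq w_v-\tilde{w}_v$, using $v\in B_0$; and all other vertices are untouched.

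I do not foresee a serious obstacle here. Once one notices the trick of mapping both $x$ and $y$ to the same vertex $u$ (exploiting the loop weight $d(u,u)=1$), the rest falls out of $\beta_1(x)+\beta_1(y)+\beta_1(z)=1$ together with the defining thresholds for $A_0$ and $B_0$. This is essentially a much simpler analogue of the switching argument used for Claim~\ref{clm:case2-4}, since no previously allocated weight needs to be displaced -- the new homomorphism $\phi_{s_0+1}$ simply slots into the spare capacity already available at $u$ and $v$. Note also that we do not have to worry about the tiebreaking quantities (the count $|\{i:j_i=1\}|$ or the minimisation in \eqref{eq:minimise-this-please}), since we already contradict the first-level maximisation of $s_0$.
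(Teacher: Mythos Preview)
Your proof is correct and follows exactly the same approach as the paper: define $\phi_{s_0+1}(x)=\phi_{s_0+1}(y)=u$, $\phi_{s_0+1}(z)=v$, $j_{s_0+1}=2$, and contradict the maximality of $s_0$. The paper's proof is a single sentence doing precisely this; your version simply spells out the verifications of \ref{sub-0-d2}--\ref{sub-3-d2} in full.
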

\begin{proof}[Proof of Claim~\ref{clm:no0to1}]
If $u\in A_0$, $v\in B_0$, are such that $d(v,u)>0$, then we may\linebreak set $\phi_{s_0+1}(x),\phi_{s_0+1}(y)=u$, $\phi_{s_0+1}(z)=v$, and $i_{s_0+1}=2$, contradicting the maximality of $s_0$.
\renewcommand{\qedsymbol}{$\boxdot$}
\end{proof}
\renewcommand{\qedsymbol}{$\square$}

We now show that we are done unless $\sum_{v\in B_0}(w_v-\tilde{w}_v)$ is not too small.

\begin{claim}\label{clm:J_1-not-too-small}
Either there exists a random $(\phi,i(\phi))$ satisfying \itref{good-plan-1}-\itref{good-plan-3}, or
\begin{equation}\label{eq:case3-J_1}
    \sum_{v\in B_0}(w_v-\tilde{w}_v)\geqslant(\gamma+\alpha/16)\cdot s+\beta_1(y)\cdot (s-s_0).
\end{equation}
\end{claim}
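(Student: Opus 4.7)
Suppose \eqref{eq:case3-J_1} fails, so $\sum_{v\in B_0}(w_v-\tilde w_v)<(\gamma+\alpha/16)s+\beta_1(y)(s-s_0)$. Subtracting this bound from \eqref{eq:A_0+B_0} yields
\[
\sum_{v\in A_0}(w_v-\tilde w_v)\;\ge\;(1-\beta_1(y))(s-s_0)+(3\alpha/16)s.
\]
The idea is to extend $\phi_1,\ldots,\phi_{s_0}$ with additional homomorphisms $\phi_{s_0+1},\ldots,\phi_s\colon H_{4,1}\to\bar D$, each with $\phi_i(x)\in A$, such that $\sum_{i\in[s]}\beta_1(\phi_i^{-1}(v))\le w_v$ for every $v\in V(\bar D)$. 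Sampling $\phi$ uniformly from $\{\phi_1,\ldots,\phi_s\}$, identified with a map to $D$ via $a_j,b_j\mapsto j$, and setting $i(\phi)=1$ almost surely will then yield the required random $(\phi,i(\phi))$: \itref{good-plan-1} is immediate since $\phi_i(x)\in A$ misses $r=j_t$; \itref{good-plan-2} follows from $\sum_i\beta_1(\phi_i^{-1}(a_j)\cup\phi_i^{-1}(b_j))\le w_{a_j}+w_{b_j}=(1+\gamma+\alpha)s/r$; and \itref{good-plan-3} follows from $\beta_1(x)\cdot|\{i:\phi_i(x)=a_j\}|\le\sum_i\beta_1(\phi_i^{-1}(a_j))\le w_{a_j}$.

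To construct the new homomorphisms, I would proceed greedily using two placement types. A \emph{mixed} placement sets $\phi_i(x)=\phi_i(z)=a_j$ and $\phi_i(y)=b_{j'}$ for some $a_j\in A_0$, $b_{j'}\in B_0$ with $d(j,j')>0$ and both having residual weight; this consumes weight $1-\beta_1(y)$ on $a_j$ and $\beta_1(y)$ on $b_{j'}$. A \emph{trivial} placement sets $\phi_i(V(H_{4,1}))=\{a_j\}$ for some $a_j\in A_0$ with residual weight, consuming weight $1$ on $a_j$. Use mixed placements while $B_0$ retains capacity---roughly $\lfloor\sum_{v\in B_0}(w_v-\tilde w_v)/\beta_1(y)\rfloor$ of them---and then switch to trivial placements. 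By \itref{prop:likeregularity}, for any fixed $j$ at most $\epsilon r$ indices $j'$ have $d(j,j')=d(j',j)=0$, so a suitable $b_{j'}\in B_0$ with $d(j,j')>0$ can be located by pigeonhole whenever $B_0$ has residual capacity, while $|A_0|\ge(\alpha/32)r$ (from \eqref{eq:J_0-not-too-small}) ensures trivial placements can always be assigned.

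A direct tally shows the total $A_0$ weight consumed is $(1-\beta_1(y))(s-s_0)$ if $B_0$ absorbs every mixed placement, and $(s-s_0)-\sum_{v\in B_0}(w_v-\tilde w_v)$ if $B_0$ is exhausted first; both fall within the derived $A_0$ budget with residual slack of order $\alpha s$, which comfortably absorbs the $O(\epsilon s)$ loss from pairs with $d(j,j')=0$ and the $O(r)$ rounding loss from per-vertex discretisation of weight, since $1/s\ll 1/r\ll\epsilon\ll\alpha$. The main obstacle is the bookkeeping for the greedy assignment to simultaneously respect the per-vertex caps on $A_0$ and $B_0$ and the edge-existence constraints $d(j,j')>0$; the right-hand side of \eqref{eq:case3-J_1} is chosen precisely so that this slack absorbs all such losses.
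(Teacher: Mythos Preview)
Your approach is the same as the paper's: extend $\phi_1,\ldots,\phi_{s_0}$ to $\phi_1,\ldots,\phi_s\colon H_{4,1}\to\bar D$ using some mixed placements ($x,z$ in $A_0$ and $y$ in $B_0$) until $B_0$ is exhausted, then trivial placements (all of $H_{4,1}$ to a single $a_j\in A_0$), and sample uniformly. Your budget computation on $A_0$ is correct and matches the paper's, and your derivation of \itref{good-plan-1}--\itref{good-plan-3} is fine.

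There is one genuine gap. For a mixed placement with $\phi_i(x)=\phi_i(z)=a_j$ and $\phi_i(y)=b_{j'}$ you need $d(j,j')>0$ (for the edge $xy$). You justify this by saying that \itref{prop:likeregularity} leaves at most $\epsilon r$ indices $j'$ with $d(j,j')=d(j',j)=0$, and conclude by pigeonhole that a $b_{j'}\in B_0$ with $d(j,j')>0$ exists. But \itref{prop:likeregularity} only guarantees $d(j,j')+d(j',j)>0$; it does not tell you which direction carries the weight. It is entirely consistent with \itref{prop:likeregularity} that $d(j,j')=0$ and $d(j',j)=1$ for every $b_{j'}\in B_0$, in which case no mixed placement at $a_j$ is available at all. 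The missing ingredient is Claim~\ref{clm:no0to1}, proved just before this claim: for $a_j\in A_0$ and $b_{j'}\in B_0$ one has $d(j',j)=0$, so by \itref{prop:likeregularity} we get $d(j,j')=1$ for all but $\epsilon r$ such $j'$. The paper invokes exactly this. Once you insert that citation, your $O(\epsilon s)$ slack absorbs the exceptional pairs and the argument is complete.
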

\begin{proof}[Proof of Claim~\ref{clm:J_1-not-too-small}]

Suppose that \eqref{eq:case3-J_1} does not hold. If $\sum_{v\in B_0}(w_v-\tilde{w}_v)\leqslant(\gamma+\alpha/8)\cdot s$, then set $s_1=s_0$. Otherwise, let $s_1\in[s]\setminus[s_0]$ be maximal such that $\sum_{v\in B_0}(w_v-\tilde{w}_v)\geqslant(\gamma+\alpha/16)\cdot s+\beta_1(y)\cdot(s_1-s_0)$. Note that $s_1<s$, else \eqref{eq:case3-J_1} holds. By considering the cases $s_1=s_0$ and $s_1>s_0$ separately (using the maximality of $s_1$ in the latter), we deduce that
\begin{equation}\label{eq:remaining-B0}
    \sum_{v\in B_0}(w_v-\tilde{w}_v)\leqslant(\gamma+\alpha/8)\cdot s+\beta_1(y)\cdot(s_1-s_0),
\end{equation}
and hence,
\begin{equation*}
    \sum_{v\in A_0}(w_v-\tilde{w}_v)\overset{\eqref{eq:A_0+B_0},\eqref{eq:remaining-B0}}{\geqslant}\beta_1(x,z)\cdot(s_1-s_0)+(s-s_1)+(\alpha/8)\cdot s.
\end{equation*}
Therefore, using Claim~\ref{clm:no0to1}, we may greedily choose homomorphisms $\phi_{s_0+1},\ldots,\phi_{s}:H_{4,1}\to\bar{D}$ such that the following properties hold.
\stepcounter{propcounter}
\begin{enumerate}[label = {\bfseries \Alph{propcounter}\arabic{enumi}}]
\item\label{sub-1-d3} For each $i\in[s_1]\setminus [s_0]$, we have 
    ${\phi}_i(x)\in A$, 
    ${\phi}_i(y)\in B$, and
    ${\phi}_i(z)\in A$.
\item\label{sub-2-d3}
For each $i\in[s]\setminus[s_1]$ we have
    ${\phi}_i(x)\in A$, 
    ${\phi}_i(y)\in A$, and
    ${\phi}_i(z)\in A$.
\item\label{sub-3-d3} For each $v\in A\cup B$, $\sum_{i\in [s]}\beta({\phi}_i^{-1}(v))\leq w_v$.
\end{enumerate}
Thus, by defining $(\phi,i(\phi))$ by sampling $\phi$ from $\phi_1,\ldots,\phi_s$ uniformly at random (identifying the result as a map $V(H_{4,1})\to V(D)$ in the natural way) and setting $i(\phi)=1$, we obtain a random $(\phi,i(\phi))$ satisfying \itref{good-plan-1}-\itref{good-plan-3}.
\renewcommand{\qedsymbol}{$\boxdot$}
\end{proof}
\renewcommand{\qedsymbol}{$\square$}

Thus, we may now assume that (\ref{eq:case3-J_1}) holds, and hence also $|B_0|\geqslant(\alpha/64)\cdot r$.
Next we show that each $i\in[s_0]$ satisfies (at least) one of two properties, \itref{good-type-1} or \itref{good-type-2}.

\begin{claim}\label{clm:good-types}
For each $i\in [s_0]$, at least one of the following holds.
\stepcounter{propcounter}
\begin{enumerate}[label = {\bfseries \emph{\Alph{propcounter}\arabic{enumi}}}]
\item $d(\phi_i(z),v)=0$ for every $v\in A_0$.\label{good-type-1}
\item $d(v,\phi_i(x))=0$ for every $v\in B_0$, $d(\phi_i(y),v)=0$ for every $v\in A_0$, and $j_i=1$.\label{good-type-2}
\end{enumerate}
\end{claim}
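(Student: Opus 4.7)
The plan is to fix $i \in [s_0]$, suppose \itref{good-type-1} fails (so there exists $v^* \in A_0$ with $d(\phi_i(z), v^*) > 0$), and then establish each of the three conclusions of \itref{good-type-2} in turn by exhibiting, in each case, a modification of the collection $(\phi_1,\ldots,\phi_{s_0})$ together with $(j_1,\ldots,j_{s_0})$ that contradicts one of the three nested optimality properties: (i) maximality of $s_0$ under \ref{sub-0-d2}--\ref{sub-3-d2}, (ii) maximality of $|\{k : j_k = 1\}|$ given $s_0$, and (iii) minimality of the count in \eqref{eq:minimise-this-please} given the previous two. The common theme is that the vertex $v^* \in A_0$ carries spare weight (at least $1 \geq \beta_1(\cdot)$), so it can absorb the image of $\phi_i(x)$ via the $z \to x$ edge provided by $d(\phi_i(z), v^*) > 0$, and the released weight on $\phi_i(x)$ in turn lets us reshuffle to reach a better configuration.

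For the conclusion $j_i = 1$, I would suppose $j_i = 2$ and build a replacement $\phi_i'$ with $j_i' = 1$ as follows: set $\phi_i'(z) = \phi_i(z)$, $\phi_i'(x) = v^*$, and choose $\phi_i'(y) \in B$ with $d(v^*, \phi_i'(y)) > 0$ (such a choice exists away from the small number of ``blocked'' clusters because $v^*$ is not a near-isolated vertex of $D$ by~\ref{prop:likeregularity}, and the released weight $\beta_1(x)$ on the old $\phi_i(x)$ together with a matching argument inside $B$ provides the needed budget; the restriction against creating forbidden overloads will be handled using the slackness already present at $v^*$). This strictly increases $|\{k : j_k = 1\}|$ without touching $s_0$, contradicting (ii). For the conclusion that $d(v, \phi_i(x)) = 0$ for every $v \in B_0$, I would suppose $v^\star \in B_0$ satisfies $d(v^\star, \phi_i(x)) > 0$ (with $j_i = 1$ already by the previous step), and construct a wholly new homomorphism $\phi_{s_0+1}$: take $\phi_{s_0+1}(x) = v^*$, $\phi_{s_0+1}(z) = v^\star$ (legal since $d(v^\star, v^*)$ need not be zero only if $v^\star, v^*$ lie in the careful range supplied, else I route via $\phi_i(z)$), and pick $\phi_{s_0+1}(y) \in B$ with $d(v^*, \phi_{s_0+1}(y)) > 0$; the spare weights $w_{v^*} - \tilde{w}_{v^*} \geq 1$ and $w_{v^\star} - \tilde{w}_{v^\star} \geq 1$ ensure \ref{sub-3-d2} remains valid, contradicting (i). For the conclusion $d(\phi_i(y), v) = 0$ for every $v \in A_0$, I would suppose there is $v^\star \in A_0$ with $d(\phi_i(y), v^\star) > 0$; then modifying $\phi_i$ by setting the new $\phi_i'(x) = v^*$ (via the $z$-edge to $v^*$) and the new $\phi_i'(y)$ somewhere in $B$ \emph{disjoint from $A_0$ out-neighbours} frees the old pair $(\phi_i(x), \phi_i(y))$ and strictly reduces the contribution to \eqref{eq:minimise-this-please}, while keeping $s_0$ and $|\{k : j_k = 1\}|$ unchanged; this contradicts (iii).

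The main obstacle, which I would handle by careful bookkeeping rather than clever new ideas, is verifying that each proposed modification preserves \ref{sub-0-d2}--\ref{sub-3-d2}. In particular, \ref{sub-3-d2} requires that the reshuffle not push the weight on any vertex above its budget $w_v$: this is the reason $A_0$ is defined with the slightly stronger slack $w_v - \tilde{w}_v \geq 2$, precisely so that replacing an $x$-image by $v^* \in A_0$ and simultaneously dropping a $y$- or $z$-image into another slack vertex remains within budget for every coordinate. Constraint \ref{sub-0-d2} (that $\phi_i'(y)$ and $\phi_i'(z)$ are joined by an edge in at least one direction) is harder to guarantee directly when we replace $y$ freely; but once $x$ is moved to $v^*$, any candidate $\phi_i'(y) \in N^+_{\bar D}(v^*) \cap B$ outside of the $\epsilon r$-sized exceptional neighbourhood of $\phi_i'(z)$ given by~\ref{prop:likeregularity} suffices, and such candidates are abundant since $|B_0| \geq (\alpha/64)r$ and $v^*$ has positive $d$-weight to most of $B$ (otherwise $\phi_i$ with its current $y$-image would itself contradict an earlier step). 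Once the bookkeeping is set up, each of the three steps becomes a direct verification that the constructed modification is legal, yielding the desired contradiction and so establishing the claim.
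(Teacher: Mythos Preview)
Your overall plan—leverage the three-level optimisation to derive each part of \itref{good-type-2}—matches the paper, but several of your concrete constructions do not survive scrutiny.

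First, a factual slip: in the proof of Lemma~\ref{lm:distillation-case-3}, the set $A_0$ is defined by the slack condition $w_v-\tilde{w}_v\geq 1$, not $\geq 2$. You have imported the definition from the proof of Lemma~\ref{lm:distillation-case-2}.

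Second, your construction for ``$d(v,\phi_i(x))=0$ for all $v\in B_0$'' sets $\phi_{s_0+1}(x)=v^*\in A_0$ and $\phi_{s_0+1}(z)=v^\star\in B_0$, which requires $d(v^\star,v^*)>0$. But Claim~\ref{clm:no0to1} says $d(v,u)=0$ for \emph{every} $u\in A_0$ and $v\in B_0$—there is no ``careful range'' of exceptions—so this step always fails. The paper's manoeuvre is different and essential: it keeps $\phi_i(x)$ fixed, sets $\phi_{s_0+1}(x)=\phi_{s_0+1}(y)=u$ (your $v^*$) and $\phi_{s_0+1}(z)=\phi_i(z)$ (legal since $d(\phi_i(z),u)>0$), and then reassigns $\phi_i(z)\leftarrow v^\star$ (legal since $d(v^\star,\phi_i(x))>0$). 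Your phrase ``route via $\phi_i(z)$'' may be gesturing towards this swap, but as written it is not a construction.

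Third, and most seriously, your argument for ``$d(\phi_i(y),v)=0$ for all $v\in A_0$'' moves $\phi_i(x)$ to $v^*$. This alters $\tilde{w}$ on vertices of $A$ and hence alters $A_0$ itself: the old location of $\phi_i(x)$ may now enter $A_0$, and every $i'\in[s_0]$ with positive weight from $\phi_{i'}(y)$ to that vertex then contributes a \emph{new} pair to the count~\eqref{eq:minimise-this-please}. You have no control over this, so the count need not decrease. The paper avoids this entirely by moving only $\phi_i(y)$, and only within $B$: having first established $d(v,\phi_i(x))=0$ for all $v\in B_0$, it uses~\itref{prop:likeregularity} and $|B_0|\geq(\alpha/64)r$ to find $v\in B_0$ with $d(\phi_i(x),v)>0$ that is also a $D$-neighbour of $\phi_i(z)$, and reassigns $\phi_i(y)\leftarrow v$. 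Since both the old and new $\phi_i(y)$ lie in $B$, the set $A_0$ is unchanged; and since $v\in B_0$, Claim~\ref{clm:no0to1} gives $d(v,u')=0$ for all $u'\in A_0$, so the contribution from index $i$ drops to zero while all other contributions are untouched. This is also why the paper proves the three conclusions in a different order from yours (first the $B_0$ condition, then $j_i=1$, then the $A_0$ condition): each step supplies the ingredient the next one needs while keeping $\phi_i(x)$ fixed throughout.
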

\begin{proof}[Proof of Claim~\ref{clm:good-types}]

Let $i\in[s_0]$. Assume there is some $u\in A_0$ with $d(\phi_i(z),u)>0$, for otherwise \itref{good-type-1} holds.

Now, if there is some $v\in B_0$ with $d(v,\phi_i(x))>0$, set $\phi_{s_0+1}(x),\phi_{s_0+1}(y)=u$ and $\phi_{s_0+1}(z)=\phi_i(z)$, and then switch $\phi_i(z)=v$. This contradicts the maximality of $s_0$.

Thus, $d(v,\phi_i(x))=0$ for every $v\in B_0$. As $|B_0|\geqslant (\alpha/64)\cdot r$, we may now fix some $v\in B_0$ with $d(\phi_i(x),v)>0$ and $d(v,\phi_i(z))+d(\phi_i(z),v)>0$, by \itref{prop:likeregularity}. Then we must have that $j_i=1$, else we could switch $\phi_i(y)=v$ and $j_i=1$, contradicting the maximality of $|\{i\in[s_0]:j_i=1\}|$.

Now, note that, by Claim~\ref{clm:no0to1}, $|\{u'\in A_0:d(v,u')>0\}|=0$. Therefore, if $|\{u'\in A_0:d(\phi_i(y),u')>0\}|>0$, we could switch $\phi_i(y)=v$ to reduce $\sum_{i'\in[s_0]}|\{u'\in A_0:d(\phi_{i'}(y),u')>0\}|$ while leaving $A_0$ unmodified, a contradiction to the minimisation of \eqref{eq:minimise-this-please}. Thus, $|\{u'\in A_0:d(\phi_i(y),u')>0\}|=0$, and hence \itref{good-type-2} holds.
\renewcommand{\qedsymbol}{$\boxdot$}
\end{proof}
\renewcommand{\qedsymbol}{$\square$}

Given $u\in A_0$, let $I_1(u)$ be the set of $i\in[s_0]$ which satisfy \itref{good-type-1} and are such that $d(u,\phi_i(z))=1$ and $\phi_i^{-1}(u)=\emptyset$, and let $I_2(u)$ be the set of $i\in[s_0]\setminus I_1(u)$ which satisfy \itref{good-type-2} and are such that $d(u,\phi_i(y))=1$ and $\phi_i^{-1}(u)=\emptyset$. Pick $j_t'\in\{j\in[r-1]:a_j\in A_0\}$ such that $|I_1(a_{j_t'})|+|I_2(a_{j_t'})|$ is maximised, and set $I_1=I_1(a_{j_t'})$, $I_2=I_2(a_{j_t'})$. By relabelling, we may assume that $I_1\cup I_2=[s_1]$ for some $s_1\leqslant s_0$.

\begin{claim}\label{clm:I1-and-I2}
$s_1\geqslant(1-\sqrt{\epsilon})s_0$.
\end{claim}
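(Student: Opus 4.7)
The plan is a standard double-counting argument, using the $\epsilon$-completeness of the edge weights together with Claim~\ref{clm:good-types} to show that each $i\in[s_0]$ lies in $I_1(a_u)\cup I_2(a_u)$ for all but a small number of $u$ with $a_u\in A_0$; averaging then produces the desired index $j_t'$.

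More concretely, I would first prove the following for any fixed $i\in[s_0]$: the number of $u\in[r-1]$ with $a_u\in A_0$ and $i\in I_1(a_u)\cup I_2(a_u)$ is at least $|A_0|-\epsilon r-3$. Suppose $i$ satisfies \itref{good-type-1}, and write $\phi_i(z)=b_k$. By \itref{good-type-1} we have $d(\phi_i(z),a_u)=d(k,u)=0$ for every $u$ with $a_u\in A_0$. Applying \itref{prop:likeregularity} at the vertex $k\in[r]$ shows that for all but at most $\epsilon r$ values of $u\in[r]\setminus\{k\}$, $d(u,k)+d(k,u)=1$; combined with $d(k,u)=0$ this forces $d(a_u,\phi_i(z))=d(u,k)=1$. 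Since at most three indices $u$ satisfy $\phi_i^{-1}(a_u)\neq\emptyset$, at least $|A_0|-\epsilon r-3$ vertices $a_u\in A_0$ have $i\in I_1(a_u)$. If instead $i$ satisfies \itref{good-type-2} but not \itref{good-type-1}, then $j_i=1$ and the identical argument applied to $\phi_i(y)\in A$ in place of $\phi_i(z)$ shows that $i\in I_2(a_u)$ for at least $|A_0|-\epsilon r-3$ choices of $a_u\in A_0$. Claim~\ref{clm:good-types} guarantees at least one case applies.

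Summing over $i\in[s_0]$ and averaging over the set $\{j\in[r-1]:a_j\in A_0\}$, which has size at least $(\alpha/32)r$ by the bound obtained just after \eqref{eq:J_0-not-too-small}, some index $j\in[r-1]$ with $a_j\in A_0$ satisfies
\[
|I_1(a_j)|+|I_2(a_j)|\;\geq\; s_0\cdot\frac{|A_0|-\epsilon r-3}{|A_0|}\;\geq\; s_0\left(1-\frac{32\epsilon}{\alpha}-\frac{96}{\alpha r}\right)\;\geq\;(1-\sqrt{\epsilon})s_0,
\]
where the last inequality uses the hierarchy $1/r\ll\epsilon\ll\alpha$. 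By the maximality in the choice of $j_t'$, $|I_1|+|I_2|=|I_1(a_{j_t'})|+|I_2(a_{j_t'})|\geq (1-\sqrt{\epsilon})s_0$, which gives $s_1\geq (1-\sqrt{\epsilon})s_0$ as required.

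No single step here is the obstacle; the only subtle point is translating \itref{prop:likeregularity}, which is phrased for edge weights in $D$ on vertex set $[r]$, to the weights of $\bar{D}$ on $A\cup B$, but this is immediate from the definition of $\bar{D}$ (edge weights $d(a_u,b_k)=d(u,k)$ and similarly for the other three types). The rest is the double count and the hierarchy estimates, both of which are routine.
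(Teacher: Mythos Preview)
Your proof is correct and follows essentially the same double-counting argument as the paper: for each $i\in[s_0]$, use Claim~\ref{clm:good-types} together with \itref{prop:likeregularity} to show $i\in I_1(a_u)\cup I_2(a_u)$ for all but $O(\epsilon r)$ many $a_u\in A_0$, then average over $A_0$ (using $|A_0|\geq(\alpha/32)r$) and invoke the maximality of $j_t'$. One small slip: in the \itref{good-type-2} case you write ``$\phi_i(y)\in A$'', but in fact $j_i=1$ there, so $\phi_i(y)\in B$ by \ref{sub-1-d2}; this does not affect the argument, since all you use is $d(\phi_i(y),v)=0$ for $v\in A_0$ and \itref{prop:likeregularity}, both of which are indifferent to whether $\phi_i(y)$ lies in $A$ or $B$.
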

\begin{proof}[Proof of Claim~\ref{clm:I1-and-I2}]

If $i\in[s_0]$ satisfies \itref{good-type-1}, then, by \itref{prop:likeregularity}, $d(u,\phi_i(z))=1$ holds for all but at most $\epsilon r$ many $u\in A_0$. Similarly, if $i\in[s_0]$ satisfies \itref{good-type-2}, then $d(u,\phi_i(y))=1$ holds for all but at most $\epsilon r$ many $u\in A_0$. In addition, for each $i\in[s_0]$, we have $\phi_i^{-1}(u)\neq\emptyset$ for at most three $u\in A_0$. Therefore, for every $i\in[s_0]$, we have $i\in I_1(u)\cup I_2(u)$ for all but at most $2\epsilon r+3$ many $u\in A_0$. In particular,
\begin{equation*}
    \sum_{u\in A_0}(|I_1(u)|+|I_2(u)|)\geqslant s_0\cdot(|A_0|-3\epsilon r).
\end{equation*}
Noting that $|I_1|+|I_2|\geqslant\frac{1}{|A_0|}\sum_{u\in A_0}(|I_1(u)|+|I_2(u)|)$ and $|A_0|\geqslant(\alpha/32)\cdot r$, we deduce $s_0-|I_1|-|I_2|\leqslant\sqrt{\epsilon}s_0$, and hence the claim.
\renewcommand{\qedsymbol}{$\boxdot$}
\end{proof}
\renewcommand{\qedsymbol}{$\square$}

Let $B_1\subseteq B_0$ be the subset of vertices $v\in B_0$ with $d(a_{j_t'},v)=1$, and note, by Claim~\ref{clm:no0to1} and \itref{prop:likeregularity}, that $|B_0\setminus B_1|\leqslant\epsilon r$.
Let $q$ be a new vertex disjoint from $V(H_{4,1})$ and add it to $H_{4,1}$ to get $H_{4,1}'$. Let $\beta_1(q)=\gamma$. Let $s_1'\leqslant s_1$ be maximal for which, for each $i\in[s_1']$, we can define $\psi_i(q)\in B_1$ such that the following hold.
\stepcounter{propcounter}
\begin{enumerate}[label = {\bfseries \Alph{propcounter}\arabic{enumi}}]
\item For each $i\in[s_1']$ and $u\in \{x,y,z\}$, $d(\psi_i(q),\phi_i(u))+d(\phi_i(u),\psi_i(q))>0$.\label{Q1}
\item For each $v\in B$, \label{Q2}
\begin{equation}\label{eqn-wvbarwv}
\tilde{w}_v+\gamma\cdot |\{i\in [s_1']:\psi_i(q)=v\}|\leq w_v.
\end{equation}
\end{enumerate}
Let $\psi_i:H_{4,1}'\to \bar{D}$ be defined by this choice of $\psi_i(q)$ and by setting $\psi_i(v)=\phi_i(v)$ for each $v\in V(H_{4,1})$.

\begin{claim}\label{clm:s=s}
$s_1'=s_1$.
\end{claim}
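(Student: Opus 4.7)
The plan is to argue by contradiction from $s_1'<s_1$, producing a valid extension $\psi_{s_1'+1}(q)\in B_1$ that contradicts the maximality of $s_1'$. Two things must be verified for this choice: the adjacency condition \ref{Q1} and the weight bound \ref{Q2}.

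For \ref{Q1}, I apply \itref{prop:likeregularity} in $D$ (which transfers to $\bar{D}$ since $\bar{D}$'s edge weights are inherited from $D$) to each of $\phi_{s_1'+1}(x), \phi_{s_1'+1}(y), \phi_{s_1'+1}(z)$ in turn. For any fixed $w\in V(\bar D)$ there are at most $2\epsilon r$ vertices $v\in B$ with $d(v,w)+d(w,v)=0$, so letting $B_{\mathrm{bad}}\subseteq B_1$ denote the set of vertices in $B_1$ violating \ref{Q1} for $\phi_{s_1'+1}$, a union bound yields $|B_{\mathrm{bad}}|\leq 6\epsilon r$.

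For \ref{Q2}, I run a capacity argument. If $\gamma=0$ then \eqref{eqn-wvbarwv} is automatic by \ref{sub-3-d2}, and the bound $|B_1|\geq |B_0|-\epsilon r\geq(\alpha/64-\epsilon)r$ (where $|B_0\setminus B_1|\leq\epsilon r$ follows from Claim~\ref{clm:no0to1} together with \itref{prop:likeregularity}) comfortably exceeds $|B_{\mathrm{bad}}|$, so any $v\in B_1\setminus B_{\mathrm{bad}}$ works. Assume now $\gamma>0$. For each $v\in B_1$ set $c_v=\lfloor(w_v-\tilde w_v)/\gamma\rfloor$, the maximum number of times $v$ may serve as some $\psi_i(q)$. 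Since $v\in B_0\supseteq B_1$ forces $w_v-\tilde w_v\geq 1\geq\gamma$, each $c_v\geq 1$. Combining \eqref{eq:case3-J_1} (dropping the nonnegative $\beta_1(y)(s-s_0)$ term) with $|B_0\setminus B_1|\leq\epsilon r$ and $w_v\leq(1+\gamma+\alpha)s/r\leq 3s/r$ yields $\sum_{v\in B_1}(w_v-\tilde w_v)\geq(\gamma+\alpha/16-3\epsilon)s$, and hence
\[
\sum_{v\in B_1}c_v\;\geq\;\frac{(\gamma+\alpha/16-3\epsilon)s}{\gamma}-r\;=\;s+\frac{(\alpha/16-3\epsilon)s}{\gamma}-r.
\]
The capacity absorbed by $B_{\mathrm{bad}}$ is at most $|B_{\mathrm{bad}}|\cdot(3s/r)/\gamma\leq 18\epsilon s/\gamma$, so the capacity available in $B_1\setminus B_{\mathrm{bad}}$ is at least $s+(\alpha/16-21\epsilon)s/\gamma-r$, which strictly exceeds $s_1'\leq s_1\leq s_0<s$ because $1/s\ll 1/r\ll\epsilon\ll\alpha$. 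As the total number of $q$-placements so far is exactly $s_1'$, pigeonhole delivers some $v\in B_1\setminus B_{\mathrm{bad}}$ with strictly positive spare capacity, and setting $\psi_{s_1'+1}(q)=v$ gives the desired extension.

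The main obstacle is purely bookkeeping; the real work was already done in establishing \eqref{eq:case3-J_1}, which guarantees the surplus $\gamma s+(\alpha/16)s$ of unused $B_0$-weight. This surplus is comfortably large enough to absorb both the $O(\epsilon s)$ loss to $B_0\setminus B_1$ and the $O(\epsilon s/\gamma)$ loss to $B_{\mathrm{bad}}$ while still leaving capacity for one more $q$-image.
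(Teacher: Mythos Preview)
Your proof is correct and follows essentially the same approach as the paper's: assume $s_1'<s_1$, use $\epsilon$-completeness to bound the vertices of $B_1$ failing \ref{Q1} for $\phi_{s_1'+1}$, and then invoke \eqref{eq:case3-J_1} to show enough spare weight remains to place one more $q$. The only cosmetic differences are that you handle $\gamma=0$ explicitly and phrase the weight argument as a capacity/pigeonhole count, whereas the paper bounds $\sum_{v\in B_1'}(w_v-\tilde w_v-\gamma\cdot|\{i:\psi_i(q)=v\}|)$ directly and averages.
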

\begin{proof}[Proof of Claim~\ref{clm:s=s}]
Suppose for contradiction that $s_1'<s_1$. Let $B_1'$ be the set of $v\in B_1$ such that $d(v,\phi_{s_1'+1}(u))+d(\phi_{s_1'+1}(u),v)>0$ for every $u\in\{x,y,z\}$. Because the edge weights $d(e)$, $e\in E(D)$, are $\epsilon$-complete and $|B_0\setminus B_1|\leqslant\epsilon r$, we have $|B_0\setminus B_1'|\leqslant 4\epsilon r$. Therefore, we have
\begin{align*}
    \sum_{v\in B_1'}(w_v-\tilde{w}_v&-\gamma\cdot|\{i\in[s_1']:\psi_i(q)=v\}|)\geqslant\sum_{v\in B_0}(w_v-\tilde{w}_v)-\sum_{v\in B_0\setminus B_1'}w_v-\gamma\cdot s_1'\\
    &\overset{\eqref{eq:case3-J_1}}{\geqslant}(\gamma+\alpha/16)\cdot s-12\epsilon\cdot s-\gamma\cdot s_1'\geqslant(\alpha/32)\cdot s\geqslant\gamma\cdot r
\end{align*}
Thus, there is some $v\in B_1'$ such that $\tilde{w}_v+\gamma\cdot|\{i\in[s_1']:\psi_i(q)=v\}|+\gamma\leqslant w_v$. But then setting $\psi_{s_1'+1}(q)=v$ contradicts the maximality of $s_1'$.
\renewcommand{\qedsymbol}{$\boxdot$}
\end{proof}
\renewcommand{\qedsymbol}{$\square$}
Let $m$ satisfy $\epsilon\ll1/m\ll \alpha$.
\begin{claim} \label{clm-forcase1}
For each $i\in I_1$, there are homomorphisms $\phi_{i,j}$ and indicators $k_{i,j}\in \{2,3\}$, $j\in [m-1]$, such that, for each $j\in [m-1]$, $\phi_{i,j}$ is a homomorphism from $H_{4,k_{i,j}}$ to $\bar{D}[\psi_i(V(H_{4,1}'))]$, and the following hold.
\stepcounter{propcounter}
\begin{enumerate}[label = {\emph{\bfseries \Alph{propcounter}\arabic{enumi}}}]
    \item \label{prop-x-okay-case1}$d(a_{j_t'},\phi_{i,j}(x))=1$.
    \item For each $v\in \psi_i(V(H_{4,1}'))$, $\beta_1(\psi^{-1}_i(v))\geq \sum_{j\in [m-1]}\beta_{k_{i,j}}(\phi_{i,j}^{-1}(v))/m$.\label{prop-fitsinside-case1}
\end{enumerate}
\end{claim}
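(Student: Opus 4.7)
The plan is to take the image of $x$ in each small homomorphism to be one of $\phi_i(z)$ or $\psi_i(q)$, and the image of $y$ to be one of $\phi_i(x)$ or $\phi_i(y)$. Property \ref{prop-x-okay-case1} is then automatic: for $i\in I_1$ we have $d(a_{j_t'},\phi_i(z))=1$ by the definition of $I_1$, and $d(a_{j_t'},\psi_i(q))=1$ since $\psi_i(q)\in B_1$. Each $H_{4,k_{i,j}}$ with $k_{i,j}\in\{2,3\}$ is a single directed edge between $x$ and $y$, so each $\phi_{i,j}$ is specified by an ordered pair $(u_x,u_y)$ of images in $\psi_i(V(H_{4,1}'))$ together with a choice of $k_{i,j}$ matching whichever direction of the edge between $u_x$ and $u_y$ in $\bar{D}$ has positive weight. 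All four candidate pairings $(\phi_i(z),\phi_i(x))$, $(\phi_i(z),\phi_i(y))$, $(\psi_i(q),\phi_i(x))$, $(\psi_i(q),\phi_i(y))$ admit such a realisation: the first from $\phi_i$ being a homomorphism of $H_{4,1}$, which gives $d(\phi_i(z),\phi_i(x))>0$ and so $k_{i,j}=2$ works; the second from \ref{sub-0-d2}; and the last two from \ref{Q1}.

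Let $n_1,n_2,n_3,n_4$ record how many of the $m-1$ small homomorphisms use each of these four pairings respectively, so $n_1+n_2+n_3+n_4=m-1$. Using $\beta_2(x)=\beta_3(x)=\beta_x+\beta_z+\beta_w$ and $\beta_2(y)=\beta_3(y)=\beta_y+\beta_u$, property \ref{prop-fitsinside-case1} amounts to four inequalities, one at each vertex of $\psi_i(V(H_{4,1}'))$. Summing the two inequalities at $\{\phi_i(z),\psi_i(q)\}$ yields $(m-1)(\beta_x+\beta_z+\beta_w)\leq m(\beta_1(z)+\gamma)$, and summing those at $\{\phi_i(x),\phi_i(y)\}$ yields $(m-1)(\beta_y+\beta_u)\leq m(1-\beta_1(z))$. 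Both are verified by a short case split on whether $\beta_z+\beta_w\leq 1-\beta_z$: in the first case $\beta_1(z)+\gamma=\beta_z+\beta_w+\max\{\beta_x,\beta_z\}\geq \beta_x+\beta_z+\beta_w$ and $1-\beta_1(z)=\beta_x+\beta_y+\beta_u\geq\beta_y+\beta_u$; in the second case $\beta_1(z)+\gamma=1\geq\beta_x+\beta_z+\beta_w$ and $1-\beta_1(z)=\beta_z\geq\beta_x+\beta_y+\beta_u\geq\beta_y+\beta_u$.

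Given these aggregate budgets, I would set the marginals $n_1+n_2\approx(m-1)\beta_1(z)/(\beta_1(z)+\gamma)$ and $n_1+n_3\approx(m-1)\beta_1(x)/(\beta_1(x)+\beta_1(y))$, interpreting each ratio as $0$ or $1$ when the denominator vanishes and assigning no small homomorphisms to any vanishing $\beta_1$-vertex. The four multiplicities $n_k$ are then determined uniquely by these marginals and the total $m-1$. The per-vertex inequalities of \ref{prop-fitsinside-case1} then hold with additive rounding error of order $O(1)$, absorbed because $1/m\ll\alpha$. The main obstacle I anticipate is the careful bookkeeping around vanishing $\beta_1$-weights (which forces a reduction to three or fewer candidate pairings) and around configurations in $\bar{D}$ where several of the four pairings are forced into the same type $H_{4,k_{i,j}}$; both are minor complications that the case split above handles uniformly.
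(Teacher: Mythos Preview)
Your approach is essentially the paper's: place $\phi_{i,j}(x)$ in $\{\psi_i(z),\psi_i(q)\}$ and $\phi_{i,j}(y)$ in $\{\psi_i(x),\psi_i(y)\}$, verify \ref{prop-x-okay-case1} from the definitions of $I_1$ and $B_1$, and check the aggregate capacities $\beta_1(z,q)\geq\beta_2(x)$ and $\beta_1(x,y)\geq\beta_2(y)$ via the same case split. The paper simply allocates the $m-1$ homomorphisms greedily, maintaining \ref{prop-fitsinside-case1} as an invariant; since the two aggregate capacities hold and only $m-1$ of the $m$ ``slots'' are used, at each step at least one vertex in each pair has room for the next placement.

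Two small corrections. First, the four counts $n_1,n_2,n_3,n_4$ are \emph{not} uniquely determined by the two marginals $n_1+n_2$, $n_1+n_3$ and the total $m-1$; there is a one-parameter family, though any nonnegative choice in that family works, so this is harmless. Second, the rounding is not ``absorbed because $1/m\ll\alpha$'': the constraints in \ref{prop-fitsinside-case1} involve only the $\beta$-weights, not $\alpha$. What actually makes an integer marginal exist is that the feasible interval for (say) $n_1+n_2$ has length $m(\beta_1(z)+\gamma)/\beta_2(x)-(m-1)\geq 1$, using $\beta_2(x)\leq\beta_1(z)+\gamma$. The paper's greedy phrasing sidesteps this bookkeeping entirely.
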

\begin{proof}[Proof of Claim~\ref{clm-forcase1}]
Fixing $i\in I_1$, for each $j'=1,\ldots,m-1$ in turn, choose a\linebreak homomorphism $\phi_{i,j'}$ from $H_{4,2}$ or $H_{4,3}$ to $\bar{D}[\psi_i(V(H_{4,1}'))]$ such that the following properties hold.
\begin{enumerate}[label =(\roman{enumi})]
    \item $\beta_1(\psi_i^{-1}(v))\geqslant\sum_{j\in[j']}\beta_{k_{i,j}}(\phi_{i,j}^{-1}(v))/m$ for each $v\in\psi_i(H_{4,1}')$.
    \item $\phi_{i,j'}(x)\in \{\psi_i(z),\psi_i(q)\}$.
    \item
    \begin{enumerate}[label=(\alph{enumii})]
        \item $\phi_{i,j'}(y)\in\{ \psi_i(x),\psi_i(y)\}$ if $\phi_{i,j'}$ is a homomorphism from $H_{4,2}$.
        \item $\phi_{i,j'}(z)\in\{ \psi_i(x),\psi_i(y)\}$ if $\phi_{i,j'}$ is a homomorphism from $H_{4,3}$.
    \end{enumerate}
\end{enumerate}
Then \itref{prop-x-okay-case1} holds, through the definition of either $I_1$ (if $\phi_{i,j'}(x)=\psi_i(z)$) or $B_1$ (if $\phi_{i,j'}(x)=\psi_i(q)$).

Note that this is possible because, as
\begin{equation*}
\beta_1(z,q)=\min{\{\beta_w+\beta_z,1-\beta_z\}}+\max{\{\beta_x,\beta_z\}}\geq\min{\{\beta_w+\beta_z+\beta_x,1\}}=\beta_2(x)=\beta_3(x),
\end{equation*}
there is enough room in $\{\psi_i(z),\psi_i(q)\}$ for $\phi_{i,j'}(x)$, and, as
\[
\beta_1(x,y)=1-\beta_1(z)
\geq 1-(\beta_w+\beta_z)= \beta_x+\beta_u+\beta_y\geq \beta_2(y)=\beta_3(z).
\]
there is enough room in $\{\psi_i(x),\psi_i(y)\}$ for $\phi_{i,j'}(y)$.
We also use that there is weight in at least one direction between any pair from $\{\psi_i(z),\psi_i(q)\}$ and $\{\psi_i(x),\psi_i(y)\}$, where the direction of the edge gives whether we embed $H_{4,2}$ or $H_{4,3}$.
\renewcommand{\qedsymbol}{$\boxdot$}
\end{proof}
\renewcommand{\qedsymbol}{$\square$}

\begin{claim}\label{clm-forcase2}
For each $i\in I_2$, there are homomorphisms $\phi_{i,j}$, $j\in [m-1]$, such that, for each $j\in [m-1]$, $\phi_{i,j}$ is a homomorphism from  $H_{4,4}$ to $\bar{D}[\psi_i(V(H_{4,1}'))]$, and the following holds.
\stepcounter{propcounter}
\begin{enumerate}[label = {\emph{\bfseries \Alph{propcounter}\arabic{enumi}}}]
    \item \label{prop-x-okay-case2}$d(a_{j_t'},\phi_{i,j}(x))=1$.
    \item For each $v\in\psi_i(V(H_{4,1}'))$, $\beta_1(\psi^{-1}_i(v))\geq \sum_{j\in [m-1]}\beta_4(\phi_{i,j}^{-1}(v))/m$. \label{prop-fitsinside-case2}
\end{enumerate}
\end{claim}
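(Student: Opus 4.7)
The plan is to use the structural information accumulated about $i \in I_2$ together with the carefully chosen $\psi_i(q)$ to build each $\phi_{i,j}$ with the same target on $y$ and $z$ and only the image of $x$ varying with $j$. Since $i \in I_2$ gives \itref{good-type-2} and $j_i = 1$, we have $\psi_i(x) = \phi_i(x) \in A$ with $\psi_i(x) \to \psi_i(y)$ (from the edge $xy$ in $H_{4,1}$) and $\psi_i(z) \to \psi_i(x)$ (from the edge $zx$). Since $i \in I_2(a_{j_t'})$ we have $d(a_{j_t'}, \psi_i(y)) = 1$, and since $\psi_i(q) \in B_1 \subseteq B_0$ we have $d(a_{j_t'}, \psi_i(q)) = 1$. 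Finally, applying \itref{good-type-2} to $v = \psi_i(q) \in B_0$ gives $d(\psi_i(q), \psi_i(x)) = 0$, which combined with \ref{Q1} forces $d(\psi_i(x), \psi_i(q)) > 0$.

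The plan is then to set $\phi_{i,j}(y) := \psi_i(x)$ and $\phi_{i,j}(z) := \psi_i(z)$ for every $j \in [m-1]$, and to distribute the $m-1$ choices of $\phi_{i,j}(x)$ between $\psi_i(y)$ and $\psi_i(q)$. The two edges $yx, zy$ of $H_{4,4}$ then require positive weight on $\psi_i(x) \to \phi_{i,j}(x)$ and $\psi_i(z) \to \psi_i(x)$, both of which have just been established. Property \ref{prop-x-okay-case2} follows immediately from $d(a_{j_t'}, \psi_i(y)) = d(a_{j_t'}, \psi_i(q)) = 1$, whichever of the two options is taken for $\phi_{i,j}(x)$.

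For \ref{prop-fitsinside-case2}, writing $k_1$ for the number of $j \in [m-1]$ with $\phi_{i,j}(x) = \psi_i(y)$, the four non-trivial constraints that arise are
\[
(m-1)\beta_4(y)/m \leq \beta_1(x), \quad (m-1)\beta_4(z)/m \leq \beta_1(z),
\]
\[
k_1\beta_4(x)/m \leq \beta_1(y), \quad (m-1-k_1)\beta_4(x)/m \leq \gamma = \beta_1(q),
\]
(at $\psi_i(x)$, $\psi_i(z)$, $\psi_i(y)$ and $\psi_i(q)$ respectively; any failure of $\psi_i$ to be injective only pools available weight and so can only help). The first inequality holds by~\eqref{eqn-beta1x-beta4y}, and the second since $\beta_4(z) = \beta_w \leq \min\{\beta_w+\beta_z,1-\beta_z\} = \beta_1(z)$. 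Assuming $\beta_4(x) > 0$ (otherwise the last two are vacuous), the last two inequalities ask for an integer $k_1 \in \{0,1,\ldots,m-1\}$ with $m-1-m\gamma/\beta_4(x) \leq k_1 \leq m\beta_1(y)/\beta_4(x)$, and the interval here has length $m(\beta_1(y)+\gamma)/\beta_4(x) - (m-1) \geq 1$ provided that $\beta_1(y)+\gamma \geq \beta_4(x)$, which always contains a valid integer after intersecting with $[0,m-1]$.

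The main obstacle is thus the inequality $\beta_1(y) + \gamma \geq \beta_4(x)$, which I plan to verify by case analysis on whether $\beta_z \leq \beta_x + \beta_y$. In the first case $\beta_1(y) = \beta_x + \beta_y - \gamma$ and $\beta_4(x) \leq \max\{\beta_x+\beta_y,\beta_z\} = \beta_x+\beta_y$, giving $\beta_1(y)+\gamma = \beta_x+\beta_y \geq \beta_4(x)$; in the second case $\beta_1(y) = 0$ and $\beta_4(x) \leq \beta_z \leq \gamma$. This will complete the construction.
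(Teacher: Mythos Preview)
Your proposal is correct and follows essentially the same approach as the paper: set $\phi_{i,j}(y)=\psi_i(x)$, $\phi_{i,j}(z)=\psi_i(z)$, and distribute $\phi_{i,j}(x)$ between $\psi_i(y)$ and $\psi_i(q)$, using \itref{good-type-2} together with \ref{Q1} to get $d(\psi_i(x),\psi_i(q))>0$ and the definitions of $I_2$ and $B_1$ for \itref{prop-x-okay-case2}. The only differences are cosmetic: the paper argues greedily step by step rather than solving for the split $k_1$ explicitly, and it obtains the key inequality $\beta_1(y)+\gamma\geq\beta_4(x)$ in one line via $\beta_1(y)+\gamma=\max\{\beta_x+\beta_y,\gamma\}=\max\{\beta_x+\beta_y,\beta_z\}\geq\beta_4(x)$, rather than by your case split on $\beta_z\lessgtr\beta_x+\beta_y$.
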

\begin{proof}[Proof of Claim~\ref{clm-forcase2}]
Fixing $i\in I_2$, for each $j'=1,\ldots,m-1$ in turn, choose a homomorphism $\phi_{i,j'}$ from $H_{4,4}$ to $\bar{D}[\psi_i(V(H_{4,1}'))]$ such that 
$\beta_1(\psi^{-1}_i(v))\geq \sum_{j\in [j']}\beta_4(\phi_{i,j}^{-1}(v))/m$ for each $v\in \psi_i(H_{4,1}')$, $\phi_{i,j'}(x)\in \{\psi_i(y),\psi_i(q)\}$, $\phi_{i,j'}(z)=\psi_i(x)$ and $\phi_{i,j'}(w)=\psi_i(z)$. Then \itref{prop-x-okay-case2} holds, through the definition of either $I_2$ (if $\phi_{i,j'}(x)=\psi_i(y)$) or $B_1$ (if $\phi_{i,j'}(x)=\psi_i(q)$).

Note that this is possible using the following. As $\beta_1(y)+\beta_1(q)=\beta_1(y)+\gamma=\max\{\beta_x+\beta_y,\gamma\}=\max\{\beta_x+\beta_y,\beta_z\}\geq \beta_4(x)$,  there is enough room in $\{\psi_i(y),\psi_i(q)\}$ for $\phi_{i,j'}(x)$. As $\beta_1(z)\geq\min\{\beta_w,1-\beta_z\}\geq \beta_w=\beta_4(w)$, there is enough room in $\psi_i(z)$ for $\phi_{i,j'}(w)$. Finally, recall from \eqref{eqn-beta1x-beta4y}, that $\beta_4(z)\leq \beta_1(x)$, so there is enough room in $\psi_i(x)$ for $\phi_{i,j'}(z)$. Because $d(\psi_i(x),\psi_i(y)),d(\psi_i(z),\psi_i(x))>0$ (as $\psi_i:H_{4,1}'\to\bar{D}$ is a homomorphism) and $d(\psi_i(x),\psi_i(q))>0$ (by~\itref{good-type-2} and~\ref{Q1}), we also have that $\phi_{i,j'}:H_{4,4}\to\bar{D}[\psi_i(V(H_{4,1}'))]$ is a homomorphism, as required. 
\renewcommand{\qedsymbol}{$\boxdot$}
\end{proof}
\renewcommand{\qedsymbol}{$\square$}

For each $i\in I_2$ and $j\in [m-1]$, let $k_{i,j}=4$. For each $v\in A\cup B$, let 
\begin{align}
\hat{w}_v&=\frac{1}{m}\sum_{i\in [s_1]}\sum_{j\in [m-1]}\beta_{k_{i,j}}(\phi_{i,j}^{-1}(v))
\overset{\text{\itref{prop-fitsinside-case1},\itref{prop-fitsinside-case2}}}{\leq} \sum_{i\in [s_1]}\beta_1(\psi_i^{-1}(v))\nonumber\\
&=\sum_{i\in [s_1]}\beta_1(\phi_i^{-1}(v))+\gamma\cdot |\{i\in [s_1]:\psi_i(q)=v\}|=\tilde{w}_v+\gamma\cdot |\{i\in [s_1]:\psi_i(q)=v\}|\overset{\eqref{eqn-wvbarwv}}{\leq} w_v.\label{eqn-hatwvwv}
\end{align}
We note that
\begin{align}
\sum_{v\in A_0\cup B_1}(w_v-\hat{w}_v)&\geqslant \sum_{v\in A_0\cup B_0}(w_v-\hat{w}_v)-3\epsilon\cdot s \nonumber\\
&\overset{\eqref{eqn-hatwvwv}}{\geqslant} 
\sum_{v\in A_0\cup B_0}(w_v-\tilde{w}_v)-3\epsilon \cdot s-\sum_{v\in A_0\cup B_0} \gamma\cdot |\{i\in [s_1]:\psi_i(q)=v\}|\nonumber
\\
&\overset{\eqref{eq:A_0+B_0}}{\geq} s-s_0+(\gamma+\alpha/8)\cdot s-\gamma\cdot s_1\overset{\text{Claim~\ref{clm:I1-and-I2}}}{\geq}
(1+\gamma)\cdot(s-s_1)+(\alpha/16)\cdot s. \label{eq:hatJAJB-not-too-small}
\end{align}
Furthermore,
\begin{align}
\sum_{v\in B_1}(w_v-\hat{w}_v)&\geqslant\sum_{v\in B_0}(w_v-\hat{w}_v)-3\epsilon\cdot s\overset{\eqref{eqn-hatwvwv}}{\geqslant} 
\sum_{v\in B_0}(w_v-\tilde{w}_v)-3\epsilon\cdot s-\sum_{v\in B_0} \gamma\cdot |\{i\in [s_1]:\psi_i(q)=v\}|\nonumber
\\
&\overset{\eqref{eq:case3-J_1}}{\geq} \beta_1(y)\cdot(s-s_0)+(\gamma+\alpha/32)\cdot s-\gamma\cdot s_1\overset{\text{Claim~\ref{clm:I1-and-I2}}}{\geq}
(\gamma+\beta_1(y))\cdot(s-s_1)+(\alpha/64) s. \label{eq:hatJB-not-too-small}
\end{align}

Take a maximal set $J\subseteq([s]\times[m])\setminus([s_1]\times[m-1])$ for which there are homomorphisms $\phi_{i,j}:H_{4,5}\to \bar{D}[A_0\cup B_1]$, $(i,j)\in J$ such that the following hold.
\stepcounter{propcounter}
\begin{enumerate}[label = {\bfseries \Alph{propcounter}\arabic{enumi}}]
\item For each $(i,j)\in J$, $\phi_{i,j}(x)\in B_1$.
\item For each $v\in A\cup B$, $\sum_{(i,j)\in J}\beta_5(\phi_{i,j}^{-1}(v))/m\leq w_v-\hat{w}_v$.
\end{enumerate}
Subject this choice of $J$, maximise
\begin{equation}\label{eqn-tomax}
|\{(i,j)\in J: \phi_{i,j}(z)\in A_0\}|.
\end{equation}

\begin{claim}\label{clm:J}
$J=([s]\times[m])\setminus([s_1]\times[m-1])$.
\end{claim}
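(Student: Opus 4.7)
The plan is to suppose for contradiction that $J\subsetneq ([s]\times[m])\setminus([s_1]\times[m-1])$ and to produce one additional homomorphism $\bar{\phi}:H_{4,5}\to\bar{D}[A_0\cup B_1]$ with $\bar{\phi}(x)\in B_1$ satisfying the weight constraint, which will contradict the maximality of $J$. For each $v\in A_0\cup B_1$ I would write $f_v=(w_v-\hat{w}_v)-\sum_{(i,j)\in J}\beta_5(\phi_{i,j}^{-1}(v))/m$ for the residual budget at $v$, and let $J'=\{(i,j)\in J:\phi_{i,j}(y)\in B_1\}$, so that the weight consumed in $A_0$ is $(|J|-|J'|)\beta_5(y)/m$ and the weight consumed in $B_1$ is $|J|\beta_5(x)/m+|J'|\beta_5(y)/m$. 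Since each $\phi_{i,j}$ contributes $\beta_5(V(H_{4,5}))/m=1/m$ in total, the assumption $|J|\leq s_1+m(s-s_1)-1$ combined with \eqref{eq:hatJAJB-not-too-small} yields $\sum_{v\in A_0\cup B_1}f_v\geq \gamma(s-s_1)+\alpha s/32$.

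I would then split on the sets $A_0^{\star}:=\{v\in A_0:f_v\geq \beta_5(y)/m\}$ and $B_1^{\star}:=\{v\in B_1:f_v\geq \beta_5(x)/m\}$. When both contain more than $2\epsilon r$ vertices, I would pick $\bar{\phi}(y)\in A_0^{\star}$ and $\bar{\phi}(x)\in B_1^{\star}$ with $d(\bar{\phi}(y),\bar{\phi}(x))>0$: such a pair exists because Claim~\ref{clm:no0to1} gives $d(B_1,A_0)=0$, so \itref{prop:likeregularity} forces $d(A_0,B_1)>0$ for all but $O(\epsilon r)$ pairs. If $A_0^{\star}$ is small (of size less than $2\epsilon r$) but $B_1^{\star}$ is still large, I would instead seek $\bar{\phi}(y),\bar{\phi}(x)\in B_1^{\star}$ with $d(\bar{\phi}(y),\bar{\phi}(x))>0$, again produced by \itref{prop:likeregularity}. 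In either subcase the extension contradicts the primary maximality of $|J|$, with the secondary maximization of \eqref{eqn-tomax} ensuring for the book-keeping that no element of $J'$ could have been cheaply rearranged to have its image of $y$ moved to $A_0$, freeing the same $B_1$-room.

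If both $A_0^{\star}$ and $B_1^{\star}$ have size less than $2\epsilon r$, then saturation forces $\sum_{v\in A_0}(w_v-\hat{w}_v)\leq (|J|-|J'|)\beta_5(y)/m+O(\epsilon s)$ and $\sum_{v\in B_1}(w_v-\hat{w}_v)\leq |J|\beta_5(x)/m+|J'|\beta_5(y)/m+O(\epsilon s)$, which sum to $\sum_{v\in A_0\cup B_1}(w_v-\hat{w}_v)\leq |J|/m+O(\epsilon s)$; combined with \eqref{eq:hatJAJB-not-too-small} this gives $|J|\geq m(s-s_1)+m\gamma(s-s_1)+\Omega(m\alpha s)$, which exceeds $s_1+m(s-s_1)$ since $s_1\leq s$ and $\epsilon\ll 1/m\ll\alpha$, contradicting the assumed upper bound on $|J|$. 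The main obstacle I anticipate is the middle subcase where both endpoints of the new homomorphism must sit inside $B_1$: Claim~\ref{clm:no0to1} carries no information about the orientations of edges within $B_1\subseteq B_0$, so compatibility of the pair has to come entirely from \itref{prop:likeregularity} together with the quantitative lower bound $|B_1^{\star}|\geq 2\epsilon r$, and one needs to verify that the addition of this $B_1$-internal homomorphism is not blocked by space already committed to existing elements of $J'$, which is precisely where the secondary maximization of \eqref{eqn-tomax} is used.
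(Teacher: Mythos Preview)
Your middle subcase has a genuine gap. You define $B_1^{\star}=\{v\in B_1:f_v\geq\beta_5(x)/m\}$ and then propose to place both $\bar\phi(x)$ and $\bar\phi(y)$ at distinct vertices of $B_1^{\star}$. But placing $y$ at a vertex $v$ requires $f_v\geq\beta_5(y)/m$, and under the hypotheses of Lemma~\ref{lm:distillation-case-3} one can easily have $\beta_5(y)>\beta_5(x)$ (e.g.\ $\beta_x=\beta_y=0$, $\beta_u=0.2$, $\beta_z=\beta_w=0.4$, giving $\beta_5(x)=0$ and $\beta_5(y)=1$). So membership in $B_1^{\star}$ does not guarantee room for $y$, and your appeal to the secondary maximisation of \eqref{eqn-tomax} does not repair this: that maximisation lets you relocate existing $\phi_{i,j}(y)$ into $A_0$ when there is room in $A_0$, but in your subcase you have assumed $|A_0^{\star}|<2\epsilon r$, so there may be no such room.

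The paper sidesteps this case-split entirely by exploiting the loops in $\bar D$. Since $d(j,j)=1$ for every $j$, the constant map sending all of $H_{4,5}$ to a single vertex $v\in B_1$ is a valid homomorphism with $\phi(x)\in B_1$; it only needs $f_v\geq 1/m$. Hence maximality of $J$ forces every $v\in B_1$ to be saturated to within $1/m$, which via \eqref{eq:hatJAJB-not-too-small} leaves at least $(\alpha/32)s$ of residual budget in $A_0$. That gives $\geq 2\epsilon r$ vertices of $A_0$ with room $\geq\beta_5(y)/m$, so the secondary maximisation forces $\phi_{i,j}(y)\in A_0$ for all $(i,j)\in J$. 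The contradiction then comes solely from the $B_1$ accounting: with $J'=\emptyset$ the weight used in $B_1$ is at most $|J|\beta_5(x)/m\leq((s-s_1)m+s)(\gamma+\beta_1(y))/m$, which is too small against \eqref{eq:hatJB-not-too-small}. Using the loop trick would also simplify your first subcase and eliminate your second one altogether.
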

\begin{proof}[Proof of Claim~\ref{clm:J}]
Suppose, for contradiction, that there is some $(i',j')\in  (([s]\times[m])\setminus([s_1]\times[m-1]))\setminus J$. We must then have, for each $v\in B_1$, that $\sum_{(i,j)\in J}\beta_5(\phi_{i,j}^{-1}(v))/m\geq w_v-\hat{w}_v-1/m$, for otherwise we can take the homomorphism $\phi_{i',j'}$ sending $H_{4,5}$ to $v$.

Therefore, we have
\begin{equation*}
\sum_{v\in A_0}\bigg(w_v-\hat{w}_v-\textstyle\frac{1}{m}\displaystyle-\sum_{(i,j)\in J}\beta_5(\phi_{i,j}^{-1}(v))/m\bigg)\overset{\eqref{eq:hatJAJB-not-too-small}}{\geqslant}\gamma\cdot (s-s_1)+(\alpha/16)\cdot s-|A_0|/m-|B_1|/m\geqslant(\alpha/32)\cdot s.
\end{equation*}
Thus, there must be at least $2\eps r$ vertices $v\in A_0$ with $\sum_{(i,j)\in J}\beta_5(\phi_{i,j}^{-1}(v))/m\leq w_v-\hat{w}_v-1/m$. Therefore, by Claim~\ref{clm:no0to1}, if there is some $(i,j)\in J$ with $\phi_{i,j}(y)\notin A_0$, then we could move $\phi_{i,j}(y)$ into $A_0$, and increase the value of \eqref{eqn-tomax}. Thus, we must have $\phi_{i,j}(y)\in A_0$ for each $(i,j)\in J\setminus([s_1]\times[m-1])$.

Therefore, using that $\beta_5(x)\leqslant\gamma+\beta_1(y)$ and $|J|\leqslant(s-s_1)m+s$,
\begin{align*}
0&\geqslant\sum_{v\in B_1}\bigg(w_v-\hat{w}_v-\textstyle\frac{1}{m}\displaystyle-\sum_{(i,j)\in J}\beta_5(\phi_{i,j}^{-1}(v))/m\bigg)\geqslant\sum_{v\in B_1}(w_v-\hat{w}_v)-|B_1|/m-((s-s_1)m+s)\cdot\beta_5(x)/m\\
&\geqslant\sum_{v\in B_1}(w_v-\hat{w}_v)-(s-s_1)\cdot(\gamma+\beta_1(y))-s/m-|B_1|/m\overset{\eqref{eq:hatJB-not-too-small}}{>}0,
\end{align*}
a contradiction. Therefore, $J=([s]\times[m])\setminus([s_1]\times[m-1])$.
\renewcommand{\qedsymbol}{$\boxdot$}
\end{proof}
\renewcommand{\qedsymbol}{$\square$}

For each $(i,j)\in ([s]\times[m])\setminus([s_1]\times [m-1])$, let $k_{i,j}=5$. 
Select $(\phi,i(\phi))$ uniformly at random from $(\phi_{i,j},k_{i,j})$, $(i,j)\in [s]\times [m]$.
\end{proof}

\subsection{Proof of Theorem~\ref{thm:extending-distillation}}\label{sect:extending-new}

We are now ready to complete this section by proving Theorem~\ref{thm:extending-distillation}. To give a brief overview of this proof, we again turn to our simplified situation: assume we are trying to embed a tree $T$ in a tournament $G$ and suppose we have $t\in V(T)$ so that $T-t$ consists of small components. Unlike in Sections~\ref{sec:lmcase2} and~\ref{sec:lmcase3}, $t$ can have both in- and out-neighbours in $T$. Let $T^+$ and $T^-$ be the trees covering the edges of $T$, intersecting only on $t$, so that $t$ has only out-neighbours in $T^+$ and only in-neighbours in $T^-$. As $G$ is a tournament with distinctly more vertices than $T$, each vertex $v\in V(G)$ either has enough out-neighbours in $G$ that we can embed the components of $T^+-t$ greedily into $N^+_G(v)$ or the components of $T^--t$ greedily into $N^-_G(v)$. If we partition $V(G)=V^+\cup V^-$ so that the former holds for vertices in $V^+$ and the latter holds for vertices in $V^-$, then, from this partition, either $G[V^+]$ will be large enough to embed $T^-$ (using our previous methods) or $G[V^-]$ will be large enough to embed $T^+$. By directional duality, we can assume that the latter case holds. This allows us to find a copy of $T^+$ in $G$ with $t$ embedded to $v_t$, a vertex of $G$ which has enough in-neighbours for us to greedily embed the components of $T^--t$. Of course, some of these in-neighbours may be occupied already by the embedding of $T^+$, but, by embedding $T^+$ in such a way to cover minimally many of these in-neighbours we will have there are enough in-neighbours to embed the components of $T^--t$, and complete the embedding.

For Theorem~\ref{thm:extending-distillation}, we do this in the setting of distillations, random homomorphisms and a weighted looped digraph $D$. We ultimately wish to find a random homorphism from $H$, where $H$ is the fully-looped oriented forest with vertex and edge sets given by
\begin{equation*}
\HDefinitionEquation
\end{equation*}
For each $\diamond\in\{+,-\}$, let $X^\diamond=\{x^\diamond,\bar{x}^\diamond\}$, and let $X=X^+\cup X^-$. 

Note that $H_0\cong H[\{x^+,y^+,z^+,u^+,w^+,\bar{x}^+,\bar{z}^+,\bar{u}^+,\bar{w}^+\}]$. Therefore, we will assume equality here by letting, for example, $x^+=x$. In addition, let $H_0'=H-V(H_0)$, and note that $H_0'$ is itself isomorphic to a copy of $H_0$ with all edges reversed. Here $H_0$ and $H_0'$ correspond to $T^+$ and $T^-$ in the sketch above.

Instead of partitioning $V(G)$ as $V^+\cup V^-$ we partition $V(D)$ as $J^+\cup J^-$ in a similar manner, and assume, by directional duality, that $J^-$ is large enough that we can apply Theorem~\ref{thm:overarching} to get a random homomorphism of $H_0$ into $D[J^-]$ satisfying \ref{sub-out-1}--\ref{sub-out-3} below (comparable to the embedding of $T^+$ into $G[V^-]$ in the sketch above) before minimising a certain property (comparable to the embedding of $T^+$ using minimally many in-neighbours of $v_t$ above). We then use the minimisation of the random homomorphism to extend it to cover $H_0'$, so that we have a random homorphism of $H$ into $D$. Finally, we adjust this random homomorphism to get the additional condition \itref{ext-good-plan-4} which we dropped for Theorem~\ref{thm:overarching}, completing the proof.

\begin{proof}[Proof of Theorem~\ref{thm:extending-distillation}]
For $\diamond\in\{+,-\}$, let \begin{equation*}
    \lambda^\diamond=\beta(x^\diamond,y^\diamond,z^\diamond,u^\diamond,w^\diamond,\bar{x}^\diamond,\bar{z}^\diamond,\bar{u}^\diamond,\bar{w}^\diamond)
\end{equation*}
so that $\lambda^++\lambda^-=1$, and let
\begin{equation*}
    \gamma^\diamond=\max{\{\beta(x^\diamond,\bar{x}^\diamond),\beta(z^\diamond,\bar{z}^\diamond)\}}/\lambda^\diamond.
\end{equation*}
For $\diamond\in\{+,-\}$, define
\begin{equation*}
    r^\diamond=\left\lceil\frac{\lambda^\diamond(1+\gamma^\diamond+\alpha/16)}{1+\gamma+\alpha/4}\cdot r\right\rceil,
\end{equation*}
so that $r^++r^-\leqslant(1-\epsilon)\cdot r$.

Let $K$ be an $\epsilon$-almost tournament with vertex set $[r]$, such that $d(i,j)\geqslant1/2$ whenever $i\rightarrow_Kj$. Partition $[r]$ as $J^+\cup J^-$ such that $d_K^\diamond(j)\geqslant r^\diamond$ whenever $j\in J^\diamond$. Note that we either have $|J^+|\geqslant r^-$ or $|J^-|\geqslant r^+$. By directional duality, we may assume that $|J^-|\geqslant r^+$.

Let $\beta_0:V(H_0)\to[0,1]$ be given by $\beta_0(v)=\beta(v)/\lambda^+$, and note that $\sum_{v\in V(H_0)}\beta_0(v)=1$ and $\beta_0(y^+)\geqslant\beta_0(x^+)$. By Theorem~\ref{thm:overarching}, if $\mathcal{H}_0=(H_0,X^+,\beta_0)$, then $\{\mathcal{H}_0\}$ is $\gamma^+$-good. Therefore, because $\beta=\lambda^+\cdot\beta_0$ and $\lambda^+\cdot\frac{1+\gamma^++\alpha/16}{r^+}\leqslant\frac{1+\gamma+\alpha/4}{r}$, there exists some $j_t\in J^-$ and random $\psi:H_0\to D[J^-]$ such that the following hold.
\stepcounter{propcounter}
\begin{enumerate}[label = {\bfseries{\Alph{propcounter}\arabic{enumi}}}]
    \item\label{sub-out-1} With probability 1, we have that $\psi$ is a homomorphism from $H_0$ to $D$, and that $j_t\notin\psi(X^+)$.
    \item\label{sub-out-2} For each $j\in[r]$, $\E(\beta(\psi^{-1}(j)))\leqslant\frac{1+\gamma+\alpha/4}{r}$.
    \item\label{sub-out-3} For each $j\in[r]$, $\E(\beta(\psi^{-1}(j)\cap X^+))\leqslant d(j_t,j)\cdot\frac{1+\gamma+\alpha/4}{r}$.
\end{enumerate}
Fix such a $j_t\in J^-$. Let $A=N_K^+(j_t)$ and $B=N_K^-(j_t)$. Take a random $\psi:H_0\to D$ satisfying \ref{sub-out-1}-\ref{sub-out-3} so that  $\E(\beta(\psi^{-1}(B)))$ is minimised. (To see that this minimum can be attained, note that with $j_t$ fixed we may consider a random $\psi:H_0\to D[J^-]$ satisfying \ref{sub-out-1} as an element of the probability simplex $\Delta^{m-1}$, where $m$ is the number of homomorphisms from $H_0$ to $D[J^-]$ satisfying \ref{sub-out-1}; then, regarding $\Delta^{m-1}$ as a compact topological space, the random homomorphisms also satisfying \ref{sub-out-2} and \ref{sub-out-3} form a closed subset of $\Delta^{m-1}$, and hence the continuous function $f(\psi)=\mathbb{E}(\beta(\psi^{-1}(B)))$ attains its minimum over this set.)

\begin{claim}\label{clm:in-weight}
$|B|\cdot\frac{1+\gamma+\alpha/4}{r}-\E(\beta(\psi^{-1}(B)))-\E(\beta(\psi^{-1}(B)\cap X^+))\geqslant\lambda^-+\beta(X^-)+\alpha/8$.
\end{claim}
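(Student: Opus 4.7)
The plan is to combine a capacity lower bound on $|B|\cdot\tfrac{1+\gamma+\alpha/4}{r}$ with upper bounds on the two expectations, the second of which will require the minimality of $\psi$. First, since $j_t\in J^-$ forces $d_K^-(j_t)\geq r^-$, we have $|B|\geq r^-$, so by the choice of $r^-$,
$$|B|\cdot\frac{1+\gamma+\alpha/4}{r}\geq\lambda^-(1+\gamma^-+\alpha/16).$$
By the definition of $\gamma^-$, we have $\lambda^-\gamma^-=\max\{\beta(x^-,\bar{x}^-),\beta(z^-,\bar{z}^-)\}\geq\beta(X^-)$, so this rewrites as
$$|B|\cdot\frac{1+\gamma+\alpha/4}{r}\geq\lambda^-+\beta(X^-)+\lambda^-\alpha/16.$$

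Next, I bound $\E(\beta(\psi^{-1}(B)\cap X^+))$ by summing \ref{sub-out-3} over $j\in B$ to get $\sum_{j\in B}d(j_t,j)\cdot\tfrac{1+\gamma+\alpha/4}{r}$. For every $j\in B$, by definition of $K$ we have $d(j,j_t)\geq 1/2$, so the $\epsilon$-completeness of the edge weights \itref{prop:likeregularity} gives $d(j_t,j)\leq 1/2+\epsilon$ for all but at most $\epsilon r$ of the $j\in B$; for the remaining $j$ we use the trivial bound $d(j_t,j)\leq 1$. This yields an upper bound on $\E(\beta(\psi^{-1}(B)\cap X^+))$ of roughly $\tfrac{1}{2}|B|\cdot\tfrac{1+\gamma+\alpha/4}{r}+O(\epsilon)$.

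The most delicate step is bounding $\E(\beta(\psi^{-1}(B)))$, where the minimality of $\psi$ is essential. Here the strategy is to construct, for contradiction, an alternative random homomorphism $\tilde\psi$ also satisfying \ref{sub-out-1}-\ref{sub-out-3} but with strictly smaller $\E(\beta(\tilde\psi^{-1}(B)))$. The construction applies Theorem~\ref{thm:overarching} to $D$ restricted to a suitable set of clusters that avoids the overloaded vertices of $B$, using the slack between $r^+$ and $|J^-|$ (together with the slack $\alpha/16$ versus $\alpha/4$ in the density bounds) to afford such reapplication. A convex combination of $\psi$ with $\tilde\psi$ remains valid by linearity of expectation and reduces the value of $\E(\beta(\psi^{-1}(B)))$, contradicting minimality; hence the desired upper bound must already hold for $\psi$.

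Combining these three bounds, the residual slack of $\lambda^-\alpha/16$ in the capacity bound comfortably dominates the $\alpha/8$ needed on the right-hand side together with the contribution from $\E(\beta(\psi^{-1}(B)))+\E(\beta(\psi^{-1}(B)\cap X^+))$. The main obstacle is the construction of the alternative $\tilde\psi$: this requires a careful case analysis (depending in particular on whether $|A|$ is large enough to support a direct reapplication of Theorem~\ref{thm:overarching} on $D[A\cup\{j_t\}]$, or whether we must work within $J^-$ but avoid a large subset of $B$), and verifying that the convex combination continues to satisfy the per-cluster bounds \ref{sub-out-2} and \ref{sub-out-3}.
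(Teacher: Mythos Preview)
Your arithmetic does not close. From $|B|\geq r^-$ you obtain
\[
|B|\cdot\frac{1+\gamma+\alpha/4}{r}\;\geq\;\lambda^-+\beta(X^-)+\lambda^-\alpha/16,
\]
and then assert that the slack $\lambda^-\alpha/16$ ``comfortably dominates'' the $\alpha/8$ on the right-hand side together with the two (non-negative) expectations. But $\lambda^-\leq 1$, so $\lambda^-\alpha/16\leq\alpha/16<\alpha/8$: the inequality fails even if both expectations were zero. No amount of work on the expectation bounds can repair this; the capacity lower bound obtained from $|B|\geq r^-$ alone is too weak for the claim. Your proposed fix---reapplying Theorem~\ref{thm:overarching} to a subgraph of $D$ to build an alternative $\tilde\psi$---also has a structural problem: that theorem only promises \emph{some} $j_t$, so a second application would in general produce a different $j_t$, whereupon $A$, $B$ and the quantity you are trying to minimise all change.

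The paper's argument is shorter and avoids both pitfalls by counting globally over $A\cup B$ (which covers all but $O(\epsilon r)$ vertices of $[r]$). Write
\[
Q(S)\;=\;|S|\cdot\frac{1+\gamma+\alpha/4}{r}-\E(\beta(\psi^{-1}(S)))-\E(\beta(\psi^{-1}(S)\cap X^+)).
\]
If $Q(A)\leq 0$, then $Q(B)\geq Q(A\cup B)\geq (1+\gamma+\alpha/8)-\lambda^+-\beta(X^+)\geq \lambda^-+\beta(X^-)+\alpha/8$, using $\lambda^++\lambda^-=1$ and $\gamma\geq\beta(X^+)+\beta(X^-)$. If instead $Q(A)>0$, there is a single cluster $j\in A$ with positive per-vertex slack; the paper defines $\hat\psi$ to equal $\psi$ except that with a small fixed probability it collapses all of $H_0$ onto $j$. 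Since $j\in A$ the constraints \ref{sub-out-2}--\ref{sub-out-3} are preserved (the slack condition was chosen precisely so that $\E(\beta(\hat\psi^{-1}(j)\cap X^+))$ stays below $d(j_t,j)\cdot\tfrac{1+\gamma+\alpha/4}{r}$), yet $\E(\beta(\hat\psi^{-1}(B)))$ strictly decreases, contradicting minimality. The key point you missed is that the minimality is exploited to show $Q(A)\leq 0$, after which a \emph{total-capacity} argument---not a bound on $|B|$ alone---gives the claim.
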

\begin{proof}[Proof of Claim~\ref{clm:in-weight}]
First, if $|A|\cdot\frac{1+\gamma+\alpha/4}{r}-\E(\beta(\psi^{-1}(A))-\E(\beta(\psi^{-1}(A)\cap X^+))\leqslant0$, then
\begin{align*}
    |B|\cdot\textstyle\frac{1+\gamma+\alpha/4}{r}\displaystyle-&\E(\beta(\psi^{-1}(B)))-\E(\beta(\psi^{-1}(B)\cap X^+))\geqslant|D|\cdot\textstyle\frac{1+\gamma+\alpha/4}{r}\displaystyle-3\epsilon-\lambda^+-\beta(X^+)\\
    &\geqslant1+\gamma+\alpha/8-\lambda^+-\beta(X^+)\geqslant\lambda^-+\beta(X^-)+\alpha/8.
\end{align*}
So we may assume that $|A|\cdot\frac{1+\gamma+\alpha/4}{r}-\E(\beta(\psi^{-1}(A)))-\E(\beta(\psi^{-1}(A)\cap X^+))>0$, else the claim is proven. In particular, we may assume there is some $j\in A$ such that, if
\begin{align*}
    p&:=\frac{1}{2}\left(\frac{1+\gamma+\alpha/4}{r}-\E(\beta(\psi^{-1}(j)))-\E(\beta(\psi^{-1}(j)\cap X^+))\right)\\
    &\leqslant d(j_t,j)\cdot\frac{1+\gamma+\alpha/4}{r}-\E(\beta(\psi^{-1}(j)\cap X^+)),
\end{align*}
then $0<p<1$. In addition, we may assume that $\mathbb{E}(\beta(\psi^{-1}(B)))>0$, else the claim follows immediately from the definition of $J^-$. Then, however, if we define $\hat{\psi}:H_0\to D$ by setting $\hat{\psi}(V(H_0))=j$ with probability $p$ and sampling $\psi$ otherwise, we find $\hat{\psi}$ satisfies \ref{sub-out-1}-\ref{sub-out-3}, but $\E(\beta(\hat{\psi}^{-1}(B)))=(1-p)\cdot\E(\beta(\psi^{-1}(B)))$, a contradiction to the minimality of $\E(\beta(\psi^{-1}(B)))$.
\renewcommand{\qedsymbol}{$\boxdot$}
\end{proof}
\renewcommand{\qedsymbol}{$\square$}

Consider the random $\hat{\phi}:H\to D$ defined by sampling $\psi$ to determine $\hat{\phi}|_{V(H_0)}$, and independently choosing $\hat{\phi}(V(H_0'))\in B$ at random so that
\begin{equation*}
    \mathbb{P}(\hat{\phi}(V(H_0'))=j)=p_j:=\frac{\frac{1+\gamma+\alpha/4}{r}-\E(\beta(\psi^{-1}(j)))-\E(\beta(\psi^{-1}(j)\cap X^+))}{|B|\cdot\frac{1+\gamma+\alpha/4}{r}-\E(\beta(\psi^{-1}(B)))-\E(\beta(\psi^{-1}(B)\cap X^+))}.
\end{equation*}
We remark that for every $j\in B$, we have
\begin{equation}\label{eq:in-prob}
    p_j\cdot\max{\{\lambda^-,2\beta(X^-)\}}\leqslant p_j\cdot(\lambda^-+\beta(X^-))\overset{\text{Claim~\ref{clm:in-weight}}}{\leqslant}\textstyle\frac{1+\gamma+\alpha/4}{r}\displaystyle-\E(\beta(\psi^{-1}(j)))-\E(\beta(\psi^{-1}(j)\cap X^+)).
\end{equation}
Therefore, if $j\in B$, we have
\begin{flalign*}
    &\hspace{0.75cm}\E(\beta(\hat{\phi}^{-1}(j)))=\E(\beta(\psi^{-1}(j)))+p_j\cdot\lambda^-\overset{\eqref{eq:in-prob}}{\leqslant}\frac{1+\gamma+\alpha/4}{r},&\\
    &\hspace{0.75cm}\E(\beta(\hat{\phi}^{-1}(j)\cap X^+))\overset{\text{\ref{sub-out-3}}}{\leqslant} d(j_t,j)\cdot\frac{1+\gamma+\alpha/4}{r},\\
    &\hspace{0.75cm}\E(\beta(\hat{\phi}^{-1}(j)\cap X))\leqslant\E(\beta(\psi^{-1}(j)\cap X^+))+p_j\cdot\beta(X^-)\\
    &\hspace{3.75cm}\overset{\eqref{eq:in-prob}}{\leqslant}\frac{1}{2}\left(\frac{1+\gamma+\alpha/4}{r}\right)\leqslant d(j,j_t)\cdot\frac{1+\gamma+\alpha/4}{r},
\end{flalign*}
whereas if $j\in [r]\setminus B$, we have
\begin{flalign*}
    &\hspace{0.75cm}\E(\beta(\hat{\phi}^{-1}(j)))=\E(\beta(\psi^{-1}(j)))\overset{\text{\ref{sub-out-2}}}{\leqslant}\frac{1+\gamma+\alpha/4}{r},&\\
    &\hspace{0.75cm}\E(\beta(\hat{\phi}^{-1}(j)\cap X^-))=0\leqslant d(j,j_t)\cdot\frac{1+\gamma+\alpha/4}{r},\\
    &\hspace{0.75cm}\E(\beta(\hat{\phi}^{-1}(j)\cap X))=\E(\beta(\psi^{-1}(j)\cap X^+))\overset{\text{\ref{sub-out-3}}}{\leqslant} d(j_t,j)\cdot\frac{1+\gamma+\alpha/4}{r}.
\end{flalign*}

Take $\bar{\phi}:H\to D$ with $\sum_{e\in E(H)}\mathbb{P}(|\bar{\phi}(e)|=1)$ minimal, such that the following properties hold.
\stepcounter{propcounter}
\begin{enumerate}[label = {\bfseries{\Alph{propcounter}\arabic{enumi}}}]
\item\label{ext-good-plan-partial-1} With probability 1, we have that $\bar{\phi}$ is a homomorphism from $H$ to $D$, and that $j_t\notin\bar{\phi}(\{x^+,\bar{x}^+,x^-,\bar{x}^-\})$.
\item \label{ext-good-plan-partial-2} For each $j\in[r]$, $\E(\beta(\bar{\phi}^{-1}(j)))\leq \frac{1+\gamma+\alpha/2}{r}-\frac{\alpha^2}{r}\cdot\sum_{e\in E(H)}\mathbb{P}(|\bar{\phi}(e)|=1)$.
\item\label{ext-good-plan-partial-3} For each $j\in[r]$, either
\begin{enumerate}[label = {\bfseries {\Alph{propcounter}\arabic{enumi}.\arabic{enumii}}}]
    \item\label{ext-good-plan-partial-3-1} $\E(\beta(\bar{\phi}^{-1}(j)\cap X^+))\leq d(j_t,j)\cdot \frac{1+\gamma+\alpha/2}{r}$ and $\E(\beta(\bar{\phi}^{-1}(j)\cap X))\leq d(j,j_t)\cdot\frac{1+\gamma+\alpha/2}{r}$, or
    \item\label{ext-good-plan-partial-3-2} $\E(\beta(\bar{\phi}^{-1}(j)\cap X^-))\leq d(j,j_t)\cdot \frac{1+\gamma+\alpha/2}{r}$ and $\E(\beta(\bar{\phi}^{-1}(j)\cap X))\leq d(j_t,j)\cdot\frac{1+\gamma+\alpha/2}{r}$.
\end{enumerate}
\end{enumerate}
$\bar{\phi}$ is well-defined, as we may take $\bar{\phi}=\hat{\phi}$.

We will shortly prove the following claim.

\begin{claim}\label{clm:expected-singletons}
$\mathbb{P}(|\bar{\phi}(e)|=1)\leqslant \epsilon^{1/4}/|E(H)|$ for every non-looped edge $e$ of $H$.
\end{claim}

From Claim~\ref{clm:expected-singletons} it follows that $\mathbb{P}(\text{$|\bar{\phi}(e)|=2$ for every non-looped edge $e$ of $H$})\leqslant\epsilon^{1/4}$. Then, if we take $\phi$ to be $\bar{\phi}$ conditioned on the event that $|\bar{\phi}(e)|=2$ for every non-looped edge $e$ of $H$, we have
\begin{equation*}
    \E(\beta(\phi^{-1}(j)\cap\{v\}))\leqslant(1-\epsilon^{1/4})^{-1}\cdot\E(\beta(\bar{\phi}^{-1}(j)\cap\{v\}))
\end{equation*}
for every $j\in[r],v\in V(H)$, thus $\phi$ satisfies the conclusion of the theorem. So it only remains to prove Claim~\ref{clm:expected-singletons}.

\begin{proof}[Proof of Claim~\ref{clm:expected-singletons}]

Suppose for contradiction that there is some non-looped edge $e$ of $H$ with $\mathbb{P}(|\bar{\phi}(e)|=1)\geqslant \epsilon^{1/4}/|E(H)|$. Let $e=v_1v_2$. For $i\in[2]$, let $H_i$ be the component of $H-e$ containing $v_i$. Assume, by directional duality, that $X\cap V(H_2)=\emptyset$. To prove the claim, we will modify $\bar{\phi}$ by switching the image of $H_2$ in certain occurrences where $e$ is mapped onto a single vertex by $\bar{\phi}$. The result will be a new random homomorphism with a smaller value of $\mathbb{P}(|\bar{\phi}(e)|=1)$. Because the image of $X$ is unchanged by this modification, the new homomorphism will automatically satisfy \ref{ext-good-plan-partial-1} and \ref{ext-good-plan-partial-3}. Additionally, even though this modification may increase $\mathbb{E}(\beta(\bar{\phi}^{-1}(j)))$ for some values of $j$, \ref{ext-good-plan-partial-2} will still hold as this increase is sufficiently offset in the relevant inequality by the reduction in $\mathbb{P}(|\bar{\phi}(e)|=1)$.

Because $\beta(v_1,v_2)\geqslant2\mu$, \ref{ext-good-plan-partial-2} implies that $\mathbb{P}(\bar{\phi}(e)=\{j\})\leqslant\frac{2}{\mu r}$ for every $j\in[r]$. So, if $J$ is the set of $j\in[r]$ for which $\mathbb{P}(\bar{\phi}(e)=\{j\})\geqslant \frac{\sqrt{\epsilon}}{r}$, then $|J|\geqslant\sqrt{\epsilon}\cdot r$, else we find $\mathbb{P}(|\bar{\phi}(e)|=1)=\sum_{j\in[r]}\mathbb{P}(\bar{\phi}(e)=\{j\})\leqslant\frac{2\sqrt{\epsilon}}{\mu}+\sqrt{\epsilon}<\epsilon^{1/4}/|E(H)|$.

Let $m$ be the number of possible homomorphisms $H\to D$. Choose homomorphisms $\phi_j, j\in J$ such that $\phi_j(e)=\{j\}$ and $\mathbb{P}(\bar{\phi}=\phi_j)\geqslant\frac{\sqrt{\epsilon}}{mr}$ for every $j\in J$ (these can be found as the edge weights of $D$ are $\epsilon$-complete).

Set $s=\lceil1/\alpha^3\rceil$. Let $j_1,\ldots,j_{s+1}\in J$ be distinct such that $d(j_i,j_{i+1})>0$ for every $i\in[s]$. Let $k\in[2]$ be random, with distribution coupled with $\bar{\phi}$ such that $\mathbb{P}(k=2)=\frac{(s+1)\sqrt{\epsilon}}{mr}$, and that $\mathbb{P}(\bar{\phi}=\phi_{j_i}\mid k=2)=\frac{\sqrt{\epsilon}}{mr}$ for every $i\in[s+1]$.

Define a random $\psi:H\to D$ as follows. Sample $(\bar{\phi},k)$, choose $i\in[s+1]$ uniformly at random, and set
\begin{equation*}
    \psi(v)=
    \begin{cases*}
        \bar{\phi}(v) & if $k=1$, \\
        \phi_{j_i}(v) & if $k=2$, $i=s+1$,\\
        \phi_{j_i}(v) & if $k=2$, $i\in[s]$, $v\in V(H_1)$,\\
        \phi_{j_{i+1}}(v) & if $k=2$, $i\in[s]$, $v\in V(H_2)$.
    \end{cases*}
\end{equation*}

We then find $\mathbb{P}(|\psi(e)|=1)=\mathbb{P}(|\bar{\phi}(e)|=1)-\frac{s\sqrt{\epsilon}}{mr}$, yet $\E(\beta(\psi^{-1}(j)\cap X))=\E(\beta(\bar{\phi}^{-1}(j)\cap X))$ and $\E(\beta(\psi^{-1}(j)))\leqslant\E(\beta(\bar{\phi}^{-1}(j)))+\frac{\sqrt{\epsilon}}{mr}$ for every $j\in[r]$. Thus, \ref{ext-good-plan-partial-1}-\ref{ext-good-plan-partial-3} also hold for $\psi$, a contradiction to the minimality of $\sum_{e\in E(H)}\mathbb{P}(|\bar{\phi}(e)|=1)$.
\renewcommand{\qedsymbol}{$\boxdot$}
\qedhere
\renewcommand{\qedsymbol}{$\square$}
\qedsymbol
\end{proof}
\renewcommand{\qedsymbol}{}
\end{proof}
\renewcommand{\qedsymbol}{$\square$}

\section*{Acknowledgements}

We thank Peter Allen and Eoin Long for their detailed feedback on this research, which was provided during the examination of the first author's PhD thesis. We also thank the anonymous referees for their work and their many helpful comments.

\bibliographystyle{abbrv}
\bibliography{references}

\end{document}